\documentclass{amsart}

\usepackage{amssymb,stmaryrd,blkarray,varwidth}
\usepackage{thmtools}
\usepackage{enumitem}
\usepackage{tikz-cd}
\usepackage[a4paper,left=2.25cm,right=2.25cm,top=3cm,bottom=3cm,headsep=1cm]{geometry}
\usepackage{euler} 
\usepackage{tikz}\usetikzlibrary{graphs,quotes,fit,positioning,matrix,calc,decorations.markings,angles,decorations.pathmorphing,decorations.pathreplacing}
\usetikzlibrary{arrows.meta,decorations.markings}

\tikzset{mynode/.style={circle,draw=black,fill=black,inner sep=1.8pt,outer sep=0pt}}
\tikzset{edgelabel/.style={\mcol,inner sep=0pt}}
\tikzset{invlabel/.style={draw=black,text=black,circle,inner sep=0pt,minimum size=3mm}}
\newcommand\tikzif[2][]{
\tikzifinpicture{#2}{\begin{tikzpicture}[#1]#2\end{tikzpicture}}
}
\tikzset{math mode/.style = {execute at begin node=$, execute at end node=$}}
\def\dr{red!80!black} 
\def\dg{green!80!black} 
\def\db{blue!80!black} 
\tikzset{d/.style={ultra thick}}
\tikzset{dr/.style={draw=\dr,d}}
\tikzset{dg/.style={draw=\dg,d}}
\tikzset{db/.style={draw=\db,d}} 
\def\mcol{black}
\def\m#1{{\color{\mcol}#1}}
\tikzset{rt/.style={text=blue,execute at begin node=$\sf,execute at end node=$}}

\newlength\myshift
\newcommand\getshift{%
\pgfmathsetlength{\myshift}{0.8mm}
}
\def\noparenx#1{%
\ifx\relax#1
\else
\if)#1%
\else
\if(#1%
\else
#1%
\fi
\fi
\expandafter\noparenx
\fi
}
\def\noparen#1{\noparenx #1\relax}
\newcommand\rh[5][]{
\tikzif[baseline=0,scale=1.25]{
	\getshift
	\draw[thick,black] (0,0) coordinate (ad) -- node[edgelabel,left,xshift=\myshift] {$#2$} ++(-60:1) coordinate (ba) -- node[edgelabel,right,xshift=-\myshift] {$#3$} ++(60:1) coordinate (cb) -- node[edgelabel,right,xshift=-\myshift] {$#4$} ++(120:1) coordinate (dc) -- node[edgelabel,left,xshift=\myshift] {$#5$} cycle;
	\ifx\&#1\&\else\node[invlabel] at (0.5,0) {$\ss#1$};\fi
	}}
	
	\newcommand\uptri[4][]{
\tikzif[baseline=0.34cm,scale=1.25]{
	\getshift
	\draw[thick,black] (-0.5,0) -- node[edgelabel] (horiz) {$\vphantom{\noparen{#3}}\smash{#3}$} ++(0:1) -- node[edgelabel,right,xshift=-\myshift] (NE) {$#4$} ++(120:1) -- node[edgelabel,left,xshift=\myshift] (NW) {$#2$} ++(240:1) -- cycle; 
	\ifx\&#1\&\else\node[invlabel] at (0,0.33) {$\ss#1$};\fi
	}}
	\newcommand\downtri[4][]{
\tikzif[baseline=-0.54cm,scale=1.25]{
	\getshift
	\begin{scope}[scale=-1]
		\draw[thick,black] (-0.5,0) -- node[edgelabel] (horiz) {$\vphantom{\noparen{#3}}\smash{#3}$} ++(0:1) -- node[edgelabel,left,xshift=\myshift] (NE) {$#4$} ++(120:1) -- node[edgelabel,right,xshift=-\myshift] (NW) {$#2$} ++(240:1) -- cycle; 
		\ifx\&#1\&\else\node[invlabel] at (0,0.33) {$\ss#1$};\fi
	\end{scope}
	}}
	\tikzset{ 
	centerzero/.style={>=To,baseline={([yshift=-0.5ex](#1))}},
	centerzero/.default={0,0}
}

\tikzset{mynode/.style={circle,draw=black,fill=black,inner sep=1.8pt,outer sep=0pt}}
\tikzset{edgelabel/.style={\mcol,inner sep=0pt}}
\tikzset{invlabel/.style={draw=black,text=black,circle,inner sep=0pt,minimum size=3mm}}
\tikzset{triv/.style={circle,fill=black,inner sep=0.2mm}}
\tikzset{math mode/.style = {execute at begin node=$, execute at end node=$}}
\def\dr{red!80!black} 
\def\dg{green!80!black} 
\def\db{blue!80!black} 
\tikzset{d/.style={ultra thick}}
\tikzset{dr/.style={draw=\dr,d}}
\tikzset{dg/.style={draw=\dg,d}}
\tikzset{db/.style={draw=\db,d}} 
\def\mcol{}
\def\m{}
\tikzset{rt/.style={text=blue,execute at begin node=$\sf,execute at end node=$}}

\tikzset{arrow/.style={postaction={decorate,decoration={markings,mark = at position #1 with {\arrow{Straight Barb[line width=0.2mm,length=1.2mm]}}}}},arrow/.default=0.5}
\tikzset{invarrow/.style={postaction={decorate,decoration={markings,mark = at position #1 with {\arrowreversed{Straight Barb[line width=0.2mm,length=1.2mm]}}}}},invarrow/.default=0.5}
\def\noparenx#1{%
	\ifx\relax#1
	\else
	\if)#1%
	\else
	\if(#1%
	\else
	#1%
	\fi
	\fi
	\expandafter\noparenx
	\fi
}
\def\noparen#1{\noparenx #1\relax}

\usepackage{hyperref}
\allowdisplaybreaks[1]
\renewcommand\ss{\scriptstyle}

\newcommand\Otimes\bigotimes

\newcommand\into\hookrightarrow
\newcommand\onto\twoheadrightarrow
\newcommand\otno\twoheadleftarrow
\newcommand\dom\backslash
\newcommand\tensor\otimes
\newcommand\Tensor\bigotimes
\newcommand\iso\cong

\tikzset{gauged/.style={rectangle,rounded corners=2mm,draw,inner sep=0.5mm,minimum size=4mm}}
\tikzset{framed/.style={rectangle,draw,inner sep=0.5mm,minimum size=4mm}}
\tikzset{script math mode/.style = {execute at begin node=$\ss , execute at end node=$}}

\newcommand\actson\circlearrowright

\usepackage{hyperref}
\allowdisplaybreaks[1]
\renewcommand\ss{\scriptstyle}

\usepackage{hyperref} 

\usepackage[capitalize]{cleveref}   

\renewcommand\rightmark{\footnotesize Motivic Segre classes of Schubert cells and the connective formal group law} 

\theoremstyle{plain}
\newtheorem{lem}{Lemma}[section]

\newtheorem{prop}[lem]{Proposition}

\newtheorem{theo}[lem]{Theorem}

\newtheorem{assumption}[lem]{Assumption}
\newtheorem{problem}[lem]{Problem}

\theoremstyle{definition}
\newtheorem{egg}[lem]{Example}
\newtheorem{dfn}[lem]{Definition}
\newtheorem{rmk}[lem]{Remark}
\newtheorem{nota}[lem]{Notation}

\newcommand{\mrm}{\mathrm}
\newcommand{\mbb}{\mathbb}
\newcommand{\mcal}{\mathcal}

\newcommand{\arxiv}[1]{\href{http://arxiv.org/abs/#1}{\tt arXiv:\nolinkurl{#1}}}

\newcommand{\arxivpdf}[1]{\href{http://arxiv.org/pdf/#1}{\tt arXiv:\nolinkurl{#1}}}

\def\mcol{black}
\def\m#1{{\color{\mcol}#1}}

\newtoggle{comments}
\newtoggle{details}
\newtoggle{detailsnote}

\toggletrue{comments}   
\togglefalse{details}   

\toggletrue{detailsnote}   

\iftoggle{comments}{%
	\newcommand{\comments}[1]{
		\ \\
		{\color{red}
			\textbf{RG:} #1
		}
		\\
	}
	
}{%
	\newcommand{\comments}[1]{}
}

\iftoggle{details}{%
	\newcommand{\details}[1]{
		\ \\
		{\color{OliveGreen}
			\textbf{Details:} #1
		}
		\\
	}
}{%
	\newcommand{\details}[1]{}
}

\begin{document}
	
	\title[]{Motivic Segre classes of Schubert cells and the \\ connective formal group law}
	
	\author[]{Raj Gandhi}
	
	\address{
		Department of Mathematics \\
		Cornell University
	}
	\email{rg593@cornell.edu}
	
	\address{} 
	\thanks{}
	
	\address{}
	\thanks{}
	
	\subjclass[2020]{}
	
	\keywords{}
	
	\maketitle

	\begin{abstract} 
		We use the connective formal group law to define a one-parameter ($\beta$-)deformation of the motivic Segre classes of Schubert cells in the $d$-step flag variety. This $\beta$-deformation specializes to the motivic Segre classes of Schubert cells when $\beta=1$ and to the Segre-Schwartz-MacPherson classes of Schubert cells when $\beta=0$. We define rational function representatives for the $\beta$-deformed classes in the $d=1$ case in terms of a solvable lattice model, and we prove a combinatorial formula for the structure constants in the $\beta$-deformed basis in the $d=1$ case using Knutson-Tao puzzles. The proof of the puzzle formula involves intertwiners for representations of the multi-parameter quantum group of type $\widehat{a}_2$. We show that our $\beta$-deformations can be viewed as quotients of canonical elements in a quotient of the equivariant algebraic cobordism ring of the cotangent bundle of the flag variety by proving that the canonical elements satisfy a GKM type condition.
	\end{abstract}
	
	\tableofcontents

	\section{Introduction}
	
	Schubert calculus is a field of mathematics that interacts with combinatorics, representation theory, and algebraic geometry.	In this paper, we will explore connections between these three disciplines, and for the benefit of the reader, we have included a lot of the relevant background material in this introduction. In \cref{subsection:LR-rule}, we will give an overview of the combinatorial aspects of Schubert calculus which will be relevant to us. In particular, we will describe Knutson-Tao puzzles and explain how quantum integral systems have been applied to compute the structure constants in the ``Schubert basis" in the equivariant cohomology ring and equivariant $K$-ring of the Grassmannian and its cotangent bundle. In \cref{subsection:generalized-cohomology-theories}, we will give an overview of how Schubert classes can be defined for generalized cohomology rings of flag varieties, and we will explain where formal group laws appear in this theory. The main results in this paper are summarized in \cref{subsection:research-results}. The organization of this paper is described in \cref{subsection:organization}.
	
	\subsection{Littlewood-Richardson rules and Schubert calculus}\label{subsection:LR-rule}
	When $G=\mrm{GL}_n(\mbb{C})$ is the group of invertible $n\times n$ matrices with entries in $\mbb{C}$, and  $P$ is the subgroup of \textit{block upper-triangular} matrices in $\mrm{GL}_n(\mbb{C})$, the set $G/P$ can be thought of as the set of sequences of vector spaces in $\mbb{C}^n$ of the form
	\[(0)\subsetneq V_{1}\subsetneq V_{2}\subsetneq \dotsb\subsetneq V_{d}\subsetneq \mbb{C}^n,\quad \mrm{dim}(V_{j})=i_j,\]
	where the sequence $(i_1<i_2\dotsb < i_d)$ depends entirely on $P$. The set $G/P$ has the structure of a smooth, projective algebraic variety defined over $\mbb{C}$, and $G/P$ is is called a \textbf{flag variety}. In the special case when $P$ is a \textit{maximal} parabolic subgroup of $G$, the 
	space $G/P$ is the \textbf{Grassmannian} consisting of $k$-planes in $\mbb{C}^n$. When $P=B$ is the set of upper-triangular matrices in $G$, the space $G/B$ is called the \textbf{complete flag variety}. We will denote by $B_-$ the subgroup of lower-triangular matrices in $G$.

	Let $W=S_n$ be the \textit{Weyl group} of $G/P$. If $(i_1<i_2\dotsb < i_d)$ is the sequence corresponding to $P$, then set $W_P=S_{i_1}\times S_{i_2}\times \dotsc S_{i_d}\times S_{n-i_1-\dotsb-i_d}$. Let $W^P$ be the minimal length coset representatives in $W/W_P$. 
	For each $w\in W^P$, one can define a locally closed subvariety $X_w^\circ:=B_- w P/P$ in $G/P$, called a \textbf{Schubert cell}. The closure of a Schubert cell $X_w:=\overline{X_w^\circ}$ in $G/P$ is called a \textbf{Schubert variety}\footnote{We will define a Schubert variety to be a $B_-$-orbit closure $\overline{B_- w P/P}$ in $G/P$ everywhere in this paper, except in \cref{subsection:generalized-cohomology-theories}, where we will assume that a Schubert variety is a $B$-orbit closure $\overline{B w P/P}$ in $G/P$. Our precise conventions are given in \cref{notation:Schubert}, and the reasoning behind these choices is explained in \cref{rmk:reason-for-these-choices}.}, and $G/P$ has a stratification:
	\begin{equation}\label{eqn:Bruhat-decomposition}G/P=\sqcup_{w\in W^P}X_w^\circ.
	\end{equation}
	Consider the singular cohomology ring $H^*(G/P)$. This ring has an extra structure: it is a ring with basis. The classes $[X_w]$ associated to the Schubert varieties $X_w$ form a $\mbb{Z}$-basis for $H^*(G/P)$. In particular, the product of two Schubert classes can be expanded uniquely as a $\mbb{Z}$-linear combination of the Schubert classes:
	\[[X_u]\cdot [X_v]=\sum_{w\in W^P}(c_{u,v}^w)^H\cdot [X_w], \quad (c_{u,v}^w)^H
	\in\mbb{Z}.\]
	The coefficients $(c_{u,v}^w)^H$ are called \textbf{Littlewood-Richardson coefficients}, and they are non-negative as they count \textit{intersection multiplicities}. We can now state what is arguably the most important problem in Schubert calculus:
	\begin{equation}\label{eqn:formula}\textit{Find a combinatorial formula for the $(c_{u,v}^w)^H$.}
	\end{equation}
	The problem (\ref{eqn:formula}) remains wide open outside of special cases.
	We will hereby call any combinatorial formula that computes the $(c_{u,v}^w)^H$ a \textbf{Littlewood-Richardson rule}.

	A permutation $w$ in $S_n$ is said to have a \textbf{descent} at position $k$ if, when written in one-line notation
	\[w=\left(\begin{matrix} 
		1 & 2 & \cdots & n\\
		\text{ }i_1 & \text{ }i_2 & \cdots & \text{ }i_n
	\end{matrix}\right),
	\]
	the permutation satisfies $i_k>i_{k+1}$. There are well-known combinatorial formula for the $(c_{u,v}^w)^H$ in several special cases, one being when $u,v,w$ are \textit{Grassmannian permutations}: a permutation in $S_n$ is called $k$-\textbf{Grassmannian} if it has no descents or if it has exactly one descent and it occurs at position $k$. If $G/P=\mrm{Gr}(k,n)$, then the indexing set $W^P$ in the stratification (\ref{eqn:Bruhat-decomposition}) is the set of $k$-Grassmannian permutations in $S_n$. The $k$-Grassmannian permutations in $S_n$ are in bijection with  $\Gamma_{k,n}$, the set of sequences of $0$'s and $1$'s consisting exactly $k$ $1$'s and $n-k$ $0$'s.

	We will focus our attention now on $\mrm{Gr}(k,n)$, identifying $W^P$ with $\Gamma_{k,n}$. One of the many combinatorial formulas for the structure constants $(c_{u,v}^w)^H$, now viewing $\lambda,\mu,\nu$ as sequences in $\Gamma_{k,n}$, is the \textit{Knutson-Tao puzzle formula}. A \textbf{Knutson-Tao puzzle} with side labels $\lambda$, $\mu$, $\nu$ in $\Gamma_{k,n}$ is a triangle $\tikz[scale=0.8,baseline=0cm]{\uptri{\lambda}{\nu}{\mu}}$ with side labels $\lambda$, $\mu$, $\nu$ that is tiled by the following \textbf{puzzle pieces} with edge labels $0$, $1$, and $10$, and allowing rotations of the puzzle pieces:
	
	\begin{equation}\label{eqn:basic-tiles}
		\uptri000 \quad \quad
		\uptri111 
		\quad \quad
		\uptri01{10}	
	\end{equation}
	In \cite[Theorem 1]{KTW04}, Knutson, Tao, and Woodward showed that the number of such tilings equals the Littlewood-Richardson coefficient $(c_{u,v}^w)^H$. This gives a Littlewood-Richardson rule (\ref{eqn:formula}) in the special case that $\lambda,\mu,\nu$ are $k$-Grassmannian permutations.

	The problem posed in (\ref{eqn:formula}) has many natural generalizations, one being to equivariant cohomology. The flag variety $G/P$ has a natural action of the $n\times n$ diagonal matrices $T$ in $G$. The Schubert classes $X_w$ are invariant under this $T$-action, and from this we deduce that they define classes $[X_w]$ in the $T$-equivariant cohomology ring $H_T(G/P)$. Analogous to the case of $H^*(G/P)$, the classes $[X_w]$ form a basis for $H_T(G/P)$ over $H_T(\mrm{point})\simeq \mbb{Z}[y_1,\dotsc,y_n]$, allowing one to expand the product of two \textit{equivariant} Schubert classes \textit{uniquely} as a $\mbb{Z}[y_1,\dotsc,y_n]$-linear combination of the equivariant Schubert classes:
	\begin{equation}\label{eqn:equivariantLR}
		[X_u]\cdot [X_v]=\sum_{w\in W^P}(c_{u,v}^w)^{H_T}\cdot [X_w], \quad (c_{u,v}^w)^{H_T}
		\in\mbb{Z}[y_1,\dotsc,y_n].
	\end{equation}
	Analogously to the problem (\ref{eqn:formula}), one can pose the following problem:
	\begin{equation}\label{eqn:equivariantformula}\textit{Find a combinatorial formula for the $(c_{u,v}^w)^{H_T}$.}
	\end{equation}
	As before, we will call any combinatorial formula that computes the $(c_{u,v}^w)^{H_T}$ a \textbf{Littlewood-Richardson rule}. This problem is open outside of special cases.
	
	Assume that $u,v,w$ are \textit{$k$-Grassmannian} permutations. In this case, the first combinatorial formula for the coefficient $(c_{u,v}^w)^{H_T}$ of (\ref{eqn:equivariantLR}) was introduced in \cite{KT03} by Knutson and Tao in terms of puzzles. Their puzzles are tiled by the puzzles pieces in (\ref{eqn:basic-tiles}) together with \textit{an additional puzzle piece}. The additional piece is a rhombus, which carries a \textit{weight} of $y_i-y_j$ if it is positioned in the $i$-th southwest-to-northeast diagonal and $j$-th southeast-to-northwest diagonal in the puzzle:
	\[
	\begin{gathered}
		\rh1{0}10
	\end{gathered}
	\]
	A very insightful example of a computation of (equivariant) Littlewood-Richardson coefficients using puzzles can be found just below Theorem 3 in the expository article \cite{K23}.
	
	The problem posed in (\ref{eqn:formula}) can be generalized in an orthogonal direction to the $K$-ring of locally free coherent sheaves on $G/P$, denoted $K(G/P)$, and its $T$-equivariant version $K_T(G/P)$. The interesting classes to consider in these rings are the classes defined by the structure sheaves of Schubert varieties $[\mcal{O}_{X_w}]$. As one would have hoped, the classes $[\mcal{O}_{X_w}]$ form a basis for $K(G/P)$ over $\mbb{Z}$, and a basis for $K_T(G/P)$ over $K_T(\mrm{point})\simeq \mbb{Z}[e^{\pm y_1},\dotsc,e^{\pm y_n}]$. Consequently, the product of two classes can be expanded uniquely as a linear combination of the $[\mcal{O}_{X_w}]$ over the respective coefficient ring:
	\[[\mcal{O}_{X_u}]\cdot [\mcal{O}_{X_v}]=\sum_{w\in W^P}c_{u,v}^w\cdot [\mcal{O}_{X_w}].\] 
	As earlier, we will call any combinatorial formula  that computes the $c_{u,v}^w$ a \textbf{Littlewood-Richardson rule}.
	In both $K(\mrm{Gr}(k,n))$ and $K_T(\mrm{Gr}(k,n))$, there are combinatorial formulas for the structure constants $c_{\lambda,\mu}^\nu$ in terms of Knutson-Tao puzzles. The formula in $K(\mrm{Gr}(k,n))$ was proven by Vakil \cite{V06}, and two different puzzle formulas in $K_T(\mrm{Gr}(k,n))$ were proven, one by Pechenik and Yong in \cite{PY17I}, \cite{PY17II}, and the other by Wheeler and Zinn-Justin in \cite{WZJ19}.
	
	The Littlewood-Richardson rules for $H_{T}(G/P)$ and $K_T(G/P)$ can be generalized by incorporating an additional parameter $q$ in the following way. The cotangent bundle $T^*(G/P)$ has a natural action of $T$ that acts on the base $G/P$, together with an action of an additional $\mbb{C}^\times$ factor which scales cotangent fibres with weight $q$. Set $\widehat{T}:=T\times\mbb{C}^\times$. The $\widehat{T}$-equivariant cohomology ring $H_{\widehat{T}}(T^*(G/P))$ has a natural basis over the fraction field $\mrm{Frac}(H_{T\times \mbb{C}^\times}(\mrm{point}))\simeq \mbb{Q}(y_1,\dotsc,y_n,q)$ indexed by $W^P$ called the basis of \textit{Segre-Schwartz-MacPherson classes of Schubert cells} (i.e., \textit{SSM classes}), and the $\widehat{T}$-equivariant $K$-ring $K_{\widehat{T}}(T^*(G/P))$ has a natural basis over the fraction field $\mrm{Frac}(K_{\widehat{T}}(\mrm{point}))\simeq \mrm{Frac}\left(\mbb{Z}[e^{\pm y_1},\dotsc,e^{\pm y_n},q^{\pm 1}]\right)$ indexed by $W^P$ called the basis of \textit{motivic Segre classes of Schubert cells} (i.e., \textit{mS classes}). We will denote the SSM and mS classes uniformly by $S_w$. In \cite{KZJ17,KZJ21}, Knutson and Zinn-Justin proved a combinatorial formula for the structure constants in the $\{S_\lambda\}_{\lambda\in\Gamma_{k,n}}$ basis in both $H_{\widehat{T}}(T^*(\mrm{Gr}(k,n)))$ and $K_{\widehat{T}}(T^*(\mrm{Gr}(k,n)))$ in terms of Knutson-Tao puzzles. In both cases, there are 15 rhombus puzzle pieces, whose weights depend on a parameter $q$, and in the limit $q\to 0$ the puzzle formulas in both rings collapse to those in $H_T(\mrm{Gr}(k,n))$ and $K_T(\mrm{Gr}(k,n))$. The ``Littlewood-Richardson rule" in $K_{\widehat{T}}(T^*(\mrm{Gr}(k,n)))$ specializes to the Littlewood-Richardson rules in $H^*(\mrm{Gr}(k,n))$, $H_T(\mrm{Gr}(k,n))$, $K(\mrm{Gr}(k,n))$, $K_T(\mrm{Gr}(k,n))$, and $H_{\widehat{T}}(T^*(\mrm{Gr}(k,n)))$ by taking certain limits.
	
	The techniques used to prove the Littlewood-Richardson rule for the $S_\lambda$ basis in $K_{\widehat{T}}(T^*(\mrm{Gr}(k,n)))$ go back to work of Zinn-Justin in \cite{ZJ09}. In that paper, Zinn-Justin produced a \textit{vertex model} (also known as a \textit{tiling model} or \textit{lattice model}) that simultaneously computes both \textit{double Schur polynomials} (which are polynomial representatives of Schubert classes in $H_T(\mrm{Gr}(k,n))$) and the associated Littlewood-Richardson coefficients, giving a new proof of the Knutson-Tao puzzle formula. In \cite{WZJ19}, Zinn-Justin and Wheeler developed a vertex model that simultaneously computes both \textit{double Grothendieck polynomials for Grassmannian permutations} (which are polynomial representatives for the basis of structure sheaves $[\mcal{O}_{X_\lambda}]$ in $K_T(\mrm{Gr}(k,n))$), and the associated Littlewood-Richardson coefficients, giving a Littlewood-Richardson rule for $K_T(\mrm{Gr}(k,n))$ in terms of Knutson-Tao puzzles. In \cite{KZJ17,KZJ21}, Knutson and Zinn-Justin were able to bootstrap the techniques of \cite{ZJ09} and \cite{WZJ19} to deal with the more general Schubert calculus in  $H_{\widehat{T}}(T^*(\mrm{Gr}(k,n)))$ and $K_{\widehat{T}}(T^*(\mrm{Gr}(k,n)))$. Knutson and Zinn-Justin constructed a vertex model that simultaneously computes both \textit{rational function representatives} for the $S_\lambda$ basis in  $H_{\widehat{T}}(T^*(\mrm{Gr}(k,n)))$ or in $K_{\widehat{T}}(T^*(\mrm{Gr}(k,n)))$, and the structure constants in these bases in terms of Knutson-Tao puzzles. The vertex model incorporates the representation theory of the quantum group $U_q(\widehat{\mathfrak{a}}_2)$, and the proof of the puzzle formula requires manipulating ``$R$-matrices" via the ``Yang-Baxter equation".

	\subsection{Formal group laws and generalized Schubert calculus}\label{subsection:generalized-cohomology-theories}
	
	A \textbf{formal group law} is a pair $(R,F)$, where $R$ is a commutative, unital ring and $F(x,y)=\sum_{i,j\geq 0}a_{i,j}x^iy^j$ is a formal power series in $R\llbracket x,y\rrbracket$, satisfying three axioms: \textit{unity} (i.e., $F(0,x)=x$), \textit{commutativity} (i.e., $F(x,y)=F(y,x)$), and \textit{associativity} (i.e., $F(x,F(y,z))=F(F(x,y),z)$). These axioms impose relations on the coefficients $a_{i,j}$. The unity axiom implies that $a_{0,0}=0$ and $a_{1,0}=a_{0,1}=1$. The commutativity axiom implies that $a_{i,j}=a_{j,i}$. The relations arising from associativity are very complicated. Exposition on formal group laws can be found in, for example, \cite{F68}. Apart from being very interesting objects in themselves, formal group laws carry deep geometric information. That is, a formal group law can be viewed as a combinatorial shadow of an algebraic invariant of smooth algebraic varieties known as an \textit{algebraic oriented cohomology theory.}

	An \textbf{(algebraic) oriented cohomology theory} (OCT) is a (contravariant) functor
	\[\mrm{h}^*\colon \left\{\text{categ. of smooth, separated, finite type $\mbb{C}$-schemes}\right\}\to \left\{\text{categ. of graded, commutative, unital rings}\right\}\]
	that satisfies various axioms generalizing the \textit{Eilenberg-Steenrod axioms} from algebraic topology. OCTs were defined by Levine and Morel in \cite{LM07}, and the concept of an OCT was inspired by the notion of a \textit{complex-oriented cohomology theory}, originally defined by Daniel Quillen in his paper \cite{Q71}. The \textit{Chow theory}, which sends a variety $X$ to its Chow ring $\mrm{CH}^*(X)$, and the \textit{Grothendieck $K^0$ functor} (which we will call ``$K$-theory"), which sends a variety $X$ to the Grothendieck ring of vector bundles $K^0(X)$ on $X$, are examples of OCTs. As this is the first time we are using the words ``Chow ring," we clarify that for any parabolic subgroup $P$, the singular cohomology ring of $G/P$ is isomorphic to the Chow ring of $G/P$ due to the stratification of $G/P$ given by (\ref{eqn:Bruhat-decomposition}). Therefore, Schubert calculus in the Chow ring $CH^*(G/P)$ of $G/P$ is no different than Schubert calculus in the singular cohomology ring $H^*(G/P)$. Every OCT comes with \textit{Chern classes}, which can be constructed using a method of Grothendieck. For a vector bundle of $\mcal{V}$ rank $n$ on variety $X$, the $i$-th Chern class $c_i(\mcal{V})$ resides in $\mrm{h}^i(X)$. Chern classes commute with pullbacks and satisfy the ``Whitney sum formula". 
	Every OCT $\mrm{h}^*$ is equipped with a formal group law, and the formal group law of an OCT is determined by the following property: given two line bundles $\mcal{L}_1,\mcal{L}_2$ on a smooth variety $X$, 
	\[c_1(\mcal{L}_1\otimes \mcal{L}_2)=F(c_1(\mcal{L}_1),c_1(\mcal{L}_2)),\]
	where $F$ is the formal group law of $\mrm{h}^*$. Chow rings gives rise to the \textbf{additive formal group law} $F(x,y)=x+y$ over $R=\mbb{Z}$, and $K$-theory gives rise to the \textbf{multiplicative formal group law} $F(x,y)=x+y-xy$ over $R=\mbb{Z}$. The \textit{universal} OCT is called \textit{algebraic cobordism} $\Omega^*$, and it has a unique map to any other oriented cohomology theory. The algebraic cobordism ring of a smooth variety $X$ can be constructed formally as the set of projective morphisms $Y\to X$, with $Y$ being smooth, quasi-projective, and irreducible, modulo certain relations. A geometric construction for algebraic cobordism was developed by Levine and Pandharipande in \cite{LP09} in terms of so-called "double-point degenerations". The formal group law of algebraic cobordism is the \textbf{universal formal group law} defined over the \textbf{Lazard ring}. 
	
	Our primary focus will be the formal group law $F_C(x,y)=x+y-\beta xy$ defined over $R=\mbb{Z}[\beta]$. We will hereby call the pair $(R,F_C)$ the \textbf{connective formal group law}. The \textit{universal OCT whose formal group law is the connective one} is called \textbf{algebraic connective $K$-theory} $CK^*$. A geometric construction for $CK^*$ was given by Cai in \cite{C08}, and by Dai and Levine in \cite{DL14}. A key property of the functor $\mrm{CK}^*$ is that for any smooth algebraic variety $X$, there are always natural ring homomorphisms $\mrm{CK}^*(X)\to CH^*(X)$ and $\mrm{CK}^*(X)\to K^0(X)$ induced by evaluating the parameter $\beta$ at $0$ and at $1$, respectively.
	
	In \cref{notation:Schubert} below, will note the different conventions for ``Schubert varieties" used in the literature, and explain how we will handle these different conventions in the current subsection and in this paper. The reasoning behind these choices is explained in \cref{rmk:reason-for-these-choices}.
	
	\begin{nota}\label{notation:Schubert}
		In the current subsection (\cref{subsection:generalized-cohomology-theories}), and \textit{only} in the current subsection, we will follow the conventions used in \cite{CPZ,CZZ3,CZZ2}, and define a \textit{Schubert variety} to be a $B$-orbit closure $\overline{BwP/P}$ in $G/P$. 
		Everywhere else in this paper, apart from the current subsection, we will follow the conventions used in \cite{KZJ17,KZJ21} and define a \textit{Schubert variety} to be a $B_-$-orbit closure $\overline{B_-wP/P}$ in $G/P$, which agrees with our definition of \textit{Schubert variety} in \cref{subsection:LR-rule}. 
	\end{nota}
	
	\begin{rmk}\label{rmk:reason-for-these-choices}
		The reasoning behind our choices of conventions in \cref{notation:Schubert} is as follows. In the current subsection, we will motivate the main results of this paper by citing many constructions from \cite{CPZ,CZZ3,CZZ2} regarding Schubert calculus in generalized cohomology rings of flag varieties. In those papers, a ``Schubert variety" is defined to be a $B$-orbit closure $\overline{BwP/P}$ in $G/P$. Although we expect that results analogous to those we cite from \cite{CPZ,CZZ3,CZZ2} should hold for the $B_-$-orbit closures $\overline{B_-wP/P}$ in $G/P$, those analogous statements do not seem to be written down explicitly in the literature. Therefore, solely in the current subsection, for the sake of matching the conventions used in those three papers, a ``Schubert variety" will be taken to mean a $B$-orbit closure $\overline{BwP/P}$ in $G/P$ rather than a $B_-$-orbit closure $\overline{B_-wP/P}$. The reason that we define a ``Schubert variety" to be a $B_-$-orbit closure $\overline{B_-wP/P}$ in $G/P$ everywhere else in this paper, apart from the current subsection, is because this paper directly generalizes results from \cite{KZJ17,KZJ21}, and these papers use the convention that a ``Schubert variety" is a $B_-$-orbit closure in $G/P$. For the rest of the current subsection, we will explain how \textit{Schubert calculus} can be defined for generalized cohomology rings of flag varieties, and we will describe the geometric and algebraic intuitions that led to our main results. Our main results are described in detail in \cref{subsection:research-results}. 
	\end{rmk}

	One definition of ``generalized Schubert calculus" is the study of Schubert classes and Littlewood-Richardson rules in ``generalized cohomology theories". The first systematic study of Schubert calculus for arbitrary complex-oriented cohomology theories was initiated by Bressler and Evens in \cite{BE90}, \cite{BE92}. 	Among their contributions was the construction of a \textit{Schubert class} in any ``complex-oriented cohomology theory".	To construct it, first we need the following notion: a pair of complete flags $(F_\bullet^{(1)},F_\bullet^{(2)})$ has \textbf{relative position $s_{i}:=(i,i+1)$} in $S_n$ if $F_\bullet^{(1)}=F_\bullet^{(2)}$ \textit{or} if the flags differ \textit{only in} their $i$-th vector spaces. We will begin by taking a sequence of simple transpositions $Q=(s_{i_1},\dotsc,s_{i_k})$ in $S_n$, and considering the set
	\begin{equation}\label{eqn:Bott-Samelson-Variety}\mcal{X}^Q:=\left\{(F_\bullet^{(0)},F_\bullet^{(1)},\dotsc,F_\bullet^{(k)})\in\mrm{Fl}_n^{k+1}\mid  F_\bullet^{(0)}=\text{base flag, and} (F_\bullet^{(j)},F_\bullet^{(j+1)}) \text{ has ``relative position" $s_{i_j}$} \right\}.
	\end{equation}
	The set $\mcal{X}^Q$ has the structure of a smooth, projective algebraic variety over $\mbb{C}$, and it is known as a \textbf{Bott-Samelson variety}. This variety has a $B$-equivariant map $\pi_Q\colon \mcal{X}^Q\to G/B$ given by taking the \textit{last flag}. For each $w\in S_n$, we will choose a \textit{reduced word} $I_w$ for $w$ (in other words, $I_w=(s_{i_1},\dotsc,s_{i_k})$ and $w=s_{i_1}\dotsb s_{i_k}$, and this product is \textit{reduced}). For any choice of $I_w$ the image of $\pi_{I_{w}}\colon \mcal{X}^{I_w}\to G/B$ is the Schubert variety $X^w:=\overline{BwB/B}$, following the conventions for the current subsection established in \cref{notation:Schubert}, and $\pi_{I_w}$ is in fact a resolution of singularities for $X^w$. Given any \textit{complex-oriented cohomology theory} $\mrm{h}^*$, the pushforward of the fundamental class $1$ of $\mrm{h}^*(\mcal{X}^{I_w})$ to $\mrm{h}^*(G/B)$ is called a \textbf{Schubert class} $\xi_{I_w}$. A result of Bressler and Evens is that $\xi_{I_w}$ is independent of the choice of reduced word $I_w$ \textit{if and only if} $F(x,y)=x+y-\beta xy$ for some $\beta\in \mrm{h}^*(\mrm{point})$. If $F(x,y)$ is \textit{not} of the form $x+y-\beta xy$, then the Schubert classes $\xi_{I_w}$ will \textit{depend} on the choices of reduced words $I_w$. If $\mrm{h}^*=H^*$, then $\xi_{I_w}=[X^w]\in H^*(G/B)$, and if $\mrm{h}^*=K^0$, then $\xi_{I_w}=[\mcal{O}_{X^w}]\in K^0(G/B)$.
	
	The study of Schubert calculus for general \textit{algebraic} OCTs was initiated independently by Calm\'{e}s, Petrov, and Zainoulline in \cite{CPZ}, and by Hornbostel and Kiritchenko in \cite{HK11}. As in the complex-oriented case, for any OCT $\mrm{h}^*$ the Schubert classes $\xi_{I_w}$ in $\mrm{h}^*(G/B)$ are defined to be the pushforwards of the fundamental classes of the Bott-Samelson resolutions in $\mrm{h}^*(\mcal{X}^{I_w})$, and moreover, the classes $\{\xi_{I_w}\}_{w\in S_n}$ form an $\mrm{h}^*(\mrm{point})$-basis for $\mrm{h}^*(G/B)$ (\cite[\S 13]{CPZ}). The Schubert classes are independent of the choices of reduced words $I_w$ \textit{if and only if} $F(x,y)=x+y-\beta xy$. In particular, the Schubert classes $\xi_{I_w}$ in $CK^*(G/B)$ are \textit{independent of the choices of reduced words $I_w$}, so we will use the notation $\xi_w:=\xi_{I_w}$.
	Under the natural ring homomorphisms $CK^*(G/B)\to \mrm{CH}^*(G/B)$ and $CK^*(G/B)\to K^0(G/B)$, the Schubert class $\xi_w$ is mapped to the ordinary Schubert class $[X^w]$ in $\mrm{CH}^*(G/B)$, and to the class $[\mcal{O}_{X^w}]$ in $K^0(G/B)$.

	There exists an \textit{equivariant} version of algebraic cobordism, which was constructed independently by Krishna in \cite{K12}, and by Heller and Malag\'{o}n-Lopez in \cite{HM13}. Their constructions incoporate the action of a smooth linear algebraic group on a scheme. The equivariant algebraic cobordism ring of a scheme is a module over the equivariant algebraic cobordism ring of a point. For $T\simeq (\mbb{C}^\times)^n$ a complex torus, the $T$-equivariant algebraic cobordism ring of a point $\Omega_T(\mrm{point})$ is a complete and Hausdorff topological ring, and this ring has an algebraic model that depends only on the character lattice $\Lambda:=\mrm{Hom}(T,\mbb{C}^\times)$ of the torus $T$ and the universal formal group law $(\mbb{L},F_U)$. The algebraic model is called the \textit{formal group algebra} $\mbb{L}\llbracket\Lambda\rrbracket_{F_U}$, and was constructed in \cite{CPZ} by Calm\'es, Petrov, and Zainoulline. One can replace the universal formal group law $(\mbb{L},F_U)$ defining $\mbb{L}\llbracket\Lambda\rrbracket_{F_U}$ by \textit{any} formal group law $(R,F)$ and obtain a new algebra $R\llbracket\Lambda\rrbracket_F$. Consider the subring $\mcal{F}(R,F)$ of $\bigoplus_{w\in W}R\llbracket \Lambda\rrbracket_F$ satisfying the so called \textit{GKM conditions}, i.e., the elements of $\mcal{F}(R,F)$ are sequences $(f_w)_{w\in W}$ such that $f_w-f_{s_\alpha w}$ is divisible by $x_\alpha$ in $R\llbracket \Lambda\rrbracket_F$, where $\alpha$ is a \textit{root} in $\Lambda$ (see \cref{dfn:GKM-conditions}). Notably, $\mcal{F}(\mbb{L},F_U)\simeq \Omega_{T}(G/B)$, and there is \textit{always} a ring homomorphism $\Omega_{T}(G/B)\to \mcal{F}(R,F)$ (see \cref{lem:surjective-restriction}). In \cref{rmk:inclusion of rings0}, we explain that there are inclusions of rings:
	\begin{equation}\label{eqn:inclusions}
		\mrm{CH}_T(G/B)\hookrightarrow \mcal{F}(\mbb{Z},F_A)\quad \text{and}\quad K_T(G/B)\hookrightarrow \mcal{F}(\mbb{Z},F_M).
	\end{equation}

	As in the non-equivariant case, if we choose a reduced word $I_w$ for each $w\in S_n$, then the Schubert classes $\xi_{I_w}$ in $\Omega_T(G/B)$ can be defined as the pushforwards of the Bott-Samelson resolutions $\mcal{X}^{I_w}\to G/B$ in $T$-equivariant algebraic cobordism (see \cite{CZZ3}). These Schubert classes form a basis for $\Omega_T(G/B)$ over $\Omega_T(\mrm{point})$, and the basis \textit{depends} on the choices of reduced words $I_w$. Under the ring homomorphism $\Omega_{T}(G/B)\to \mcal{F}(R,F)$, the image $\xi_{I_w}^{(R,F)}$ of the Schubert class $\xi_{I_w}$ in the ring $\mcal{F}(R,F)$ is \textit{independent} of the choice of reduced word $I_w$ for $w$ if and only if $F(x,y)=x+y-\beta xy$ for some $\beta\in R$. Thus, the Schubert basis $\left\{\xi_{I_w}^{(\mbb{Z}[\beta],F_C)}\right\}_{w\in W}$ in $\mcal{F}(\mbb{Z}[\beta],F_C)$ is independent of the choices of reduced words $I_w$, so we may as well use the notation $\xi_w:=\xi_{I_w}^{(\mbb{Z}[\beta],F_C)}$. There are natural maps from $\mcal{F}(\mbb{Z}[\beta],F_C)$ to both $\mcal{F}(\mbb{Z},F_A)$ and $\mcal{F}(\mbb{Z},F_M)$, which are induced by evaluating the parameter $\beta$ at $0$ and $1$, respectively. Under the $\beta=0$ (resp. $\beta=1$) evaluation, the equivariant ``connective" Schubert class $\xi_w$ in $\mcal{F}(\mbb{Z}[\beta],F_C)$ coincides with the usual equivariant Schubert class $[X^w]$ (resp. $[\mcal{O}_{X^w}]$) in $CH_T(G/B)$ (resp. $K_T(G/B)$). We record these evaluations in the diagram below:	
	\begin{equation}\label{eq:equivariant-ck}\begin{tikzcd}
			& \xi_\lambda \in \mcal{F}(\mbb{Z}[\beta],F_C)  \arrow [dl,rightsquigarrow, swap,"\beta \to 0"] \arrow[dr,rightsquigarrow, "\beta \to 1"] &\\
			\left[X_\lambda\right]\in CH_T(G/B)  & &\left[\mcal{O}_{X_\lambda}\right] \in K_T(G/B)
		\end{tikzcd}
	\end{equation}
	Notably, after the localization of $\mcal{F}(\mbb{Z}[\beta],F_C)$ at $\beta$, the equivariant Schubert classes in $\mcal{F}(\mbb{Z},F_C)$ can be identified with the $\beta$-\textbf{homogenizations} $\beta^{-\ell(w)}[\mcal{O}_{X^w}]$ of the equivariant Schubert classes in $K_T(G/B)\otimes_{\mbb{Z}}\mbb{Z}[\beta,\beta^{-1}]$. From this point of view, the basis of Schubert classes in $\mcal{F}(\mbb{Z}[\beta],F_C)$ is \textit{not} an interesting combinatorial object, as the structure constants in this basis are simply \textit{homogenizations} of the structure constants in the Schubert basis of $K_T(G/B)\otimes_{\mbb{Z}}\mbb{Z}[\beta,\beta^{-1}]$. 
	
	Recall that the Schubert classes in the connective $K$-ring $CK^*(G/B)$ specialize to the Schubert classes in $CH^*(G/B) $ and $K^0(G/B)$ by specializing the parameter $\beta$ to $0$ or $1$, respectively. The theory of \textit{equivariant algebraic connective $K$-theory} was developed by Karpenko and Merkurjev in \cite{KM22}. As the Schubert classes in $\mcal{F}(\mbb{Z}[\beta],F_C)$ specialize to the equivariant Schubert classes in $H_T^*(G/B)$ and $K_T(G/B)$ by specializing the parameter $\beta$ at $0$ or $1$, respectively, and as the inclusions (\ref{eqn:inclusions}) hold, we expect that $\mcal{F}(\mbb{Z}[\beta],F_C)$ contains the $T$-equivariant algebraic connective $K$-ring $CK_T(G/B)$ of $G/B$ as a subring, and that  $CK_T(G/B)$ is generated over $CK_T(\mrm{pt})$ by the equivariant Schubert classes $\xi_w$ in $\mcal{F}(\mbb{Z}[\beta],F_C)$. This intuition, as well as work in progress on the construction of a ``connective" motivic Chern transformation for equivariant algebraic connective $K$-theory, is explained in detail in \cref{rmk:Karpenko}.

	\subsection{Research results}\label{subsection:research-results}

	Recall that the cotangent bundle $T^*(G/P)$ is a smooth quasi-projective variety with the algebraic action of $\widehat{T}:=T\times \mbb{C}^*$, where the last $\mbb{C}^*$ factor acts by scaling cotangent fibres. In Subsection \ref{subsection:LR-rule}, we explained that the rings $CH_{\widehat{T}}(T^*(G/P))$ and $K_{\widehat{T}}(T^*(G/P))$ have natural bases indexed by $W^P$. The basis elements in  $CH_{\widehat{T}}(T^*(G/P))$ (resp.  $K_{\widehat{T}}(T^*(G/P))$) are called the \textit{SSM classes} (resp. \textit{mS classes}). In Subsection \ref{subsection:LR-rule}, we denoted the basis elements in both rings uniformly by $S_w$. Let $\widehat{\mcal{F}}(\mbb{Z}[\beta],F_C)$ be the subring of $\bigoplus_{w\in W^P}\mbb{Z}[\beta]\llbracket \Lambda\rrbracket_{F_C}\llbracket x_q\rrbracket$ satisfying the \textit{GKM conditions}; see \cref{dfn:GKM-conditions}.

	The result below is motivated by the diagram (\ref{eq:equivariant-ck}):
	\begin{theo}(\cref{subsection:second-construction})\label{theo:universal-connective}	
		There are elements $\mrm{S}_w$ in $ \widehat{\mcal{F}}(\mbb{Z}[\beta],F_C)$ that map to the corresponding elements $S_w$ in both $K_{\widehat{T}}(T^*(G/P))$ and $CH_{\widehat{T}}(T^*(G/P))$:
		\[\begin{tikzcd}
			&S_w \in \widehat{\mcal{F}}(\mbb{Z}[\beta],F_C) \arrow [dl,rightsquigarrow, swap,"\beta \to 0"] \arrow[dr,rightsquigarrow, "\beta \to 1"] &\\
			S_w\in CH_{\widehat{T}}(T^*(G/P))  & &S_w \in K_{\widehat{T}}(T^*(G/P))
		\end{tikzcd}\]
	\end{theo}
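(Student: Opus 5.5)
We will build the elements $\mrm{S}_w$ directly inside $\widehat{\mcal{F}}(\mbb{Z}[\beta],F_C)$, by $T$-fixed point localization. Both $CH_{\widehat{T}}(T^*(G/P))$ and $K_{\widehat{T}}(T^*(G/P))$ embed into $\bigoplus_{w\in W^P}$ of the fixed-point coefficient rings. For each $w$ we therefore write down a tuple $(\mrm{S}_w|_v)_{v\in W^P}$ and check three things: its coordinates lie in $\mbb{Z}[\beta]\llbracket\Lambda\rrbracket_{F_C}\llbracket x_q\rrbracket$, it satisfies the GKM conditions of \cref{dfn:GKM-conditions}, and it specializes correctly at $\beta=0$ and $\beta=1$.

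The tuple comes from a Demazure–Lusztig type recursion. In both theories the $S_w$ are produced from the class of the point $S_{\mrm{id}}$, localized only at $\mrm{id}$, by operators $\mathcal{T}_i$ built from $s_i$ and a divided difference. In $K$-theory the coefficients are rational in $e^{y}$ and $q$; in cohomology they are rational in $y$ and $q$.
\begin{enumerate}
\item Write both families of operators uniformly using the formal group law $F_C$. Replace $1-e^{-\alpha}$ by $x_\alpha$ and $1-q$ by $x_q$. The multiplicative structure $x+y-\beta xy$ recovers $K$-theory after rescaling at $\beta=1$ and collapses to the additive (cohomological) formulas at $\beta=0$.
\item Define $\mathcal{T}_i^{\beta}$ on $\bigoplus_v \mbb{Z}[\beta]\llbracket\Lambda\rrbracket_{F_C}\llbracket x_q\rrbracket$, localized at the $x_\alpha$.
\item Set $\mrm{S}_w=\mathcal{T}^\beta_{i_1}\cdots\mathcal{T}^\beta_{i_k}\mrm{S}_{\mrm{id}}$ for a reduced word. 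The braid relations for $\mathcal{T}_i^\beta$ follow because $F_C$ is of the form $x+y-\beta xy$; this is the same phenomenon as the Bressler–Evens independence result, and the check is a direct computation in rank two. For partial flags $G/P$, pass to minimal coset representatives as usual.
\end{enumerate}

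Compatibility with the specializations then follows from two facts. The maps $\beta\to 0$ and $\beta\to1$ are ring maps from $\mcal{F}(\mbb{Z}[\beta],F_C)$ to $\mcal{F}(\mbb{Z},F_A)$ and $\mcal{F}(\mbb{Z},F_M)$, and they intertwine $\mathcal{T}^\beta_i$ with the known recursions for SSM classes and for mS classes. The latter requires identifying $x_\alpha$ with $1-e^{-\alpha}$ and using the inclusions (\ref{eqn:inclusions}) in their $\widehat{T}$-versions. Uniqueness of the classes $S_w$ in each specialized theory, via the injectivity of localization, then gives $\mrm{S}_w\mapsto S_w$.

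The main obstacle is showing that $\mrm{S}_w$ genuinely lies in $\widehat{\mcal{F}}$, namely:
\begin{itemize}
\item its fixed-point values have no denominators, or at worst denominators that are units in the power series ring, so that they lie in $\mbb{Z}[\beta]\llbracket\Lambda\rrbracket_{F_C}\llbracket x_q\rrbracket$;
\item the differences $\mrm{S}_w|_v-\mrm{S}_w|_{s_\alpha v}$ are divisible by $x_\alpha$.
\end{itemize}

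I would attempt this by induction on $\ell(w)$. The operator $\mathcal{T}_i^\beta$ has the form $a\cdot s_i+b\cdot\frac{1-s_i}{x_{\alpha_i}}$, which preserves the GKM ring, as in the CZZ formal affine Demazure algebra. The remaining step is to check that the coefficients $a,b$ are power series: they involve quotients such as $x_q/x_{\alpha}$ or $F_C(x_q,x_\alpha)$-type expressions. If a denominator like $x_{\alpha}$ survives, I would fall back on the explicit product formula for $\mrm{S}_w|_v$ along a reduced word. That formula is a sum over subwords of products of factors, each a ratio of $F_C$-sums, and it can be checked factor by factor; the GKM divisibility is then verified on rank-one Levi pieces.
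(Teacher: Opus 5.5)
Your overall strategy matches the paper's. You build uniform Demazure--Lusztig operators from $F_C$, get the braid relations because $\kappa_{i,j}=0$ and $\kappa_i=\beta$ for $F_C$, prove integrality and the GKM conditions by induction on length, and then specialize. Your remark that an operator $a\,r_i+b\,\frac{1-r_i}{x_{\alpha_i}}$ with power-series $a,b$ preserves the GKM ring is exactly what \cref{prop:GKM} checks by hand. The gap is in your dictionary ``$1-e^{-\alpha}\mapsto x_\alpha$, $1-q\mapsto x_q$'', and it is not where you look for it. Put $u:=x_q+_{F_C}x_q$, the image of $1-q^2$, and use $\frac{1}{x_\alpha}+\frac{1}{x_{-\alpha}}=\beta$. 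Then the $K$-theoretic operator becomes simply $r_i+u\,\frac{1-r_i}{x_{\alpha_i}}$, so its coefficients are harmless. The denominators come instead from the Segre normalization of the starting class. The factor $\prod_\alpha\frac{1-e^{-\alpha}}{1-q^2e^{-\alpha}}$ becomes $\prod_\alpha\frac{x_\alpha}{x_\alpha+_{F_C}u}$. Since $x_\alpha+_{F_C}u$ has zero constant term, it is not a unit in $\mbb{Z}[\beta]\llbracket\Lambda\rrbracket_{F_C}\llbracket x_q\rrbracket$; at $\beta=0$ the entry is $\alpha/(\alpha+2\hbar)$, which is not a power series. These denominators are intrinsic to Segre classes; they are why SSM and mS classes only live in the localizations at $\hbar+\alpha$ and at $1-q^2e^{\alpha}$. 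No subword or factor-by-factor formula removes them, so your $S_w$ do not lie in $\widehat{\mcal{F}}(\mbb{Z}[\beta],F_C)$. What your construction does produce are Chern-type classes in $\widehat{\mcal F}$. But after inverting $\beta$ every $\beta$ cancels, and they are just the ordinary classes up to powers of $\beta$, which \cref{theo:non-trivial-Schubert} says the paper's classes are not.

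The missing idea is the paper's choice to let $x_q$ stand for $q$ itself rather than for $1-q$. It takes $\partial_i=(1-x_q^2)\delta_i+x_q^2r_i$ with $\delta_i=\frac{1}{x_{-\alpha_i}}+\frac{1}{x_{\alpha_i}}r_i$, and $S_{w_0}|_{w_0}=\prod_{\alpha\in\Sigma^+_\Theta}\frac{x_{-\alpha}}{1-x_q^2(1-x_{-\alpha})}$. These denominators have constant term $1$, so they are units. Hence the $S_w$ (and the $\mrm{St}_w$) have power-series entries, and the induction of \cref{prop:GKM} gives the GKM conditions. The same choice makes the deformation non-homogeneous. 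Under $x_\lambda\mapsto\beta^{-1}(1-e^{\lambda})$, $x_q\mapsto q$, one gets $\frac{\beta(1-q^2)}{1-e^{-\alpha_i}}+\frac{Q(\beta,q)-q^2e^{\alpha_i}}{1-e^{\alpha_i}}r_i$, which is the usual operator at $\beta=1$. The cost appears at $\beta=0$. With $x_q\mapsto\hbar$ one recovers the SSM recursion only after three adjustments (\cref{egg:H}):
\begin{enumerate}
\item replacing $\partial_i$ by $\partial_i/\hbar^2$;
\item rescaling by $(\hbar^2)^{\ell(w)}$;
\item reparametrizing $\bar\hbar=(\hbar^2-1)/\hbar^2$.
\end{enumerate}
So your claim that the $\beta\to0$ map directly intertwines the operators with the SSM recursion would also have to be modified.
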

	
	The next theorem follows from the fact that the element $S_w$ of \cref{theo:universal-connective} satisfy the GKM conditions:
	
	\begin{theo}(\cref{theo:geometric-connective-stable})\label{theo:geometric-cobordism}
		The elements $S_w\in \widehat{\mcal{F}}(\mbb{Z}[\beta],F_C)$ can be identified with quotients of canonical elements in a quotient of the $\widehat{T}$-equivariant algebraic cobordism ring of $T^*(G/P_\Theta)$.
	\end{theo}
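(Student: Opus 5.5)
The plan is to realize the equivariant cobordism side entirely algebraically, through the GKM description, and then show that the elements $S_w$ of \cref{theo:universal-connective} (written as quotients of elements in $\widehat{\mcal{F}}(\mbb{Z}[\beta],F_C)$) come from it. Since $T^*(G/P_\Theta)$ retracts $\widehat{T}$-equivariantly onto $G/P_\Theta$, restriction to the zero section identifies $\Omega_{\widehat{T}}(T^*(G/P_\Theta))$ with $\Omega_{\widehat{T}}(G/P_\Theta)$. By the result cited before \cref{lem:surjective-restriction}, this ring is $\mcal{F}(\mbb{L},F_U)$ with the extra variable $x_q$ adjoined, i.e.\ the GKM subring of $\bigoplus_{w\in W^P}\mbb{L}\llbracket\Lambda\rrbracket_{F_U}\llbracket x_q\rrbracket$.

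\cref{lem:surjective-restriction} gives a map $\Omega_{\widehat{T}}(T^*(G/P_\Theta))\to\widehat{\mcal{F}}(\mbb{Z}[\beta],F_C)$. It is induced by the classifying map $\mbb{L}\to\mbb{Z}[\beta]$ of the connective law, and it is surjective. The \emph{quotient of the cobordism ring} in the statement should be the image of this map. Equivalently, it is $\Omega_{\widehat{T}}(T^*(G/P_\Theta))\otimes_{\mbb{L}}\mbb{Z}[\beta]$ modulo the kernel of the restriction to fixed points.

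The key steps, in order, are as follows.

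First, write $S_w$ by localization: give its restrictions $S_w|_v$ to the fixed points $v\in W^P$ as explicit rational expressions in the connective Chern roots $x_\alpha$ and $x_q$. These come from the recursive construction in \cref{subsection:second-construction}, built from connective Demazure–Lusztig operators.

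Second, clear denominators. Multiply by a fixed element $D$, for example the connective Euler class $\prod_{\alpha}x_{-\alpha}$ of the appropriate bundle, or a product of factors $F_C(x_q,x_{-\alpha})$ coming from the cotangent weights at the fixed points. The resulting tuple $D\cdot S_w=(D|_v\,S_w|_v)_v$ then has entries in $\mbb{Z}[\beta]\llbracket\Lambda\rrbracket_{F_C}\llbracket x_q\rrbracket$. These are the ``canonical elements'', and they should have a geometric interpretation: the pushforward of the fundamental class of a Bott–Samelson-type resolution of the conormal variety or of the cell $X_w^\circ$. The denominator $D$ is itself the image of an element of the cobordism ring, namely a Chern class.

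Third, verify the GKM condition for $D\cdot S_w$. For each root $\alpha$ and each $v$, one must show
\[x_\alpha \mid D|_v\,S_w|_v-D|_{s_\alpha v}\,S_w|_{s_\alpha v}.\]
This can be done inductively on $\ell(w)$. The recursion $S_{ws_i}=\mathcal{T}_i S_w$ uses a connective Hecke-type operator $\mathcal{T}_i$, a combination of the identity and $s_i$ with coefficients like $\frac{F_C(x_q,\cdot)}{x_{\alpha_i}}$. The point is to show that $\mathcal{T}_i$ preserves the GKM subring. This follows the standard argument that push–pull (Demazure) operators $\frac{1-s_\alpha}{x_\alpha}$ preserve GKM sections. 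The extra ingredient is the connective identity $x_{-\alpha}=-x_\alpha/(1-\beta x_\alpha)$, which shows that $x_\alpha$ and $x_{-\alpha}$ differ by a unit, so divisibility by one is divisibility by the other. Once $D S_w\in\widehat{\mcal{F}}(\mbb{Z}[\beta],F_C)$, surjectivity lifts it to the cobordism ring. The same holds for $D$, and therefore $S_w=(DS_w)/D$ is a quotient of canonical elements.

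The main obstacle is the third step. In particular, I must check that the denominators appearing in $S_w|_v$ are exactly killed by the chosen $D$, and that the GKM difference at every root, not just the simple ones, is divisible by $x_\alpha$ despite $\beta$-corrections. I would first try to handle simple roots using the operator recursion. I would then reduce arbitrary roots to simple ones via $W$-equivariance, using that $w$ acting on $\mbb{Z}[\beta]\llbracket\Lambda\rrbracket_{F_C}$ sends $x_\alpha$ to $x_{w\alpha}$. The remaining step is to check on the $\beta=0,1$ specializations, where the classical SSM and mS GKM properties are known, for consistency.
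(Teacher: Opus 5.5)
Your skeleton follows the paper's: identify $\Omega_{\widehat T}(T^*(G/P_\Theta))$ with the GKM ring $\widehat{\mcal F}(\mbb L,F_U)$ via \cref{theo:localization}, prove by induction on length that the recursively defined classes satisfy the GKM conditions (this is \cref{prop:GKM}), and pass through the map $h_{(\mbb Z[\beta],F_C)}$ of \cref{lem:surjective-restriction}.

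\textbf{The gap is in the lifting step.} You assert that $h_{(\mbb Z[\beta],F_C)}\colon\widehat{\mcal F}(\mbb L,F_U)\to\widehat{\mcal F}(\mbb Z[\beta],F_C)$ is surjective and credit \cref{lem:surjective-restriction}. That lemma only shows the image lands in the GKM subring.
\begin{itemize}
\item Componentwise surjectivity of $\mbb L\llbracket\Lambda\rrbracket_{F_U}\llbracket x_q\rrbracket\to\mbb Z[\beta]\llbracket\Lambda\rrbracket_{F_C}\llbracket x_q\rrbracket$ does not suffice. A componentwise lift of a GKM tuple has $f_w-f_{s_\alpha w}$ divisible by $x_\alpha$ only modulo the kernel.
\item Correcting this for all roots simultaneously is a real freeness/base-change theorem for GKM rings. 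It plausibly holds in type $A$ via a Calm\`es--Zainoulline--Zhong description by push--pull classes, but you would have to supply it.
\end{itemize}
The paper never needs surjectivity. It fixes reduced words $I_w$ and runs the same recursion with the universal law to get $\mrm{St}^{(\mbb L,F_U)}_{I_w}$ (\cref{rmk:general-motivic-Chern-classes}). The same induction, using only that $x_{-\alpha}/x_\alpha$ is a unit, shows these are GKM, so they lie in $\widehat{\mcal F}(\mbb L,F_U)$. Since $h$ intertwines the operators $\partial_i$, one gets $h(\mrm{St}^{(\mbb L,F_U)}_{I_w})=\mrm{St}_w$. The quotient ring is then $\Omega_{\widehat T}(T^*(G/P_\Theta))/\ker h$, the canonical elements are the $\mrm{St}_w$, and $S_w=\mrm{St}_w/\kappa$. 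This explicit lift is also what gives ``canonical'' its content; with bare surjectivity, every element of $\widehat{\mcal F}(\mbb Z[\beta],F_C)$ would qualify.

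\textbf{Two smaller corrections.}
\begin{itemize}
\item \emph{The choice of $D$.} Your worry that $D$ must kill the denominators is moot. Apart from $x_{\pm\alpha_i}$, which the GKM induction handles, the only denominators are $1-x_q^2(1-x_\lambda)$. These have constant term $1$, so they are already units in $\mbb Z[\beta]\llbracket\Lambda\rrbracket_{F_C}\llbracket x_q\rrbracket$.
\begin{itemize}
\item The paper's $D$ is $\kappa=\prod_{\alpha\in\Sigma_\Theta^+}(1-x_q^2(1-x_{-\alpha}))$. It is chosen so that $\mrm{St}_{w_0}|_{w_0}=\prod x_{-\alpha}$, which is exactly the universal starting class.
\item Your candidates $\prod x_{-\alpha}$ and $F_C(x_q,x_{-\alpha})$ are not these factors: here $x_q$ stands in for $q$, not for a Chern class of a cotangent weight.
\item Whatever $D$ you use must be invariant under the twisted $W$-action so that it commutes with $\partial_i$.
\end{itemize}
\item \emph{Non-simple roots.} You cannot reduce them to simple roots by $W$-equivariance, because the classes are not $W$-invariant. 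The induction must carry all roots at once, as in the paper.
\begin{itemize}
\item Case I, $(j,k)\neq r_i$: conjugate $(j,k)$ by $r_i$ and use the hypothesis for $(j',k')$ at $r_i\mu$. Then cancel the factor $x_{-\alpha_i}$, using that $x_{y_j-y_k}$ and $x_{\alpha_i}$ are coprime (\cref{lem:does-not-divide}).
\item Case II, $(j,k)=r_i$: this needs $x_{\alpha_i}\mid D-r_i(D)$, which comes from \cite{CPZ}.
\end{itemize}
The $\beta=0,1$ comparison is only a consistency check, not part of the proof.
\end{itemize}
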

	
	In fact, we expect that there is an analogue of the motivic Chern natural transformation for equivariant algebraic connective $K$-theory, and that the elements $S_w\in \widehat{\mcal{F}}(\mbb{Z}[\beta],F_C)$ of \cref{theo:universal-connective} and \cref{theo:geometric-cobordism} can be realized ``geometrically" as the images of Schubert cells of $G/P$ in the $\widehat{T}$-equivariant algebraic connective $K$-ring of $T^*(G/P)$ under this natural transformation. See \cref{rmk:Karpenko} for details on work in progress in this direction, and see \cref{subsection:motivic-Chern-classes} for exposition on the motivic Chern natural transformation for the usual equivariant $K$-theory.
	
	We will call the elements $S_w$ in $\widehat{\mcal{F}}(\mbb{Z}[\beta],F_C)$ of Theorem \ref{theo:universal-connective} the \textbf{connective motivic Segre classes}. We saw in \cref{subsection:generalized-cohomology-theories} that Schubert calculus in ${\mcal{F}}(\mbb{Z}[\beta],F_C)$ boils down to Schubert calculus in $K_T(G/P)$, since the $T$-equivariant Schubert classes in $\mcal{F}(\mbb{Z}[\beta],F_C)$ can be identified with $\beta$-homogenizations of the $T$-equivariant Schubert classes in $K_T(G/P)$. It is natural to ask whether the \textit{connective} motivic Segre classes are identified with the $\beta$-\textit{homogenizations} of the motivic Segre classes $S_w$ in $K_{\widehat{T}}(T^*(G/P))\otimes_{\mbb{Z}}\mbb{Z}[\beta,\beta^{-1}]$. We address this below:
	\begin{lem}(\cref{egg:connective:beta})\label{theo:non-trivial-Schubert}
		The connective motivic Segre classes \underline{do \textit{not}} correspond to $\beta$-homogenizations of the motivic Segre classes in $K_{\widehat{T}}(T^*(G/P))\otimes_{\mbb{Z}}\mbb{Z}[\beta,\beta^{-1}]$. 
	\end{lem}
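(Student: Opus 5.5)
This is a non-existence statement, so I will prove it with one explicit counterexample. The smallest case is $G/P=\mathbb{P}^1=\mrm{Gr}(1,2)$, where $W^P=\{\mrm{id},s\}$ and there are two $\widehat{T}$-fixed points. I will compute the connective motivic Segre class $S_{s}$ (or $S_{\mrm{id}}$) of \cref{theo:universal-connective} as a tuple of localizations $(f_{\mrm{id}},f_s)$ in $\mbb{Z}[\beta]\llbracket\Lambda\rrbracket_{F_C}\llbracket x_q\rrbracket$. I will use the explicit construction of \cref{subsection:second-construction}, which for $\mathbb{P}^1$ reduces to a single Demazure–Lusztig type operator applied to the point class. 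Concretely, the localization at the fixed point of the cell should be a rational expression of the form
\[\frac{x_q\text{-dependent numerator}}{\text{product of }F_C\text{-formal roots}},\]
built from $x_\alpha$, $x_{-\alpha}=-x_\alpha/(1-\beta x_\alpha)$, and $x_q$, with the numerator involving expressions like $F_C(x_q,\pm x_\alpha)=x_q\pm x_\alpha\mp\beta x_qx_\alpha$.

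Next I make precise what a $\beta$-homogenization would be. I take the $K$-theoretic mS class $S_w\in K_{\widehat T}(T^*(G/P))$ in the variables $x_i=1-e^{-y_i}$ (and $x_q=1-q^{\pm1}$), as in the identification $K_T\hookrightarrow\mcal{F}(\mbb{Z},F_M)$ of \cref{rmk:inclusion of rings0}. Its homogenization replaces each $x$ by $\beta x$ and multiplies by a single overall power $\beta^{-m}$; this is exactly the rule that identified the connective Schubert classes $\xi_w$ with $\beta^{-\ell(w)}[\mcal{O}_{X^w}]$. A characterizing consequence is that a homogenized class is a homogeneous function of degree $-m$ in the joint grading with $\deg\beta=-1$ and $\deg x=1$. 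Moreover, its value at $\beta=0$ (when defined) is the lowest-degree part of the $K$-class.

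The contradiction then comes from comparing the $\beta\to0$ and $\beta\to1$ specializations guaranteed by \cref{theo:universal-connective}. If $S_w=\beta^{-m}\,\mathrm{mS}_w(\beta x)$, then at $\beta=1$ it is the mS class. For the $\beta\to0$ limit to be finite and equal to the SSM class, $m$ must equal the order of vanishing, and the SSM class would be the leading-order part of the mS class. I will show directly that this fails for $\mathbb{P}^1$. The mS class of the open cell has localization like $\frac{1-q}{1-e^{\alpha}}$-type factors, and its leading-order term in $(x_\alpha,x_q)$ is $x_q/x_\alpha$ with the wrong degree balance. Either no single power $\beta^{-m}$ gives a finite nonzero limit, or the limit is not the SSM class $\frac{\hbar}{\alpha}$-type expression with the correct inhomogeneous numerator, e.g. $\alpha+\hbar$ versus $\alpha$. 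A cleaner route, which I will try first, is to exhibit directly that $f_{\mrm{id}}$ of the connective class is \emph{not} homogeneous in $(\beta^{-1},x_\alpha,x_q)$. For instance, a term like $(1-\beta x_q)$ appearing in a numerator over $x_\alpha$ mixes degrees $0$ and $1$ relative to the leading term. That is already incompatible with any homogenization, and it requires no knowledge of $m$.

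The main obstacle is bookkeeping. I have to fix the precise normalization of the mS classes and the variable $x_q$ (whether $q$ or $q^{-1}$, and the factor $1-\beta x_q$) so that the explicit formula from \cref{subsection:second-construction} is computed correctly at one fixed point. Once that single localization is written out, the inhomogeneity check is immediate.
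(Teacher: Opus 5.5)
Your second route, which exhibits one explicit localization whose $\beta$-dependence is not an overall power of $\beta$, is essentially the paper's argument via Example~\ref{egg:connective:beta}. As written, however, the proposal has a real gap, and it lies in how you treat $x_q$.

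In the paper's correspondence (\ref{eqn:iso1}), only the torus variables are rescaled: $x_\lambda\mapsto\beta^{-1}(1-e^{\lambda})$, while $x_q\mapsto x_q$ and then $x_q=q$. So a $\beta$-homogenization means $\beta^{-m}S^{K}_w(e^{y},q)$, i.e.\ $\beta^{-m}S^K_w(1-\beta x_\lambda,x_q)$. This is homogeneous for $\deg\beta=-1$, $\deg x_\lambda=1$, $\deg x_q=0$. You instead set $x_q=1-q^{\pm1}$ and rescale it by $\beta$ as well, which is a different comparison from the one in the statement. For instance, the $\beta=1$ value of $S_{w_0}|_{w_0}=\frac{x_{-\alpha}}{1-x_q^2(1-x_{-\alpha})}$ then becomes $\frac{1-e^{-\alpha}}{1-(1-q)^2e^{-\alpha}}$.

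Worse, in your convention the first route cannot produce a contradiction, because the lowest-order part of the mS class \emph{is} the SSM class. For the open cell of $\mathbb{P}^1$, take $\frac{1-q^2}{1-q^2e^{-\alpha}}$ with $q=1-x_q$ and $e^{-\alpha}=1-x_\alpha$. Its leading part is $\frac{2x_q}{x_\alpha+2x_q}$, which is $\frac{\hbar}{\alpha+\hbar}$ with $\hbar=2x_q$. Likewise the point class gives $\frac{x_\alpha}{x_\alpha+2x_q}$. In the paper's convention, $\beta=0$ does not literally give the SSM class either: Example~\ref{egg:H} needs the factor $(\hbar^2)^{\ell}$ and the substitution $\bar\hbar=(\hbar^2-1)/\hbar^2$.

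In your second route, the proposed witness fails. The factor $1-\beta x_q$ has degree $0$ in your own grading, so it is homogeneous. The shape you guess (denominators that are products of $F_C$-roots, numerators like $F_C(x_q,\pm x_\alpha)$) is also not the paper's. The inhomogeneity actually comes from $x_q$ appearing with no $\beta$. It is already visible in the point class, so no operator needs to be applied. In $\mathbb{P}^1$, $S_{w_0}|_{w_0}=\frac{x_{-\alpha}}{1-x_q^2(1-x_{-\alpha})}$ maps under (\ref{eqn:iso1}) with $x_q=q$ to
\[\frac{1-e^{-\alpha}}{Q(\beta,q)-q^2e^{-\alpha}}=\frac{1-e^{-\alpha}}{q^2(1-e^{-\alpha})+\beta(1-q^2)}.\]
Suppose this equalled $\beta^{-m}\frac{1-e^{-\alpha}}{1-q^2e^{-\alpha}}$. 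Then $\beta^{m}(1-q^2e^{-\alpha})=q^2(1-e^{-\alpha})+\beta(1-q^2)$, which is impossible: the right side is affine in $\beta$ with both coefficients nonzero, while the left side is a single power of $\beta$. Equivalently, with $\deg x_q=0$ the denominator $1-x_q^2+x_q^2x_{-\alpha}$ has components in degrees $0$ and $1$. This is exactly the point of Example~\ref{egg:connective:beta}: $\beta$ enters only through $Q(\beta,q)=q^2+\beta-q^2\beta$, both in $S_{w_0}$ and in $\partial_i$, and never as an overall power.
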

	As a consequence of Lemma \ref{theo:non-trivial-Schubert}, Schubert calculus in $\widehat{\mcal{F}}(\mbb{Z}[\beta],F_C)$ is strictly more general (and more interesting) than Schubert calculus in $K_{\widehat{T}}(T^*(G/P))$. One might now ask whether there are rational function 
	representatives for the connective motivic Segre classes, and whether there is a combinatorial formula for the structure constants for this basis in terms of Knutson-Tao puzzles. We address this in the case where $G/P=\mrm{Gr}(k,n)$:
	\begin{prop}(\cref{prop:charact})
		Assume $G/P=\mrm{Gr}(k,n)$.	There is a solvable lattice model that defines rational function representatives for the connective motivic Segre classes in $\widehat{\mcal{F}}(\mbb{Z}[\beta],F_C)$.
	\end{prop}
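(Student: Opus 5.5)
Following Knutson and Zinn-Justin \cite{KZJ17,KZJ21}, I will build the lattice model from a trigonometric-type $R$-matrix whose spectral parameter is combined through the connective formal group law $F_C(x,y)=x+y-\beta xy$ rather than additively or multiplicatively. The model is a rectangular grid with $k$ rows (one per occupied position of $\lambda\in\Gamma_{k,n}$) and $n$ columns. Column $j$ carries the equivariant parameter $y_j$, and rows carry spectral parameters $z_1,\dots,z_k$. Edges are labeled $0$ or $1$, and the boundary is fixed by $\lambda$. Each vertex gets a Boltzmann weight that is a rational function in $x=F_C(z_a,\ominus y_j)$ (with $\ominus$ the formal inverse $\ominus y=-y/(1-\beta y)$) and in the $q$-variable $x_q$. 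I will take these weights to be the six-vertex weights of \cite{KZJ21}, written so that at $\beta=1$ they are exactly the $K$-theoretic mS weights and as $\beta\to0$ they degenerate to the SSM weights. The partition function $Z_\lambda(z;y)$, symmetrized over the $z$'s, is the candidate representative.

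\textbf{Solvability.} Weights depending only on the formal difference $F_C(z,\ominus y)$ satisfy the Yang--Baxter equation with an $R$-matrix of the same form. The reason is that $F_C$ is isomorphic to the multiplicative group law over $\mathbb{Z}[\beta,\beta^{-1}]$ via $u=1-\beta x$, so the $\beta=1$ Yang--Baxter identity transfers after the substitution $e^{-x}\mapsto 1-\beta x$. Since the identity is polynomial in $\beta$, it extends to all $\beta$. From this I obtain symmetry of $Z_\lambda$ in the $z$'s by the train argument, so $Z_\lambda$ descends to a function on $T$-fixed points.

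\textbf{Identification with $S_\lambda$.} I will restrict $Z_\lambda$ to each fixed point $\mu\in\Gamma_{k,n}$, which means substituting $z_a=y_{\mu_a}$. I then check that the restrictions $(Z_\lambda|_\mu)_\mu$ agree with the localizations of the class $\mathrm{S}_\lambda\in\widehat{\mathcal F}(\mathbb{Z}[\beta],F_C)$ constructed in \cref{subsection:second-construction}. The plan has three parts:
\begin{enumerate}
\item Show triangularity: the restriction vanishes unless $\mu\ge\lambda$, because the lattice paths are forced.
\item Compute the diagonal restriction explicitly as a product over inversions, and match it with the normalization of $\mathrm{S}_\lambda$.
\item Show that the $Z_\lambda$ satisfy the same recursion under Demazure--Lusztig-type operators. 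These are built from $F_C$, with $x_\alpha$ replaced by $F_C(y_i,\ominus y_j)$, and they are what define $\mathrm{S}_\lambda$. In the lattice this recursion is realized by attaching an $R$-matrix to columns $i,i+1$ and using Yang--Baxter to move it through.
\end{enumerate}
Uniqueness then follows because triangularity together with the recursion determines the class. Running the recursion from $\lambda=\mathrm{id}$ would also suffice.

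\textbf{Main obstacle.} I expect the hardest step to be step 3, matching the connective Demazure--Lusztig operator with the local lattice exchange relation, because the $q$-dependence must be $\beta$-deformed consistently. Lemma \ref{theo:non-trivial-Schubert} shows this is not a mere homogenization, so the $x_q$-weights must involve $F_C$ nontrivially. My first attempt would be to write the $R$-matrix entries as functions of $F_C(z,\ominus y)$ and $x_q$, impose Yang--Baxter and the exchange relation, and solve for the unknown entries. I would then verify that the $\beta\to0$ and $\beta=1$ limits recover the weights of \cite{KZJ17,KZJ21}.
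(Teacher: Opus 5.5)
There is a genuine gap. Your skeleton matches the one the paper takes from \cite{KZJ17,KZJ21}: a solvable $R$-matrix, a symmetric partition function, and fixed-point restrictions satisfying triangularity, diagonal values and an exchange relation that is matched with the Demazure--Lusztig recursion. But you never produce the Boltzmann weights, and your solvability argument does not apply to the weights that are actually needed.

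Transporting the $\beta=1$ matrix along $u=1-\beta x$ only reparametrizes the spectral variable. With $x_q$ left alone, this just rewrites the ordinary mS weights in connective coordinates. As $\beta\to 0$ it collapses to $\widehat R(1)=\mathrm{id}$ (because $z=1-\beta x\to 1$), not to the rational SSM matrix.

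The classes you must represent are defined by $\partial_i=(1-x_q^2)\delta_i+x_q^2r_i$ and $S_{w_0}|_{w_0}=\prod_{\alpha\in\Sigma_\Theta^+}x_{-\alpha}/(1-x_q^2(1-x_{-\alpha}))$. Their exchange relation forces weights such as $\tfrac{x_q x}{1-x_q^2(1-x)}$ and $\tfrac{x_q Q(\beta,x_q)\,x}{1-x_q^2(1-x)}$. In multiplicative variables these have denominators $Q(\beta,q)-q^2z$, with $Q=q^2+\beta-q^2\beta$. No substitution in the spectral variable produces these from the $\beta=1$ denominators $1-q^2z$.

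Your last paragraph in effect admits this (``solve for the unknown entries''), which contradicts your solvability paragraph. It also leaves unproved both the existence of suitable weights and their Yang--Baxter property, and that is the whole content of the statement.

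The paper supplies exactly this. It writes down the matrix (\ref{eqn:latticemodel}), whose entries are the coefficients forced by $\tfrac1q\partial_i$. It then verifies Yang--Baxter, unitarity and $\widehat R(\beta,1)=\mathrm{id}$ directly, by computer (\cref{prop:single}). The arguments of \cite{KZJ21} then give \cref{prop:charact}, and \cref{prop:divided-difference-recursion} matches the exchange relation with $\partial_i$. Finally, (\ref{eqn:iso1}) together with \cref{lem:injects-into-localization} moves everything into the connective ring, where the weights contain no $\beta^{-1}$.

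Your transfer idea can be rescued, but only if you also deform $q$. Put $\tilde q^2=q^2/Q$. Then $\beta(1-q^2)=Q(1-\tilde q^2)$ and $Q-q^2z=Q(1-\tilde q^2z)$. So (\ref{eqn:latticemodel}) is the $\beta=1$ matrix evaluated at $\tilde q$, except that its two $(1-z)$-entries are rescaled by $Q^{1/2}$ and $Q^{-1/2}$. That rescaling is conjugation by a constant diagonal twist $Y\otimes Y^{-1}$. Since the matrix conserves the number of $1$'s, such a conjugation preserves Yang--Baxter, unitarity and the value at $z=1$. This reparametrization of $q$, together with the twist, is the idea your solvability step is missing.
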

	\begin{theo}(\cref{thm:main}, \cref{rmk:positive}, \cref{thm:main2})
		Assume $G/P=\mrm{Gr}(k,n)$. There is a combinatorial formula for the structure constants in the basis of connective motivic Segre classes in $\widehat{\mcal{F}}(\mbb{Z}[\beta],F_C)$ given in terms of Knutson-Tao puzzles.
	\end{theo}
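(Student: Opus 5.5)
The plan follows the Zinn-Justin / Wheeler--Zinn-Justin / Knutson--Zinn-Justin strategy, now with the connective formal group law $F_C(x,y)=x+y-\beta xy$ built into the spectral parameters. Throughout, write $x\ominus y$ for the formal difference determined by $F_C$.

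\emph{Step 1: the $\beta$-deformed $R$-matrix.} I work with the multi-parameter quantum group of type $\widehat{a}_2$ and its three fundamental-type evaluation representations, with basis labels $0,1,10$ and their duals. I take as input the solvable lattice model of \cref{prop:charact}: its partition functions give the rational function representatives $S_\lambda$ for $\lambda\in\Gamma_{k,n}$. In that model the rhombus weights are rational functions of $x_i\ominus y_j$ and of $x_q$.

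Next I write down the $R$-matrix $\check R(z)$ intertwining $V_A(z_1)\otimes V_B(z_2)$ with its swap. Here the spectral parameter is $z=z_1\ominus z_2$. I then check the Yang--Baxter equation with spectral parameters combined via $F_C$. This is essentially a rewriting of the trigonometric $U_q(\widehat{\mathfrak a}_2)$ $R$-matrix of \cite{KZJ21}: under $e^{-y}\mapsto 1-\beta y$, multiplicative differences become $F_C$-differences. The extra multi-parameter twist is needed so that $\beta$ appears homogeneously.

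\emph{Step 2: the puzzle pieces.} I also need the trivalent intertwiner (the "puzzle vertex") $V_A(z)\otimes V_B(\omega z)\to V_C(\omega^2 z)$. In the connective setting, $\omega$ becomes a formal shift by $x_q$ under $F_C$. The upward and downward triangles of (\ref{eqn:basic-tiles}), together with their $10$-labelled companions, are then read off from the nonzero matrix entries of this intertwiner. The 15 rhombus weights are read off from the entries of $\check R$ at $z=x_i\ominus y_j$.

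\emph{Step 3: the bootstrap identity.} I attach the product of two lattice-model rows (encoding $S_\lambda S_\mu$) to a triangle of trivalent vertices. I then use the Yang--Baxter equation repeatedly to push the $R$-matrix lines through the triangle, and use the "unitarity" and "bootstrap" relation $\check R\circ(\text{vertex})=(\text{vertex})\circ\check R\check R$ to move the triangle to the opposite side.

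Taking the matrix coefficient against the basis indexed by $\nu$ gives
\[
S_\lambda S_\mu=\sum_\nu\Big(\sum_{P}\mathrm{wt}(P)\Big)S_\nu ,
\]
where $P$ ranges over puzzles with boundary $\lambda,\mu,\nu$ and $\mathrm{wt}(P)$ is the product of the rhombus weights. This identity holds among the rational representatives. To transfer it to $\widehat{\mcal F}(\mbb Z[\beta],F_C)$, I restrict to fixed points, i.e. substitute $x=y_{\sigma}$. The GKM description of \cref{theo:geometric-cobordism} then shows the identity holds in the ring.

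\emph{Step 4: consistency checks.} I check that setting $\beta=1$ recovers the puzzles of \cite{KZJ21}, and that $\beta=0$ recovers the SSM puzzles.

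\emph{Main obstacle.} The hardest step will be the bootstrap relation with $F_C$-shifted parameters. The $\beta$-deformation is not merely a homogenization (\cref{theo:non-trivial-Schubert}), so the cube-root-of-unity trick does not transfer verbatim. My first attempt will be to verify the bootstrap equation entrywise, treating $\beta$ and $x_q$ as independent parameters of the multi-parameter quantum group. If that fails, I will instead choose the vertex weights by solving the bootstrap equation directly, and then check that the resulting weights are nonzero on exactly the standard puzzle pieces.
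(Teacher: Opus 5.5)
Your integrability core (deformed $R$-matrices, trivalent $U$/$D$ vertices, Yang--Baxter, bootstrap and unitarity, then the argument of \cite[Theorem 3.8]{KZJ21}) is the same engine the paper uses for \cref{thm:main}. However, your Steps 1--2 parametrize it in a way that is incompatible with the lattice model of \cref{prop:charact} that you take as input.

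\emph{The fusion shift.} In that model $q=x_q$ enters multiplicatively, through $x_q^2$ and $Q(\beta,x_q)=\beta+x_q^2-\beta x_q^2$. The bootstrap equations of \cref{prop:ybe} hold for green, red and blue lines carrying the \emph{multiplicative} parameters $qz$, $q^{-1}z$, $z$. Write a spectral parameter as $z=1-\beta u$. An $F_C$-translation by $x_q$ then multiplies $z$ by $1-\beta x_q$; already at $\beta=1$ this is $1-q$, not $q$. So your vertices sit at the wrong fusion ratio, and the bootstrap equations fail against these $R$-matrices. Re-solving for vertex weights at that ratio should not be expected to help, since the intertwiner exists only at the special ratio where the relevant $R$-matrix degenerates. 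The correct shift corresponds to an $F_C$-translation by $(1-x_q)/\beta$, which does not exist in $\mbb{Z}[\beta]\llbracket\Lambda\rrbracket_{F_C}\llbracket x_q\rrbracket$.

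\emph{The $R$-matrix.} For the same reason, the deformed $R$-matrix is not the \cite{KZJ21} one rewritten via $e^{-y}\mapsto 1-\beta y$. That substitution leaves $q$ untouched and cannot produce $Q(\beta,q)$. In connective variables the weights read, e.g., $\frac{1-x_q^2}{1-x_q^2(1-x_\lambda)}$, with $1-x_\lambda$ rather than the group-like $1-\beta x_\lambda$. You could make Steps 1--2 consistent by also setting $q=1-\beta x_q$; everything would then be inherited from \cite{KZJ21}. But you would be multiplying the ordinary motivic Segre classes written in connective coordinates, not the classes $S_\lambda$ built from $\partial_i=(1-x_q^2)\delta_i+x_q^2r_i$ (cf.\ \cref{theo:non-trivial-Schubert}).

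\emph{Homogenization.} The puzzle sum computes the structure constants of $x_q^{\ell(\lambda)}S_\lambda$, not of $S_\lambda$; see (\ref{eq:relationship}).

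The second gap is the transfer to the connective ring, and here the GKM description plays no role. It characterizes which tuples come from $\Omega_{\widehat{T}}$. An identity $S_\lambda S_\mu=\sum_\nu c_{\lambda,\mu}^\nu S_\nu$ in a direct sum is checked fixed point by fixed point anyway. Moreover, \cref{prop:GKM} concerns $\mrm{St}_\lambda$ and feeds only \cref{theo:geometric-connective-stable}. Two things are actually missing. First, a reason why an identity proved where the bootstrap makes sense (with $\beta$ inverted and multiplicative spectral parameters) holds in $\mbb{Z}[\beta]\llbracket\Lambda\rrbracket_{F_C}\llbracket x_q\rrbracket$. Second, a reason why the lattice model of \cref{prop:charact} represents the connective classes at all: by \cref{prop:divided-difference-recursion} it represents $q^{\ell(\lambda)}S_\lambda$ over $K_{\widehat{T}}(\mrm{pt})[\beta,\beta^{-1}]$.

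The paper supplies both in \cref{thm:main2}. Under the localization-at-$\beta$ map $x_\lambda\mapsto\beta^{-1}(1-e^\lambda)$, $x_q\mapsto q$ of (\ref{eqn:iso1}), the connective classes and connective fugacities go exactly to the $\beta$-deformed classes and fugacities of \cref{thm:main} (\cref{egg:connective:beta}). This map is injective because the formal group algebra is a domain (\cref{lem:injects-into-localization}) and $K_T(\mrm{pt})\hookrightarrow\widehat{K}_T(\mrm{pt})$ (\cref{lem:inclusion of rings}). An injectivity statement of this kind, or an embedding into a common fraction field, is what closes the argument; GKM does not.
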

	
	The puzzle formula of Knutson and Zinn-Justin in \cite{KZJ17}, \cite{KZJ21} computes the structure constants for the motivic Segre classes in $K_{\widehat{T}}(T^*(\mrm{Gr}(k,n)))$. To prove the formula, the authors constructed a lattice model from the weights of the puzzle pieces, and this lattice model simultaneously computes the structure constants and the rational function representatives for the motivic Segre classes in $K_{\widehat{T}}(T^*(\mrm{Gr}(k,n)))$. The weights of the puzzle pieces were arranged into a matrix, and the authors showed that the matrix of puzzle weights is an $R$-matrix for an evaluation representation of the quantum affine algebra $U_q(\widehat{\mathfrak{a}}_2)$ (see Appendix \ref{section:quantum-group}). The $R$-matrices of $U_q(\widehat{\mathfrak{a}}_2)$ satisfy matrix equations called ``Yang-Baxter equations", which allow one to equate different products of $R$-matrices. The utility of the Yang-Baxter equations is that the left-hand-side and right-hand-side of Equation (\ref{eqn:stable-constants}) in $\widehat{K}_{\widehat{T}}(T^*(\mrm{Gr}(k,n)))$ could be equated through a sequence of moves arising from the Yang-Baxter equations:
	\begin{equation}\label{eqn:stable-constants}
		[S_\lambda]\cdot [S_\mu]=\sum_{\nu\in\Gamma_{k,n}}(c_{\lambda,\mu}^\nu)^{K_{\widehat{T}}}\cdot  [S_\nu],\quad (c_{\lambda,\mu}^\nu)^{K_{\widehat{T}}}\in \mrm{Frac}({K}_{\widehat{T}}(\mrm{point}))
	\end{equation}
	We highlight this deep connection:
	\[\{\text{Puzzle formula in $\widehat{K}_{\widehat{T}}(T^*(\mrm{Gr}(k,n))$}\}\leftrightarrow \{\text{Rep. theory of the quantum affine algebra of type $\hat{\mathfrak{a}}_2$}\}\]
	This leads to a natural question: is there a quantum group attached to the proof of Theorem \ref{thm:main}? Indeed, the proof of the puzzle formula of Theorem \ref{thm:main} relies heavily (and unexpectedly) on the representation theory of the \textit{multi-parameter quantum group} of type $\widehat{\mathfrak{a}}_2$ (see \cite[Definition 7]{PHR10}):
	\begin{theo}(\cref{theo:R-representations})
		The weights of the puzzle pieces that define the puzzle rule in Theorem \ref{thm:main} can be arranged into an $R$-matrix for the multi-parameter quantum group of type $\widehat{\mathfrak{a}}_2$ of \cite[Definition 7]{PHR10}.
	\end{theo}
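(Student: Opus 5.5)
We would prove the statement by writing down, for the connective formal group law, the $9\times 9$ matrix of puzzle-piece weights in the $\widehat{\mathfrak{a}}_2$ setting, and comparing it with the universal $R$-matrix of the multi-parameter quantum group of \cite[Definition 7]{PHR10}, evaluated on a tensor product of two evaluation representations.

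\textbf{Step 1: arrange the weights.} The edge labels $0$, $1$, $10$ span a three-dimensional space $\mbb{C}^3$, which carries the vector representation of $U(\mathfrak{sl}_3)$. The weights of the rhombus puzzle pieces in Theorem \ref{thm:main} are functions of the spectral parameters $y_i,y_j$ and of $q,\beta$. We arrange them into an operator $\check R(y_i,y_j)$ on $\mbb{C}^3\otimes\mbb{C}^3$. This operator is block diagonal with respect to weight spaces, because the pieces conserve the $\mathfrak{sl}_3$-charge: the blocks have sizes $1,1,1,2,2,2$. Its entries are expressed through the connective differences $F_C(y_i,-_{F_C}y_j)=(y_i-y_j)/(1-\beta y_j)$ and the analogous expressions involving $x_q$.

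\textbf{Step 2: build the evaluation modules.} Next we recall the Chevalley generators $e_i,f_i,\omega_i,\omega_i'$ ($i=0,1,2$) of the two-parameter (more generally, multi-parameter) quantum affine algebra $U_{r,s}(\widehat{\mathfrak{a}}_2)$. Following the two-parameter analogue of Jimbo's construction, we write down an evaluation representation $\pi_z$ on $\mbb{C}^3$, with $e_1,e_2$ and $e_0$ acting by elementary matrices scaled by powers of $z$. The group-like generators act by diagonal matrices in $r,s$. The key choice is to match the parameters: we set $r,s$ in terms of $q$ and $\beta$ so that the ratio $r/s$ recovers the $q$ of \cite{KZJ17,KZJ21} when $\beta=1$. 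We also choose a reparametrization $z=z(y)$ under which the multiplicative structure of $z_i/z_j$ becomes the connective-FGL subtraction.

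\textbf{Step 3: verify the intertwining relation.} We must show that
\[
\check R(z_i/z_j)\,(\pi_{z_i}\otimes\pi_{z_j})(\Delta X)=(\pi_{z_j}\otimes\pi_{z_i})(\Delta X)\,\check R(z_i/z_j)
\]
holds for all generators $X$. Since the operator is block diagonal, it suffices to check this on the weight blocks, and only the $2\times2$ blocks give nontrivial equations. Each generator is checked separately:
\begin{itemize}
\item The Cartan-type generators commute automatically with $\check R$, by weight conservation.
\item For $e_1,e_2,e_0$, the check reduces to a handful of linear relations among the entries of each $2\times2$ block.
\end{itemize}
Since the intertwiner between irreducible tensor products is unique up to scalar, verifying these relations identifies the puzzle matrix as an $R$-matrix. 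This uniqueness follows from generic irreducibility of $\pi_{z_i}\otimes\pi_{z_j}$. As an additional sanity check, we confirm that the specializations $\beta=0$ and $\beta=1$ recover the $R$-matrices of \cite{KZJ17,KZJ21}.

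\textbf{Main obstacle.} The hardest step is Step 2: finding the correct parameter identification. The asymmetry $r\neq s$ must absorb the $\beta$-dependence coming from $F_C$, for example a factor $(1-\beta y_j)$ breaking the symmetry between $i$ and $j$. I would first try $r=q$, $s=q^{-1}(1-\beta\cdot\,\cdot)$-type substitutions guided by the off-diagonal entries of the $2\times2$ blocks. After that, I would solve the $e_0$ relation for the unknown twist, possibly allowing a diagonal gauge transformation of $\check R$.
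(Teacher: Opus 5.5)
\textbf{There is a genuine gap in Steps 2--3.} You try to realize the rhombus-weight matrix as an intertwiner $\pi_{z_i}\otimes\pi_{z_j}\to\pi_{z_j}\otimes\pi_{z_i}$ for a \emph{single} evaluation representation $\pi_z$. For $\beta\neq1$ this is impossible. No choice of $r,s$, no relabelling of $0,1,10$, and no diagonal gauge applied consistently to each strand can fix it.

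Here is why. The labels are weight vectors with generic distinct weights, so such an intertwiner preserves the weight spaces of $\pi\otimes\pi$. Hence your three $1\times1$ blocks must be the lines spanned by $v_a\otimes v_a$. Suppose $E_\ell v_b=z^m v_a$ and $K_\ell^{(1)}$ acts on $v_a,v_b$ by $\kappa_a,\kappa_b$. Then $\Delta(E_\ell)^2(v_b\otimes v_b)=(z_1z_2)^m(\kappa_a+\kappa_b)\,v_a\otimes v_a$ in \emph{both} orders of the tensor factors. This scalar is nonzero, since $\kappa_a=q_{\ell,\ell}\kappa_b$. So the intertwiner must act by the same scalar on $v_a\otimes v_a$ and on $v_b\otimes v_b$. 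Since $E_0,E_1,E_2$ link all three labels, all three scalars coincide.

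But the $1\times1$ blocks of the puzzle matrix $R_{g,r}$ are $f_{1,0}^{0,1}(z)=\frac{q(1-z)}{Q(\beta,q)-q^2z}$ and $f_{0,10}^{10,0}(z)=f_{10,1}^{1,10}(z)=\frac{qQ(\beta,q)(1-z)}{Q(\beta,q)-q^2z}$. These differ by the factor $Q(\beta,q)$. This also shows that your remark that only the $2\times2$ blocks give nontrivial equations is wrong: $E_\ell$ shifts weights, so the relations couple different blocks, and that is exactly where the obstruction appears.

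\textbf{The missing idea} is that the two strands of a rhombus carry genuinely different representations. The paper uses three $3$-dimensional representations $\rho_g^z,\rho_r^z,\rho_b^z$ of $U_q(\beta,\widehat{a}_2)$, one for each puzzle direction. Here $\rho_r$ is $\rho_g$ twisted by the cyclic rotation of the affine Dynkin diagram, and $\rho_b$ is of dual type. These are non-isomorphic once $Q(\beta,q)\neq1$. For example, the second basis vector of $\rho_g$ has $(K_0^{(1)},K_1^{(1)},K_2^{(1)})$-weight $(1,q^{-1},q)$, whereas the only weight of $\rho_r$ with first coordinate $1$ is $(1,Q/q,q/Q)$.

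The theorem is then a finite direct check, done in SageMath. It shows that $\overline{R}_{g,r}(\beta,z_2/z_1)=R_{g,r}(\beta,z_1/(q^4z_2))$ intertwines $\rho_g^{z_1}\otimes\rho_r^{z_2}$ with $\rho_r^{z_2}\otimes\rho_g^{z_1}$. The same is checked for the other mixed-colour matrices and for the fusion maps $U(\beta)$, $D(\beta)$. No universal $R$-matrix and no irreducibility or uniqueness argument is needed.

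\textbf{Your parameter diagnosis is also off.}
\begin{itemize}
\item All the $\beta$-dependence of the weights goes through the \emph{constant} $Q(\beta,q)$; note $\beta(1-q^2)=Q(\beta,q)-q^2$. The spectral parameter stays multiplicative.
\item The algebra is specialized to $q_{i,i}=q^2/Q$, $q_{i,i+1}=Q/q$, $q_{i+1,i}=q^{-1}$ (indices mod $3$). Up to conventions this is $r=q$, $s=Q(\beta,q)/q$.
\item Spectral-dependent factors such as $(1-\beta y_j)$ cannot be absorbed into the constants $r,s$.
\item The $\beta=0$ sanity check is unavailable. There $q_{i,i}=1$, so $[E_i,F_i]=\frac{q_{i,i}}{q_{i,i}-1}\bigl(K_i^{(1)}-(K_i^{(2)})^{-1}\bigr)$ has a pole and the algebra degenerates. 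The cohomological limit only appears after the additive reparametrization through the connective formal group law.
\end{itemize}
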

	
	\subsection{Organization}\label{subsection:organization} In \cref{section:preliminaries}, we establish notation that will be used throughout this paper, and we recall and summarize various notions such as $d$-step flag varieties and their cotangent bundles, Chern-Schwartz-MacPherson classes, motivic Chern classes, divided difference operators, equivariant localization, and Knutson-Tao puzzles. In \cref{section:deformation}, we define the $\beta$-deformations of the motivic Segre classes of Schubert cells that will be our main object of study, \textit{in the special case that $\beta$ is nonzero}. (The uniform construction of these classes for all $\beta$ will be handled in \cref{section:interpolating}). In \cref{section:solvable-lattice-model}, we define a solvable lattice model that generates rational functions which represent our $\beta$-deformed motivic Segre classes in the special case when $d=1$. When $\beta=1$, our lattice model specializes to the one considered in \cite{KZJ17,KZJ21}. In \cref{section:puzzle-formula}, we prove a \textit{positive} puzzle formula for the structure constants in the basis of our $\beta$-deformed classes when $d=1$, using the technique of \cite{KZJ17,KZJ21}. Our proof involves $R$, $U$, and $D$ matrices involving a parameter $\beta$ that specialize to the matrices considered in \cite{KZJ17,KZJ21} when $\beta=1$. In \cref{section:multi-parameter}, we show that our $\beta$-deformed $R$, $U$, and $D$ matrices are intertwiners for evaluation representations of the multi-parameter quantum group of type $\widehat{\mathfrak{a}}_2$. In \cref{section:interpolating}, we recall the formal group algebra of \cite{CPZ}, and we repeat the construction of the $\beta$-deformed motivic Segre classes of \cref{section:deformation} in the full generality, allowing $\beta$ to be zero or non-zero. We show that the $\beta=0$ specialization of our $\beta$-deformed classes recovers the Segre-Schwartz-MacPherson classes of Schubert cells and that the specialization of $\beta$ to any nonzero element recovers the classes defined in \cref{section:deformation}. In \cref{section:localization-techniques}, we prove that our $\beta$-deformed motivic Segre classes can be interpreted as canonical elements in a quotient of the equivariant algebraic cobordism ring of the cotangent bundle of the $d$-step flag variety. In \cref{section:future-work}, we describe future directions to take this work and outline strategies to accomplish this future work. In Appendix \ref{section:YBE}, we list the equations that were verified in SageMath \cite{G25} in the course of the proof of our puzzle formula, and in Appendix \ref{section:quantum-group}, we recall the definition of the quantum affine algebra of type $\widehat{a}_n$.

	\subsection*{Acknowledgements}
	The author is grateful to his Ph.D. advisor Allen Knutson for proposing the idea that eventually led to this paper and for his attention, guidance, and insights throughout the course of this project. We thank Kirill Zainoulline for introducing the author to formal group laws and connective $K$-theory; we thank Rui Xiong for an enlightening conversation regarding Schubert calculus in connective $K$-theory; we thank Timothy Miller and Travis Scrimshaw for explaining that the lattice model in this paper should simultaneously compute rational function representatives for the classes \textit{and} structure constants; we thank Marcelo Aguiar for pointing out that the Hopf algebra obtained in \cref{section:multi-parameter} is a multi-parameter quantum group; we thank Changlong Zhong for explaining that the analogues of the motivic Chern classes defined via an arbitrary formal group law satisfy the GKM conditions; and we thank Paul Zinn--Justin for help with debugging the code used to verify several computations in this paper.
	This work was partially supported by a Postgraduate Scholarship - Doctoral - from the Natural Sciences and Engineering Research Council of Canada, and by Cornell University.
	
	\section{Preliminaries and notation}\label{section:preliminaries}
	
	In this section, we establish notation and recall several concepts that will be used throughout this paper. In \cref{subsection:flag}, we discuss flag varieties and Schubert varieties. In \cref{subsection:Chern-Schwartz-MacPherson-classes}, we recall the definition of equivariant Chern-Schwartz-MacPherson classes. In \cref{subsection:motivic-Chern-classes}, we recall the definition of equivariant motivic Chern classes. In \cref{subsection:cotangent-bundles-of-flag}, we discuss cotangent bundles of flag varieties and the relationships between Chern-Schwartz-MacPherson classes of Schubert cells (resp. motivic Chern classes of Schubert cells) and cohomological stable envelopes (resp. $K$-theoretic stable envelopes) in the cotangent bundle of the flag variety. In \cref{subsection:localization}, we discuss divided difference operators and the combinatorial construction of Segre-Schwartz-MacPherson classes and motivic Segre classes of Schubert cells. In \cref{subsection:puzzles}, we discuss Littlewood-Richardson coefficients and Knutson-Tao puzzles.
	
	\subsection{Flag varieties}\label{subsection:flag} In this subsection, we will discuss flag varieties, Schubert varieties, and Schubert classes in the (equivariant) cohomology ring and (equivariant) $K$-ring of the flag variety. 
	
	Let $G=\mrm{GL}_n(\mathbb{C})$. Fix the Borel subgroup $B$ in $G$ consisting of upper-triangular matrices in $G$, and fix the maximal torus $T$ of $B$ consisting of diagonal matrices in $G$. The weight lattice of $T$ is $\Lambda=\mrm{Hom}(T,\mathbb{C}^\times)$, and the root system of $T$ is $\Sigma=A_{n-1}$. We denote the set of simple roots of $\Sigma$ by $\Delta=\{\alpha_1,\dotsc,\alpha_{n-1}\}$. For a subset $\Theta=\{\alpha_{i_1},\dotsc,\alpha_{i_d}\}$, $i_1<i_2<\dotsb<i_d$, of the simple roots $\Delta$, we denote by $P_\Theta$ a parabolic subgroup of $G$ that contains $B$ and has (negative) simple roots $\Delta\setminus \Theta$. The $d$-step flag variety $G/P_\Theta$ is the variety of (partial) flags of vector spaces,
	\[0\subsetneq V_1\subsetneq V_2\subsetneq \dotsb\subsetneq V_d\subsetneq \mbb{C}^n, \quad \mrm{dim}(V_j)= i_j.\]
	In the special case $\Theta=\emptyset$, the flag variety $G/P_\Theta$ is the complete flag variety $G/B$. In the special case $\Theta=\Delta\setminus\{\alpha_k\}$, the group $P_\Theta$ is a maximal parabolic subgroup of $G$, and $G/P_\Theta$ is the Grassmannian $\mrm{Gr}(k,n)$ of $k$-planes in $\mathbb{C}^n$. Let $\Sigma_\Theta^+$ (resp. $\Sigma_\Theta^-$) be the set of positive (resp. negative) roots of $G/P_\Theta$, and let $\Sigma_\Theta:=\Sigma_\Theta^+\sqcup \Sigma_\Theta^-$ be the root system of $G/P_\Theta$. We will denote the Weyl group $S_n$ of $G$ by $W$. Let $r_i$ be the simple permutation in $W$ that swaps $i$ and $i+1$ and fixes all other $j$. The subgroup $W_{\Theta}$ of $W$ generated by $\langle r_j \rangle_{\alpha_j\in\Theta}$ is the Weyl group of $P_\Theta$. Let $W^\Theta$ be the set of minimal coset representatives of $W/W_\Theta$ in $W$. We will henceforth identify the set $W^\Theta$ with the quotient $W/W_\Theta$. Define $p_j:=i_j-i_{j-1}$. The set $W^\Theta$ is in bijection with the set $\Gamma$ of lists of length $n$ in the alphabet $\{0,\dotsc,d\}$, where $d$ appears $p_1$ times, $d-1$ appears $p_{2}$ times, and so on. Under this identification, the longest word $w_0$ in $W^\Theta$ is identified with the list $d^{p_1} (d-1)^{p_{2}} \dotsb 1^{p_d} 0^{p_{d+1}}$, and $w\cdot w_0\in W^\Theta$ is identified with the list $w(d^{p_1} (d-1)^{p_{2}} \dotsb 1^{p_d} 0^{p_{d+1}})$.
	
	For each $w\in W^\Theta$, there is a Schubert cell $X_w^\circ=B_-wP_\Theta/P_\Theta$ in $G/P_\Theta$, where $B_-$ is the Borel subgroup of $G$ consisting of lower-triangular matrices in $G$. The Schubert cells form a cell decomposition for $G/P_\Theta$. The closures of the Schubert cells $X_w:=\overline{X_w^\circ}$ are Schubert varieties. Let $\mrm{h}^*(-)$ be one of the following (generalized) cohomology theories: singular cohomology $H^*(-)$, $T$-equivariant cohomology $H_T(-)$, $K$-theory $K(-)$, and $T$-equivariant $K$-theory $K_T(-)$. Let $\mrm{pt}$ be a point, so that $H^*(\mrm{pt})\simeq \mbb{Z}$, $H_T(\mrm{pt})\simeq \mbb{Z}[y_1,\dotsc,y_n]$, $K(\mrm{pt})\simeq \mbb{Z}$, and $K_T(\mrm{pt})\simeq \mbb{Z}[e^{\pm y_1},\dotsc,e^{\pm y_n}]$. In $\mrm{h}^*(G/P_\Theta)$ there are natural classes $S_w$, $w\in W^\Theta$, known as Schubert classes, that form a basis for $\mrm{h}^*(G/P_\Theta)$, when we view $\mrm{h}^*(G/P_\Theta)$ as a module over $\mrm{h}^*(\mrm{pt})$. In $K(G/P_\Theta)$ and $K_T(G/P_\Theta)$, the Schubert class $S_w$ is the class of the structure sheaf $[\mcal{O}_{X_w}]$ of $X_w$. In $H^*(G/P_\Theta)$ and $H_T(G/P_\Theta)$, the Schubert class $S_w$ is the class $[X_w]$ defined by the ($T$-invariant) Schubert variety $X_w$.

	\subsection{Chern-Schwartz-MacPherson classes}\label{subsection:Chern-Schwartz-MacPherson-classes} 
	In this subsection, we will recall the axiomatic definition of the (equivariant) Chern-Schwartz-MacPherson class (see \cref{dfn:CSM-class}).
	
	Let $A$ be a reductive complex linear algebraic group. Let $\mcal{F}^A$ be the (covariant) functor from the category $\mcal{C}^A$ whose objects are ``$A$-varieties", i.e., complex algebraic varieties with an algebraic action of $A$, and whose morphisms are $A$-equivariant \textit{proper} morphisms of algebraic varieties, to the category $\mrm{Ab}$ whose objects are abelian groups, such that for any $A$-variety $Z$, the abelian group $\mcal{F}^A(Z)$ consists of $A$-equivariant constructible functions on $Z$ under pointwise addition. Recall that a function on $Z$ is called \textit{constructible} if it is a finite $\mbb{C}$-linear combination of characteristic functions $1_Y$ of closed subvarieties $Y$ in $Z$. If $Z_1$ and $Z_2$ are $A$-varieties and $f\colon Z_1\to Z_2$ is an $A$-equivariant proper morphism of varieties, then the functorial \textit{push-forward} map $f_*\colon \mcal{F}^A(Z_1)\to \mcal{F}^A(Z_2)$ is defined on generators by $(f_*(1_Y))(z):=\chi(f^{-1}(z)\cap Y)$ for any $A$-invariant locally closed subvariety $Y$ of $Z_1$, and extended by $\mbb{C}$-linearity to all of $\mcal{F}^A(Z_1)$. Let $H_*^A\colon \mcal{C}^A\to \mrm{Ab}$ be the $A$-equivariant singular homology functor that sends an $A$-variety $Z$ to the $A$-equivariant singular homology group $H_*^A(Z)$. If $A=\{1\}$ is the trivial group, then the category $\mcal{C}:=\mcal{C}^A$ is the category whose objects consist of \textit{all} complex algebraic varieties and whose morphisms consist of \textit{all} proper morphisms of varieties. In this case, the abelian group $\mcal{F}(Z):=\mcal{F}^A(Z)$ is the group of \textit{all} constructible functions on $Z$, and the group $H_*(Z)=H_*^A(Z)$ is the ordinary singular homology group of $Z$. 
	
	First let us assume $A=\{1\}$, so that $\mcal{F}=\mcal{F}^A$, $\mcal{C}=\mcal{C}^A$, and $H_*=H_*^A$. In \cite{M74}, Robert MacPherson proved the existence of a unique natural transformation $C_*\colon \mcal{F}\to H_{*}$, such that if $Z$ is smooth, then $C_*({1}_Z)=c(T_Z)\cap [Z]$, where $c(T_Z)\in H_*(Z)$ is the total Chern class of the tangent bundle $T_Z$ of $Z$ and $[Z]\in H_*(Z)$ is the fundamental class $Z$. The existence of $C_*$ was conjectured earlier by Grothendieck and Deligne (see \cite{S69}), and the work of MacPherson settled this conjecture. 
	
	Suppose now that $A$ is \textit{any} complex reductive linear algebraic group. An $A$-\textit{equivariant} version $C_*^A$ of $C_*$ was constructed by Ohmoto in \cite{Oh06}. That is, Ohmoto proves in \cite[Theorem 1.1]{Oh06} that there is a unique natural transformation $C_*^A\colon \mcal{F}^A\to H^A_*$, such that if $Z$ is smooth, then $C_*^A({1}_Z)=c^A(T_Z)\cap [Z]$, where $c^A(T_Z)\in H_*^A(Z)$ is the $A$-equivariant total Chern class of the tangent bundle $T_Z$ of $Z$ and $[Z]\in H_*^A(Z)$ is the $A$-equivariant fundamental class $Z$.

	\begin{dfn}\label{dfn:CSM-class}
		For a complex algebraic variety $Z$ with an algebraic action of $A$ and an $A$-invariant constructible subset $Y$ of $Z$, the class $\mrm{csm}^{A}(Y):=C_*^A(1_Y)\in H_*(Z)$ is called the \textbf{$A$-equivariant Chern-Schwartz-MacPherson (CSM) class} of $Y$ in $H_*^A(Z)$. The \textbf{Segre-Schwartz-MacPherson (SSM) class} of $Y$ in the fraction field $\mrm{Frac}(H_*^A(Z))$ of $H_*^A(Z)$ is the quotient $\mrm{ssm}^A(Y):=\frac{\mrm{csm}^A(Y)}{c^A(T_Z)}$. 
	\end{dfn}
	\begin{rmk}
		In the notation of \cref{dfn:CSM-class}, when $A=\{1\}$, the class $\mrm{csm}(Y):=\mrm{csm}^A(Y)\in  H_*(Z)$ is the ordinary (non-equivariant) \textbf{Chern-Schwartz-MacPherson} class of $Y$ in $H_*(Z)$, and $\mrm{ssm}(Y):=\frac{\mrm{csm}(Y)}{c(T_Z)}$ is the ordinary (non-equivariant) \textbf{Segre-Schwartz-MacPherson} class of $Y$ in the fraction field $\mrm{Frac}(H_*(Z))$ of $H_*(Z)$.
	\end{rmk}

	\subsection{Motivic Chern classes}\label{subsection:motivic-Chern-classes}
	
	In this subsection, we will recall the axiomatic definition of the (equivariant) motivic Chern class (see \cref{dfn:mC-class}).
	
	Let $A$ be a reductive complex linear algebraic group. Consider the category $\mcal{F}^A$ whose objects are \textit{smooth} $A$-varieties, and whose morphisms are $A$-equivariant \textit{proper} morphisms of algebraic varieties, and the category $\mrm{Ab}$ whose objects are abelian groups and whose morphisms are group homomorphisms. Let $\mrm{Var}^A(-)\colon \mcal{F}^A\to\mrm{Ab}$ be the (covariant) functor that sends a smooth $A$-variety $Z$ to the quotient of the free abelian group generated by isomorphism classes $[Y\to Z]$ of $A$-equivariant morphisms of $A$-varieties $Y\to Z$, modulo the subgroup generated by the ``scissor relations":
	\begin{equation}\label{equation:scissor}
		[Y\to Z]-([Y'\to Y\to Z]+[Y\setminus Y'\to Y\to Z]),
	\end{equation}
	where $Y'$ is a closed $A$-invariant algebraic subvariety of $Y$ and $Y'\to Y$ and $Y\setminus Y'\to Y$ are both $A$-equivariant morphisms of algebraic varieties. Let $K(-)[y]\colon \mcal{F}^A\to \mrm{Ab}$ be the (covariant) functor that sends an $A$-variety $Z$ to the group $K_0^A(Z)\otimes \mbb{Z}[y]$, where $K_0^A(Z)$ is the Grothendieck group of $A$-equivariant locally free coherent sheaves on $Z$. If $A=\{1\}$ is the trivial group, then the category $\mcal{F}:=\mcal{F}^A$ is the category whose objects consist of \textit{all} smooth complex algebraic varieties and whose morphisms consist of \textit{all} proper morphisms between smooth varieties. In this case, the abelian group $\mrm{Var}(Z):=\mrm{Var}^A(Z)$ is the quotient of the free abelian group generated by isomorphism classes $[Y\to Z]$, where  $Y\to Z$ is \textit{any} morphism of algebraic varieties, modulo the subgroup generated by the scissor relations (\ref{equation:scissor}). Additionally, the group $K_0^A(Z)\otimes \mbb{Z}[y]$ is just $K_0(Z)\otimes \mbb{Z}[y]$, where $K_0(Z)$ is the ordinary $K$-group of locally free coherent sheaves on $Z$.
	
	First let us assume $A=\{1\}$ is the trivial group, so that $\mcal{F}=\mcal{F}^A$, $\mrm{Var}^A(-)=\mrm{Var}(-)$, and $K_0^A(-)=K_0(-)$. In \cite{BSY10}, the authors proved the existence of a unique natural transformation $mC\colon \mrm{Var}(-)\to K_0(-)\otimes\mbb{Z}[y]$ satisfying
	(i) $mC([\mrm{id}_Z])=\sum_{i=0}^{\mrm{rank}(T_Z)}[\Lambda^i T_Z]y^i$, where $\Lambda^i T_Z$ is the $i$-th exterior power of the tangent bundle $T_Z$ of $Z$, and (ii) for any proper map $f\colon Z\to Z'$, we have $\mrm{mC}([f\circ g\colon Y\to Z'])=f_*(\mrm{mC}([g\colon Y\to Z]))$.
	
	Suppose now that $A$ is \textit{any} complex reductive linear algebraic group. In \cite{FRW21}, the authors proved the existence of a unique natural transformation $mC^A\colon \mrm{Var}^A(-)\to K_0^A(-)\otimes\mbb{Z}[y]$ satisfying (i) $mC^A([\mrm{id}_Z])=\sum_{i=0}^{\mrm{rank}(T_Z)}[\Lambda^i T_Z]y^i$, where $\Lambda^i T_Z$ is the $i$-th exterior power of the tangent bundle $T_Z$ of $Z$, and (ii) for any $A$-equivariant proper map $f\colon Z\to Z'$, we have $\mrm{mC}^A([f\circ g\colon Y\to Z'])=f_*(\mrm{mC}^A([g\colon Y\to Z]))$. 
	
	\begin{dfn}\label{dfn:mC-class}
		For a smooth $A$-variety $Z$, together with an $A$-equivariant morphism of varieties $Y\to Z$, the class $\mrm{mC}^{A}(Y):=mC^A([Y\to Z])\in K_0^A(Z)\otimes\mbb{Z}[y]$ is called the \textbf{$A$-equivariant motivic Chern (mC) class} of $Y$ in $K_0^A(Z)\otimes\mbb{Z}[y]$. The \textbf{motivic Segre (mS) class} of $Y$ in the fraction field $\mrm{Frac}(K_0^A(Z)\otimes\mbb{Z}[y])$ of $K_0^A(Z)\otimes\mbb{Z}[y]$ is the quotient $\mrm{mS}^A(Y):=\frac{\mrm{mC}^A(Y)}{\lambda_y(T_Z)}$, where $\lambda_y(Z):=\sum_{i=0}^{\mrm{rank}(T_Z)}[\Lambda^i T_Z]y^i$ is called the \textbf{$\lambda_y$-genus} of $Z$.
	\end{dfn}
	
	\begin{rmk}
		In the notation of \cref{dfn:mC-class}, when $A=\{1\}$, the class $\mrm{mC}(Y):=\mrm{mC}^A(Y)$ is the ordinary (non-equivariant) \textbf{motivic Chern class} of $Y$ in $K_0(Z)\otimes\mbb{Z}[y]$, and $\mrm{mS}(Y):=\frac{\mrm{mC}(Y)}{\lambda_y(T_Z)}$ is the ordinary (non-equivariant) \textbf{motivic Segre class} of $Y$ in the fraction field $\mrm{Frac}(K_0(Z)\otimes\mbb{Z}[y])$ of $K_0(Z)\otimes\mbb{Z}[y]$.
	\end{rmk}

	\subsection{Cotangent bundles of flag varieties}\label{subsection:cotangent-bundles-of-flag}
	
	In this subsection, we will recall several facts regarding the cotangent bundles of flag varieties. We will also state the relationship between cohomological (resp. $K$-theoretic) stable envelopes for cotangent bundles of flag varieties and Chern-Schwartz-MacPherson classes (resp. motivic Chern classes) of Schubert cells (see \cref{theo:CSM-classes} and \cref{theo:motivic-Chern-classes}).
	
	Let $\widehat{T}:=T\times\mbb{C}^\times$. The torus $\widehat{T}$ acts on $G/P_\Theta$, where $T$ acts on $G/P_\Theta$ in the usual way and the additional $\mbb{C}^\times$-factor acts trivially. The torus $\widehat{T}$ also acts on the cotangent bundle $T^*(G/P_\Theta)$, where $(t,z)\cdot (x,y):=(t\cdot x,(t^{-1})^*(z\cdot y))$ for all $(t,z)\in \widehat{T}$ and $(x,y)\in T^*_x(G/P_\Theta)$. The inclusion of the zero section $\iota\colon G/P_\Theta\hookrightarrow T^*(G/P_\Theta)$ is $\widehat{T}$-equivariant, and as $\iota(G/P_\Theta)$ is $\widehat{T}$-equivariantly homotopic to $T^*(G/P_\Theta)$, it follows that the pullback map $\iota^*\colon H_{\widehat{T}}(T^*(G/P_\Theta))\to H_{\widehat{T}}(G/P_\Theta)$ is an isomorphism of $H_{\widehat{T}}(\mrm{pt})$-algebras, and the pullback map $\iota^*\colon K_{\widehat{T}}(T^*(G/P_\Theta))\to K_{\widehat{T}}(G/P_\Theta)$ is an isomorphism of $K_{\widehat{T}}(\mrm{pt})$-algebras. Write $H_{\widehat{T}}(\mrm{pt})\simeq H_T(\mrm{pt})\otimes_{\mbb{Z}}\mbb{Z}[\hbar]$ and $K_{\widehat{T}}(\mrm{pt})\simeq K_T(\mrm{pt})\otimes_{\mbb{Z}}\mbb{Z}[q^{\pm}]$, where $\hbar$ and $q$ are used to denote the equivariant parameter coming from the dliation action of $\widehat{T}$ on the cotangent fibres of $T^*(G/P_\Theta)$.

	For $w\in W^\Theta$, recall the notation $X_w^\circ$ for the Schubert cell inside the $d$-step flag variety $G/P_\Theta$. As $X_w^\circ$ is a locally closed $\widehat{T}$-invariant subvariety of $G/P_\Theta$, one can consider its $\widehat{T}$-equivariant Chern-Schwartz-MacPherson class $\mrm{csm}^{\widehat{T}}(X_w^\circ)$ inside of $H_{\widehat{T}}^*(G/P_\Theta)$. We remark that, although we have defined $\mrm{csm}^{\widehat{T}}(X_w^\circ)$ to be an element of the $\widehat{T}$-equivariant singular \textit{homology} ring of $G/P_\Theta$, as $G/P_\Theta$ is a smooth, projective variety, we simply identify $\mrm{csm}^{\widehat{T}}(X_w^\circ)$ with its Poincar\'e dual in $H_{\widehat{T}}^*(G/P_\Theta)$ without losing any information. In \cite{MO19}, Davesh Maulik and Andrei Okounkov defined a natural basis $\{\mrm{stab}^H_-(w)\}_{w\in W^\Theta}$ inside of $H_{\widehat{T}}(T^*(G/P_\Theta))$ corresponding to the \textit{negative Weyl chamber} for $W$, i.e., corresponding to the subset of $\Sigma_\Theta$ consisting of \textit{negative} roots $\Sigma_\Theta^-$ (in fact, there is a stable basis for each Weyl chamber in $W$, though we will not consider the other Weyl chambers in this paper), and the elements of this basis correspond to $\widehat{T}$-invariant cycles in $H_{\widehat{T}}(T^*(G/P_\Theta))$. 
	
	\begin{theo}(\cite[Prop. 9.7(b) and Prop. 9.9]{AMSS23}, \cite[\S 8]{RWV18})\label{theo:CSM-classes}
		There is an identity \[\iota^*(\mrm{stab}^H_-(w))=(-1)^{\mrm{dim}(G/P_\Theta)}\mrm{csm}^{\widehat{T}}(X_w^\circ).\]
	\end{theo}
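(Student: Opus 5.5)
This identity is cited from \cite{AMSS23} and \cite{RWV18}. A self-contained argument would proceed by uniqueness: show that $(-1)^{\mrm{dim}(G/P_\Theta)}\mrm{csm}^{\widehat{T}}(X_w^\circ)$, read through the isomorphism $\iota^*$, satisfies the axioms characterizing $\mrm{stab}^H_-(w)$. Maulik and Okounkov characterize the stable envelope by three properties after restriction to the fixed points $\{uP_\Theta\}_{u\in W^\Theta}$ of $T^*(G/P_\Theta)$:
\begin{enumerate}
\item[(a)] \emph{support}: $\mrm{stab}^H_-(w)|_u=0$ unless $u\le w$ in the order given by the attracting sets of the negative chamber;
\item[(b)] \emph{normalization}: $\mrm{stab}^H_-(w)|_w$ is the Euler class of the repelling (negative) half of $T_w(T^*(G/P_\Theta))$, with sign fixed by the polarization;
\item[(c)] \emph{degree}: for $u\ne w$, the $T$-degree of $\mrm{stab}^H_-(w)|_u$ is strictly smaller than $\tfrac12\dim(G/P_\Theta)\cdot 2$.
\end{enumerate}
Since fixed-point localization is injective for $G/P_\Theta$, checking (a)--(c) for the CSM class suffices.

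\textbf{Support.} The support condition (a) is immediate. $\mrm{csm}^{\widehat{T}}(X_w^\circ)$ is supported on $X_w=\overline{B_-wP_\Theta/P_\Theta}$, whose fixed points are exactly the $u$ in the appropriate Bruhat interval. The full attracting set of the negative chamber at $w$ is the conormal-type Lagrangian $\overline{T^*_{X_w^\circ}}$, which projects onto $X_w$.

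\textbf{Normalization.} For (b), compute $\mrm{csm}(X_w^\circ)|_w$. Near $w$ the cell is smooth, and its CSM class localizes to $c^T(T_wX_w^\circ)\cdot e^T(N_w)$, where $N_w$ is the normal space of $X_w^\circ$ in $G/P_\Theta$. The Euler class of the repelling directions in $T^*(G/P_\Theta)$ at $w$ is the product of the weights of $N_w$ times the weights $(\hbar-\alpha)$ coming from cotangent directions along the cell. With the standard identification, which substitutes the tangent weight $\alpha$ by $-\alpha$ and inserts $\hbar=-1$ (or the homogenized variant), these two expressions agree up to the global sign $(-1)^{\mrm{dim}}$. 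I expect this sign and weight bookkeeping to be the main fiddly point, though it is routine.

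\textbf{Degree.} The degree condition (c) is the real obstacle. I would prove it via the Demazure--Lusztig type recursion: by Aluffi--Mihalcea, $\mrm{csm}(X_{ws_i}^\circ)=\mathcal{T}_i\,\mrm{csm}(X_w^\circ)$ with $\mathcal{T}_i=\partial_i-s_i$ (suitably signed). Stable envelopes satisfy the same recursion through the $R$-matrix/wall-crossing operators of \cite{MO19} (or \cite{Su17} style). Induction on $\ell(w)$ from the point class $w=\mrm{id}$, where both sides are the fixed-point class, then gives equality without checking (c) directly. For partial flags, pull back along $G/B\to G/P_\Theta$ and use functoriality of both CSM classes (proper pushforward) and stable envelopes.
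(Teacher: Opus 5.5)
Your proposal has a genuine gap, at the step ``since fixed-point localization is injective, checking (a)--(c) for the CSM class suffices.'' The support axiom of Maulik--Okounkov is a condition on a cycle in $T^*(G/P_\Theta)$. At a fixed point $u\neq w$ it forces more than vanishing or GKM divisibility: the restriction must be divisible by the Euler class of the repelling \emph{cotangent} directions at $u$. This is a factor $\prod_\gamma(\hbar-\gamma)$ over the tangent weights $\gamma$ of the attracting cell at $u$, with signs depending on conventions.

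Those fiber directions are not compact $T$-curves, so GKM never produces this factor, and (a)--(c) do not determine the class. For example, on $\mbb{P}^2$ let $w$ be the fixed point whose attracting cell is open, and let $\ell$ be the $T$-stable line through the other two fixed points. Then $\mrm{stab}(w)+\hbar\,\pi^*[\ell]$, with $\pi\colon T^*\mbb{P}^2\to\mbb{P}^2$, still satisfies your (a), (b) and (c).

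Your support paragraph also conflates supports in $G/P_\Theta$ and in $T^*(G/P_\Theta)$. The lift $(\iota^*)^{-1}\mrm{csm}(X_w^\circ)=\pi^*\mrm{csm}(X_w^\circ)$ lives on $\pi^{-1}(X_w)$. That set is not contained in the full attracting set, which is the union of the conormal bundles of all cells in $X_w$, not just $\overline{T^*_{X_w^\circ}}$.

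The missing idea is what the references the paper cites supply; the paper itself gives no proof. One option is to replace $1_{X_w^\circ}$ by its characteristic cycle, a $\mbb{Z}$-combination of conormal closures of cells in $X_w$, so that support holds at the cycle level. Then use the identity expressing $\iota^*[CC(1_{X_w^\circ})]$ as $\pm$ the $\hbar$-homogenized CSM class (the ``shadow'' of \cite{AMSS23}). Equivalently, you can prove the extra interpolation condition that $\prod_{\gamma}(\hbar+\gamma)$, over the tangent weights of $X_u^\circ$ at $u$, divides the homogenized $\mrm{csm}(X_w^\circ)|_u$, using the local product structure of $X_w$ along $X_u^\circ$ (\cite{RWV18}). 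Note also that $\mrm{csm}^{\widehat{T}}(X_w^\circ)$ has no $\hbar$-dependence, since $\mbb{C}^\times$ acts trivially on $G/P_\Theta$. So the identity only makes sense for the homogenized class, and ``substituting $\alpha$ by $-\alpha$'' is not a legitimate step in your normalization check.

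Your fallback for (c) does not repair this; it replaces the whole argument. If both families satisfy the same Demazure--Lusztig recursion with the same initial value, then (a) and (b) are irrelevant. All the content then sits in the claim that stable envelopes satisfy that recursion. This is a genuine theorem (due to Su for $T^*(G/B)$), and its proof needs the Weyl symmetry $w\cdot\mrm{stab}_{\mathfrak{C}}(u)=\mrm{stab}_{w\mathfrak{C}}(wu)$ plus an explicit rank-one wall-crossing computation. Naming the $R$-matrices of \cite{MO19} does not establish it.

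Your base case is also wrong for these conventions. Since $X_w^\circ=B_-wP_\Theta/P_\Theta$ has codimension $\ell(w)$, $w=\mrm{id}$ is the open cell and the point is $w=w_0$. This matches the paper, where $S_{w_0}$ is supported at a single fixed point and the other classes are $\partial_w(S_{w_0})$.

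Finally, there is no off-the-shelf ``functoriality of stable envelopes'' along $G/B\to G/P_\Theta$. You would need either a separate compatibility argument for the correspondence $T^*(G/P_\Theta)\leftarrow G/B\times_{G/P_\Theta}T^*(G/P_\Theta)\to T^*(G/B)$, or to run the left Demazure--Lusztig recursion directly on $G/P_\Theta$, which is how the operators $\delta_i$ in this paper act.
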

	
	In \cite{O17}, Maulik and Okounkov defined natural bases inside of $K_{\widehat{T}}(T^*(G/P_\Theta))$, which are analogues of the stables bases in $H_{\widehat{T}}(T^*(G/P_\Theta))$ and depend on more data than the cohomological stable bases. Although we will not go into detail on this construction, we mention the following result:
	
	\begin{theo}(\cite[Prop. 2.9, and the Remark above \S 3]{KZJ21}, \cite[Theorem 3.5]{SZZ20}, \cite[Remark 5.6]{FRW21})\label{theo:motivic-Chern-classes}
		Let $\{\mrm{stab}_{-}^K(w)\}_{w\in W^\Theta}$ be the stable basis in $K_{\widehat{T}}(T^*(G/P_\Theta))$ corresponding to the polarization $T^*(G/P_\Theta)$, and to a line bundle in the negative Weyl alcove closest to the origin. There is an identity 
		\[\iota^*(\mrm{stab}_-^K(w))=\mrm{mC}^{\widehat{T}}(X_w^\circ).\]
	\end{theo}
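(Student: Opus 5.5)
The plan is to use the \emph{uniqueness} of $K$-theoretic stable envelopes. For the fixed polarization and slope, $\mrm{stab}_-^K(w)$ is the unique class in $K_{\widehat{T}}(T^*(G/P_\Theta))$ satisfying three axioms: a support condition, a normalization at the fixed point $w$, and a Newton-polytope (window) condition at the other fixed points. Since $\iota^*$ is an isomorphism of $K_{\widehat{T}}(\mrm{pt})$-algebras, it suffices to set $c_w:=(\iota^*)^{-1}\mrm{mC}^{\widehat{T}}(X_w^\circ)$ and check that $c_w$ satisfies the same three axioms. Throughout, the motivic variable is specialized as $y=-q^{-1}$ (up to the convention used for the cotangent weight), and everything is checked by localization at the $\widehat{T}$-fixed points $u\in W^\Theta$.

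\emph{Support and normalization.} The class $\mrm{mC}^{\widehat{T}}(X_w^\circ)$ is pushed forward from $X_w=\overline{B_-wP_\Theta/P_\Theta}$, so its restriction to $u$ vanishes unless $u\in X_w$, that is, unless $u\geq w$ in Bruhat order. This matches the support condition, with the attracting set taken for the negative chamber. At $u=w$, the point is smooth on the cell and the local computation is a product over characters. The tangent characters $\alpha$ of $X_w^\circ$ at $w$ contribute $(1+y e^{-\alpha})$. The normal characters contribute the $K$-theoretic Euler class $\prod(1-e^{\alpha})$. After substituting $y=-q^{-1}$, I would check factor by factor that this product equals the normalization of $\mrm{stab}_-^K(w)|_w$ prescribed by the polarization $T^*(G/P_\Theta)$: the Euler class of the repelling half of $T_w T^*(G/P_\Theta)$, times the polarization sign and $q$-power.

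\emph{Window condition.} This is the main obstacle. The condition says that for $u>w$ the Newton polytope of $c_w|_u$ in $\Lambda$ lies strictly inside the polytope of $\mrm{stab}_-^K(u)|_u$, shifted by the slope. A direct check needs explicit localizations, so I would not verify it head-on. Instead I would argue inductively through Demazure--Lusztig operators.
\begin{enumerate}
\item On the motivic Chern side, I would use the known recursion $\mrm{mC}(X_{ws_i}^\circ)=\mathcal{T}_i\,\mrm{mC}(X_w^\circ)$ for $ws_i>w$. Here $\mathcal{T}_i$ is the Demazure--Lusztig operator obtained by push--pull along the $\mathbb{P}^1$-fibration $G/B\to G/P_{\{\alpha_i\}}$; this follows from functoriality of $\mrm{mC}^{\widehat{T}}$ under proper pushforward together with the $\lambda_y$-genus of a $\mathbb{P}^1$-bundle.
\item On the stable envelope side, I would show that $\mrm{stab}_-^K$ obeys the same recursion when the slope lies in the negative alcove adjacent to the origin. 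I would prove this by rank-one reduction to $T^*\mathbb{P}^1$, where both bases can be written down explicitly. The reduction should go through because the operator intertwining stable envelopes for $w$ and $ws_i$ is the $R$-matrix of the wall $\alpha_i$, and since no affine walls separate the small alcove from the origin, this $R$-matrix coincides with $\mathcal{T}_i$.
\item The base case is the identity cell $w=\mathrm{id}$. Its closure is all of $G/P_\Theta$, so both sides are forced by the support and normalization conditions alone, since the window condition there is vacuous.
\end{enumerate}

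For partial flags, I would deduce the statement from the $G/B$ case by pushing forward along $G/B\to G/P_\Theta$. On the motivic Chern side, $\pi_*\mrm{mC}(X_{w}^\circ)=\mrm{mC}(X_{wW_\Theta}^\circ)$ for minimal representatives $w$, because $\pi$ restricts to an isomorphism from the $B_-$-cell of the minimal-length representative onto the corresponding cell of $G/P_\Theta$. On the stable envelope side, the corresponding compatibility of stable envelopes with parabolic pushforward comes from the Lagrangian correspondence between $T^*(G/B)$ and $T^*(G/P_\Theta)$.
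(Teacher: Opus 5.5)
The paper gives no proof of this statement; it quotes it from \cite{KZJ21}, \cite{SZZ20}, \cite{FRW21}. Your plan is to show that both families obey the same Demazure--Lusztig recursion from a common base case, with the stable-envelope recursion obtained by rank-one reduction. That is essentially the route behind \cite{SZZ20}; \cite{FRW21} works with the axiomatic characterization instead. As written, however, your induction has no valid starting point. You have transplanted statements that hold for $B$-orbits into the paper's $B_-$ convention $X_w^\circ=B_-wP_\Theta/P_\Theta$. In that convention $X_{\mathrm{id}}^\circ$ is the \emph{open} cell. Its closure is all of $G/P_\Theta$, so the support condition says nothing, and the window condition is imposed at every other fixed point. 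This is the least constrained case, not a vacuous one. The paper's $T^*\mathrm{Gr}(1,2)$ example at $\beta=1$ shows this: $S_{01}=\bigl[1,\tfrac{1-q^2}{1-q^2e^{-\alpha}}\bigr]$ has a nonzero off-diagonal entry that support and normalization cannot produce, and it is exactly what the window condition fixes. The class determined by support and normalization alone is that of the zero-dimensional cell $X_{w_0}^\circ$, with $w_0$ longest in $W^\Theta$, where both sides are concentrated at one fixed point. This is why the paper starts from $S_{w_0}$ and applies $\partial_w$. Your step (1) is reversed for the same reason. If $ws_i>w$, then $\pi_i$ maps $X_{ws_i}^\circ$ isomorphically onto $Z=\pi_i(X_w^\circ)$, and $\pi_i^{-1}(Z)=X_w^\circ\sqcup X_{ws_i}^\circ$. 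Functoriality together with Verdier--Riemann--Roch for the smooth map $\pi_i$ then gives $\mathrm{mC}(X_w^\circ)=\mathcal{T}_i\,\mathrm{mC}(X_{ws_i}^\circ)$, which passes from the smaller cell to the larger one. Your formula $\mathrm{mC}(X_{ws_i}^\circ)=\mathcal{T}_i\,\mathrm{mC}(X_w^\circ)$ is the $B$-orbit version; in the $B_-$ setting it needs $\mathcal{T}_i^{-1}$. The fix is to run the whole induction downward from $w_0$.

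The parabolic step has the same slip. For $w\in W^\Theta$ minimal, the map $\pi\colon B_-wB/B\to B_-wP_\Theta/P_\Theta$ is an affine-space bundle of rank $\ell(w_{0,\Theta})$, not an isomorphism. The cell that maps isomorphically is the one of the maximal representative $ww_{0,\Theta}$. If you use $w$ itself, you pick up a factor $\chi_y(\mathbb{A}^{\ell(w_{0,\Theta})})=(-y)^{\ell(w_{0,\Theta})}$ in the Brasselet--Sch\"urmann--Yokura normalization. The claimed compatibility of $\mathrm{stab}^K_-$ with the correspondence $T^*(G/B)\leftarrow G/B\times_{G/P_\Theta}T^*(G/P_\Theta)\to T^*(G/P_\Theta)$ is also not automatic. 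A slope pulled back from $\mathrm{Pic}(G/P_\Theta)$ lies on walls for $G/B$, and the polarizations differ. Left Demazure--Lusztig operators, which act on equivariant parameters and fixed points as the paper's $\partial_i$ do, avoid this detour. More substantively, step (2) is the real content of the theorem, and your justification conflates two mechanisms. A chamber $R$-matrix across $\alpha_i^\perp$ relates $\mathrm{stab}_C$ and $\mathrm{stab}_{s_iC}$ with the \emph{same} labels. To move the label $w$ you must also use the Weyl group action. You then still have to show that, for the small-alcove slope, the composite equals $\mathcal{T}_i$; this uses the rank-one $T^*\mathbb{P}^1$ computation and a check of the window axiom, and is what \cite{SZZ20} supplies. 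Finally, once (1)--(3) hold the equality follows by induction, so your preliminary axiom check for $c_w$ is unnecessary. It is also too weak as stated. The support axiom requires a class supported on $\bigcup_{u\ge w}T^*_{X_u^\circ}(G/P_\Theta)$, whereas $(\iota^*)^{-1}\mathrm{mC}(X_w^\circ)=\pi^*\mathrm{mC}(X_w^\circ)$ is only visibly supported on $\pi^{-1}(X_w)$. Fixed-point vanishing plus GKM gives divisibility only by the normal weights in $G/P_\Theta$, not by the cotangent-fibre factors that a uniqueness argument needs.
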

	
	In light of \cref{theo:CSM-classes} and \cref{theo:motivic-Chern-classes}, we will denote the class $\mrm{csm}^{\widehat{T}}(X_w^\circ)$ in $H_{\widehat{T}}(T^*(G/P_\Theta))$ and the class $\mrm{mC}^{\widehat{T}}(X_w^\circ)$ in $K_{\widehat{T}}(T^*(G/P_\Theta))$ uniformly by $\mrm{St}_w$. Consider the localizations $H_{\widehat{T}}^{\mrm{loc}}(\mrm{pt})$ and $K_{\widehat{T}}^{\mrm{loc}}(\mrm{pt})$ of $H_{\widehat{T}}(\mrm{pt})$ and $K_{\widehat{T}}(\mrm{pt})$ at the elements $\{\hbar+\alpha\}_{\alpha\in\Sigma}$ and $\{1-q^2e^{\alpha}\}_{\alpha\in\Sigma}$, respectively. Set $H_{\widehat{T}}^{\mrm{loc}}(T^*(G/P_\Theta)):=H_{\widehat{T}}^{\mrm{loc}}(\mrm{pt})\otimes_{H_{\widehat{T}}(\mrm{pt})} H_{\widehat{T}}(T^*(G/P_\Theta))$ and $K_{\widehat{T}}^{\mrm{loc}}(T^*(G/P_\Theta)):=K_{\widehat{T}}^{\mrm{loc}}(\mrm{pt})\otimes_{K_{\widehat{T}}(\mrm{pt})} K_{\widehat{T}}(T^*(G/P_\Theta))$. Let $\kappa$ be the class of the zero section of $T^*(G/P_\Theta)\to G/P_\Theta$ in either $H_{\widehat{T}}(T^*(G/P_\Theta))$ or $K_{\widehat{T}}(T^*(G/P_\Theta))$. The elements $S_\lambda=\tfrac{\mrm{csm}^{\widehat{T}}(X_w^\circ)}{\kappa}$ and $S_\lambda=\tfrac{\mrm{mC}^{\widehat{T}}(X_w^\circ)}{\kappa}$ live in the localization $H_{\widehat{T}}^{\mrm{loc}}(T^*(G/P_\Theta))$ or the localization $K_{\widehat{T}}^{\mrm{loc}}(T^*(G/P_\Theta))$, and $\left\{S_\lambda\right\}_{\lambda\in W^\Theta}$ forms a basis for $H_{\widehat{T}}^{\mrm{loc}}(T^*(G/P_\Theta))$ or $K_{\widehat{T}}^{\mrm{loc}}(T^*(G/P_\Theta))$ over $H_{\widehat{T}}^{\text{loc}}(\mrm{pt})$ or $K_{\widehat{T}}^{\text{loc}}(\mrm{pt})$, respectively. As explained in \cite[\S 2 and \S 5]{KZJ21}, the elements $S_\lambda$ in $H_{\widehat{T}}^{\mrm{loc}}(T^*(G/P_\Theta))$ agree with the \textit{Segre-Schwartz-MacPherson (SSM) classes} of the Schubert cells $X_\lambda^\circ$, and the elements $S_\lambda$ in $K_{\widehat{T}}^{\mrm{loc}}(T^*(G/P_\Theta))$ agree with the \textit{motivic Segre classes} of the Schubert cells $X_\lambda^\circ$. The class $\kappa$ is discussed further in \cref{subsection:second-construction}.
	
	\subsection{Equivariant localization and divided difference operators}\label{subsection:localization}
	
	In this subsection, we will recall divided difference operators, and we will construct the Schubert classes, Segre-Schwartz-MacPherson classes of Schubert cells, and motivic Segre classes of Schubert cells using these operators.

	Let $X$ be a smooth projective complex variety, and suppose $E$ is a complex torus acting on $X$ algebraically with finitely many fixed points $X^E$. For both $h_E(-)=H_E(-)$ and $h_E(-)=K_E(-)$, the localization map
	\begin{equation}\label{equation:injective-fixed-point}
		\iota\colon h_E(X)\to \bigoplus_{f\in X^E}h_E(\mrm{pt}),
	\end{equation}
	defined by restricting $h_E(X)$ to the corresponding fixed point $f$ on each component, is injective.
	We will consider the case $X=G/P_\Theta$ and $E=T$, or the case $X=G/P_\Theta$ and $E=\widehat{T}=T\times \mbb{C}^\times$, where in the latter case the additional $\mbb{C}^\times$-factor acts trivially on $G/P_\Theta$. Thus, in both cases, the $E$-fixed points of $X$ are indexed by $W^\Theta$. The inclusion of the zero section $G/P_\Theta\hookrightarrow T^*(G/P_\Theta)$ induces a pullback map $\mrm{h}_{\widehat{T}}(T^*(G/P_\Theta))\to \mrm{h}_{\widehat{T}}(G/P_\Theta)$, which is a ring isomorphism, and the localization map $\iota\colon h_{\widehat{T}}(T^*(G/P_\Theta))\to \bigoplus_{f\in X^{\widehat{T}}}h_{\widehat{T}}(\mrm{pt})$ agrees with the injective localization map $\iota\colon h_{\widehat{T}}(G/P_\Theta)\hookrightarrow \bigoplus_{f\in X^{\widehat{T}}}h_{\widehat{T}}(\mrm{pt})$ under this isomorphism. We will henceforth identify the ring homomorphisms $\iota\colon h_{\widehat{T}}(T^*(G/P_\Theta))\hookrightarrow \bigoplus_{f\in X^{\widehat{T}}}h_{\widehat{T}}(\mrm{pt})$ and $\iota\colon h_{\widehat{T}}(G/P_\Theta)\hookrightarrow \bigoplus_{f\in X^{\widehat{T}}}h_{\widehat{T}}(\mrm{pt})$.

	We will now describe how to obtain the images of the classes $S_\lambda$ in the rings $H_T(G/P_\Theta)$, $H_{\widehat{T}}^{\text{loc}}(T^*(G/P_\Theta))$, $K_T(G/P_\Theta)$, and $K_{\widehat{T}}^{\text{loc}}(T^*(G/P_\Theta))$ under the localization map. For each $\alpha_i\in\Delta$, there is a $\mathbb{Z}$-linear operator $\delta_i$ that acts on $\bigoplus_{w\in W^\Theta}h_E(\mrm{pt})$, which we will define explicitly later in this subsection. Any $w\in W^\Theta$ can be expressed as the reduced product of simple transpositions $w=r_{i_1}\dotsb r_{i_k}$. The operator $\delta_w:=\delta_{i_1}\circ\dotsb\circ \delta_{i_k}$ also acts on $\bigoplus_{w\in W^\Theta}h_E(\mrm{pt})$ and is independent of the choice of reduced expression for $w$.  We have $\iota(S_{w\cdot w_0})=\delta_w(\iota(S_{w_0}))$, where $w_0$ is the longest word in $W^\Theta$. We will now explicitly describe the restrictions $S_{w_0}|_\lambda$ of $S_{w_0}$ to each fixed point $\lambda\in W^\Theta$, and we will give an explicit formula defining the operator $\delta_i$ in each of the four cases $H_T(G/P_\Theta)$, $H_{\widehat{T}}^{\text{loc}}(T^*(G/P_\Theta))$, $K_T(G/P_\Theta)$, and $K_{\widehat{T}}^{\text{loc}}(T^*(G/P_\Theta))$. We can view the root $\alpha_i$ as the element $\alpha_i=y_i-y_{i+1}$ in the weight lattice $\Lambda\simeq \mbb{Z}[y_1,\dotsc,y_n]$ 
	
	\begin{enumerate}
		\item (\cite{D73}) $H_T(G/P_\Theta)$. Below are the formulas for $S_{w_0}$ restricted to fixed points $\lambda\in W^\Theta$:	
		\[S_{w_0}|_\lambda=\begin{cases}
			\prod_{\alpha\in \Sigma_\Theta^+}\alpha,& \lambda=w_0;\\
			0,& \lambda\neq w_0.
		\end{cases}\]		
		There is a natural action of $W^\Theta$ on the ring $H_T(\mrm{pt})\simeq \mbb{Z}[y_1,\dotsc,y_n]$, defined on generators by $w(y_i):=y_{w(i)}$ for all $w\in W^\Theta$. This $W^\Theta$-action extends to the ring $\bigoplus_{\lambda\in W^\Theta}H_T(\mrm{pt})$ by $w(f_\lambda)_{\lambda}:=(w(f_\lambda))_{w(\lambda)}$ for all $w\in W^\Theta$. 
		Consider the $\mbb{Z}$-linear operators on $H_T(\mrm{pt})$:
		\[\partial_{i}(f)=\frac{f-r_i(f)}{\alpha_i},\quad f\in H_T(\mrm{pt}),\text{ } \alpha_i\in\Delta.\]
		The operators $\delta_i$ naturally define $\mbb{Z}$-linear operators on $\bigoplus_{\lambda\in W^\Theta}H_T(\mrm{pt})$, and they are often called \textbf{divided difference operators} for $H_T(G/P_\Theta)$. 
		\item (\cite[\S 5]{KZJ21}, \cite{AM16}) $H_{\widehat{T}}^{\text{loc}}(T^*(G/P_\Theta))$. Below are the formulas for $S_{w_0}$ restricted to fixed points $\lambda\in W^\Theta$:	
		\[S_{w_0}|_\lambda=\begin{cases}
			\prod_{\alpha\in \Sigma_\Theta^+} \tfrac{\alpha}{\alpha+\hbar},& \lambda=w_0;\\
			0,& \lambda\neq w_0.
		\end{cases}\]		
		There is a natural action of $W^\Theta$ on the ring $H_{\widehat{T}}^{\text{loc}}(\mrm{pt})\simeq \mbb{Z}[y_1,\dotsc,y_n,\hbar]$, defined on generators by $w(y_i):=y_{w(i)}$ and $w(\hbar)=\hbar$ for all $w\in W^\Theta$. This $W^\Theta$-action extends  to the ring $\bigoplus_{\lambda\in W^\Theta}H_{\widehat{T}}^{\text{loc}}(\mrm{pt})$ by $w(f_\lambda)_{\lambda}:=(w(f_\lambda))_{w(\lambda)}$ for all $w\in W^\Theta$. 
		Consider the $\mbb{Z}[\hbar]$-linear operators on $H_{\widehat{T}}^{\text{loc}}(\mrm{pt})$:
		\[\delta_{i}(f)=\frac{\hbar}{\alpha_i}f+\frac{\alpha_i-\hbar}{\alpha_i}r_i(f),\quad f\in H_{\widehat{T}}(\mrm{pt}),\text{ } \alpha_i\in\Delta.\]
		The operators $\delta_i$ naturally define $\mbb{Z}[\hbar]$-linear operators on $\bigoplus_{\lambda\in W^\Theta}H_{\widehat{T}}^{\text{loc}}(\mrm{pt})$, and we will call them \textbf{Demazure-Lusztig operators} for $H_{\widehat{T}}(T^*(G/P_\Theta))$.
		\item\label{itm:3} (\cite{D74}) $K_T(G/P_\Theta)$. Below are the formulas for $S_{w_0}$ restricted to fixed points $\lambda\in W^\Theta$:	
		\[S_{w_0}|_\lambda=\begin{cases}
			\prod_{\alpha\in\Sigma_\Theta^+} (1-e^{-\alpha}),& \lambda=w_0;\\
			0,&\lambda\neq w_0.
		\end{cases}\]		
		There is a natural action of $W^\Theta$ on the ring $K_T(\mrm{pt})\simeq \mbb{Z}[e^{\pm y_1},\dotsc,e^{\pm y_n}]$, defined on generators by $w(e^{\pm y_i}):=e^{\pm y_{w(i)}}$ for all $w\in W^\Theta$. This $W^\Theta$-action extends to the ring $\bigoplus_{\lambda\in W^\Theta}K_T(\mrm{pt})$ by $w(f_\lambda)_{\lambda}:=(w(f_\lambda))_{w(\lambda)}$ for all $w\in W^\Theta$. 
		Consider the $\mbb{Z}$-linear operators on $K_T(\mrm{pt})$:
		\[\delta_{i}(f)=\frac{f}{1-e^{-\alpha_i}}+\frac{r_i(f)}{1-e^{\alpha_i}},\quad f\in K_T(\mrm{pt}),\text{ } \alpha_i\in\Delta.\]
		The operators $\delta_i$ naturally define $\mbb{Z}$-linear operators on $\bigoplus_{\lambda\in W^\Theta}K_T(\mrm{pt})$, and they are often called \textbf{Demazure operators} for $K_T(G/P_\Theta)$.
		\item\label{itm:4} (\cite[\S 2.5]{KZJ21}) $K_{\widehat{T}}^{\text{loc}}(T^*(G/P_\Theta))$. Below are the formulas for $S_{w_0}$ restricted to fixed points $\lambda\in W^\Theta$:	
		\[S_{w_0}|_\lambda=\begin{cases}
			\prod_{\alpha\in\Sigma_\Theta^+} \tfrac{1-e^{-\alpha}}{1-q^2e^{-\alpha}},& \lambda=w_0;\\
			0,&\lambda\neq w_0.
		\end{cases}\]		
		There is a natural action of $W^\Theta$ on the ring $K_{\widehat{T}}^{\text{loc}}(\mrm{pt})\simeq \mbb{Z}[e^{\pm y_1},\dotsc,e^{\pm y_n},q^{\pm 1}]$, defined on generators by $w(e^{\pm y_i}):=e^{\pm y_{w(i)}}$ and $w(q^{\pm 1})=q^{\pm1 }$ for all $w\in W^\Theta$. This $W^\Theta$-action extends to the ring $\bigoplus_{\lambda\in W^\Theta}K_{\widehat{T}}^{\text{loc}}(\mrm{pt})$ by $w(f_\lambda)_{\lambda}:=(w(f_\lambda))_{w(\lambda)}$ for all $w\in W^\Theta$. 
		Consider the $\mbb{Z}[q^{\pm1}]$-linear operators on $K_{\widehat{T}}^{\text{loc}}(\mrm{pt})$:
		\[\delta_{i}(f)=\frac{1-q^2}{1-e^{-\alpha_i}}f+\frac{1-q^2e^{\alpha_i}}{1-e^{\alpha_i}}r_i(f),\quad f\in K_{\widehat{T}}^{\text{loc}}(\mrm{pt}),\text{ } \alpha_i\in\Delta.\]
		The operators $\delta_i$ naturally define $\mbb{Z}[q^{\pm 1}]$-linear operators on $\bigoplus_{\lambda\in W^\Theta}K_{\widehat{T}}^{\text{loc}}(\mrm{pt})$, and they are often called \textbf{Demazure-Lusztig operators} for $K_{\widehat{T}}(T^*(G/P_\Theta))$.
	\end{enumerate}

	\subsection{Structure constants and puzzles}\label{subsection:puzzles}
	In this subsection, we will recall the Littlewood-Richardson rule for multiplying motivic Segre classes of Schubert cells in terms of Knutson-Tao puzzles, which is proven in \cite{KZJ17}, \cite{KZJ21}. See the introduction of \cite{K23} for a detailed history on Littlewood-Richardson rules.

	Apart from several special cases, general combinatorial formulas for the structure constants $c_{\lambda,\mu}^\nu$ in the Schubert basis $S_\lambda$ in the rings $H^*(G/P_\Theta)$, $H_T(G/P_\Theta)$, $H_{\widehat{T}}^{\mrm{loc}}(T^*(G/P_\Theta))$, $K(G/P_\Theta)$, $K_T(G/P_\Theta)$, $K_{\widehat{T}}^{\mrm{loc}}(T^*(G/P_\Theta))$ are unknown:
	\[S_\lambda\cdot S_\mu=\sum_{\nu} c_{\lambda,\mu}^\nu S_\nu,\quad \lambda,\mu,\nu\in W^\Theta.\]
	There are combinatorial formulas for the structure constants $c_{\lambda,\mu}^\nu$ in $H^*(G/P_\Theta)$ and $K(G/P_\Theta)$ in terms of Knutson-Tao puzzles for $d=1,2,3$. However, equivariantly in $H_T(G/P_\Theta), K_T(G/P_\Theta)$, $H_{\widehat{T}}^{\mrm{loc}}(T^*(G/P_\Theta)),$ and  $K_{\widehat{T}}^{\mrm{loc}}(T^*(G/P_\Theta))$ there are only combinatorial formulas for $d=1,2$ (see \cite[\S 6]{KZJ21}).

	We will explain the puzzle formula for multiplying motivic Segre classes of Schubert cells in the special case $d=1$. Consider three lists $\lambda$, $\mu$, $\nu$ in $W^\Theta$. A \textbf{puzzle} with side labels $\lambda$, $\mu$, $\nu$ is a triangle with side labels $\lambda$, $\mu$, $\nu$ that is tiled by the following \textbf{valid} puzzle pieces with edge labels\footnote{We note that an equal sign between puzzle pieces means that the puzzle pieces have the same \textit{fugacity}, which is a rational function in the $z$ variable that appears to the right of some of the puzzles (we will explain precisely what we mean by ``fugacity" shortly). An equal sign between puzzle pieces does \textit{not} mean that the puzzle pieces themselves are actually equal.}:
	
	\[
	\begin{gathered}
		\rh0000=
		\rh1111=
		\rh0101=
		\rh1{10}1{10}=
		\rh{10}0{10}0=
		\rh{10}{10}{10}{10}=1
	\end{gathered}
	\]
	\[
	\begin{gathered}
		\rh1{10}00=
		\rh{10}011=\frac{1-q^2}{1-q^2z}
	\end{gathered} \quad\quad
	\begin{gathered}
		\rh11{10}0=
		\rh001{10}=\frac{(1-q^2)z}{1-q^2z}
	\end{gathered}
	\]
	\[
	\begin{gathered}
		\rh{10}1{10}1=
		\rh0{10}0{10}= 
		\rh1010=\frac{q(1-z)}{1-q^2z}
	\end{gathered}
	\]	
	\[	
	\begin{gathered}
		\rh{10}{10}01=\frac{ q(q^2-1)}{1-q^2z}
	\end{gathered}	\quad \quad
	\begin{gathered}
		\rh01{10}{10}=\frac{(q^2-1)z}{q(1-q^2z)}
	\end{gathered}
	\]
	\[
	\uptri000=
	\uptri111=
	\uptri01{10}=
	\uptri{10}01=1
	\]

	Below is an example of a puzzle with edge labels $(\lambda,\mu,\nu)=(011,011,110)$.
	\[\begin{tikzpicture}[math mode,nodes={\mcol},x={(-0.577cm,-1cm)},y={(0.577cm,-1cm)},scale=1.5]
		\draw[thick] (1,0) -- node[pos=0.5] {\ss 0} ++(0,1); \draw[thick] (1,0) -- node[pos=0.5] {\ss 1} ++(1,0); 
		\draw[thick] (1,1) -- node[pos=0.5] {\ss 1} ++(0,1); \draw[thick] (1,1) -- node[pos=0.5] {\ss 10} ++(1,0); 
		\draw[thick] (1,2) -- node[pos=0.5] {\ss 1} ++(0,1); \draw[thick] (1,2) -- node[pos=0.5] {\ss 10} ++(1,0); \draw[thick] (1+1,2) -- node {\ss 0} ++(-1,1); 
		\draw[thick] (2,0) -- node[pos=0.5] {\ss10} ++(0,1); \draw[thick] (2,0) -- node[pos=0.5] {\ss 1} ++(1,0); 
		\draw[thick] (2,1) -- node[pos=0.5] {\ss 1} ++(0,1); \draw[thick] (2,1) -- node[pos=0.5] {\ss 1} ++(1,0); \draw[thick] (2+1,1) -- node {\ss 1} ++(-1,1); 
		\draw[thick] (3,0) -- node[pos=0.5] {\ss 10} ++(0,1); \draw[thick] (3,0) -- node[pos=0.5] {\ss 0} ++(1,0); \draw[thick] (3+1,0) -- node {\ss 1} ++(-1,1); 
	\end{tikzpicture}\]
	Triangle tiles are allowed only along the bottom row of a puzzle. Each puzzle piece has a \textbf{fugacity}, which is a function from $\Lambda$ to $\mrm{Frac}(K_{
		\widehat{T}}(\mrm{pt}))\simeq \mrm{Frac}( \mbb{Z}[e^{\pm y_1},\dotsc,e^{\pm y_n},q^{\pm 1}])$. The fugacities of the puzzle pieces are the rational functions in the $z$ variable that appear next to the puzzle pieces above, and these fugacities can also be found in \cite[\S 4.1]{KZJ21}. The \textbf{fugacity of a puzzle} is the product of the fugacities of the rhombi puzzles pieces that tile it; within a puzzle, the fugacity of a triangle tile is constant and equal to $1$, whereas the fugacity of a rhombus tile depends on its position in the puzzle. A rhombus tile that lies on the $i$-th southwest-to-northeast diagonal and the $j$-th northwest-to-southeast diagonal depends on $z=e^{y_j-y_i}$. Let $\mrm{fug}(P)$ denote the fugacity of a puzzle $P$. In this $d=1$ case, the combinatorial formula for the structure constants $c_{\lambda,\mu}^\nu$ in the Schubert basis $S_\lambda$ in the ring $K_{\widehat{T}}^{\mrm{loc}}(T^*(G/P_\Theta))$ is
	\[S_\lambda\cdot S_\mu=\sum_{\text{puzzles $P$ with side lengths $\lambda,\mu,\nu$}}\left(\mrm{fug}(\text{P})\right)S_\nu.\]

	In \cref{section:deformation}, we will deform the motivic Segre classes $S_\lambda$ by incorporating a parameter $\beta$. As we will explain in \cref{section:deformation}, our deformed motivic Segre classes are non-trivial $\beta$-deformations of the usual motivic Segre classes described in (\ref{itm:4}); they are not simply $\beta$-homogenizations of the usual motivic Segre classes. In the $d=1$ case, we will give a positive formula for the structure constants for these $\beta$-deformed classes in terms of Knutson-Tao puzzles (see \cref{thm:main}). This formula will be positive in the sense of \cref{rmk:positive}. In \cref{section:interpolating}, we will explain that our puzzle formula involving the $\beta$ parameter interpolates between the puzzle formula for multiplying motivic Segre classes of Schubert cells (the $\beta=1$ specialization) and the puzzle formula for multiplying Segre-Schwartz-MacPherson classes of Schubert cells (the $\beta=0$ specialization). See \cref{thm:main2} for the precise formulation of this statement. The proof of the puzzle formula for the deformed classes involves the representation theory of the multi-parameter quantum group of type $\widehat{\mathfrak{a}}_2$, as we will explain in \cref{subsection:R} and \cref{section:multi-parameter}.

	\section{Deformation of the motivic Segre classes}\label{section:deformation}
	
	In this section, we define the $\beta$-deformations of the motivic Segre classes of Schubert cells, which will be the main objects of study in this paper. Our deformed motivic Segre classes are non-trivial $\beta$-deformations of the usual motivic Segre classes described in (\ref{itm:4}), and they are not simply $\beta$-homogenizations of the usual motivic Segre classes. This construction works for all $d$, and the $\beta=1$ specialization recovers the usual motivic Segre classes of Schubert cells. We will \textit{not} explain how to specialize the parameter $\beta$ in order to recover the Segre-Schwartz-MacPherson classes of Schubert cells in this section: this will be left until \cref{section:interpolating}, where we discuss the connective formal group law. In \cref{subsection:deformed-ddo}, we will define divided difference operators that $\beta$-deform the $K$-theoretic Demazure-Lusztig operators. In \cref{subsection:motive-Segre}, we will construct our deformed motivic Segre classes using the deformed divided difference operators. In \cref{section:localization-techniques}, we will see that the deformed motivic Segre classes arise as quotients of canonical elements in a quotient of the $\widehat{T}$-equivariant algebraic cobordism ring of $T^*(G/P_\Theta)$.

	\subsection{Deformed divided difference operators}\label{subsection:deformed-ddo}
	
	In this subsection, we will define a one-parameter deformation of the $K$-theoretic Demazure-Lusztig operators. Our deformed operators, which we define in \cref{dfn:deformed-operator}, are not simply the $\beta$-homogenizations $\beta^{-1}\delta_i^{K_{\widehat{T}}}$ of the Demazure-Lusztig operators $\delta_i^{K_{\widehat{T}}}$ defined in (\ref{itm:4}); they are non-trivial $\beta$-deformation of the operators in (\ref{itm:4}).
	
	We will use the operators $\delta_i:=\frac{1}{1-e^{-\alpha_i}}+\frac{1}{1-e^{\alpha_i}}r_{i}$ of (\ref{itm:3}) that act on $K_{T}(\mrm{pt})$ to define linear operators on $K_{\widehat{T}}(\mrm{pt})[\beta,\beta^{-1}]$. Though the operators $r_i$ and $\delta_i$ act naturally on $K_{T}(\mrm{pt})$, we will extend them $\mbb{Z}[q^{\pm 1},\beta^{\pm 1}]$-linearly to act on $K_{\widehat{T}}(\mrm{pt})[\beta,\beta^{-1}]$. For a positive integer $k$, set $[k]:=\{1,\dotsc,k\}$.
	
	\begin{dfn}\label{dfn:Q(b)}
		Let $Q(\beta,q):=q^2+\beta-q^2\beta$ in $\mbb{Z}[q,\beta]$. 
	\end{dfn}
	\begin{dfn}\label{dfn:deformed-operator}
		For each $i\in[n-1]$, define the following $\mbb{Z}[q^{\pm 1},\beta^{\pm 1}]$-linear operator on $K_{\widehat{T}}(\mrm{pt})[\beta,\beta^{-1}]$:
		\[\partial_i:=\beta(1-q^2)\delta_{i}+q^2r_{i}=\frac{\beta(1-q^2)}{1-e^{-\alpha_i}}+\frac{Q(\beta,q)-q^2e^{\alpha_i}}{1-e^{\alpha_i}}r_{i}.\]
	\end{dfn}
	
	When $\beta=1$, the operator $\partial_i$ is identical to that of (\ref{itm:4}). We will see in \cref{lem:independent} that the $\partial_i$ satisfy the braid relations, which does not trivially follow from the fact that the operators $\delta_i^{K_{\widehat{T}}}$ of (\ref{itm:4}) satisfy the braid relations, as $\partial_i$ is not the homogenization $\beta^{-1}\delta_i^{K_{\widehat{T}}}$ of the Demazure-Lusztig operator $\delta_i^{K_{\widehat{T}}}$ defined in (\ref{itm:4}).
	\begin{rmk}
		That $\partial_i$ is indeed a linear operator on $K_{\widehat{T}}(\mrm{pt})[\beta,\beta^{-1}]$ follows from the fact that $\delta_i$ and $r_i$ are linear operators on $K_{\widehat{T}}(\mrm{pt})[\beta,\beta^{-1}]$.
	\end{rmk}
	\begin{rmk}
		The motivation for our definition of $\partial_i$ in \cref{dfn:deformed-operator} comes the \textit{connective formal group law}. See \cref{section:interpolating} for further details.
	\end{rmk}

	\begin{lem}\label{lem:independent}
		The operators $\partial_{i}$ satisfy the following relations:
		\begin{enumerate}
			\item $\partial_{i}\circ \partial_{j}=\partial_{j}\circ \partial_{i}$ whenever $|i-j|>1$.
			\item $\partial_{i}\circ \partial_{{i+1}}\circ\partial_{i}=\partial_{{i+1}}\circ\partial_{{i}}\circ\partial_{{i+1}}$ for all $i\in[n-2]$.
			\item $(\partial_{i}+q^2)\circ(\partial_{i}-Q(\beta,q))=0$ for all $i\in[n-1]$.
		\end{enumerate}
	\end{lem}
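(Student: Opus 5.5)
We will work entirely inside the algebra of operators on $K_{\widehat{T}}(\mrm{pt})[\beta,\beta^{-1}]$ (localized at the $1-e^{\alpha}$ when needed) generated by multiplication operators $f\mapsto g f$ and the reflections $r_i$. The basic commutation rule is $r_i\circ g=r_i(g)\circ r_i$. In this language
\[\partial_i=a(\alpha_i)+b(\alpha_i)r_i,\qquad a(x)=\frac{\beta(1-q^2)}{1-e^{-x}},\quad b(x)=\frac{Q-q^2e^{x}}{1-e^{x}},\qquad Q=Q(\beta,q).\]
Because every operator in sight has this form, each relation reduces to comparing the coefficients of the elements $w\in S_3$ (or $S_2\times S_2$) in an expansion $\sum_w c_w\, w$. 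A convenient alternative is to write $\partial_i=\beta(1-q^2)\delta_i+q^2r_i$ and use the known relations for $\delta_i$ and $r_i$. We nonetheless prefer the direct coefficient comparison, since it is uniform across the three relations.

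Relation (1) is immediate. For $|i-j|>1$, $r_i$ fixes $\alpha_j$ and $r_j$ fixes $\alpha_i$, so $a(\alpha_j)$ and $b(\alpha_j)$ commute with $r_i$, and symmetrically. Expanding both products then gives identical four-term sums.

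For relation (3), we expand
\[\partial_i^2=a(\alpha_i)^2+b(\alpha_i)\,b(-\alpha_i)+\bigl(a(\alpha_i)b(\alpha_i)+b(\alpha_i)a(-\alpha_i)\bigr)r_i,\]
using $r_i\circ g(\alpha_i)=g(-\alpha_i)\circ r_i$. The quadratic relation is equivalent to $\partial_i^2=(Q-q^2)\partial_i+q^2Q$. Comparing the $r_i$-coefficients reduces to
\[a(x)+a(-x)=Q-q^2.\]
This holds because $\frac{1}{1-e^{-x}}+\frac{1}{1-e^{x}}=1$ and $\beta(1-q^2)=Q-q^2$. Comparing the identity coefficients reduces to
\[a(x)^2+b(x)b(-x)=(Q-q^2)a(x)+q^2Q.\]
Using $a(-x)=(Q-q^2)-a(x)$, this becomes $b(x)b(-x)=a(x)a(-x)+q^2Q$. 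To verify it, we multiply through by $(1-e^x)(1-e^{-x})$. The left side becomes $(Q-q^2e^x)(Q-q^2e^{-x})$ and the right side becomes $-(Q-q^2)^2+q^2Q(2-e^x-e^{-x})$. Both sides expand to $Q^2+q^4-q^2Q(e^x+e^{-x})$, which settles relation (3).

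Relation (2) is the main obstacle. Expanding both triple products gives sums $\sum_{w\in S_3}c_w w$ with six coefficients each. The coefficient of the longest element $r_ir_{i+1}r_i$ matches trivially: on both sides it equals $b(\alpha_i)b(\alpha_i+\alpha_{i+1})b(\alpha_{i+1})$, up to the order of the arguments. The coefficients of $r_ir_{i+1}$ and $r_{i+1}r_i$ reduce to identities with only two distinct roots, and these follow from the additivity of the exponents $e^{\alpha_i+\alpha_{i+1}}=e^{\alpha_i}e^{\alpha_{i+1}}$. The delicate cases are the coefficients of $1$, $r_i$ and $r_{i+1}$. Each of these is a rational identity in $u=e^{\alpha_i}$ and $v=e^{\alpha_{i+1}}$ with parameters $Q$ and $q^2$, which we verify by clearing denominators.

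To avoid brute force there, we would first try a structural shortcut. The expression is affine in $\beta$ through $Q$, and $\partial_i$ is obtained from the usual $K$-theoretic Demazure–Lusztig operator of (\ref{itm:4}) by replacing $1-q^2$ with $Q-q^2$ in the $\delta_i$-part. Writing $\partial_i=t\,\delta_i+q^2r_i$ with $t$ a formal parameter, the braid relation becomes a polynomial identity in $t$ of degree at most $3$. The $t^0$ part holds because the $r_i$ satisfy the braid relation, and the $t^3$ part holds because the $\delta_i$ do. The mixed terms are known to cancel in the specialization $t=1-q^2$. We would then check that these mixed terms are forced to vanish identically, using the relations $r_i\delta_i=\delta_i$ (up to the appropriate twist) and $\delta_i^2=\delta_i$. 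If this shortcut fails, we fall back on the direct polynomial verification in $u$ and $v$, for instance in SageMath, in the same spirit as the paper's later computer checks.
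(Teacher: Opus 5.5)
Your plan is the same direct computation the paper has in mind (its proof is just ``a straightforward calculation''), and your checks of (1) and (3) are correct except for a sign slip: after multiplying by $(1-e^x)(1-e^{-x})$ the term $a(x)a(-x)$ becomes $+(Q-q^2)^2$, not $-(Q-q^2)^2$, and only with the plus sign does the right side expand to $Q^2+q^4-q^2Q(e^x+e^{-x})$. Your optional shortcut for (2) closes immediately: the $t^1$ and $t^2$ coefficients are $q^4A$ and $q^2B$ with operators $A,B$ independent of $q$, so the known $\beta=1$ braid relation (vanishing at $t=1-q^2$ for indeterminate $q$) gives $q^2A+(1-q^2)B=0$, hence $A=B=0$, and the mixed terms vanish for every $t$.
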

	\begin{proof}
		This is a straightforward calculation.
	\end{proof}
	
	\begin{rmk}\label{rmk:inverse-relations}
		The operator $\partial_i$ has an inverse
		\[\widetilde{\partial}_i=\frac{1}{q^2Q(\beta,q)}\left(\frac{\beta(q^2-1)}{1-e^{\alpha_i}}+\frac{Q(\beta,q)-q^2 e^{\alpha_i}}{1-e^{\alpha_i}}r_i\right)\]
		in the sense that $\partial_i\circ \widetilde{\partial}_i=\widetilde{\partial}_i\circ\partial_i=1$. These inverse operators $\widetilde{\partial}_i$ satisfy the following relations:
		\begin{enumerate}
			\item $\widetilde{\partial}_{i}\circ \widetilde{\partial}_{j}=\widetilde{\partial}_{j}\circ \widetilde{\partial}_{i}$ whenever $|i-j|>1$.
			\item $\widetilde{\partial}_{i}\circ \widetilde{\partial}_{{i+1}}\circ\widetilde{\partial}_{i}=\widetilde{\partial}_{{i+1}}\circ\widetilde{\partial}_{i}\circ\widetilde{\partial}_{{i+1}}$ for all $i\in[n-2]$.
			\item $\left(\widetilde{\partial}_{i}+\tfrac{1}{q^2}\right)\circ\left(\widetilde{\partial}_{i}-\tfrac{1}{Q(\beta,q)}\right)=0$ for all $i\in[n-1]$.
		\end{enumerate}
	\end{rmk}

	Write $w\in W^\Theta$ as a reduced product of simple transpositions $w=r_{i_1}\cdots r_{i_k}$. We will use the notation $\partial_w:=\partial_{{i_1}}\circ\dotsb\circ \partial_{{i_k}}$; it follows from \cref{lem:independent} that $\partial_w$ is independent of the choice of reduced word for $w$. We also use the notation $\widetilde{\partial}_w:=\widetilde{\partial}_{{i_1}}\circ\dotsb\circ \widetilde{\partial}_{{i_k}}$, which is well-defined by Remark \ref{rmk:inverse-relations}.

	\subsection{Deformed motivic Segre classes}\label{subsection:motive-Segre}
	In this subsection, we will use the deformed divided difference operators of \cref{subsection:deformed-ddo} to define our deformed motivic Segre classes. In \cref{section:localization-techniques}, we will interpret the deformed motivic Segre classes geometrically, as quotients of canonical elements in a quotient of the $\widehat{T}$-equivariant algebraic cobordism ring of $T^*(G/P_\Theta)$ by showing that the canonical elements satisfy a GKM-like condition (see \cref{prop:GKM}).

	Let $K_{\widehat{T}}^{\text{loc}}(\mrm{pt})[\beta,\beta^{-1}]$ be the localization of $K_{\widehat{T}}(\mrm{pt})[\beta,\beta^{-1}]$ at the elements $\left\{Q(\beta,q)-q^2e^\lambda\right\}_{\lambda\in \Sigma}$. Consider the rings 
	\[\widetilde{R}_\Lambda:=\bigoplus_{\lambda\in W^\Theta}K_{\widehat{T}}(\mrm{pt})[\beta,\beta^{-1}]\quad\text{and}\quad \widetilde{R}_\Lambda^{\text{loc}}:=\bigoplus_{\lambda\in W^\Theta}K_{\widehat{T}}^{\text{loc}}(\mrm{pt})[\beta,\beta^{-1}],\]
	each with direct sum addition and multiplication. There is a natural action of $W^\Theta$ on $\widetilde{R}_\Lambda$, given by $w\cdot(f_\lambda)_\lambda:=(w(f_\lambda))_{w(\lambda)}$ for all $w\in W^\Theta$. Moreover, the action of $W^\Theta$ on $K_{\widehat{T}}(\mrm{pt})[\beta,\beta^{-1}]$ preserves the set of elements $\left\{Q(\beta,q)-q^2e^\lambda\right\}_{\lambda\in \Sigma}$. Therefore, the action of $W^\Theta$ on $\widetilde{R}_\Lambda$ induces an action of $W^\Theta$ on $\widetilde{R}_\Lambda^\mrm{loc}$. In turn, we get an action of $\partial_w$ on $\widetilde{R}_\Lambda^\mrm{loc}$ for all $w\in W^\Theta$. Consider the element $S_{w_0}\in\widetilde{R}_\Lambda^\mrm{loc}$ defined by 
	\[S_{w_0}|_\lambda=\begin{cases}
		\prod_{\alpha\in\Sigma_\Theta^+} \frac{1-e^{-
				\alpha}}{Q(\beta,q)-q^2e^{-\alpha}},& \text{if $\lambda=w_0$};\\
		0,&\text{otherwise.}
	\end{cases}\] 
	Define $S_{w\cdot w_0}:=\partial_w(S_{w_0})$. The set $\{S_\lambda\}_{\lambda\in W^\Theta}$ forms a basis for $\widetilde{R}_\Lambda^\mrm{loc}$ over $K_{\widehat{T}}^{\text{loc}}(\mrm{pt})[\beta,\beta^{-1}]$, and we will call the elements $S_\lambda$ \textbf{deformed motivic Segre classes}. Observe that, since $\partial_i$ is not simply the $\beta$-homogenization of the Demazure-Lusztig operator defined in (\ref{itm:4}), the element 
	$S_\lambda$ is not simply a $\beta$-homogenization $\beta^{-\ell(\lambda)}S_\lambda^{K_{\widehat{T}}}$ of the usual motivic Segre class $S_\lambda^{K_{\widehat{T}}}$ that we recalled in (\ref{itm:4}). The following definition will be useful in Proposition \ref{prop:divided-difference-recursion} and \cref{section:puzzle-formula}.
	\begin{dfn}\label{dfn:homogenization}
		The \textbf{homogenization} of $S_\lambda$ by a factor $q$ is $q^{\ell(\lambda)}S_\lambda$, where $\ell(\lambda)$ is the length of $\lambda\in W^\Theta$. 
	\end{dfn}
	We will now explain in \cref{egg:K} how to specialize the parameter $\beta$ to recover the motivic Segre classes of Schubert cells. In \cref{egg:H}, we will explain how the Segre-Schwartz-MacPherson classes of Schubert cells can be recovered from the deformed motivic Segre classes.
	\begin{egg}\label{egg:K}
		At $\beta=1$, the operator $\partial_i$ and class $S_{w_0}$ are the following:
		\[\partial_i=\frac{1-q^2}{1-e^{-\alpha_i}}+\frac{1-q^2e^{\alpha_i}}{1-e^{\alpha_i}}r_i
		\quad\text{and}\quad S_{w_0}|_\lambda=\begin{cases}
			\prod_{\alpha\in\Sigma_\Theta^+} \frac{1-e^{-\alpha}}{1-q^2e^{-\alpha}},& \text{if $\lambda=w_0$};\\
			0,&\text{otherwise.}
		\end{cases}
		\]
		Therefore, $\{S_\lambda\}_{\lambda\in W^\Theta}$ is the set of images of motivic Segre classes of Schubert cells in $K_{\widehat{T}}^{\text{loc}}(T^*(G/P_\Theta))$ under the localization map $\iota$ of \cref{subsection:localization}.
	\end{egg}

	\section{A solvable lattice model for Grassmannian deformed classes}\label{section:solvable-lattice-model}
	
	In this section, we derive a solvable lattice model for the homogenizations $q^{\ell(\lambda)}S_\lambda$ of the classes $S_\lambda$ from \cref{section:deformation} in the case $d=1$. Our technique closely follows \cite{KZJ21}. In \cref{subsection:the-lattice-model}, we define a new family of rational functions in terms of a solvable lattice model that depends on parameters $q$ and $\beta$. In \cref{subsection:stabclass}, we show that the rational functions of \cref{subsection:the-lattice-model} are rational function representatives for homogenizations of the deformed motivic Segre classes $S_\lambda$ defined in \cref{subsection:motive-Segre}.
	
	\begin{assumption}
		We will assume that $d=1$ for the rest of this section.
	\end{assumption}
	
	\begin{rmk}
		Note that our lattice model for $d=1$ generalizes the lattice model in \cite{KZJ21} by incorporating an additional parameter $\beta$. When $\beta=1$, we recover the lattice model of \cite{KZJ21}. The paper \cite{KZJ21} also defines lattice models for the motivic Segre classes in the cases $d=2,3,4$. We expect that our lattice model can be extended to the cases $d=2,3,4$ as well. We leave this for future work (see \cref{section:future-work}).
	\end{rmk}
	
	\subsection{The lattice model}\label{subsection:the-lattice-model} In this subsection, we define and study a lattice model that will be used to construct rational function representatives for the deformed motivic Segre classes of \cref{subsection:deformed-ddo}. 
	
	Recall the element $Q(\beta,q)=q^2+\beta-q^2\beta$ defined in \cref{subsection:deformed-ddo}. Consider the matrix $\widehat{R}(\beta,z_2/z_1)$ below: 
	\begin{equation}\label{eqn:latticemodel}
		\widehat{R}(\beta,z_2/z_1) :=\begin{tikzpicture}[baseline=(current  bounding  box.center),scale=0.75]
			\draw[orange,arrow=0.17,arrow=0.67,rounded corners,d]  (-0.5,0.5) node[black,above] {$\ss z_1$} -- (0.5,-0.5)  (0.5,0.5) node[black,above] {$\ss z_2$} -- (-0.5,-0.5);
		\end{tikzpicture}=\bordermatrix{
			&1\slash\backslash1 & 1\slash\backslash0 & 0\slash\backslash1 & 0\slash\backslash0\cr
			1\backslash\slash1&1 & 0 & 0 & 0\cr
			1\backslash\slash0&0 & \frac{\beta(1-q^2)(z_2/z_1)}{Q(\beta,q)-q^2(z_2/z_1)}  & \frac{qQ(\beta,q)(1-(z_2/z_1))}{Q(\beta,q)-q^2(z_2/z_1)}   & 0 \cr
			0\backslash\slash1&0 & \frac{q(1-(z_2/z_1))}{Q(\beta,q)-q^2(z_2/z_1)} & \frac{\beta(1-q^2)}{Q(\beta,q)-q^2(z_2/z_1)}  & 0  \cr
			0\backslash\slash0&0 & 0 & 0 & 1
		}.
	\end{equation}
	The parameters $z_1$ and $z_2$ that appear in the definition of $\widehat{R}(\beta,z_2/z_1)$ are called \textbf{spectral parameters}. Observe that the matrix $\widehat{R}(\beta,z_2/z_1)$ depends only on the quotient $z_2/z_1$.
	\begin{prop}\label{prop:single} (cf. \cite[Prop. 2.1]{KZJ21})
		The matrix $\widehat{R}(\beta,z_2/z_1)$ satisfies all equations below.
		\begin{itemize}
			\item Yang--Baxter equation:
			\begin{equation}\label{eq:ybe0}
				\begin{tikzpicture}[baseline=-3pt,y=2cm]
					\draw[orange,d,arrow=0.1,arrow=0.4,arrow=0.7,rounded corners=4mm] (-0.5,0.5) node[black,above] {$\ss z_1$} -- (0.75,0) -- (1.5,-0.5) (0.5,0.5) node[black,above] {$\ss z_2$} -- (0.2,0) -- (0.5,-0.5) (1.5,0.5) node[black,above] {$\ss z_3$} -- (0.75,0) -- (-0.5,-0.5);
				\end{tikzpicture}
				=
				\begin{tikzpicture}[baseline=-3pt,y=2cm]
					\draw[orange,d,arrow=0.1,arrow=0.4,arrow=0.7,rounded corners=4mm] (-0.5,0.5) node[black,above] {$\ss z_1$} -- (0.25,0) -- (1.5,-0.5) (0.5,0.5) node[black,above] {$\ss z_2$} -- (0.8,0) -- (0.5,-0.5)  (1.5,0.5) node[black,above] {$\ss z_3$} -- (0.25,0) -- (-0.5,-0.5) ;
				\end{tikzpicture}
			\end{equation}
			\item Unitarity equation:
			\begin{equation}\label{eq:unit0}
				\begin{tikzpicture}[baseline=-3pt]
					\draw[orange,arrow=0.07,arrow=0.57,rounded corners,d] (-0.5,1) node[black,above] {$\ss z_1$} -- (0.5,0) -- (-0.5,-1)  (0.5,1) node[black,above] {$\ss z_2$} -- (-0.5,0) -- (0.5,-1);
				\end{tikzpicture}
				=
				\begin{tikzpicture}[baseline=-3pt]
					\draw[orange,arrow=0.1,arrow=0.6,rounded corners,d] (-0.5,1) node[black,above] {$\ss z_1$} -- (-0.5,-1)  (0.5,1) node[black,above] {$\ss z_2$} -- (0.5,-1);
				\end{tikzpicture}
			\end{equation}
			\item Value at equal spectral parameters:
			\begin{equation}\label{eq:equal0}
				\begin{tikzpicture}[baseline=-3pt,yscale=1.5]
					\draw[orange,invarrow=0.3,d] (-0.5,-0.5)   -- node[left,pos=0.3] {} (0.5,0.5) node[black,above] {$\ss z_1$};
					\draw[orange,invarrow=0.3,d] (0.5,-0.5) -- node[right,pos=0.3] {} (-0.5,0.5) node[black,above] {$\ss z_1$};
				\end{tikzpicture}
				=
				\begin{tikzpicture}[baseline=-3pt,yscale=1.5]
					\draw[orange,invarrow=0.3,rounded corners=4mm,d] (-0.5,-0.5) -- node[left] {} (0,0) -- (-0.5,0.5) node[black,above] {$\ss z_1$};
					\draw[orange,invarrow=0.3,rounded corners=4mm,d] (0.5,-0.5) -- node[right] {} (0,0) -- (0.5,0.5) node[black,above] {$\ss z_1$};
				\end{tikzpicture}
			\end{equation}
		\end{itemize}
	\end{prop}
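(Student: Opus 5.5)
The plan is to verify all three identities by direct matrix computation. The structure of $\widehat{R}(\beta,z)$, with $z=z_2/z_1$, keeps this manageable. The matrix preserves the number of $1$'s among the two strands, so it is block diagonal with blocks of sizes $1,2,1$. Hence each identity splits into charge sectors.

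I will begin with the equal-spectral-parameter value \eqref{eq:equal0}. At $z=1$ the off-diagonal entries $q(1-z)/(Q-q^2z)$ and $qQ(1-z)/(Q-q^2z)$ vanish. The diagonal entries both become $\beta(1-q^2)/(Q-q^2)$. Since $Q-q^2=\beta-q^2\beta=\beta(1-q^2)$, they equal $1$. So $\widehat{R}(\beta,1)$ is the identity matrix in the crossing basis. That is exactly the uncrossed picture, with labels transmitted straight through.

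Next comes unitarity \eqref{eq:unit0}. This amounts to $\widehat{R}(\beta,z_2/z_1)\,\widehat{R}(\beta,z_1/z_2)=\mathrm{Id}$, with the index conventions dictated by the picture, so that the second crossing carries the inverse ratio. Only the middle $2\times 2$ block is nontrivial. I will compute its determinant and check that the product of the two blocks is the identity. The key algebraic identity, after clearing denominators $(Q-q^2z)(Q-q^2z^{-1})$, is
\[
\beta^2(1-q^2)^2 z - q^2Q(1-z)^2 = (Q-q^2z)(zQ-q^2)\cdot(\text{appropriate normalization}).
\]
This follows from $Q-q^2=\beta(1-q^2)$. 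A useful shortcut is that the determinant of the middle block factors as $(Q-q^2 z^{-1}) z / (Q-q^2z)$ up to sign, or something close to it. Both diagonal entries of the product should collapse via this identity.

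The main work is the Yang--Baxter equation \eqref{eq:ybe0}. Again by charge conservation, it suffices to check the sectors with one $1$ and with two $1$'s among the three strands; the sectors with zero or three $1$'s are trivial. By the $0\leftrightarrow 1$ symmetry of the six-vertex structure, conjugated by a diagonal gauge, I hope to reduce the two-$1$ sector to the one-$1$ sector. Failing that, I will just check both.

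Each sector is a $3\times3$ matrix identity in $z_1,z_2,z_3$. I plan a cleaner route than brute force. I will observe that $\widehat{R}$ is a gauge transformation of the standard trigonometric six-vertex $\check R$-matrix of $U_{t}(\widehat{\mathfrak{sl}}_2)$, and then rescale the spectral parameter. Concretely, I will set $t^2=q^2/Q$ and $u=z$. Then
\[
\frac{\beta(1-q^2)}{Q-q^2z}=\frac{1-t^2}{1-t^2z},
\]
again using $Q-q^2=\beta(1-q^2)$. The off-diagonal entries become $\frac{q}{Q}\frac{1-z}{1-t^2z}$ and $q\frac{1-z}{1-t^2z}$. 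Their product is $t^2\frac{(1-z)^2}{(1-t^2z)^2}$, as for the standard $R$-matrix with asymmetric (twisted) off-diagonal entries. Such a twist is a diagonal gauge/Drinfeld twist, which preserves Yang--Baxter.

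So \eqref{eq:ybe0} follows from the known YBE for \cite[Prop. 2.1]{KZJ21} at the parameter $t$, together with invariance of the YBE under the twist. The main obstacle is checking that this asymmetric off-diagonal factor $Q^{\pm1}$ really is a legitimate twist compatible across all three crossings. If that fails, I will fall back on a direct symbolic verification of the two $3\times3$ sectors.
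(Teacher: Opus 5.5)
Your proposal is correct, but for the Yang--Baxter equation it takes a different route from the paper. The paper's proof is only a symbolic (SageMath) check of $\widehat{R}(\beta,1)=\mathrm{id}$, of $\widehat{R}(\beta,z_2/z_1)\widehat{R}(\beta,z_1/z_2)=\mathrm{id}$, and of the braid-form Yang--Baxter identity.

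Your hand computations for the first two identities are right, and they hold exactly rather than ``up to normalization'': $\beta^2(1-q^2)^2z-q^2Q(1-z)^2=(Q-q^2z)(Qz-q^2)$. Hence the middle block has determinant exactly $z(Q-q^2z^{-1})/(Q-q^2z)$.

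The twist obstacle you flag closes cleanly, but not through a product gauge $D\otimes D$. Such a gauge acts by the same scalar on $e_0\otimes e_1$ and $e_1\otimes e_0$, so it cannot change the middle block. The fix is as follows.
\begin{itemize}
\item Put $t=q/\sqrt{Q}$, so $t^2=q^2/Q$.
\item Let $\Phi$ act on $e_{a_1}\otimes\cdots\otimes e_{a_m}$ by $\gamma^{\mathrm{inv}(a)}$, where $\mathrm{inv}(a)=\#\{i<j: a_i>a_j\}$ and $\gamma=Q^{\pm1/2}$, with the sign fixed by the row/column conventions.
\item A crossing only exchanges the two labels it meets. An adjacent swap of unequal labels therefore changes $\mathrm{inv}$ by exactly $\pm1$, whatever the other strands carry. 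So conjugating any single crossing by $\Phi$ leaves its diagonal entries alone and rescales its two off-diagonal entries by $\gamma^{\pm1}$.
\item With $m=2$ this gives $\widehat{R}(\beta,z)=\Phi\,\bigl(\widehat{R}(1,z)|_{q\mapsto t}\bigr)\,\Phi^{-1}$. The diagonal entries match because $Q-q^2=\beta(1-q^2)$.
\item With $m=3$, both sides of the Yang--Baxter equation are the $\Phi$-conjugates of the corresponding sides at $\beta=1$, $q\mapsto t$.
\end{itemize}
Since \cite[Prop.~2.1]{KZJ21} is an identity of rational functions in $q$, it specializes to $q=t$ over $\mathbb{Q}(q,\beta)(\sqrt{Q})$. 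The same conjugation also transports unitarity and the value at $z=1$. So the whole proposition follows from the $\beta=1$ case, and you need neither the $0\leftrightarrow1$ reduction nor the sector-by-sector fallback.

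What your approach buys is an explanation that the computer check does not give. For this $4\times4$ matrix, $\beta$ enters only through two things:
\begin{itemize}
\item the effective quantum parameter $t^2=q^2/Q(\beta,q)$, which is exactly the value of $q_{i,i}$ in the paper's connective quantum affine algebra;
\item an abelian (inversion-counting) twist.
\end{itemize}
This is a concrete hint of why a multi-parameter quantum group, rather than a one-parameter one, appears in Section~\ref{section:multi-parameter}. The paper's brute-force verification needs no such structural input. It is also uniform with its treatment of the $9\times9$ matrices in Proposition~\ref{prop:ybe}, where the paper offers no comparably short reduction to $\beta=1$.
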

	\begin{proof}
		This is a straightforward calculation, verified using SageMath \cite{G25}. The equations we have verified are as follows, where $\mrm{id}_3$ is the $3\times 3$ identity matrix:
		
		\textit{Yang-Baxter equation:}
		\begin{align*}
			(\widehat{R}(\beta,z_2/z_1)\otimes \mrm{id}_3)\circ (\mrm{id}_3\otimes \widehat{R}(\beta,z_3/z_1))&\circ(\widehat{R}(\beta,z_3/z_2)\otimes\mrm{id}_3)\\&
			=(\mrm{id}_3\otimes \widehat{R}(\beta,z_3/z_2))\circ(\widehat{R}(\beta,z_3/z_1)\otimes \mrm{id}_3)\circ (\mrm{id}_3\otimes \widehat{R}(\beta,z_2/z_1)).
		\end{align*}
		
		\textit{Unitarity equation:}
		\begin{align*}
			\widehat{R}(\beta,z_2/z_1)\circ \widehat{R}(\beta,z_1/z_2)=\mrm{id}_3\otimes \mrm{id}_3.
		\end{align*}
		
		\textit{Value at equal spectral parameters:}
		\begin{align*}
			\widehat{R}(\beta,1)=\mrm{id}_3\otimes\mrm{id}_3.
		\end{align*}
	\end{proof}
	
	Let $\omega$ be the list $0^{n-i_1}1^{i_1}$. Given $\lambda\in W^\Theta$, we use the $R$-matrices above to define a rational function $\overline{S}_\lambda$ in variables $x_1,\ldots,x_n, z_1,\ldots,z_n,\beta,t$ below. We will show in Proposition \ref{prop:divided-difference-recursion} that our rational functions represent the classes defined in \cref{section:deformation}.
	The rational function $\overline{S}_\lambda$ is defined to be the partition function of a wiring diagram constructed from the solvable lattice model with respect to the  $R$-matrices:

	\begin{equation}\label{eq:defS}
		\overline{S}_\lambda
		:=
		\begin{tikzpicture}[baseline=1.8cm,scale=0.75]
			\foreach\x/\t in {1/z_1,2/z_2,3/\cdots,4/z_n}
			\draw[orange,d,invarrow=0.9] (\x,0) node[black,below] {$\m 1$} -- node[black,right=-1mm,pos=0.1] {$\ss\t$} ++(0,5);
			\foreach\y/\t in {1/x_1,2/x_2,3/\vdots,4/x_n}
			\draw[orange,d,invarrow=0.9] (0,5-\y) -- node[black,above=-1mm,pos=0.1] {$\ss\t$} ++(5,0) node[black,right] {$\m 1$};
			\draw[decorate,decoration=brace] (-0.3,1) -- node[black,left] {$\m\omega$} (-0.3,4);			\draw[decorate,decoration=brace] (1,5.3) -- node[black,above] {$\m\lambda$} (4,5.3);
		\end{tikzpicture}
	\end{equation}
	The way to read the wiring diagram is as follows (see Example \ref{ex:d1} for explicit examples). We view the wiring diagram as a graph, where each boundary point and internal lattice point in the wiring diagram is considered a vertex, and each line segment connecting two vertices is an edge. A \textbf{state} in the wiring diagram is an assignment of a $0$ or $1$ label to each edge in the wiring diagram (apart from the labels of edges that are adjacent to boundary points, which have already been fixed). The \textbf{weight} of an internal vertex for a given state is the matrix entry in $\widehat{R}(\beta,x_i/z_j)$ corresponding to the edge labels of the four edges adjacent to the vertex. The \textbf{Boltzmann weight} of a state is the product over the weights of all internal vertices in the wiring diagram. The \textbf{partition function} $\overline{S}_\lambda$ of the wiring diagram above is the sum over the Boltzmann weights over all states of the wiring diagram. 
	
	\begin{egg}\label{ex:d1}
		We compute $\overline{S}_\lambda$ for $\lambda=01,10$:
		\begin{align*}
			\overline{S}_{01}&=
			\begin{tikzpicture}[baseline=(current  bounding  box.center),scale=0.75]
				\foreach\x/\t in {1/z_1,2/z_2}
				\draw[orange,d,invarrow=0.9] (\x,0) -- node[black,right=-1mm,pos=0.1] {$\ss\t$} ++(0,3);
				\foreach\y/\t in {1/x_1,2/x_2}
				\draw[orange,d,invarrow=0.9] (0,3-\y) -- node[above=-1mm,pos=0.05] {$\ss\t$} ++(3,0);
				\foreach\c in {(.5,1),(.9,.5),(1.9,.5),(2.5,1),(2.5,2),(1.9,2.5),
					(.9,1.5),(1.9,1.5),(1.5,1),(1.5,2)} { \node at \c {$1$}; }
				\foreach\c in {(.5,2),(.5,2),(.9,2.5)} { \node at \c {$0$}; }
			\end{tikzpicture}
			=\frac{\beta(1-q^2)}{Q(\beta,q)-q^2(x_1/z_1)}
			\\
			\overline{S}_{10}&=
			\begin{tikzpicture}[baseline=(current  bounding  box.center),scale=0.75]
				\foreach\x/\t in {1/z_1,2/z_2}
				\draw[orange,d,invarrow=0.9] (\x,0) -- node[black,right=-1mm,pos=0.1] {$\ss\t$} ++(0,3);
				\foreach\y/\t in {1/x_1,2/x_2}
				\draw[orange,d,invarrow=0.9] (0,3-\y) -- node[black,above=-1mm,pos=0.05] {$\ss\t$} ++(3,0);
				\foreach\c in {(.5,1),(.9,.5),(1.9,.5),(2.5,1),(2.5,2),(.9,2.5),
					(.9,1.5),(1.9,1.5),(1.5,1)} { \node at \c {$1$}; }
				\foreach\c in {(.5,2),(.5,2),(1.9,2.5),(1.5,2)} { \node at \c {$0$}; }
			\end{tikzpicture}
			=\frac{q(1-x_1/z_1)}{Q(\beta,q)-q^2(x_1/z_1)}\cdot \frac{\beta(1-q^2)}{Q(\beta,q)-q^2(x_1/z_2)}
		\end{align*}
	\end{egg}

	\begin{lem}\label{lem:Winv}
		$\overline{S}_\lambda$ is invariant under the action of $W^\Theta$ on the variables $x_i$.
	\end{lem}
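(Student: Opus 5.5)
The plan is the standard "train argument" of \cite{KZJ21}, based on the Yang--Baxter equation and unitarity from \cref{prop:single}. First I note what the $W^\Theta$-action on the $x_i$ means here. The horizontal lines carry $x_1,\dots,x_n$, and the left boundary labels $\omega=0^{n-i_1}1^{i_1}$ occupy rows $1,\dots,n$ from top to bottom. By the convention of \cref{subsection:flag}, $W^\Theta$ is identified with $W/W_\Theta$, but $\overline{S}_\lambda$ is a function of $x$, on which all of $W=S_n$ acts. The statement therefore amounts to invariance under the stabilizer of the boundary word $\omega$, namely $S_{n-i_1}\times S_{i_1}$, acting by permuting the $x_i$ within blocks of rows with equal left labels. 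Since this group is generated by the simple transpositions $r_i$ with $\omega_i=\omega_{i+1}$, it suffices to show that swapping $x_i\leftrightarrow x_{i+1}$ fixes $\overline{S}_\lambda$ whenever rows $i$ and $i+1$ carry the same left label. If the intended action is instead the full symmetric group permuting the left labels as well, the same argument applies verbatim.

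Fix such an $i$. Insert an extra crossing $\widehat{R}(\beta,x_{i+1}/x_i)$ between horizontal lines $i$ and $i+1$ just to the left of the lattice, on the boundary side where the labels are $\omega_i=\omega_{i+1}=a$. This crossing acts on the incoming state $a\otimes a$. The matrix in (\ref{eqn:latticemodel}) has entries $1$ on $1\otimes1\mapsto 1\otimes1$ and $0\otimes0\mapsto0\otimes0$ and no other nonzero entries in those rows and columns, so this vector is an eigenvector with eigenvalue $1$. Inserting the crossing therefore does not change the partition function.

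Next I push the crossing across the lattice column by column. Each vertical line $z_j$ meets both horizontal lines $i$ and $i+1$. The Yang--Baxter equation (\ref{eq:ybe0}), applied to the triple of spectral parameters $(x_i,x_{i+1},z_j)$, moves the crossing past the vertical line $z_j$. The effect is that the two horizontal lines exchange their order relative to that column, so after passing through column $j$ the weights are evaluated with $x_i$ and $x_{i+1}$ interchanged. After $n$ moves the crossing sits on the right boundary, where both lines carry the label $1$. Again $1\otimes1$ is fixed with weight $1$, so the crossing can be removed.

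The result is the same diagram with $x_i$ and $x_{i+1}$ swapped and the boundary labels unchanged, so $\overline{S}_\lambda=r_i\overline{S}_\lambda$. The main point needing care is bookkeeping: the YBE must be used in the orientation matching (\ref{eq:ybe0}) with the spectral-parameter ratios $x/z$ used in (\ref{eq:defS}), and the inserted crossing must be given the correct ratio ($x_{i+1}/x_i$ versus its inverse). Unitarity (\ref{eq:unit0}) makes this choice harmless up to inverting the crossing, since both $\widehat{R}$ and its inverse fix $a\otimes a$. The argument also implicitly uses that $\widehat{R}$ is well defined at generic $x_i/x_{i+1}$, which holds because $Q(\beta,q)-q^2z$ is generically nonzero.
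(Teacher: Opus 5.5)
Your train argument is correct and is the one the paper uses by citing \cite[Lemma 3.11]{KZJ17}: insert a crossing between two rows with equal left labels, push it across with the Yang--Baxter equation, and remove it on the right where both labels are $1$. You also read the statement correctly as invariance under the stabilizer $S_{n-i_1}\times S_{i_1}$ of $\omega$; drop the aside that the argument extends to the full symmetric group permuting the left labels, since a crossing acting on $a\otimes b$ with $a\neq b$ does not act trivially.
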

	\begin{proof}
		The argument is identical to that of \cite[Lemma 3.11]{KZJ17}.
	\end{proof}

	\begin{lem}\label{lem:spec}
		Write $\bar\sigma$ for $\sigma^{-1}$. Then $\overline{S}_\lambda$ is well-defined at every specialization $x_i=z_{\bar\sigma(i)}$,
		$\sigma\in W^\Theta$, and given by
		\begin{equation}\label{eq:defSfp}
			\overline{S}_\lambda|_\sigma = 
			\begin{tikzpicture}[baseline=-3pt,scale=0.9]
				\draw (0,-1) rectangle (5,1); \node at (2.5,0) {$\sigma$};
				\foreach\x/\t/\u in {1/z_1/z_{\bar\sigma(1)},2/z_2/z_{\bar\sigma(2)},3/\cdots/\cdots,4/z_n/z_{\bar\sigma(n)}}
				{
					\draw[orange,d,invarrow=0.5] (\x,1) -- node[black,right,pos=0.5] {$\ss\t$} ++(0,1);
					\draw[orange,d,invarrow=0.5] (\x,-2) -- node[black,right=-1mm,pos=0.4] {$\ss\u$} ++(0,1);
				}
				\draw[decorate,decoration=brace] (4,-2.3) -- node[below] {$\m\omega$} (1,-2.3);
				\draw[decorate,decoration=brace] (1,2.3) -- node[above] {$\m\lambda$} (4,2.3);
			\end{tikzpicture}
		\end{equation}
		where the rectangle labeled $\sigma$ is any wiring diagram of $\sigma$, each crossing being an $R$-matrix. Furthermore, $\overline{S}_\lambda|_{\sigma}$ only depends on the class of $\sigma$ in $W^\Theta$.
	\end{lem}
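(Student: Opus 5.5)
We follow the argument of \cite[Lemma 3.12]{KZJ17} and \cite{KZJ21}. Only three properties of $\widehat{R}(\beta,z_2/z_1)$ are needed, namely \cref{prop:single}, together with the fact that $\widehat{R}(\beta,1)$ is the identity. The strategy is to use the $W^\Theta$-invariance of \cref{lem:Winv} to move the horizontal lines into a convenient order before specializing.

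The first step handles the reduction to the identity. By \cref{lem:Winv}, $\overline{S}_\lambda$ is symmetric in the $x_i$. Concretely, the unitarity equation \eqref{eq:unit0} and the Yang--Baxter equation \eqref{eq:ybe0} let us attach a pair of crossings $\widehat{R}(\beta,x_{i+1}/x_i)\,\widehat{R}(\beta,x_i/x_{i+1})=\mrm{id}$ on the left and slide one of them through the whole grid. The crossing that emerges on the right acts on the boundary labels $1,1$, and there it acts as the identity, since the $1\backslash\slash1$ entry of $\widehat{R}$ is $1$. For the specialization $x_i=z_{\bar\sigma(i)}$ we may therefore first permute the horizontal lines so that the line carrying $x_i$ sits next to the vertical line carrying the same parameter. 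We would then check that the only denominators appearing are of the form $Q(\beta,q)-q^2x_i/z_j$, and that the permuted expression has no pole at the specialization.

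The second step is to specialize and untangle the grid. After specialization, a horizontal line and a vertical line with equal spectral parameters meet at a vertex weighted by $\widehat{R}(\beta,1)$. By \eqref{eq:equal0} this vertex may be replaced by a pair of non-crossing turning strands. Doing this at every matching crossing reconnects the lattice: each vertical line $z_j$ entering from the bottom boundary with label $1$ now turns and exits through the horizontal boundary. The resulting diagram is a braid-like wiring diagram in which the strands with parameters $z_1,\dots,z_n$ on top are connected to strands with parameters $z_{\bar\sigma(1)},\dots,z_{\bar\sigma(n)}$ on the bottom, with bottom labels $\omega$. The remaining crossings, those between distinct parameters, form a wiring diagram of $\sigma$. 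The leftover portions of the grid are regions where all labels are forced to be $1$, and these contribute weight $1$. Using \eqref{eq:ybe0} we isotope this diagram into the picture \eqref{eq:defSfp}. Independence of the choice of wiring diagram for $\sigma$ follows because any two reduced wiring diagrams differ by Yang--Baxter moves, and non-reduced ones reduce by unitarity.

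The third step shows that the value depends only on the coset of $\sigma$. If $\sigma'=\sigma r_j$ with $r_j\in W_\Theta$, then the extra crossing sits at the bottom and acts on equal labels of $\omega$, since $\omega$ is constant on the blocks. On equal labels the relevant entries of $\widehat{R}$, the $0\backslash\slash0$ and $1\backslash\slash1$ entries, are $1$ for every spectral parameter, so this crossing contributes trivially.

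The main obstacle is making the first two steps rigorous. We must verify well-definedness, meaning that no $0/0$ appears after the reordering, and we must track the geometry of the untangling to confirm that the reconnected strands really produce a wiring diagram of $\sigma$ rather than of $\bar\sigma$. We would settle both points by following the picture proof of \cite[Lemma 3.12]{KZJ17} verbatim.
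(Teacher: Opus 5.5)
Your overall route is the same as the paper's: the proof there is just a citation of the picture argument of \cite[Lemma 3.12]{KZJ17}, which uses only \cref{prop:single}. However, your first step rests on a false claim, and as written it leads to the wrong formula.

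\textbf{The gap in step 1.} $\overline{S}_\lambda$ is \emph{not} symmetric in all the $x_i$. For $n=2$, \cref{ex:d1} gives $\overline{S}_{01}=\beta(1-q^2)/(Q(\beta,q)-q^2x_1/z_1)$, which involves $x_1$ but not $x_2$. The sliding argument you describe does not yield full symmetry. Of the two crossings $\widehat{R}(\beta,x_{i+1}/x_i)\,\widehat{R}(\beta,x_i/x_{i+1})$ attached on the left, only one is pushed through to the right, where it is trivial on the labels $1,1$. The other stays on the left boundary and acts on $(\omega_i,\omega_{i+1})$, and it is the identity only when $\omega_i=\omega_{i+1}$. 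So you only get invariance under the parabolic subgroup $W_\Theta$ of block permutations, which is what \cref{lem:Winv} asserts (its ``$W^\Theta$'' must be read as $W_\Theta$).

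If you nevertheless reorder the horizontal lines so that matching parameters meet on the diagonal, the untangling in your second step produces the identity wiring diagram. You would then conclude $\overline{S}_\lambda|_\sigma=\overline{S}_\lambda|_{\mathrm{id}}=\delta_{\lambda,\omega}$ for every $\sigma$. This contradicts \cref{ex:d1b}, where $\overline{S}_{01}|_{10}=\beta(1-q^2)/(Q(\beta,q)-q^2z_2/z_1)$. So your steps 1 and 2 are inconsistent: the wiring diagram of $\sigma$ you want to find in step 2 is exactly the set of crossings that step 1 discards.

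\textbf{How to fix it.} Either keep those leftover crossings (along a reduced word for $\sigma$ they compose to the $\sigma$-box of \eqref{eq:defSfp}, sitting against the $\omega$ boundary), or skip step 1 and specialize the original grid directly. In the latter case, get the orientation right. Vertical strands flow down from the labels $\lambda$ to the bottom $1$'s, and horizontal strands flow left from the right-hand $1$'s to $\omega$; your step 2 has this reversed.
\begin{itemize}
\item After setting $x_i=z_{\bar\sigma(i)}$, the equal-parameter vertices are where the line $x_i$ meets the line $z_{\bar\sigma(i)}$.
\item Applying \eqref{eq:equal0} there reroutes the strand entering at the top of the $z_j$ line (label $\lambda_j$) so that it leaves on the left along the $x_{\sigma(j)}$ line (label $\omega_{\sigma(j)}$).
\item Two such strands cross exactly at the inversions of $\sigma$. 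This is the $\sigma$-box, with outgoing parameters $z_{\bar\sigma(1)},\dots,z_{\bar\sigma(n)}$.
\item The strand entering on the right of the $x_i$ line leaves at the bottom of the $z_{\bar\sigma(i)}$ line.
\end{itemize}
A top-entering strand and a right-entering strand cross either $0$ or $2$ times, with reciprocal parameter ratios. These double crossings must be removed by Yang--Baxter moves followed by unitarity \eqref{eq:unit0}. Before that is done, the right-entering strands are not ``forced to carry $1$'', since labels can be exchanged at one crossing and exchanged back at the other. Afterwards they form a separate diagram acting on all-$1$ labels, with weight $1$.

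Well-definedness needs no reordering. Every vertex weight has denominator $Q(\beta,q)-q^2x_i/z_j$, which remains nonzero at $x_i=z_{j'}$; when $j'=j$ it equals $\beta(1-q^2)$. Your third step, on dependence only on the class of $\sigma$, is fine.
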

	\begin{proof}
		As Proposition \ref{prop:single} holds, the proof is identical to those of \cite[Lemma 3.12]{KZJ17}, \cite[Lemma 2.4]{KZJ21}.
	\end{proof}
	
	\begin{egg}\label{ex:d1b}
		We compute $\overline{S}_\lambda|_{\sigma}$ for $\lambda=01,10$ and $\sigma=01,10$, where we identify the identity permutation with $01$
		and the nontrivial permutation with $10$:
		\begin{align*}
			\overline{S}_{01}|_{01} &=
			\begin{tikzpicture}[baseline=(current  bounding  box.center),scale=0.75]
				\draw[orange,d,invarrow=.45] (1,0) -- node[black,xshift=-.15cm,pos=0.7] {$0$} node[black,xshift=-.15cm,pos=0.3] {$0$} node[black,right,pos=.9] {$\ss z_1$} (1,2);
				\draw[orange,d,invarrow=.45] (2,0) -- node[black,xshift=-.15cm,pos=0.7] {$1$} node[black,xshift=-.15cm,pos=0.3] {$1$} node[black,right,pos=.9] {$\ss z_2$} (2,2);
			\end{tikzpicture}
			=1
			&
			\overline{S}_{10}|_{01} &=
			\begin{tikzpicture}[baseline=(current  bounding  box.center),scale=0.75]
				\draw[orange,d,invarrow=.45] (1,0) -- node[black,xshift=-.15cm,pos=0.7] {$1$} node[black,xshift=-.15cm,pos=0.3] {$0$} node[black,right,pos=.9] {$\ss z_1$} (1,2);
				\draw[orange,d,invarrow=.45] (2,0) -- node[black,xshift=-.15cm,pos=0.7] {$0$} node[black,xshift=-.15cm,pos=0.3] {$1$} node[black,right,pos=.9] {$\ss z_2$} (2,2);
			\end{tikzpicture}
			=0
			\\
			\overline{S}_{01}|_{10} &=
			\begin{tikzpicture}[baseline=(current  bounding  box.center),scale=0.75]
				\draw[orange,d,invarrow=.6] (2,0) -- node[black,xshift=-.15cm,pos=0.7] {$0$} node[black,xshift=.15cm,pos=0.3] {$1$} node[black,right,pos=.9] {$\ss z_1$} (1,2);
				\draw[orange,d,invarrow=.6] (1,0) -- node[black,xshift=.15cm,pos=0.7] {$1$} node[black,xshift=-.15cm,pos=0.3] {$0$} node[black,right,pos=.9] {$\ss z_2$} (2,2);
			\end{tikzpicture}
			=\frac{\beta(1-q^2)}{Q(\beta,q)-q^2(z_2/z_1)}
			&
			\overline{S}_{10}|_{10} &=
			\begin{tikzpicture}[baseline=(current  bounding  box.center),scale=0.75]
				\draw[orange,d,invarrow=.6] (2,0) -- node[black,xshift=-.15cm,pos=0.7] {$1$} node[black,xshift=.15cm,pos=0.3] {$1$} node[black,right,pos=.9] {$\ss z_1$} (1,2);
				\draw[orange,d,invarrow=.6] (1,0) -- node[black,xshift=.15cm,pos=0.7] {$0$} node[black,xshift=-.15cm,pos=0.3] {$0$} node[black,right,pos=.9] {$\ss z_2$} (2,2);
			\end{tikzpicture}
			=\frac{q(1-z_2/z_1)}{Q(\beta,q)-q^2(z_2/z_1)}
		\end{align*}
		
		Compare with Example~\ref{ex:d1}.
	\end{egg}

	\subsection{Characterization of the deformed motivic Segre classes}\label{subsection:stabclass}
	In this subsection, we will characterize the homogenizations of the deformed motivic Segre classes defined in \cref{subsection:deformed-ddo} in terms of the rational functions $\overline{S}_\lambda$ defined by the lattice model of \cref{subsection:the-lattice-model}. 
	
	In \cref{prop:charact} below, the transposition $r_i$ fixes both $\beta$ and $q$, and acts on the set of variables $z_1,\dotsc,z_n$ by swapping the indices $i$ and $i+1$ and fixing all other indices:
	
	\begin{prop}\label{prop:charact}
		The classes $\overline{S}_\lambda$ are entirely determined by the following properties:
		\begin{enumerate}
			\item {\em Triangularity:} $\overline{S}_\lambda|_\sigma=0$ unless $\sigma\geq \lambda$ in the Bruhat order.
			\item {\em Diagonal entries:}
			\[
			\overline{S}_\lambda|_\lambda
			=\prod_{i<j: \lambda_i>\lambda_j}
			\frac{q(1-z_{j}/z_{i})}{Q(\beta,q)-q^2(z_{j}/z_{i})}
			\]
			\item {\em Exchange relation:}
			\[
			r_i \overline{S}_\lambda|_{\sigma r_i}
			=\begin{cases}
				\frac{\beta(1-q^2)}{Q(\beta,q)-q^2(z_{i}/z_{i+1})}
				\overline{S}_\lambda|_\sigma 
				+ 
				\frac{q Q(\beta,q)(1-z_{i}/z_{i+1})}{Q(\beta,q)-q^2(z_{i}/z_{i+1})}
				\overline{S}_{\lambda r_i}|_\sigma
				&\lambda_i<\lambda_{i+1}
				\\
				\overline{S}_\lambda|_\sigma
				& \lambda_i=\lambda_{i+1}
				\\
				\frac{\beta (1-q^2)(z_{i}/z_{i+1})}{Q(\beta,q)-q^2(z_{i}/z_{i+1})}
				\overline{S}_\lambda|_\sigma 
				+ 
				\frac{q(1-z_{i}/z_{i+1})}{Q(\beta,q)-q^2(z_{i}/z_{i+1})}\overline{S}_{\lambda r_i}|_\sigma
				& \lambda_i>\lambda_{i+1}
			\end{cases}
			\]
		\end{enumerate}
	\end{prop}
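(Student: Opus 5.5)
The proof has two parts. First I verify that the lattice-model functions $\overline{S}_\lambda$ satisfy (1)--(3), using the fixed-point formula of \cref{lem:spec}. Then I show that (1)--(3) determine a unique family $\{\overline{S}_\lambda|_\sigma\}$ by induction on $\sigma$ in Bruhat order.

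\emph{Triangularity and diagonal entries.} By \cref{lem:spec}, $\overline{S}_\lambda|_\sigma$ is the partition function of a wiring diagram of $\sigma$. The bottom boundary is $\omega$ and the top is $\lambda$, and every crossing is an $R$-matrix entry from (\ref{eqn:latticemodel}). Since $\widehat{R}$ conserves labels, a state is a way of routing the labels of $\omega$ through the crossings. At each crossing, a pair $1,0$ or $0,1$ either passes through (weight of type $q(1-z)/(\cdot)$) or is transmitted back (weight of type $\beta(1-q^2)/(\cdot)$). The resulting top word is therefore obtained from $\omega$ by a subword of a reduced word for $\sigma$. This forces $\lambda\le\sigma$ in Bruhat order, which is (1).

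For $\lambda=\sigma$, choose a reduced wiring diagram. The only surviving state has every crossing of unequal labels of the pass-through type. The crossing between strands $i<j$ carries spectral ratio $z_j/z_i$, and it involves unequal labels exactly when $\lambda_i>\lambda_j$. Reading off the entry $q(1-z_j/z_i)/(Q(\beta,q)-q^2 z_j/z_i)$ at each such crossing gives the product in (2). The crossings with equal labels contribute $1$.

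\emph{Exchange relation.} Take a wiring diagram for $\sigma r_i$ obtained from one for $\sigma$ by appending a crossing at positions $i,i+1$ at the top. Applying $r_i$ to the $z$-variables relabels the strands so that this extra crossing has spectral ratio $z_i/z_{i+1}$. Summing over the two possible labels entering the extra crossing from below expresses $r_i\overline{S}_\lambda|_{\sigma r_i}$ as a combination of $\overline{S}_\lambda|_\sigma$ and $\overline{S}_{\lambda r_i}|_\sigma$. The coefficients are the entries of the row of $\widehat{R}$ indexed by $(\lambda_i,\lambda_{i+1})$. In the three cases $\lambda_i<\lambda_{i+1}$, $\lambda_i=\lambda_{i+1}$, $\lambda_i>\lambda_{i+1}$, these entries are exactly those displayed in (3).

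The main obstacle is bookkeeping. I must fix the orientation conventions, namely which strand carries $z_{\bar\sigma(i)}$, whether the new crossing goes on the top or the bottom, and whether the relevant matrix is $\widehat R(z_{i}/z_{i+1})$ or its inverse via unitarity (\ref{eq:unit0}). These must be chosen so that the row/column entries match (3) and not their unitarity partners. I would calibrate everything against Example~\ref{ex:d1b} before writing the general argument. Independence of the choice of reduced word comes from \cref{lem:spec}, which relies on the Yang--Baxter equation (\ref{eq:ybe0}). When $\ell(\sigma r_i)<\ell(\sigma)$, I would instead use unitarity to cancel a crossing, or equivalently swap the roles of $\sigma$ and $\sigma r_i$.

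\emph{Uniqueness.} Suppose $\{F_\lambda\}$ is any family satisfying (1)--(3); I show by induction on $\ell(\sigma)$ that $F_\lambda|_\sigma$ is determined for all $\lambda$. The base case $\sigma=\mathrm{id}$ is settled by (1) and (2). For $\sigma$ of positive length, choose $i$ with $\sigma'=\sigma r_i<\sigma$. Relation (3), applied with $\sigma'$ in place of $\sigma$, gives $r_iF_\lambda|_{\sigma}$ as a combination of $F_\lambda|_{\sigma'}$ and $F_{\lambda r_i}|_{\sigma'}$. Both of these are already known by induction, and applying $r_i$ recovers $F_\lambda|_\sigma$. The case $\sigma r_i=\sigma$ in $W^\Theta$ does not arise when choosing a descent of a minimal coset representative, so every $F_\lambda|_\sigma$ is determined, and the family coincides with $\overline{S}_\lambda$.
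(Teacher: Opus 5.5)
Your proposal is correct and follows the argument the paper defers to (\cite[Prop.~2.7]{KZJ21}). You read off triangularity and the diagonal entries from the fixed-point formula of \cref{lem:spec}; in the diagonal state every unequal-label crossing has the $1$ on the top-left strand, so you correctly pick up the $Q$-free pass-through weight. The exchange relation comes from adding one $\widehat R(\beta,z_i/z_{i+1})$ crossing at the top, and uniqueness is by induction. The only difference is cosmetic: you induct upward on $\sigma$ from $\sigma=\mathrm{id}$ using all three cases of (3), whereas \cref{rmk:clarity} inducts downward on $\lambda$ from $w_0$ using only the case $\lambda_i>\lambda_{i+1}$, and both inductions are valid.
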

	\begin{proof}
		The proof is identical to that of \cite[Prop. 2.7]{KZJ21}.
	\end{proof}
	
	\begin{rmk}
		When $\beta=1$, the recursion in \cref{prop:charact} is exactly the recursion for the motivic Segre classes of the Schubert cells in \cite[Prop. 2.7]{KZJ21}. In this sense, our $\beta$-deformed classes can be viewed as a $\beta$-deformation of motivic Segre classes in the extended ring $\mrm{Frac}(K_{\widehat{T}}(T^*(G/P_\Theta))\otimes_{\mbb{Z}}\mbb{Z}[\beta,\beta^{-1}])$.
	\end{rmk}
	
	\begin{rmk}\label{rmk:clarity}
		In the recursion of Proposition \ref{prop:charact} above, the classes $\overline{S}_\lambda$ are in fact entirely defined by $(1), (2),$ and the case $\lambda_i> \lambda_{i+1}$ of $(3)$. To see this, begin with the class $\overline{S}_{w_0}$. Then $(2)$ gives us the formula for $\overline{S}_{w_0}|_{w_0}$ and $(1)$ tells us that $\overline{S}_{w_0}|_\lambda=0$ for all $\lambda<w_0$. Now we apply the case $\lambda_i> \lambda_{i+1}$ of $(3)$ recursively to obtain the remaining classes. 
	\end{rmk}
	
	Recall the homogenization of a $\beta$-deformed class by a factor $q$, defined in Definition \ref{dfn:homogenization}.
	
	\begin{prop}\label{prop:divided-difference-recursion}
		The $\overline{S}_\lambda$ defined in this section are rational function representatives for the homogenizations of the $\beta$-deformed classes $q^{\ell(\lambda)}S_\lambda$ defined in \cref{subsection:motive-Segre}, in the sense that the $q^{\ell(\lambda)}S_\lambda$ are characterized by the properties (1)--(3) of \cref{prop:charact}.
	\end{prop}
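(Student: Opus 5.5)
Since Proposition \ref{prop:charact} (with Remark \ref{rmk:clarity}) says the family $\{\overline{S}_\lambda\}$ is uniquely determined by triangularity, the diagonal entries, and the case $\lambda_i>\lambda_{i+1}$ of the exchange relation, it suffices to show that the family $T_\lambda:=q^{\ell(\lambda)}S_\lambda$ from \cref{subsection:motive-Segre} satisfies these same three properties. Here we identify the variables $e^{y_i}$ with the spectral parameters via $e^{-\alpha}$ for $\alpha=y_i-y_j$ corresponding to $z_j/z_i$, matching the factor in (2). The plan is to verify, in order, the base case $T_{w_0}$, then the recursion, and finally triangularity and the diagonal entries for all $\lambda$ by induction along the recursion.

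\emph{Base case.} By definition $S_{w_0}$ is supported only at $w_0$, so triangularity holds trivially for $\lambda=w_0$. At $w_0$ its value is $\prod_{\alpha\in\Sigma_\Theta^+}\frac{1-e^{-\alpha}}{Q(\beta,q)-q^2e^{-\alpha}}$. The positive roots of $\Sigma_\Theta^+$ correspond exactly to the inversions $i<j$, $\lambda_i>\lambda_j$ of $w_0=1^{k}0^{n-k}$ (up to the paper's ordering convention for $\omega$), and there are $\ell(w_0)$ of them. Multiplying by $q^{\ell(w_0)}$ therefore gives exactly the product in (2).

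\emph{Recursion.} Next I will unwind $S_{r_iw}=\partial_i S_w$ pointwise. Using $r_i\cdot(f_\lambda)_\lambda=(r_i f_\lambda)_{r_i\lambda}$ and Definition \ref{dfn:deformed-operator}, one gets
\[(\partial_i f)|_\sigma=\frac{\beta(1-q^2)}{1-e^{-\alpha_i}}f|_\sigma+\frac{Q(\beta,q)-q^2e^{\alpha_i}}{1-e^{\alpha_i}}\,r_i\bigl(f|_{r_i\sigma}\bigr).\]
We apply $r_i$ to both sides and solve for $r_i(S_\lambda|_{\sigma r_i})$, where left versus right multiplication is matched by translating between the $W$-action on fixed points and the list convention $w\cdot w_0$. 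This expresses $r_i(S_\lambda|_{\cdot})$ as a combination of $S_\lambda|_\sigma$ and $S_{\lambda r_i}|_\sigma$, with coefficients obtained by inverting the $2\times2$ relation; Remark \ref{rmk:inverse-relations} provides the inverse operator $\widetilde{\partial}_i$. Multiplying by the homogenization factors $q^{\ell(\lambda)}$ and $q^{\ell(\lambda r_i)}=q^{\ell(\lambda)\pm1}$ should produce exactly the coefficients $\frac{\beta(1-q^2)(z_i/z_{i+1})}{Q-q^2z_i/z_{i+1}}$ and $\frac{q(1-z_i/z_{i+1})}{Q-q^2z_i/z_{i+1}}$ of case $\lambda_i>\lambda_{i+1}$. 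In that matching, the factor $q$ comes from the length shift and the partial-fraction identity
\[\frac{Q-q^2x}{1-x}\cdot\frac{1}{\cdots}\]
simplifies using $Q(\beta,q)=q^2+\beta(1-q^2)$.

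\emph{Triangularity and diagonal entries.} These follow by induction on length. The support of $\partial_iS_w$ is contained in the support of $S_w$ together with its $r_i$-translate, which stays Bruhat-above. The new diagonal entry picks up exactly one factor $\frac{q(1-z_j/z_i)}{Q-q^2z_j/z_i}$, coming from the evaluation of the second term at the fixed point where the first term vanishes by triangularity.

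\emph{Main obstacle.} I expect the main difficulty to be the bookkeeping of conventions rather than any conceptual step. This means making the sign of $\alpha_i$, the choice of $z_i/z_{i+1}$ versus its inverse, the left versus right action ($\sigma r_i$ versus $r_i\sigma$), and the identification of $w\cdot w_0$ with lists all consistent with \cite[Prop. 2.7]{KZJ21}. My first check will be the $n=2$ case against Example \ref{ex:d1b}, which fixes all of these conventions. I expect the $\beta=1$ specialization to reproduce the argument of \cite{KZJ21} verbatim, which gives a second consistency check.
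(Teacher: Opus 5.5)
Your proposal is correct and follows the paper's proof essentially step for step: you check the base class $q^{\ell(w_0)}S_{w_0}$, solve the localized formula for $\frac1q\partial_i$ for $r_i\bigl(q^{\ell(\lambda)}S_\lambda|_{r_i\sigma}\bigr)$ to obtain the $\lambda_i>\lambda_{i+1}$ case of (3), verify (1)--(2), and conclude by the uniqueness in \cref{rmk:clarity}. Three small corrections, none affecting the argument: no inversion or $\widetilde{\partial}_i$ is needed, since it is a single linear equation; the simplification that produces the stated coefficients is $\frac{1-e^{\alpha_i}}{1-e^{-\alpha_i}}=-e^{\alpha_i}$ with $e^{\alpha_i}=z_i/z_{i+1}$, not an identity involving $Q$; and the diagonal entry of the shorter class $r_i\lambda$ loses the $\alpha_i$-factor rather than gaining one, because $\frac1q\frac{Q-q^2e^{\alpha_i}}{1-e^{\alpha_i}}$ cancels the $r_i$-image of that factor.
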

	\begin{proof}
		Let $\{q^{\ell(\lambda)}S_\lambda\}_{\lambda\in W^\Theta}$ be the homogenizations of the $\beta$-deformed classes defined in \cref{section:deformation}. Then
		\[q^{\ell(w_0)}S_{w_0}|_\lambda=\begin{cases}
			\prod_{\alpha\in\Sigma_\Theta^+} \frac{q(1-e^{-
					\alpha})}{Q(\beta,q)-q^2e^{-\alpha}},& \text{if $\lambda=w_0$};\\
			0,&\text{otherwise.}
		\end{cases}\]
		Moreover, given $\lambda\in W^\Theta$ with $\lambda_i>\lambda_{i+1}$, we have $q^{\ell(r_i(\lambda))}S_{r_i(\lambda)}=\tfrac{1}{q}\partial_{i}(q^{\ell(\lambda)}S_\lambda)$. In particular,
		\begin{align*}
			&(q^{\ell(r_i(\lambda))}S_{r_i(\lambda)})|_{\sigma}=\frac{\beta(1-q^2)}{q(1-e^{-\alpha_i})} ((q^{\ell(\lambda)}S_{\lambda})|_\sigma)+\frac{Q(\beta,q)-q^2e^{\alpha_i}}{q(1-e^{\alpha_i})}r_i((q^{\ell(\lambda)}S_\lambda)|_{r_i(\sigma)})\\
			&\implies r_i((q^{\ell(\lambda)}S_\lambda)|_{r_i(\sigma)})=\frac{q(1-e^{\alpha_i})}{Q(\beta,q)-q^2e^{\alpha_i}} ((q^{\ell(r_i(\lambda))}S_{r_i(\lambda)})|_\sigma)-\frac{1-e^{\alpha_i}}{1-e^{-\alpha_i}}\frac{\beta(1-q^2)}{Q(\beta,q)-q^2e^{\alpha_i}}((q^{\ell(\lambda)} S_\lambda)|_{\sigma}).
		\end{align*}
		Observe that $\tfrac{1-e^{\alpha_i}}{1-e^{-\alpha_i}}=-e^{\alpha_i}$. Setting $z_{i}:=e^{y_{i}}$ and $z_{i+1}:=e^{y_{i+1}}$, we see that the recursion for $\lambda_{i}>\lambda_{i+1}$ in $(3)$ is satisfied by the classes $q^{\ell(\lambda)}S_\lambda$. It is straightforward to verify that the classes $q^{\ell(\lambda)}S_\lambda$ satisfy $(1)$ and $(2)$ of Proposition \ref{prop:charact}. Therefore, by Remark \ref{rmk:clarity}, the homogenizations of the $\beta$-deformed classes defined in \cref{section:deformation} coincide with the rational functions $\overline{S}_\lambda$ defined in this section.
	\end{proof}

	\section{The puzzle rule for multiplying Grassmannian deformed classes}\label{section:puzzle-formula}
	
	In this section, we will prove a \textit{positive} formula for the structure constants in the basis of deformed motivic Segre classes $S_\lambda$ when $d=1$ in terms of Knutson-Tao puzzles. As the $S_\lambda$ are not simply the $\beta$-homogenizations of the motivic Segre classes of Schubert cells defined in (\ref{itm:4}), our puzzle formula for the structure constants in the $\beta$-deformed basis is a non-trivial $\beta$-deformation of the puzzle formula for the structure constants in the basis of usual motivic Segre classes described in \cref{subsection:puzzles}. As we will show in \cref{section:localization-techniques}, the deformed motivic Segre classes of Schubert cells are quotients of canonical elements in a quotient of the $\widehat{T}$-equivariant algebraic cobordism ring of $T^*(G/P_\Theta)$. Although our deformed classes have this geometric description, we currently do not have a geometric understanding for why the structure constants in the $\beta$-deformed basis are positive. The only proof that we have for the positivity of the structure constants is the combinatorial proof given by \cref{thm:main}. In the proof of the puzzle formula, we introduce $R$, $U$, and $D$ matrices that satisfy various Yang-Baxter type equations. We will show in \cref{section:multi-parameter} that these matrices are intertwiners for representations of the multi-parameter quantum group of type $\widehat{a}_2$. That these matrices are related to the representation theory of the multi-parameter quantum group is unexpected.
	
	In \cref{subsection:fugacities}, we define the fugacities of the rhombus and triangle puzzle pieces which are required for the puzzle formula. In \cref{subsection:R}, we define several matrices, which are constructed from the fugacities in \cref{subsection:fugacities}, and we show that these matrices satisfy several Yang-Baxter type equations. In \cref{subsection:proof}, we prove the puzzle formula in \cref{thm:main} and show that the formula is suitably positive in \cref{rmk:positive}. We also provide several examples of puzzle computations.
	
	\begin{assumption}
		We will assume that $d=1$ for the rest of this section.
	\end{assumption}
	
	\subsection{Fugacities of the puzzle pieces}\label{subsection:fugacities} 
	
	In this subsection, we will present the rhombus and triangle tiles that are used to tile the puzzles for multiplying the deformed motivic Segre classes, as well as the fugacities of these tiles. The tiles that appear are the same as those that appear for multiplying usual motivic Segre classes when $d=1$ (see \cref{subsection:puzzles}). Recall the definition of the Knutson-Tao puzzle for $d=1$ given in \cref{subsection:puzzles}. The same definition applies in our case, the only difference being that our fugacities are more general, as they involve a parameter $\beta$. When $\beta=1$, the fugacities are the same as those that appear for multiplying the usual motivic Segre classes (see \cref{subsection:puzzles} or \cite[\S 4.1]{KZJ21}). 
	
	We will denote the structure constants of the homogenizations $q^{\ell(\lambda)}S_\lambda$ by $\overline{c}_{\lambda,\mu}^\nu$. By definition, we have
	\[(q^{\ell(\lambda)}S_\lambda)(q^{\ell(\mu)}S_\mu)=\sum_{\nu}\overline{c}_{\lambda,\mu}^\nu (q^{\ell(\nu)}S_\nu).\]
	Thus, the relationship between the Littlewood-Richardson coefficient $c_{\lambda,\mu}^\nu$ and the coefficient $\overline{c}_{\lambda,\mu}^\nu$ is
	\begin{equation}\label{eq:relationship}
		c_{\lambda,\mu}^\nu=q^{\ell(\nu)-\ell(\lambda)-\ell(\mu)}\overline{c}_{\lambda,\mu}^\nu.
	\end{equation}
	
	Below we list the fugacities of the rhombus tiles. 	The main result of this section is \cref{thm:main}, which implies that the structure constants in the $\beta$-deformed basis are sums of products of the fugacities of these rhombus puzzle pieces. These fugacities also appear in the $R_{g,r}$ matrix in \cref{subsection:R}.
	
	\[
	\begin{gathered}
		\rh0000=
		\rh1111=
		\rh0101=
		\rh1{10}1{10}=
		\rh{10}0{10}0=1
	\end{gathered}\quad\quad
	\begin{gathered}
		\rh{10}{10}{10}{10}=Q(\beta,q)
	\end{gathered}
	\]
	\[
	\begin{gathered}
		\rh1{10}00=
		\rh{10}011=\frac{\beta(1-q^2)}{Q(\beta,q)-q^2z}
	\end{gathered}\quad 
	\begin{gathered}
		\rh1010=\frac{q(1-z)}{Q(\beta,q)-q^2z}
	\end{gathered}
	\]
	\[ 
	\begin{gathered}
		\rh11{10}0=
		\rh001{10}=\frac{\beta (1-q^2)z}{Q(\beta,q)-q^2z}
	\end{gathered}\quad\quad
	\begin{gathered}
		\rh{10}{10}01=\frac{\beta q(q^2-1)}{Q(\beta,q)-q^2z}
	\end{gathered}
	\]	
	\[
	\begin{gathered}
		\rh0{10}0{10}=
		\rh{10}1{10}1=\frac{qQ(\beta,q)(1-z)}{Q(\beta,q)-q^2z}
	\end{gathered}\quad\quad
	\begin{gathered}
		\rh01{10}{10}=\frac{\beta Q(\beta,q)(q^2-1)z}{q(Q(\beta,q)-q^2z)}
	\end{gathered}
	\]
	
	For $i,j,k,\ell\in\{0,1,10\}$, will use the notation 
	
	\[f_{i,j}^{k,\ell}(z):=\mrm{fug}\left(\begin{gathered}
		\rh ij{\ell}{k}
	\end{gathered}\right).\]

	Below we list the fugacities of the triangle tiles, which appear in the $U(\beta)$ and $D(\beta)$ matrices of \cref{subsection:R}, and are used to prove a key result, \cref{prop:ybe}:
	
	\[
	\uptri000=
	\uptri111=
	\uptri01{10}=
	\uptri{10}01=
	\uptri1{10}0=1 \quad\quad
	\uptri{10}{10}{10}=\frac{-Q(\beta,q)}{q}
	\]
	
	\[
	\downtri000=
	\downtri111=
	\downtri01{10}=
	\downtri{10}01=
	\downtri1{10}0=1\quad\quad 
	\downtri{10}{10}{10}= - q
	\]
	Note that the $\uptri1{10}0$ and $\uptri{10}{10}{10}$ puzzle pieces and the upside-down triangle puzzle pieces are never used to tile a Knutson-Tao puzzle. However, these puzzle pieces and their fugacities are necessary for the proof of the puzzle rule for multiplying the elements $q^{\ell(\lambda)} S_\lambda$, as we will explain in \cref{subsection:R} and \cref{subsection:proof}.

	\subsection{$R$, $U$, and $D$ matrices}\label{subsection:R}
	
	In this subsection, we will define the $R$, $U$, and $D$ matrices, which will be used to prove the puzzle formula for multiplying the $q^{\ell(\lambda)}S_\lambda$ in \cref{subsection:proof}. We will also prove \cref{prop:ybe} in this subsection, which is the key result used to prove our puzzle formula, \cref{thm:main}. We will prove in \cref{section:multi-parameter} that the $R$, $U$, and $D$ matrices defined in this subsection are intertwiners for representations of the \textit{multi-parameter quantum group} of type $\widehat{\mathfrak{a}}_2$.

	Recall the solvable lattice model of \cref{section:solvable-lattice-model}. In Proposition \ref{prop:divided-difference-recursion}, we showed that the rational functions defined by the solvable lattice model are rational function representatives for the \textit{homogenization} $\{q^{\ell(\lambda)}S_\lambda\}_{\lambda\in W^\Theta}$ of the basis $\{S_\lambda\}_{\lambda\in W^\Theta}$. Puzzles compute the structure constants for the basis $\{S_\lambda\}_{\lambda\in W^\Theta}$ if and only if they compute the structure constants for the basis $\{q^{\ell(\lambda)}S_\lambda\}_{\lambda\in W^\Theta}$ (see Equation (\ref{eq:relationship})). We will prove that puzzles compute the structure constants for the basis $\{q^{\ell(\lambda)}S_\lambda\}_{\lambda\in W^\Theta}$ using the solvable lattice model of \cref{section:solvable-lattice-model}. The matrix $R_{g,r}$ below records the fugacities of the rhombus tiles from \cref{subsection:fugacities}. The other matrices are used to prove Theorem \ref{thm:main}.
	All $9\times 9$ matrices below are nonsingular at every specialization of $\beta$ except for $Q(\beta,q)=0$. 
	
	Set $z:=z_2/z_1$ to reduce the clutter in the matrix entries below.
	\begin{align*}
		&R_{g,r}(\beta,z/q^2)=
		\begin{tikzpicture}[baseline=(current  bounding  box.center),scale=0.75]
			\draw[green, thick,arrow=.27,rounded corners,d]  (-0.5,0.5) node[black,above] {$\ss z_1$} -- (0.5,-0.5) ;
			\draw[red, thick,arrow=.27,rounded corners,d] (0.5,0.5) node[black,above] {$\ss z_2$} -- (-0.5,-0.5) ;
		\end{tikzpicture}=
		\\&
		\bordermatrix{
			&1\slash\backslash1 & 1\slash\backslash0 & 1\slash\backslash10 & 0\slash\backslash1 & 0\slash\backslash0 & 0\slash\backslash10 & 10\slash\backslash1 & 10\slash\backslash0 & 10\slash\backslash10\cr
			1\backslash\slash1&1 & 0 & 0 & 0 & 0 & f_{1,1}^{0,10}(z) & 0 & 0 & 0\cr
			1\backslash\slash0&0 & 0 & 0 & f_{1,0}^{0,1}(z) & 0 & 0 & 0 & 0 & 0\cr
			1\backslash\slash10&0 & 0 & 0 & 0 & f_{1,10}^{0,0}(z) & 0 & 1 & 0 & 0\cr
			0\backslash\slash1&0 & 1 & 0 & 0 & 0 & 0 & 0 & 0 & f_{0,1}^{10,10}(z)\cr
			0\backslash\slash0&0 & 0 & 0 & 0 & 1 & 0 & f_{0,0}^{10,1}(z) & 0 & 0\cr
			0\backslash\slash10&0 & 0 & 0 & 0 & 0 & 0 & 0 & f_{0,10}^{10,0}(z) & 0\cr
			10\backslash\slash1&0 & 0 & f_{10,1}^{1,10}(z) & 0 & 0 & 0 & 0 & 0 & 0\cr
			10\backslash\slash0& f_{10,0}^{1,1}(z) & 0 & 0 & 0 & 0 & 1 & 0 & 0 & 0\cr
			10\backslash\slash10&0 & f_{10,10}^{1,0}(z) & 0 & 0 & 0 & 0 & 0 & 0 & f_{10,10}^{10,10}(z)
		}
	\end{align*}
	\begin{align*}
		&R_{b,r}(\beta,z/q)=
		\begin{tikzpicture}[baseline=(current  bounding  box.center),scale=0.75]
			\draw[blue, thick,arrow=.27,rounded corners,d]  (-0.5,0.5) node[black,above] {$\ss z_1$} -- (0.5,-0.5) ;
			\draw[red, thick,arrow=.27,rounded corners,d] (0.5,0.5) node[black,above] {$\ss z_2$} -- (-0.5,-0.5) ;
		\end{tikzpicture}=
		\\&
		\bordermatrix{
			&1\slash\backslash1 & 1\slash\backslash0 & 1\slash\backslash10 & 0\slash\backslash1 & 0\slash\backslash0 & 0\slash\backslash10 & 10\slash\backslash1 & 10\slash\backslash0 & 10\slash\backslash10\cr
			1\backslash\slash1&1 & 0 & 0 & 0 & 0 & 0 & 0 & 0 & 0\cr
			1\backslash\slash0&0 & 0 & 0 & 1 & 0 & 0 & 0 & 0 & 0\cr
			1\backslash\slash10&0 & f_{1,1}^{0,10}(1/z) & 0 & 0 & 0 & (z\cdot f_{0,1}^{10,10}(1/z)) & f_{10,1}^{1,10}(1/z)& 0 & 0\cr
			0\backslash\slash1&0 & f_{1,0}^{0,1}(1/z) & 0 & 0 & 0 &  f_{10,0}^{1,1}(1/z) & f_{10,0}^{1,1}(1/z) & 0 & 0\cr
			0\backslash\slash0&0 & 0 & 0 & 0 & 1 & 0 & 0 & 0 & 0\cr
			0\backslash\slash10&0 & 0 & 0 & 0 & 0 & 0 & 0 & 1 & 0\cr
			10\backslash\slash1&0 & 0 &  1  & 0 & 0 & 0 & 0 & 0 & 0\cr
			10\backslash\slash0&0 & f_{1,1}^{0,10}(1/z) & 0 & 0 & 0 & f_{10,1}^{1,10}(1/z) & (\tfrac{1}{z}\cdot f_{10,10}^{1,0}(1/z)) & 0 & 0\cr
			10\backslash\slash10&0 & 0 & 0 & 0 & 0 & 0 & 0 & 0 & Q(\beta,q)
		}
	\end{align*}
	\begin{align*}
		&R_{g,b}(\beta,z/q)=
		\begin{tikzpicture}[baseline=(current  bounding  box.center),scale=0.75]
			\draw[green, thick,arrow=.27,rounded corners,d]  (-0.5,0.5) node[black,above] {$\ss z_1$} -- (0.5,-0.5) ;
			\draw[blue, thick,arrow=.27,rounded corners,d] (0.5,0.5) node[black,above] {$\ss z_2$} -- (-0.5,-0.5) ;
		\end{tikzpicture}=
		\\&
		\bordermatrix{
			&1\slash\backslash1 & 1\slash\backslash0 & 1\slash\backslash10 & 0\slash\backslash1 & 0\slash\backslash0 & 0\slash\backslash10 & 10\slash\backslash1 & 10\slash\backslash0 & 10\slash\backslash10\cr
			1\backslash\slash1&1 & 0 & 0 & 0 & 0 & 0 & 0 & 0 & 0\cr
			1\backslash\slash0&0 & 0 & 0 & 1 & 0 & 0 & 0 & 0 & 0\cr
			1\backslash\slash10&0 & f_{1,1}^{0,10}(1/z) & 0 & 0 & 0 & (\tfrac{1}{z}\cdot f_{10,10}^{1,0}(1/z)) & f_{10,1}^{1,10}(1/z) & 0 & 0\cr
			0\backslash\slash1&0 & f_{1,0}^{0,1}(1/z) & 0 & 0 & 0 &  f_{10,0}^{1,1}(1/z) & f_{10,0}^{1,1}(1/z) & 0 & 0\cr
			0\backslash\slash0&0 & 0 & 0 & 0 & 1 & 0 & 0 & 0 & 0\cr
			0\backslash\slash10&0 & 0 & 0 & 0 & 0 & 0 & 0 & 1 & 0\cr
			10\backslash\slash1&0 & 0 &  1  & 0 & 0 & 0 & 0 & 0 & 0\cr
			10\backslash\slash0&0 & f_{1,1}^{0,10}(1/z) & 0 & 0 & 0 & f_{10,1}^{1,10}(1/z) & (z\cdot f_{0,1}^{10,10}(1/z))& 0 & 0\cr
			10\backslash\slash10&0 & 0 & 0 & 0 & 0 & 0 & 0 & 0 & Q(\beta,q)
		}
	\end{align*}

	\begin{align*}
		&R_{r,g}(\beta,q^2z):=
		\begin{tikzpicture}[baseline=(current  bounding  box.center),scale=0.75]
			\draw[red, thick,arrow=.27,rounded corners,d]  (-0.5,0.5) node[black,above] {$\ss z_1$} -- (0.5,-0.5) ;
			\draw[green, thick,arrow=.27,rounded corners,d] (0.5,0.5) node[black,above] {$\ss z_2$} -- (-0.5,-0.5) ;
		\end{tikzpicture} = R_{g,r}(\beta,1/(q^2z))^{-1}\quad
		&R_{r,b}(\beta,qz):=
		\begin{tikzpicture}[baseline=(current  bounding  box.center),scale=0.75]
			\draw[red, thick,arrow=.27,rounded corners,d]  (-0.5,0.5) node[black,above] {$\ss z_1$} -- (0.5,-0.5) ;
			\draw[blue, thick,arrow=.27,rounded corners,d] (0.5,0.5) node[black,above] {$\ss z_2$} -- (-0.5,-0.5) ;
		\end{tikzpicture} = R_{r,b}(\beta,1/(qz))^{-1}
	\end{align*}
	\[ R_{b,g}(\beta,qz):=
	\begin{tikzpicture}[baseline=(current  bounding  box.center),scale=0.75]
		\draw[blue, thick,arrow=.27,rounded corners,d]  (-0.5,0.5) node[black,above] {$\ss z_1$} -- (0.5,-0.5) ;
		\draw[green, thick,arrow=.27,rounded corners,d] (0.5,0.5) node[black,above] {$\ss z_2$} -- (-0.5,-0.5) ;
	\end{tikzpicture} = R_{g,b}(\beta,1/(qz))^{-1}
	\]
	
	\begin{align*}
		&R_{\text{g,g}}(\beta,z)=
		\begin{tikzpicture}[baseline=(current  bounding  box.center),scale=0.75]
			\draw[green, thick,arrow=.27,rounded corners,d]  (-0.5,0.5) node[black,above] {$\ss z_1$} -- (0.5,-0.5) ;
			\draw[green, thick,arrow=.27,rounded corners,d] (0.5,0.5) node[black,above] {$\ss z_2$} -- (-0.5,-0.5) ;
		\end{tikzpicture}= 
		\\&
		\bordermatrix{
			&1\slash\backslash1 & 1\slash\backslash0 & 1\slash\backslash10 & 0\slash\backslash1 & 0\slash\backslash0 & 0\slash\backslash10 & 10\slash\backslash1 & 10\slash\backslash0 & 10\slash\backslash10\cr
			1\backslash\slash1& 1 & 0 & 0 & 0 & 0 & 0 & 0 & 0 & 0\cr
			1\backslash\slash0&0 & f_{1,1}^{0,10}(1/z) & 0 & f_{10,1}^{1,10}(1/z)& 0 & 0 & 0 & 0 & 0\cr
			1\backslash\slash10&0 & 0 & f_{1,1}^{0,10}(1/z)  & 0 & 0 & 0 & f_{10,1}^{1,10}(1/z) & 0 & 0\cr
			0\backslash\slash1&0 & f_{1,0}^{0,1}(1/z) & 0 & f_{10,0}^{1,1}(1/z) & 0 & 0 & 0 & 0 & 0 \cr
			0\backslash\slash0&0 & 0 & 0 & 0 & 1 & 0 & 0 & 0 & 0\cr
			0\backslash\slash10&0 & 0 & 0 & 0 & 0 & f_{1,1}^{0,10}(1/z) & 0 & f_{1,0}^{0,1}(1/z) & 0\cr
			10\backslash\slash1&0 & 0 & f_{1,0}^{0,1}(1/z) & 0 & 0 & 0 & f_{10,0}^{1,1}(1/z) & 0 & 0\cr
			10\backslash\slash0&0 & 0 & 0 & 0 & 0 & f_{10,1}^{1,10}(1/z) & 0 & f_{10,0}^{1,1}(1/z) & 0\cr
			10\backslash\slash10&0 & 0 & 0 & 0 & 0 & 0 & 0 & 0 & 1
		}
	\end{align*}
	\begin{align*}
		&R_{\text{r,r}}(\beta,z)=
		\begin{tikzpicture}[baseline=(current  bounding  box.center),scale=0.75]
			\draw[red, thick,arrow=.27,rounded corners,d]  (-0.5,0.5) node[black,above] {$\ss z_1$} -- (0.5,-0.5) ;
			\draw[red, thick,arrow=.27,rounded corners,d] (0.5,0.5) node[black,above] {$\ss z_2$} -- (-0.5,-0.5) ;
		\end{tikzpicture}= 
		\\&
		\bordermatrix{
			&1\slash\backslash1 & 1\slash\backslash0 & 1\slash\backslash10 & 0\slash\backslash1 & 0\slash\backslash0 & 0\slash\backslash10 & 10\slash\backslash1 & 10\slash\backslash0 & 10\slash\backslash10\cr
			1\backslash\slash1& 1 & 0 & 0 & 0 & 0 & 0 & 0 & 0 & 0\cr
			1\backslash\slash0&0 & f_{1,1}^{0,10}(1/z) & 0 & f_{10,1}^{1,10}(1/z)& 0 & 0 & 0 & 0 & 0\cr
			1\backslash\slash10&0 & 0 & f_{10,0}^{1,1}(1/z) & 0 & 0 & 0 & f_{10,1}^{1,10}(1/z)& 0 & 0\cr
			0\backslash\slash1&0 & f_{1,0}^{0,1}(1/z) & 0 & f_{10,0}^{1,1}(1/z) & 0 & 0 & 0 & 0 & 0 \cr
			0\backslash\slash0&0 & 0 & 0 & 0 & 1 & 0 & 0 & 0 & 0\cr
			0\backslash\slash10&0 & 0 & 0 & 0 & 0 & f_{10,0}^{1,1}(1/z) & 0 & f_{1,0}^{0,1}(1/z) & 0\cr
			10\backslash\slash1&0 & 0 & f_{1,0}^{0,1}(1/z) & 0 & 0 & 0 & f_{1,1}^{0,10}(1/z) & 0 & 0\cr
			10\backslash\slash0&0 & 0 & 0 & 0 & 0 & f_{10,1}^{1,10}(1/z) & 0 & Q(f_{1,1}^{0,10}(1/z))& 0\cr
			10\backslash\slash10&0 & 0 & 0 & 0 & 0 & 0 & 0 & 0 & 1
		}
	\end{align*}
	\begin{align*}
		&R_{\text{b,b}}(\beta,z)=
		\begin{tikzpicture}[baseline=(current  bounding  box.center),scale=0.75]
			\draw[blue, thick,arrow=.27,rounded corners,d]  (-0.5,0.5) node[black,above] {$\ss z_1$} -- (0.5,-0.5) ;
			\draw[blue, thick,arrow=.27,rounded corners,d] (0.5,0.5) node[black,above] {$\ss z_2$} -- (-0.5,-0.5) ;
		\end{tikzpicture}=
		\\&
		\bordermatrix{
			&1\slash\backslash1 & 1\slash\backslash0 & 1\slash\backslash10 & 0\slash\backslash1 & 0\slash\backslash0 & 0\slash\backslash10 & 10\slash\backslash1 & 10\slash\backslash0 & 10\slash\backslash10\cr
			1\backslash\slash1& 1 & 0 & 0 & 0 & 0 & 0 & 0 & 0 & 0\cr
			1\backslash\slash0&0 & f_{1,1}^{0,10}(1/z) & 0 & f_{10,1}^{1,10}(1/z)& 0 & 0 & 0 & 0 & 0\cr
			1\backslash\slash10&0 & 0 & f_{1,1}^{0,10}(1/z)& 0 & 0 & 0 & f_{10,1}^{1,10}(1/z) & 0 & 0\cr
			0\backslash\slash1&0 & f_{1,0}^{0,1}(1/z)& 0 & f_{10,0}^{1,1}(1/z) & 0 & 0 & 0 & 0 & 0 \cr
			0\backslash\slash0&0 & 0 & 0 & 0 & 1 & 0 & 0 & 0 & 0\cr
			0\backslash\slash10&0 & 0 & 0 & 0 & 0 & f_{10,0}^{1,1}(1/z) & 0 & f_{1,0}^{0,1}(1/z) & 0\cr
			10\backslash\slash1&0 & 0 & f_{1,0}^{0,1}(1/z) & 0 & 0 & 0 & f_{10,0}^{1,1}(1/z)& 0 & 0\cr
			10\backslash\slash0&0 & 0 & 0 & 0 & 0 & f_{10,1}^{1,10}(1/z) & 0 & f_{1,1}^{0,10}(1/z) & 0\cr
			10\backslash\slash10&0 & 0 & 0 & 0 & 0 & 0 & 0 & 0 & 1
		}
	\end{align*}
	
	\begin{rmk}\label{rmk:submatrix}
		Observe that the matrix (\ref{eqn:latticemodel}) defining the lattice model for the $\overline{S}_\lambda$ of \cref{section:solvable-lattice-model} is a submatrix of the $R_{g,g}(\beta,z)$, $R_{r,r}(\beta,z)$, $R_{b,b}(\beta,z)$ matrices.
	\end{rmk}
	
	Finally, consider the following two matrices, which appear in \cref{lem:factorization}. These matrices record the fugacities of the triangle tiles.
	\begin{align*}
		D(\beta)=\begin{tikzpicture}[baseline=(current  bounding  box.center),scale=0.75]
			\draw[red, thick,arrow=.67,rounded corners,d] (0,0) to (-0.5,-0.5) node[black,left] {$\ss q^{-1}z$};
			\draw[green, thick,arrow=.67,rounded corners,d] (0,0) to (0.5,-0.5) node[black,right] {$\ss qz$} ;
			\draw[blue, thick,arrow=.67,rounded corners,d] (0,0.5) node[black,above] {$\ss z$} to (0,0);
		\end{tikzpicture}=
		\bordermatrix{
			&1- & 0- & 10-\cr
			1\backslash\slash1&1 & 0 & 0 \cr
			1\backslash\slash0&0 & 0 & 0\cr
			1\backslash\slash10&0 & 1 & 0 \cr
			0\backslash\slash1&0 & 0 & 1 \cr
			0\backslash\slash0&0 & 1 & 0 \cr
			0\backslash\slash10&0 & 0 & 0 \cr
			10\backslash\slash1&0 & 0 & 0\cr
			10\backslash\slash0&1 & 0 & 0\cr
			10\backslash\slash10&0 & 0 & -q
		}
	\end{align*}
	\begin{align*}
		U(\beta)=\begin{tikzpicture}[baseline=(current  bounding  box.center),scale=0.75]
			\draw[red, thick,arrow=.67,rounded corners,d] (0.5,0.5) node[black,right] {$\ss  q^{-1}z$} to (0,0);
			\draw[green, thick,arrow=.67,rounded corners,d] (-0.5,0.5) node[black, left] {$\ss qz$} to (0,0);
			\draw[blue, thick,arrow=.67,rounded corners,d]  (0,0) to (0,-0.5) node[black,below] {$\ss z$};
		\end{tikzpicture}=
		\bordermatrix{
			&1\slash\backslash1 & 1\slash\backslash0 & 1\slash\backslash10 & 0\slash\backslash1 & 0\slash\backslash0 & 0\slash\backslash10 & 10\slash\backslash1 & 10\slash\backslash0 & 10\slash\backslash10\cr
			1-&1 & 0 & 0 & 0 & 0 & 1 & 0 & 0 & 0\cr
			0-&0 & 0 & 0 & 0 & 1 & 0 & 1 & 0 & 0\cr
			10-&0 & 1 & 0 & 0 & 0 & 0 & 0 & 0 & \tfrac{-Q(\beta,q)}{q}\cr
		}
	\end{align*}

	The main result of this section will be \cref{thm:main}, which says that the Littlewood-Richardson coefficient $\overline{c}_{\lambda,\mu}^\nu$ is a sum of products of the fugacities of the \textit{rhombus} puzzle pieces defined in \cref{subsection:fugacities}. The key tool in proving the main theorem is \cref{prop:ybe}, which says that the $R$, $U$, and $D$ matrices defined in this section satisfy various Yang-Baxeter type equations. The following result, \cref{lem:factorization}, together with \cref{thm:main} imply that the evaluation of the coefficient $\overline{c}_{\lambda,\mu}^\nu$, by setting all $e^{\pm y_i}=0$, is a sum of products of the fugacities of the \textit{triangle} puzzle pieces defined in \cref{subsection:fugacities}. In particular, $\overline{c}_{\lambda,\mu}^\nu$ evaluated at all $e^{\pm y_i}=0$ can be computed as the sum over the fugacities of Knutson-Tau puzzles tiled solely by \textit{triangle} puzzles pieces. When $\beta=1$, this means that, non-equivariantly, the structure constants in the Schubert basis in $K(\mrm{Gr}(k,n))$ can be computed using only the triangle puzzle pieces to tile a Knutson-Tao puzzle.
	
	\begin{lem}\label{lem:factorization}
		The matrix $R_{{\text{g,r}}}(\beta,0)$ has rank $3$, and there is a factorization $R_{{\text{g,r}}}(\beta,0)=D(\beta)U(\beta)$.
	\end{lem}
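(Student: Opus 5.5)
The plan is a direct computation: evaluate $R_{g,r}(\beta,z/q^2)$ at $z=0$, multiply the $9\times 3$ matrix $D(\beta)$ by the $3\times 9$ matrix $U(\beta)$, and compare the two $9\times 9$ matrices entry by entry. The rank statement then comes for free from the factorization.

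First I set $z=0$ in each of the rhombus fugacities $f_{i,j}^{k,\ell}(z)$ of \cref{subsection:fugacities}. I then record which entries of $R_{g,r}(\beta,0)$ survive.
\begin{itemize}
\item Every fugacity carrying a factor of $z$ in its numerator vanishes. These are $f_{1,1}^{0,10}$, $f_{0,0}^{10,1}$, $f_{10,10}^{1,0}$ and $f_{0,1}^{10,10}$, according to whether the corresponding tile in the list is of the form $\beta(1-q^2)z/(\cdots)$ or $\beta Q(q^2-1)z/(q(\cdots))$.
\item The fugacities with no $z$ in the numerator specialize to constants, because their denominators become $Q(\beta,q)$. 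For example, $f_{10,10}^{10,10}(0)=Q(\beta,q)$, and the factor $qQ(\beta,q)(1-z)/(Q(\beta,q)-q^2z)$ becomes $q$.
\end{itemize}
I will be careful to match each $f_{i,j}^{k,\ell}$ with the correct tile, using the convention $f_{i,j}^{k,\ell}=\mrm{fug}(\rh ij{\ell}{k})$. In this way I obtain $R_{g,r}(\beta,0)$ as an explicit matrix whose nonzero entries are among $1$, $Q(\beta,q)$, $q$, $-q$ and the remaining $z$-free ratios.

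Next I compute $D(\beta)U(\beta)$. Each row of $D(\beta)$ has at most one nonzero entry, namely $1$ or $-q$ in a column $1-$, $0-$ or $10-$. Hence each row of the product is either zero or a scalar multiple of a single row of $U(\beta)$. Concretely:
\begin{itemize}
\item the rows of $D$ indexed by $1\backslash\slash1$ and $10\backslash\slash0$ produce copies of the $1-$ row of $U$, which has entries $1$ in columns $1\slash\backslash1$ and $0\slash\backslash10$;
\item the rows indexed by $1\backslash\slash10$ and $0\backslash\slash0$ produce copies of the $0-$ row;
\item the row indexed by $0\backslash\slash1$ produces a copy of the $10-$ row;
\item the row indexed by $10\backslash\slash10$ produces $-q$ times the $10-$ row, giving in particular the entry $(-q)(-Q(\beta,q)/q)=Q(\beta,q)=f_{10,10}^{10,10}(0)$;
\item the rows indexed by $1\backslash\slash0$, $0\backslash\slash10$ and $10\backslash\slash1$ are zero.
\end{itemize}
I then check, position by position, that this agrees with $R_{g,r}(\beta,0)$. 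This requires that the entries $f_{1,0}^{0,1}(0)$, $f_{0,10}^{10,0}(0)$ and $f_{10,1}^{1,10}(0)$ in the three rows that should vanish really do vanish at $z=0$. It also requires that the surviving $z$-free entries sit exactly at the $1$'s of the copied $U$-rows.

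For the rank, the three rows of $U(\beta)$ are supported on pairwise disjoint sets of columns, and each is nonzero, so $U(\beta)$ has rank $3$. The columns $1-$, $0-$, $10-$ of $D(\beta)$ are also linearly independent: their nonzero entries occur in the disjoint row sets $\{1\backslash\slash1,\,10\backslash\slash0\}$, $\{1\backslash\slash10,\,0\backslash\slash0\}$ and $\{0\backslash\slash1,\,10\backslash\slash10\}$. So $D(\beta)$ is injective and $U(\beta)$ is surjective, and their product has rank exactly $3$. This holds whenever $Q(\beta,q)\neq 0$ and $q\neq 0$, the standing nondegeneracy assumption.

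The main obstacle is the bookkeeping in the matching step. One must correctly translate the tile-orientation convention $f_{i,j}^{k,\ell}$ into the row and column labels $a\backslash\slash b$ and $c\slash\backslash d$ of $R_{g,r}$. One must also confirm which fugacities vanish at $z=0$; a single mislabeled tile would make the comparison fail. If a discrepancy appears, I will first recheck these conventions against the $\beta=1$ case from \cite{KZJ21}, where the same factorization is known. If needed, I will verify the full $9\times 9$ identity symbolically in SageMath, as is done for the other matrix identities in this paper.
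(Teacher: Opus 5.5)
Your method is what the paper's ``straightforward calculation'' amounts to: multiply $D(\beta)U(\beta)$, compare entrywise, and get rank $3$ because $D(\beta)$ has full column rank and $U(\beta)$ has full row rank. Your description of the rows of $D(\beta)U(\beta)$ and your rank argument are correct. The gap is the evaluation point: at $z=z_2/z_1=0$ the identity is false.

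\begin{itemize}
\item There $f_{1,0}^{0,1}(0)=q/Q(\beta,q)$ and $f_{0,10}^{10,0}(0)=f_{10,1}^{1,10}(0)=q$ are nonzero. You note yourself that $qQ(\beta,q)(1-z)/(Q(\beta,q)-q^2z)$ becomes $q$, and then require these same entries to vanish. The corresponding rows $1\backslash\slash0$, $0\backslash\slash10$, $10\backslash\slash1$ of $D(\beta)$ are zero.
\item $f_{10,10}^{1,0}(z)=\beta q(q^2-1)/(Q(\beta,q)-q^2z)$ has no $z$ in its numerator, so it does not vanish at $z=0$.
\item At $z=0$ the entries $f_{1,1}^{0,10},f_{0,0}^{10,1},f_{0,1}^{10,10},f_{1,10}^{0,0},f_{10,0}^{1,1},f_{10,10}^{1,0}$ equal $0,\,0,\,0,\,\beta(1-q^2)/Q,\,\beta(1-q^2)/Q,\,\beta q(q^2-1)/Q$. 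The entries of $D(\beta)U(\beta)$ in those positions are $1,\,1,\,-Q/q,\,1,\,1,\,-q$.
\end{itemize}

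In fact $R_{g,r}$ at $z=0$ is invertible when $q\neq 0\neq Q(\beta,q)$, consistent with the paper's remark that these matrices are nonsingular. So it has rank $9$, and the comparison you describe fails at essentially every nonzero position.

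The ``$0$'' in $R_{g,r}(\beta,0)$ has to be read additively, as the equivariant parameter $y_j-y_i=0$. This is the point $z=1$, where the base spectral parameters coincide, so the actual ratio is $q^{-2}$ in the notation $R_{g,r}(\beta,z/q^2)$. It is exactly the configuration at the trivalent vertices of the bootstrap equations, where green $qz_1$ meets red $q^{-1}z_1$.

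At $z=1$ one has $Q(\beta,q)-q^2=\beta(1-q^2)$, so:
\begin{itemize}
\item $f_{1,1}^{0,10}=f_{0,0}^{10,1}=f_{1,10}^{0,0}=f_{10,0}^{1,1}=1$;
\item $f_{0,1}^{10,10}=-Q(\beta,q)/q$, $f_{10,10}^{1,0}=-q$, and $f_{10,10}^{10,10}=Q(\beta,q)$;
\item the three entries with a factor $(1-z)$, namely $f_{1,0}^{0,1}$, $f_{0,10}^{10,0}$, $f_{10,1}^{1,10}$, vanish.
\end{itemize}
With these values your row-by-row matching goes through verbatim, provided $\beta(1-q^2)\neq0$ so the denominators are nonzero, and your rank argument then applies unchanged. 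Redo the specialization at $z=1$ rather than $z=0$.
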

	\begin{proof}
		This is a straightforward calculation.
	\end{proof}

	\begin{prop}\label{prop:ybe} (cf. \cite[Prop. 3.4]{KZJ21})
		In the pictures below, black lines can have arbitrary (independent)
		colors. Moreover, all green lines have spectral parameter $q z_i$, all red lines have spectral parameter $q^{-1}z_i$, and all blue lines have spectral parameter $z_i$.
		\begin{itemize}
			\item Yang--Baxter equations:
			\begin{equation}\label{eq:ybe}
				\begin{tikzpicture}[baseline=-3pt,y=2cm]
					\draw[d,arrow=0.1,arrow=0.4,arrow=0.7,rounded corners=4mm] (-0.5,0.5) node[black,above] {} -- (0.75,0) -- (1.5,-0.5)  (0.5,0.5) node[black,above] {} -- (0.2,0) -- (0.5,-0.5) (1.5,0.5) node[black,above] {} -- (0.75,0) -- (-0.5,-0.5);
				\end{tikzpicture}
				=
				\begin{tikzpicture}[baseline=-3pt,y=2cm]
					\draw[d,arrow=0.1,arrow=0.4,arrow=0.7,rounded corners=4mm] (-0.5,0.5)node[black,above] {} -- (0.25,0) -- (1.5,-0.5) (0.5,0.5) node[black,above] {} -- (0.8,0) -- (0.5,-0.5) (1.5,0.5) node[black,above] {} -- (0.25,0) -- (-0.5,-0.5);
				\end{tikzpicture}
			\end{equation}
			\item Bootstrap equations: 
			\begin{align}\label{eq:qtri}
				&\tikz[baseline=0]{
					\draw[invarrow=0.5,dg] (0,0) -- (-1,1) node[black,right] {$\ss qz_1$};
					\draw[invarrow=0.5,dr]  (0,0) -- (1,1) node[black,left] {$\ss q^{-1}z_1$};
					\draw[invarrow=0.5,db] (0,-1.5) node[black,below] {$\ss z_1$} -- (0,0)   node[triv] {};
					\draw[invarrow=0.5,rounded corners,d] (-1,-1.5) -- (-1,-0.5) -- (1.5,0) -- (1.5,1) node[black,above] {};
				}
				=
				\tikz[baseline=0]{
					\draw[invarrow=0.6,dg]  (0,-0.5) -- (-1,1) node[black,right] {$\ss qz_1$};
					\draw[invarrow=0.75,dr] (0,-0.5) -- (1,1) node[black,left] {$\ss q^{-1}z_1$};
					\draw[invarrow=0.6,db] (0,-1.5) node[black,below] {$\ss z_1$} -- (0,-0.5) node[triv] {};
					\draw[invarrow=0.6,rounded corners,d] (-1,-1.5) -- (-1,0) -- (1.5,0.5) -- (1.5,1) node[black,above] {};
				}
				&
				&\tikz[baseline=0,xscale=-1]{
					\draw[invarrow=0.5,dr] (0,0) -- (-1,1) node[black,left] {$\ss q^{-1}z_2$};
					\draw[invarrow=0.5,dg] (0,0) -- (1,1) node[black,right] {$\ss qz_2$};
					\draw[invarrow=0.5,db] (0,-1.5) node[black,below] {$\ss z_2$} -- (0,0) node[triv] {};
					\draw[invarrow=0.5,rounded corners,d] (-1,-1.5) -- (-1,-0.5) -- (1.5,0) -- (1.5,1) node[black,above] {};
				}
				=
				\tikz[baseline=0,xscale=-1]{
					\draw[invarrow=0.6,dr]  (0,-0.5) -- (-1,1) node[black,left] {$\ss q^{-1}z_2$};
					\draw[invarrow=0.75,dg] (0,-0.5) -- (1,1) node[black,right] {$\ss qz_2$};
					\draw[invarrow=0.6,db] (0,-1.5) node[black,below] {$\ss z_2$} -- (0,-0.5) node[triv] {};
					\draw[invarrow=0.6,rounded corners,d] (-1,-1.5) -- (-1,0) -- (1.5,0.5) -- (1.5,1) node[black,above] {};
				}
				\\[4mm]\label{eq:qtrirev}
				&\tikz[baseline=0,scale=-1]{
					\draw[arrow=0.5,dg]  (0,0) -- (-1,1) node[black,left] {$\ss qz_1$};
					\draw[arrow=0.5,dr] (0,0) -- (1,1) node[black,right] {$\ss q^{-1}z_1$};
					\draw[arrow=0.5,db] (0,-1.5) node[black,above] {$\ss z_1$} -- (0,0) node[triv] {};
					\draw[arrow=0.5,rounded corners,d] (-1,-1.5) node[black,above] {} -- (-1,-0.5) -- (1.5,0) -- (1.5,1) ;
				}
				=
				\tikz[baseline=0,scale=-1]{
					\draw[arrow=0.6,dg] (0,-0.5) -- (-1,1) node[black,left] {$\ss z_1$};
					\draw[arrow=0.75,dr] (0,-0.5) -- (1,1) node[black,right] {$\ss q^{-1}z_1$};
					\draw[arrow=0.6,db] (0,-1.5) node[black,above] {$\ss z_1$} -- (0,-0.5)  node[triv] {};
					\draw[arrow=0.6,rounded corners,d] (-1,-1.5) node[black,above] {} -- (-1,0) -- (1.5,0.5) -- (1.5,1);
				}&
				&\tikz[baseline=0,yscale=-1]{
					\draw[arrow=0.5,dr] (0,0) -- (-1,1) node[black,right] {$\ss q^{-1}z_2$};
					\draw[arrow=0.5,dg] (0,0) -- (1,1) node[black,left] {$\ss qz_2$};
					\draw[arrow=0.5,db] (0,-1.5) node[black,above] {$\ss z_2$} -- (0,0) node[triv] {};
					\draw[arrow=0.5,rounded corners,d] (-1,-1.5) node[black,above] {} -- (-1,-0.5) -- (1.5,0) -- (1.5,1);
				}
				=
				\tikz[baseline=0,yscale=-1]{
					\draw[arrow=0.6,dr] (0,-0.5) -- (-1,1) node[black,right] {$\ss q^{-1}z_2$};
					\draw[arrow=0.75,dg] (0,-0.5) -- (1,1) node[black,left] {$\ss qz_2$};
					\draw[arrow=0.6,db] (0,-1.5) node[black,above] {$\ss z_2$} -- (0,-0.5) node[triv] {};
					\draw[arrow=0.6,rounded corners,d] (-1,-1.5) node[black,above] {} -- (-1,0) -- (1.5,0.5) -- (1.5,1);
				}
			\end{align}
			\item Unitarity equations: 
			\begin{equation}\label{eq:unit}
				\begin{tikzpicture}[baseline=-3pt]
					\draw[arrow=0.07,arrow=0.57,rounded corners,d] (-0.5,1) node[black,above] {} -- (0.5,0) -- (-0.5,-1) (0.5,1) node[black,above] {} -- (-0.5,0) -- (0.5,-1);
				\end{tikzpicture}
				=
				\begin{tikzpicture}[baseline=-3pt]
					\draw[arrow=0.1,arrow=0.6,rounded corners,d] (-0.5,1) node[black,above] {} -- (-0.5,-1) (0.5,1) node[black,above] {}-- (0.5,-1);
				\end{tikzpicture}
			\end{equation}
			\item Value at equal spectral parameters: (here, the lines {\em must}\/ have the same color
			for the equality to make sense)
			\begin{equation}\label{eq:equal}
				\begin{tikzpicture}[baseline=-3pt,yscale=1.5]
					\draw[invarrow=0.3,d] (-0.5,-0.5) -- node[left,pos=0.3] {} (0.5,0.5);
					\draw[invarrow=0.3,d] (0.5,-0.5) -- node[right,pos=0.3] {} (-0.5,0.5);
				\end{tikzpicture}
				=
				\begin{tikzpicture}[baseline=-3pt,yscale=1.5]
					\draw[invarrow=0.3,rounded corners=4mm,d] (-0.5,-0.5) -- node[left] {} (0,0) -- (-0.5,0.5);
					\draw[invarrow=0.3,rounded corners=4mm,d] (0.5,-0.5) -- node[right] {} (0,0) -- (0.5,0.5);
				\end{tikzpicture}
			\end{equation}
		\end{itemize}
	\end{prop}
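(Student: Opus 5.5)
The plan is to reduce every pictorial identity in \cref{prop:ybe} to a finite list of matrix identities in $\mathrm{End}(\mathbb{C}^3)^{\otimes 2}$ or $\mathrm{End}(\mathbb{C}^3)^{\otimes 3}$, or to maps $\mathbb{C}^3\otimes\mathbb{C}^3\otimes\mathbb{C}^3\to\mathbb{C}^3\otimes\mathbb{C}^3$ and back, with entries in $\mathbb{Q}(q,\beta,z_1,z_2,z_3)$. We then verify these identities, following the approach of the proof of \cref{prop:single} and \cite[Prop. 3.4]{KZJ21}. Concretely, we fix the conventions already used for \cref{prop:single}: a crossing of a line of colour $c$ carrying spectral parameter $a$ with a line of colour $c'$ carrying $b$ is the matrix $R_{c,c'}$ evaluated at the appropriate ratio. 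By the prescription that green, red and blue lines carry $qz_i$, $q^{-1}z_i$ and $z_i$, these arguments are $z/q^2$, $z/q$, $z$, $qz$, $q^2z$. A trivalent vertex is $U(\beta)$ or $D(\beta)$.

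\textbf{Unitarity and equal spectral parameters.} The mixed-colour unitarity equations hold by construction, since $R_{r,g}$, $R_{r,b}$ and $R_{b,g}$ were \emph{defined} as inverses of $R_{g,r}$, $R_{r,b}$ and $R_{g,b}$ at the inverted argument. For the same-colour unitarity equations we must show $R_{c,c}(\beta,z)R_{c,c}(\beta,1/z)=\mathrm{id}$ for $c\in\{g,r,b\}$. These matrices are block diagonal, with $1\times1$ blocks and $2\times 2$ blocks whose entries are the weights of \eqref{eqn:latticemodel} (cf. \cref{rmk:submatrix}). The identity therefore reduces to the unitarity equation of \cref{prop:single} for each block. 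The only exception is the block containing the entry $Q\,f_{1,1}^{0,10}$ in $R_{r,r}$, which we check by hand. The value at equal spectral parameters reduces in the same way to $f_{1,1}^{0,10}(1)=f_{10,0}^{1,1}(1)=1$ and $f_{1,0}^{0,1}(1)=f_{10,1}^{1,10}(1)=0$, which can be read off directly.

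\textbf{Yang--Baxter equations.} Because the black lines have arbitrary colours, there are $27$ colour triples. We expand each one as an identity $(R_{12}\otimes \mathrm{id}_3)(\mathrm{id}_3\otimes R_{13})(R_{23}\otimes\mathrm{id}_3)=(\mathrm{id}_3\otimes R_{23})(R_{13}\otimes\mathrm{id}_3)(\mathrm{id}_3\otimes R_{12})$ of $27\times27$ matrices. The arguments are the products of $z_j/z_i$ with the colour-dependent powers of $q$. We verify all of them symbolically in SageMath, exactly as was done for \cref{prop:single}. To cut down the work, we use unitarity: any triple obtained from another by reversing the orientation of a line can be rewritten in terms of the original using inverses. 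This lets us restrict to the triples with colours in a fixed cyclic order.

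\textbf{Bootstrap equations.} These are the fusion identities
\[
(\mathrm{id}\otimes U(\beta))\,R_{x,g}R_{x,r}=R_{x,b}\,(U(\beta)\otimes\mathrm{id}),
\]
together with its mirror images and the analogous identity for $D(\beta)$, for each colour $x$ of the black line. We first check these with $4\times 4$ or $9\times 3$ block computations. Then, as a cross-check, we use \cref{lem:factorization}: since $R_{g,r}(\beta,0)=D(\beta)U(\beta)$ is the degenerate point at which the green and red parameters differ by $q^2$, the identities for $U$ and $D$ should follow from the Yang--Baxter equation involving $R_{g,r}$ specialised at that point. This works provided $D(\beta)$ is left-cancellable there. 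It is, because $D(\beta)$ has full column rank $3$.

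\textbf{Main obstacle.} We expect the hardest part to be the large mixed-colour Yang--Baxter cases involving $R_{r,b}$ and $R_{g,b}$. These matrices contain the nonstandard entries $z\,f_{0,1}^{10,10}(1/z)$ and $z^{-1}f_{10,10}^{1,0}(1/z)$, whose $\beta$-dependence through $Q(\beta,q)$ is not a mere rescaling of the $\beta=1$ case. For this reason we do not try to deduce these cases from \cite{KZJ21}. Instead we simplify the rational entries after clearing the common denominators $Q(\beta,q)-q^ku$.
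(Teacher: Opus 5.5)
Your strategy is the paper's own. Its proof is a direct SageMath check of the finite list of identities in \cref{subsection:the-equations}, and your hand reductions of unitarity and of the equal-parameter identity to \cref{prop:single} are a legitimate shortcut. The one block you flag as an exception, however, does not pass the hand check you propose.

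Write $a,b,c,d$ for $f_{1,1}^{0,10},f_{10,1}^{1,10},f_{1,0}^{0,1},f_{10,0}^{1,1}$ evaluated at $w=1/z$, and $\bar a,\bar c$ for $f_{1,1}^{0,10},f_{1,0}^{0,1}$ evaluated at $1/w$. As printed, the block of $R_{r,r}(\beta,z)$ in rows $0\backslash\slash10$, $10\backslash\slash0$ and columns $0\slash\backslash10$, $10\slash\backslash0$ is $\bigl(\begin{smallmatrix} d&c\\ b&Qa\end{smallmatrix}\bigr)$. The off-diagonal entry of its product with the same block at $1/z$ is
\[
d\bar c+Qc\bar a=\frac{\beta q(1-q^2)(1-Q)(w-1)}{w\,(Q-q^2w)(Q-q^2w^{-1})},\qquad 1-Q=(1-\beta)(1-q^2),
\]
which is nonzero for generic $\beta$. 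Likewise, $Q-q^2=\beta(1-q^2)$ gives $f_{1,1}^{0,10}(1)=1$, so the corresponding diagonal entry of $R_{r,r}(\beta,1)$ is $Q\neq1$. Your equal-parameter paragraph, which only reads off the values $f(1)$, silently drops this factor. So both identities fail for the printed matrix. You must state that this entry is $f_{1,1}^{0,10}(1/z)$, as in $R_{b,b}$; this is what the identity $R_{r,r}(\beta,1)=\mathrm{id}_3\otimes\mathrm{id}_3$, listed as verified in the paper, forces. The block then becomes $\bigl(\begin{smallmatrix} d&c\\ b&a\end{smallmatrix}\bigr)$.

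With that correction your reduction is sound, but you should say why. The $2\times2$ blocks of $R_{g,g},R_{r,r},R_{b,b}$ are not all literally the middle block $\bigl(\begin{smallmatrix} a&b\\ c&d\end{smallmatrix}\bigr)$ of $\widehat{R}(\beta,1/z)$. Some are $\bigl(\begin{smallmatrix} a&c\\ b&d\end{smallmatrix}\bigr)$, $\bigl(\begin{smallmatrix} d&c\\ b&a\end{smallmatrix}\bigr)$ or $\bigl(\begin{smallmatrix} d&b\\ c&a\end{smallmatrix}\bigr)$. Since $b=Qc$, these are conjugates of it by $\mathrm{diag}(1,Q)$ and/or the coordinate swap, and conjugation by a constant matrix preserves both unitarity and the value at $z=1$.

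Three smaller points:
\begin{enumerate}
\item Mixed unitarity holds by construction only once $R_{r,b}(\beta,qz)$ is read as $R_{b,r}(\beta,1/(qz))^{-1}$. The paper's display has a typo here, which you reproduced.
\item The valid reduction of the Yang--Baxter cases is conjugation by an adjacent crossing and its unitarity inverse. This swaps the colours of two adjacent lines, so one ordering per colour multiset suffices: ten cases, including the monochromatic ones that the paper's list omits. ``Reversing orientation'' is not the mechanism.
\item The factorization gives only a cross-check. $D(\beta)U(\beta)$ equals $R_{g,r}$ at $z_2/z_1=1$, i.e.\ argument $q^{-2}$; this matches your description, and the lemma's ``$R_{g,r}(\beta,0)$'' should be read this way. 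The degenerate Yang--Baxter equation, together with surjectivity of $U(\beta)$ and injectivity of $D(\beta)$, only produces \emph{some} $X$ satisfying the fusion relations. Identifying $X$ with the printed $R_{b,x}$ is exactly the content of the bootstrap equations, so the direct check cannot be skipped.
\end{enumerate}
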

	\begin{proof}
		These are a straightforward calculations, verified using SageMath \cite{G25}. See \cref{subsection:the-equations} for the list of all the equations that were verified.
	\end{proof}

	\subsection{Proof of the puzzle formula}\label{subsection:proof}
	In this subsection, we will prove the puzzle formula for multiplying the classes $q^{\ell(\lambda)}S_\lambda$, closely following the strategy used in \cite{KZJ17}, \cite{KZJ21}. We will show the formula is positive in \cref{rmk:positive}, and we will give several examples of computations of structure constants in terms of puzzles in this subsection.
	
	We will use the notation $\tikz[scale=1,baseline=0cm]{\uptri{\lambda}{\nu}{\mu}}$ to mean the sum of the fugacities over all possible puzzles with the prescribed boundary labels. For example, for
	$\lambda = 0101$, $\mu = 0101$, $\nu = 0110$, we have
	\[\tikz[scale=1,baseline=0cm]{\uptri{0101}{0110}{0101}}:=\sum_{L_1,\dots,L_{12}\in \{0,1,10\}}
	\begin{tikzpicture}[math mode,nodes={\mcol},x={(-0.577cm,-1cm)},y={(0.577cm,-1cm)},baseline=(current  bounding  box.center)]
		\draw[thick] (0,0) -- node[pos=0.5] {\ss 0} ++(0,1); \draw[thick] (0,0) -- node[pos=0.5] {\ss 1} ++(1,0);
		\draw[thick] (0,1) -- node[pos=0.5] {\ss 1} ++(0,1); \draw[thick] (0,1) -- node[pos=0.5] {\ss L_1} ++(1,0); 
		\draw[thick] (0,2) -- node[pos=0.5] {\ss 0} ++(0,1); \draw[thick] (0,2) -- node[pos=0.5] {\ss L_2} ++(1,0); 
		\draw[thick] (0,3) -- node[pos=0.5] {\ss 1} ++(0,1); \draw[thick] (0,3) -- node[pos=0.5] {\ss L_3} ++(1,0); \draw[thick] (0+1,3) -- node {\ss 0} ++(-1,1); 
		\draw[thick] (1,0) -- node[pos=0.5] {\ss L_4} ++(0,1); \draw[thick] (1,0) -- node[pos=0.5] {\ss 0} ++(1,0);
		\draw[thick] (1,1) -- node[pos=0.5] {\ss L_5} ++(0,1); \draw[thick] (1,1) -- node[pos=0.5] {\ss L_6} ++(1,0);
		\draw[thick] (1,2) -- node[pos=0.5] {\ss L_7} ++(0,1); \draw[thick] (1,2) -- node[pos=0.5] {\ss L_8} ++(1,0); \draw[thick] (1+1,2) -- node {\ss 1} ++(-1,1); 
		\draw[thick] (2,0) -- node[pos=0.5] {\ss L_9} ++(0,1); \draw[thick] (2,0) -- node[pos=0.5] {\ss 1} ++(1,0);
		\draw[thick] (2,1) -- node[pos=0.5] {\ss L_{10}} ++(0,1); \draw[thick] (2,1) -- node[pos=0.5] {\ss L_{11}} ++(1,0); \draw[thick] (2+1,1) -- node {\ss 1} ++(-1,1); 
		\draw[thick] (3,0) -- node[pos=0.5] {\ss L_{12}} ++(0,1); \draw[thick] (3,0) -- node[pos=0.5] {\ss 0} ++(1,0); \draw[thick] (3+1,0) -- node {\ss 0} ++(-1,1); 
	\end{tikzpicture}
	=
	\begin{tikzpicture}[math mode,nodes={\mcol},x={(-0.577cm,-1cm)},y={(0.577cm,-1cm)},baseline=(current  bounding  box.center)]
		\draw[dg,arrow=0.07] (0.5,0) node[above] {\ss 1} -- node[left,pos=0.07] {\ss q z_1} (0.5,3.5);
		\draw[dg,arrow=0.1] (1.5,0) node[above] {\ss 0} -- node[left,pos=0.1] {\ss qz_2} (1.5,2.5);
		\draw[dg,arrow=0.17] (2.5,0) node[above] {\ss 1} -- node[left,pos=0.17] {\ss qz_3} (2.5,1.5);
		\draw[dg,arrow] (3.5,0) node[above] {\ss 0} -- node[left] {\ss q z_4} (3.5,0.5);
		\draw[dr,arrow=0.07] (0,0.5) node[above] {\ss 0} -- node[right,pos=0.07] {\ss q^{-1} z_4} (3.5,0.5);
		\draw[dr,arrow=0.1] (0,1.5) node[above] {\ss 1} -- node[right,pos=0.1] {\ss q^{-1} z_3} (2.5,1.5);
		\draw[dr,arrow=0.17] (0,2.5) node[above] {\ss 0} -- node[right,pos=0.17] {\ss q^{-1} z_2} (1.5,2.5);
		\draw[dr,arrow] (0,3.5) node[above] {\ss 1} -- node[right] {\ss q^{-1} z_1} (0.5,3.5);
		\draw[db,arrow] (0.5,3.5) node[triv] {} -- node[right] {\ss z_1}  ++(0.25,0.25) node[below] {\ss 0};
		\draw[db,arrow] (1.5,2.5) node[triv] {} -- node[right] {\ss z_2} ++(0.25,0.25) node[below] {\ss 1};
		\draw[db,arrow] (2.5,1.5) node[triv] {} -- node[right] {\ss z_3} ++(0.25,0.25) node[below] {\ss 1};
		\draw[db,arrow] (3.5,0.5) node[triv] {} -- node[right] {\ss z_4} ++(0.25,0.25) node[below] {\ss 0};
	\end{tikzpicture}
	\]
	
	\begin{prop}(cf. \cite[Prop. 3.4]{KZJ17})\label{prop:unique-puzzle}
		\begin{enumerate}
			\item Single-color crossings preserve single numbers, i.e.
			\[\text{if }\begin{tikzpicture}[baseline=-3pt,yscale=1.5]
				\draw[invarrow=0.3,d] (-0.5,-0.5) node[black,below] {$\ss i$} -- node[left,pos=0.3] {} (0.5,0.5) node[black,above] {$\ss l$};
				\draw[invarrow=0.3,d] (0.5,-0.5) node[black,below] {$\ss j$} -- node[right,pos=0.3] {} (-0.5,0.5) node[black,above] {$\ss k$};
			\end{tikzpicture}\neq 0, \text{ then}\quad  i,j\in\{0,1\}\iff k,l\in\{0,1\}, \]
			where the lines have arbitrary, but identical, colors.
			\item For $i\in\{0,1\}$,
			\[\text{if }\begin{tikzpicture}[baseline=-3pt,yscale=1.5]
				\draw[invarrow=0.3,d] (-0.5,-0.5) node[black,below] {$\ss i$} -- node[left,pos=0.3] {} (0.5,0.5) node[black,above] {$\ss k$};
				\draw[invarrow=0.3,d] (0.5,-0.5) node[black,below] {$\ss i$} -- node[right,pos=0.3] {} (-0.5,0.5) node[black,above] {$\ss j$};
			\end{tikzpicture}\neq 0 \quad \text{   or } \quad \begin{tikzpicture}[baseline=-3pt,yscale=1.5]
				\draw[invarrow=0.3,d] (-0.5,-0.5) node[black,below] {$\ss j$} -- node[left,pos=0.3] {} (0.5,0.5) node[black,above] {$\ss i$};
				\draw[invarrow=0.3,d] (0.5,-0.5) node[black,below] {$\ss k$} -- node[right,pos=0.3] {} (-0.5,0.5) node[black,above] {$\ss i$};
			\end{tikzpicture}\neq0 \quad \implies \quad i=j=k,\]
			where again the lines have identical colors.
			\item Recall the weakly-increasing string $\omega=0^{n-i_1}1^{i_1}$. We have \[\uptri\lambda\omega\mu\neq 0\implies \lambda=\mu=\omega\]
			for all single-number strings $\lambda$, $\mu$. Moreover, the puzzle $\uptri\omega\omega\omega$ is unique and has fugacity $1$.	
		\end{enumerate}
	\end{prop}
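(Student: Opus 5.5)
The plan is to read off parts (1) and (2) directly from the sparsity patterns of the single-color matrices $R_{g,g}(\beta,z)$, $R_{r,r}(\beta,z)$ and $R_{b,b}(\beta,z)$ of \cref{subsection:R}, and then deduce (3) from (1) and (2) by a propagation argument through the triangle, as in \cite[Prop. 3.4]{KZJ17}. The point is that these three matrices have exactly the same support. They differ only in which entry is a $\beta$-deformed fugacity, for instance $Q(\beta,q)f_{1,1}^{0,10}$ in the $10\backslash\slash0$ row of $R_{r,r}$. So the combinatorial arguments of \cite{KZJ17} carry over verbatim, provided no nonzero entry of the $\beta=1$ case has become zero and no zero entry has become nonzero. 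This is clear because every entry is a fixed fugacity times a unit or $1$, and we work generically in $\beta$ and $q$.

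For (1), I would inspect the $9\times 9$ single-color matrices block by block. The rows indexed by $i\backslash\slash j$ with $i,j\in\{0,1\}$ are $1\backslash\slash1$, $1\backslash\slash0$, $0\backslash\slash1$ and $0\backslash\slash0$. Their nonzero entries occur only in the columns $1\slash\backslash1$, $1\slash\backslash0$, $0\slash\backslash1$ and $0\slash\backslash0$. Conversely, every row involving a $10$ has support only in columns involving a $10$. Hence each matrix is block diagonal with respect to the decomposition into the single-number sector and its complement, which gives (1). Its restriction to the single-number block is precisely the lattice-model matrix (\ref{eqn:latticemodel}), as noted in \cref{rmk:submatrix}.

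For (2), I would use the block structure from (1): if $i\in\{0,1\}$ appears on both inputs, or on both outputs, then the other pair also lies in $\{0,1\}$. Inside the $4\times4$ block (\ref{eqn:latticemodel}), the states $1\backslash\slash1$ and $0\backslash\slash0$ connect only to $1\slash\backslash1$ and $0\slash\backslash0$, respectively, since those rows and columns are diagonal. This is exactly charge conservation for the number of $1$'s, and it forces $i=j=k$.

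For (3), I would take the lattice-model interpretation of $\uptri\lambda\omega\mu$ displayed above. In it, green lines enter with labels $\mu$, red lines with labels $\lambda$, and the blue output lines carry $\omega=0^{n-k}1^{k}$. The key step is to argue inductively from one corner. Because $\omega$ begins with $0$'s and ends with $1$'s, the \emph{gentle path} / \emph{charge} argument of \cite{KZJ17} applies: the rightmost (or leftmost) crossing structure forces the label on each boundary segment in turn, one diagonal at a time. This induction would use only the support pattern of the rhombi from \cref{subsection:fugacities} and of the triangle matrix $U(\beta)$.

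The output of this induction is that every rhombus used must be one of the weight-$1$ pieces $\rh0000$, $\rh1111$ or $\rh0101$, and every triangle is $\uptri000$ or $\uptri111$. Consequently $\lambda=\mu=\omega$, the puzzle is unique, and its fugacity is $1$.

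The main obstacle is this propagation step. One must check that none of the new $\beta$-dependent rhombus weights becomes nonzero in a place where the \cite{KZJ17} argument relies on vanishing. Since the tile set and the support are unchanged, I expect this check to reduce to the observation already made. I would verify it explicitly by listing the allowed tiles meeting a $0$ on the northwest side and a $1$ on the northeast side.
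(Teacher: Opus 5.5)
Your proposal is correct and is essentially the paper's argument. The paper's proof likewise observes that each relevant matrix entry is nonzero exactly when its $\beta=1$ counterpart is, so \cite[Prop.~3.4]{KZJ17} transfers. The corner-peeling induction you sketch for (3) does go through using only the supports of the rhombi and of $U(\beta)$; note, however, that the green lines carry $\lambda$ from the NW side and the red lines carry $\mu$, not the reverse.

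Your block-diagonal check for (1)--(2) makes explicit what the paper's one-line proof leaves implicit. So does your identification of the tiles in the $\omega$-puzzle as weight-$1$ pieces, which is genuinely needed because the all-$10$ rhombus now has weight $Q(\beta,q)$.
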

	\begin{proof}
		By inspection, the $(i,j)$-entry in a single-color $R$-matrix, $R_{g,g}(\beta,z)$, $R_{r,r}(\beta,z)$, or $R_{b,b}(\beta,z)$, is nonzero if and only if the $(i,j)$-entry in the respective single-color $R$-matrix evaluated at $\beta=1$, $R_{g,g}(1,z)$, $R_{r,r}(1,z)$, $R_{b,b}(1,z)$, is nonzero. This result holds when $\beta=1$ by \cite[Prop. 3.4]{KZJ17}, so the result for general $\beta$ holds as well.
	\end{proof}
	
	We can now state the main result of this section:
	\begin{theo}\label{thm:main}
		The product of two classes $S_\lambda$ and $S_\mu$ in $\widetilde{R}_\Lambda^{\mrm{loc}}$ is given by the ``puzzle'' formula
		\begin{equation}\label{eq:main}
			(q^{\ell(\lambda)}S_\lambda)(q^{\ell(\mu)} S_\mu)=
			\sum_\nu \tikz[scale=1.8,baseline=0.5cm]{\uptri{\lambda}{\nu}{\mu}}
			\ (q^{\ell(\nu)}S_\nu)  
		\end{equation}
	\end{theo}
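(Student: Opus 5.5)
We follow the strategy of \cite[Theorem 3.6]{KZJ17} and \cite[\S 3]{KZJ21}. By Proposition \ref{prop:divided-difference-recursion} and the injectivity of localization, it suffices to prove (\ref{eq:main}) after restriction to every fixed point $\sigma\in W^\Theta$. The restriction $\overline{S}_\lambda|_\sigma$ is the partition function (\ref{eq:defSfp}) of a wiring diagram of $\sigma$ whose crossings are the matrices (\ref{eqn:latticemodel}). By Remark \ref{rmk:submatrix} and Proposition \ref{prop:unique-puzzle}(1), this equals the same diagram built from single-color matrices $R_{g,g}$, $R_{r,r}$ or $R_{b,b}$ when all boundary labels are single numbers. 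So we will realize $\overline{S}_\lambda|_\sigma$ using green lines with spectral parameters $qz_i$, $\overline{S}_\mu|_\sigma$ using red lines with parameters $q^{-1}z_i$, and $\overline{S}_\nu|_\sigma$ using blue lines with parameters $z_i$. The ratios of spectral parameters are unchanged within each color, so each single-color diagram computes the same function.

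The key step is to consider a single diagram. At its top, green and red lines enter with labels $\lambda$ and $\mu$ and cross in the puzzle region. Each green--red crossing is $R_{g,r}$, whose entries are the rhombus fugacities, with the argument $z$ in the correct position $z_j/z_i$ (after the shift by $q^2$). The green and red lines with equal index then fuse at a trivalent $U(\beta)$ vertex into a blue line, using the triangle fugacities. Summing over internal labels, this upper region is exactly $\sum_\nu \uptri{\lambda}{\nu}{\mu}$ with a blue line of label $\nu$ emerging. Below it we attach the wiring diagram of $\sigma$ on the remaining lines. We evaluate the composite in two ways. In the first, we keep the crossings of $\sigma$ on the blue lines below the puzzle, which gives $\sum_\nu \uptri{\lambda}{\nu}{\mu}\,\overline{S}_\nu|_\sigma$. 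In the second, we use the bootstrap equations (\ref{eq:qtri}), (\ref{eq:qtrirev}) to push the blue crossings of $\sigma$ upward through the $U$ vertices, turning them into pairs of green and red crossings. We then use the Yang--Baxter equations (\ref{eq:ybe}) and unitarity (\ref{eq:unit}) to slide the $\sigma$-wiring above the puzzle region. This separates it into a green diagram of $\sigma$ and a red diagram of $\sigma$ with bottom labels $\omega$, followed by the puzzle with sides $\omega,\omega$. By Proposition \ref{prop:unique-puzzle}(2) and (3), the labels stay single numbers, the resulting puzzle has all three sides equal to $\omega$, and it is unique with fugacity $1$. The second evaluation therefore gives $\overline{S}_\lambda|_\sigma\,\overline{S}_\mu|_\sigma$. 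Equating the two evaluations gives (\ref{eq:main}) at $\sigma$.

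We expect the main obstacle to be bookkeeping rather than concept. The first difficulty is checking that the matrices in Proposition \ref{prop:ybe} carry exactly the normalizations needed: the spectral-parameter shifts $q^{\pm1}$, $q^{\pm2}$, the factors $z$ and $1/z$ in $R_{b,r}$ and $R_{g,b}$, and the $-Q(\beta,q)/q$ and $-q$ triangle weights. Only with these do the bootstrap moves hold identically in $\beta$. The second difficulty is confirming that the $\uptri1{10}0$ and $\uptri{10}{10}{10}$ pieces, and the downward triangles, never contribute to a genuine puzzle. Both points reduce to the identities already recorded in Proposition \ref{prop:ybe}. We therefore expect the argument of \cite{KZJ21} to transfer verbatim, with the homogenization (\ref{eq:relationship}) accounting for the powers of $q$.
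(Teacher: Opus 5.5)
Your proposal is correct and is essentially the paper's proof. The paper simply invokes the argument of \cite[Theorem 3.8]{KZJ21} once Propositions~\ref{prop:ybe} and~\ref{prop:unique-puzzle} are established: you evaluate the green/red $\sigma$-wirings, the puzzle region and the $U$-vertices in two ways, moving the $\sigma$-crossings through the puzzle by the bootstrap, Yang--Baxter and unitarity equations, and that argument transfers unchanged. One small slip: the fact that labels stay single numbers through the single-color $\sigma$-diagrams comes from part (1) of Proposition~\ref{prop:unique-puzzle}, not part (2), with part (3) then forcing the unique $\omega$-puzzle of fugacity $1$.
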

	\begin{proof}
		Since Proposition \ref{prop:ybe} and Proposition \ref{prop:unique-puzzle} hold, the proof of this theorem is identical to that of \cite[Theorem 3.8]{KZJ21}. 
	\end{proof}
	
	\begin{rmk}\label{rmk:positive}
		Following \cite[\S 6]{KZJ21}, we define a \textbf{positivity monoid} to be a submonoid $M$ of an abelian group, such that $M\cap (-M)=0$. Consider the submonoid $M$ of $K_{\widehat{T}}^{\text{loc}}(\mrm{pt})[\beta,\beta^{-1}]$ under addition, defined as the set of sums of products of the factors
		\[-q^{\pm}\qquad Q(\beta,q)\qquad  e^{-\alpha}\qquad \tfrac{\beta(1-q^2)}{Q(\beta,q)-q^2e^{-\alpha}}\qquad -\tfrac{1-e^{-\alpha}}{Q(\beta,q)-q^2e^{-\alpha}}\]
		over all $\alpha\in\Sigma_\Theta^+$. Then $M$ is a positivity monoid. To see this, we apply \cite[Lemma 6.1]{KZJ21}: note that after evaluating $\beta=1$, $e^{y_i}=2^i$, and $q=-2^{-n/2}$, every factor is a positive real number. Thus, $M\cap (-M)=0$. 
		
		As the structure constants $\overline{c}_{\lambda,\mu}^\nu$ lie in the positivity monoid $M$, it is in this sense that we consider the structure constants and the puzzle formula for them \textbf{positive}.
	\end{rmk}
	
	\begin{rmk}
		In \cref{section:localization-techniques}, we will show that the elements $S_\lambda$ are quotients of canonical elements in the $\widehat{T}$-equivariant algebraic cobordism ring of $T^*(G/P_\Theta)$. Although \cref{rmk:positive} shows that the structure constants in the $\beta$-deformed basis are positive, we currently do not have a geometric understanding of the positivity from the point of view of algebraic cobordism.
	\end{rmk}

	We will now give several examples of computing the classes $q^{\ell(\lambda)}S_\lambda$ and computing the Littlewood-Richardson coefficients in terms of puzzles. Recall that, by the definition given in \cref{subsection:motive-Segre}, a deformed motivic Segre class $q^{\ell(\lambda)}S_\lambda$ is a sequence in $\oplus_{\lambda\in W^\Theta} K_{\widehat{T}}(\mrm{pt})[\beta,\beta^{-1}]$, and that all classes $q^{\ell(\lambda)}S_\lambda$ can be obtained from $q^{\ell(w_0)}S_{w_0}$ by applying a sequence of deformed divided difference operators to $q^{\ell(w_0)}S_{w_0}$.
	
	\begin{egg}\label{egg:1}
		There is exactly one class for each of $T^*(\mrm{Gr}(0,2))$ and $T^*(\mrm{Gr}(2,2))$. For $T^*(\mrm{Gr}(0,2))$, we have $S_{11}=1$, and for $T^*(\mrm{Gr}(2,2))$, we have $S_{00}=1$. Thus, $S_{00}^2=1\cdot S_{00}$ and $S_{11}^2=1\cdot S_{11}$, and indeed
		\[\def\posa{0.5}\def\posb{0.5}\def\thescale{1}
		\begin{tikzpicture}[math mode,nodes={edgelabel},x={(-0.577cm,-1cm)},y={(0.577cm,-1cm)},scale=\thescale,baseline=(current  bounding  box.center)]
			\draw[thick] (2,0) -- node[pos=0.5] {\ss 0} ++(0,1); \draw[thick] (2,0) -- node[pos=0.5] {\ss 0} ++(1,0); 
			\draw[thick] (2,1) -- node[pos=0.5] {\ss 0} ++(0,1); \draw[thick] (2,1) -- node[pos=0.5] {\ss 0} ++(1,0); \draw[thick] (2+1,1) -- node {\ss 0} ++(-1,1); 
			\draw[thick] (3,0) -- node[pos=0.5] {\ss 0} ++(0,1); \draw[thick] (3,0) -- node[pos=0.5] {\ss 0} ++(1,0); \draw[thick] (3+1,0) -- node {\ss 0} ++(-1,1); 	
		\end{tikzpicture}
		=1\quad\quad\text{and}\quad\quad
		\def\posa{0.5}\def\posb{0.5}\def\thescale{1}
		\begin{tikzpicture}[math mode,nodes={edgelabel},x={(-0.577cm,-1cm)},y={(0.577cm,-1cm)},scale=\thescale,baseline=(current  bounding  box.center)]
			\draw[thick] (2,0) -- node[pos=0.5] {\ss 1} ++(0,1); \draw[thick] (2,0) -- node[pos=0.5] {\ss 1} ++(1,0); 
			\draw[thick] (2,1) -- node[pos=0.5] {\ss 1} ++(0,1); \draw[thick] (2,1) -- node[pos=0.5] {\ss 1} ++(1,0); \draw[thick] (2+1,1) -- node {\ss 1} ++(-1,1); 
			\draw[thick] (3,0) -- node[pos=0.5] {\ss 1} ++(0,1); \draw[thick] (3,0) -- node[pos=0.5] {\ss 1} ++(1,0); \draw[thick] (3+1,0) -- node {\ss 1} ++(-1,1); 	
		\end{tikzpicture}
		=1
		\]
	\end{egg}
	\begin{egg}\label{egg:2}
		We will first compute the classes $q\cdot S_{10}$  and $S_{01}$ in $T^*(\mrm{Gr}(1,2))$. There is exactly one root $\alpha$ in $\Sigma_\Theta^+$. The classes are
		\[q\cdot S_{10}=q\cdot \left[0,\tfrac{1-e^{-\alpha}}{Q(\beta,q)-q^2e^{-\alpha}}\right];\quad\quad\quad S_{01}=(\tfrac{1}{q}\partial_\alpha)(q\cdot S_{10})=\left[1,\tfrac{\beta(1-q^2)}{Q(\beta,q)-q^2e^{-\alpha}}\right].\]
		Next, we will expand the products of the classes in the basis $\{q^{\ell(\lambda)}S_\lambda\}$:
		\[(q\cdot S_{10})^2=\tfrac{q(1-e^{-\alpha})}{Q(\beta,q)-q^2e^{-\alpha}}(q\cdot S_{10});\quad (q\cdot S_{10})S_{01}=S_{01}(q\cdot S_{10})=\tfrac{\beta(1-q^2)}{Q(\beta,q)-q^2e^{-\alpha}}(q\cdot S_{10});	\]
		\[S_{01}^2=S_{01}+\tfrac{\beta q(q^2-1)}{Q(\beta,q)-q^2e^{-\alpha}}(q\cdot S_{10}).\]
		Indeed, puzzles give the same answers:
		\[
		\def\posa{0.5}\def\posb{0.5}\def\thescale{1}
		\begin{tikzpicture}[math mode,nodes={edgelabel},x={(-0.577cm,-1cm)},y={(0.577cm,-1cm)},scale=\thescale,baseline=(current  bounding  box.center)]
			\draw[thick] (2,0) -- node[pos=0.5] {\ss 1} ++(0,1); \draw[thick] (2,0) -- node[pos=0.5] {\ss 0} ++(1,0); 
			\draw[thick] (2,1) -- node[pos=0.5] {\ss 0} ++(0,1); \draw[thick] (2,1) -- node[pos=0.5] {\ss 0} ++(1,0); \draw[thick] (2+1,1) -- node {\ss 0} ++(-1,1); 
			\draw[thick] (3,0) -- node[pos=0.5] {\ss 1} ++(0,1); \draw[thick] (3,0) -- node[pos=0.5] {\ss 1} ++(1,0); \draw[thick] (3+1,0) -- node {\ss 1} ++(-1,1); 	
		\end{tikzpicture}
		=\frac{q(1-e^{-\alpha})}{Q(\beta,q)-q^2e^{-\alpha}}\quad\quad
		\def\posa{0.5}\def\posb{0.5}\def\thescale{1}
		\begin{tikzpicture}[math mode,nodes={edgelabel},x={(-0.577cm,-1cm)},y={(0.577cm,-1cm)},scale=\thescale,baseline=(current  bounding  box.center)]
			\draw[thick] (2,0) -- node[pos=0.5] {\ss 0} ++(0,1); \draw[thick] (2,0) -- node[pos=0.5] {\ss 0} ++(1,0); 
			\draw[thick] (2,1) -- node[pos=0.5] {\ss 1} ++(0,1); \draw[thick] (2,1) -- node[pos=0.5] {\ss 10} ++(1,0); \draw[thick] (2+1,1) -- node {\ss 0} ++(-1,1); 
			\draw[thick] (3,0) -- node[pos=0.5] {\ss 1} ++(0,1); \draw[thick] (3,0) -- node[pos=0.5] {\ss 1} ++(1,0); \draw[thick] (3+1,0) -- node {\ss 1} ++(-1,1); 	
		\end{tikzpicture}
		=\frac{\beta(1-q^2)}{Q(\beta,q)-q^2e^{-\alpha}}\]
		\[
		\def\posa{0.5}\def\posb{0.5}\def\thescale{1}
		\begin{tikzpicture}[math mode,nodes={edgelabel},x={(-0.577cm,-1cm)},y={(0.577cm,-1cm)},scale=\thescale,baseline=(current  bounding  box.center)]
			\draw[thick] (2,0) -- node[pos=0.5] {\ss 1} ++(0,1); \draw[thick] (2,0) -- node[pos=0.5] {\ss 1} ++(1,0); 
			\draw[thick] (2,1) -- node[pos=0.5] {\ss 0} ++(0,1); \draw[thick] (2,1) -- node[pos=0.5] {\ss 0} ++(1,0); \draw[thick] (2+1,1) -- node {\ss 0} ++(-1,1); 
			\draw[thick] (3,0) -- node[pos=0.5] {\ss 10} ++(0,1); \draw[thick] (3,0) -- node[pos=0.5] {\ss 0} ++(1,0); \draw[thick] (3+1,0) -- node {\ss 1} ++(-1,1); 	
		\end{tikzpicture}
		=\frac{\beta(1-q^2)}{Q(\beta,q)-q^2e^{-\alpha}}\quad\quad
		\def\posa{0.5}\def\posb{0.5}\def\thescale{1}
		\begin{tikzpicture}[math mode,nodes={edgelabel},x={(-0.577cm,-1cm)},y={(0.577cm,-1cm)},scale=\thescale,baseline=(current  bounding  box.center)]
			\draw[thick] (2,0) -- node[pos=0.5] {\ss 0} ++(0,1); \draw[thick] (2,0) -- node[pos=0.5] {\ss 1} ++(1,0); 
			\draw[thick] (2,1) -- node[pos=0.5] {\ss 1} ++(0,1); \draw[thick] (2,1) -- node[pos=0.5] {\ss 1} ++(1,0); \draw[thick] (2+1,1) -- node {\ss 1} ++(-1,1); 
			\draw[thick] (3,0) -- node[pos=0.5] {\ss 0} ++(0,1); \draw[thick] (3,0) -- node[pos=0.5] {\ss 0} ++(1,0); \draw[thick] (3+1,0) -- node {\ss 0} ++(-1,1); 	
		\end{tikzpicture}
		=1\quad\quad
		\def\posa{0.5}\def\posb{0.5}\def\thescale{1}
		\begin{tikzpicture}[math mode,nodes={edgelabel},x={(-0.577cm,-1cm)},y={(0.577cm,-1cm)},scale=\thescale,baseline=(current  bounding  box.center)]
			\draw[thick] (2,0) -- node[pos=0.5] {\ss 0} ++(0,1); \draw[thick] (2,0) -- node[pos=0.5] {\ss 1} ++(1,0); 
			\draw[thick] (2,1) -- node[pos=0.5] {\ss 1} ++(0,1); \draw[thick] (2,1) -- node[pos=0.5] {\ss 10} ++(1,0); \draw[thick] (2+1,1) -- node {\ss 0} ++(-1,1); 
			\draw[thick] (3,0) -- node[pos=0.5] {\ss 10} ++(0,1); \draw[thick] (3,0) -- node[pos=0.5] {\ss 0} ++(1,0); \draw[thick] (3+1,0) -- node {\ss 1} ++(-1,1); 	
		\end{tikzpicture}
		=\frac{\beta q(q^2-1)}{Q(\beta,q)-q^2e^{-\alpha}}
		\]
	\end{egg}
	
	\begin{egg}\label{egg:3}
		We first compute the classes $q^2\cdot S_{100}$, $q\cdot S_{010}$, and $S_{001}$ for $T^*(\mrm{Gr}(1,3))$. There are exactly two simple roots $\alpha_1,\alpha_2$ in $\Sigma_\Theta^+.$ The classes are
		\[q^2\cdot S_{100}=q^2\cdot \left[0,0,\tfrac{1-e^{-\alpha_1}}{Q(\beta,q)-q^2e^{-\alpha_1}}\tfrac{1-e^{-\alpha_1-\alpha_2}}{Q(\beta,q)-q^2e^{-\alpha_1-\alpha_2}}\right];\]
		\[q\cdot S_{010}=(\tfrac{1}{q}\partial_{\alpha_1})(q^2\cdot S_{100})=q\cdot \left[0,\tfrac{1-e^{-\alpha_2}}{Q(\beta,q)-q^2e^{-\alpha_2}},\tfrac{\beta(1-q^2)}{Q(\beta,q)-q^2e^{-\alpha_1}}\tfrac{1-e^{-\alpha_1-\alpha_2}}{Q(\beta,q)-q^2e^{-\alpha_1-\alpha_2}}\right];\]\[ S_{001}=(\tfrac{1}{q}\partial_{\alpha_2})(q\cdot S_{010})=\left[1,\tfrac{\beta(1-q^2)}{Q(\beta,q)-q^2e^{-\alpha_2}},\tfrac{\beta(1-q^2)}{Q(\beta,q)-q^2e^{-\alpha_1-\alpha_2}}\right].\]
		Next, we expand the following products of the classes in the basis $\{q^{\ell(\lambda)}S_\lambda\}$:
		\[(q^2\cdot S_{100})S_{001}=S_{001}(q^2\cdot S_{100})=\tfrac{\beta(1-q^2)}{Q(\beta,q)-q^2e^{-\alpha_1-\alpha_2}}(q^2\cdot S_{100});\]
		\[S_{001}(q\cdot S_{010})=(q\cdot S_{010})S_{001}=\tfrac{\beta(1-q^2)}{Q(\beta,q)-q^2e^{-\alpha_2}}(q\cdot S_{010})+\tfrac{\beta^2q(1-q^2)^2(-e^{-\alpha_2})}{(Q(\beta,q)-q^2e^{-\alpha_1-\alpha_2})(Q(\beta,q)-q^2e^{-\alpha_2})}(q^2\cdot S_{100});\]
		\[S_{001}^2=S_{001}+\tfrac{\beta q(q^2-1)}{Q(\beta,q)-q^2e^{-\alpha_2}} (q\cdot S_{010})+\tfrac{\beta q^2Q(\beta,q)(q^2-1) (1-e^{-\alpha_2})}{(Q(\beta,q)-q^2e^{-\alpha_2})(Q(\beta,q)-q^2e^{-\alpha_1-\alpha_2})}(q^2\cdot S_{100}).
		\] 
		Indeed, these structure constants can be computed with puzzles as well:
		\[
		\def\posa{0.5}\def\posb{0.5}\def\thescale{1}
		\begin{tikzpicture}[math mode,nodes={edgelabel},x={(-0.577cm,-1cm)},y={(0.577cm,-1cm)},scale=\thescale,baseline=(current  bounding  box.center)]
			\draw[thick] (1,0) -- node[pos=0.5] {\ss 1} ++(0,1); \draw[thick] (1,0) -- node[pos=0.5] {\ss 1} ++(1,0); 
			\draw[thick] (1,1) -- node[pos=0.5] {\ss 0} ++(0,1); \draw[thick] (1,1) -- node[pos=0.5] {\ss 0} ++(1,0); 
			\draw[thick] (1,2) -- node[pos=0.5] {\ss 0} ++(0,1); \draw[thick] (1,2) -- node[pos=0.5] {\ss 0} ++(1,0); \draw[thick] (1+1,2) -- node {\ss 0} ++(-1,1); 
			\draw[thick] (2,0) -- node[pos=0.5] {\ss 10} ++(0,1); \draw[thick] (2,0) -- node[pos=0.5] {\ss 0} ++(1,0); 
			\draw[thick] (2,1) -- node[pos=0.5] {\ss 0} ++(0,1); \draw[thick] (2,1) -- node[pos=0.5] {\ss 0} ++(1,0); \draw[thick] (2+1,1) -- node {\ss 0} ++(-1,1); 
			\draw[thick] (3,0) -- node[pos=0.5] {\ss 10} ++(0,1); \draw[thick] (3,0) -- node[pos=0.5] {\ss 0} ++(1,0); \draw[thick] (3+1,0) -- node {\ss 1} ++(-1,1); 
		\end{tikzpicture}=\frac{\beta(1-q^2)}{Q(\beta,q)-q^2e^{-\alpha_1-\alpha_2}}\quad \def\posa{0.5}\def\posb{0.5}\def\thescale{1}
		\begin{tikzpicture}[math mode,nodes={edgelabel},x={(-0.577cm,-1cm)},y={(0.577cm,-1cm)},scale=\thescale,baseline=(current  bounding  box.center)]
			\draw[thick] (1,0) -- node[pos=0.5] {\ss 0} ++(0,1); \draw[thick] (1,0) -- node[pos=0.5] {\ss 1} ++(1,0); 
			\draw[thick] (1,1) -- node[pos=0.5] {\ss 1} ++(0,1); \draw[thick] (1,1) -- node[pos=0.5] {\ss 1} ++(1,0); 
			\draw[thick] (1,2) -- node[pos=0.5] {\ss 0} ++(0,1); \draw[thick] (1,2) -- node[pos=0.5] {\ss 0} ++(1,0); \draw[thick] (1+1,2) -- node {\ss 0} ++(-1,1); 
			\draw[thick] (2,0) -- node[pos=0.5] {\ss0} ++(0,1); \draw[thick] (2,0) -- node[pos=0.5] {\ss 0} ++(1,0); 
			\draw[thick] (2,1) -- node[pos=0.5] {\ss 10} ++(0,1); \draw[thick] (2,1) -- node[pos=0.5] {\ss 0} ++(1,0); \draw[thick] (2+1,1) -- node {\ss 1} ++(-1,1); 
			\draw[thick] (3,0) -- node[pos=0.5] {\ss 0} ++(0,1); \draw[thick] (3,0) -- node[pos=0.5] {\ss 0} ++(1,0); \draw[thick] (3+1,0) -- node {\ss 0} ++(-1,1); 
		\end{tikzpicture}=\frac{\beta(1-q^2)}{Q(\beta,q)-q^2 e^{-\alpha_2}}\]
		\[\def\posa{0.5}\def\posb{0.5}\def\thescale{1}
		\begin{tikzpicture}[math mode,nodes={edgelabel},x={(-0.577cm,-1cm)},y={(0.577cm,-1cm)},scale=\thescale,baseline=(current  bounding  box.center)]
			\draw[thick] (1,0) -- node[pos=0.5] {\ss 0} ++(0,1); \draw[thick] (1,0) -- node[pos=0.5] {\ss 1} ++(1,0); 
			\draw[thick] (1,1) -- node[pos=0.5] {\ss 1} ++(0,1); \draw[thick] (1,1) -- node[pos=0.5] {\ss 10} ++(1,0); 
			\draw[thick] (1,2) -- node[pos=0.5] {\ss 0} ++(0,1); \draw[thick] (1,2) -- node[pos=0.5] {\ss 0} ++(1,0); \draw[thick] (1+1,2) -- node {\ss 0} ++(-1,1); 
			\draw[thick] (2,0) -- node[pos=0.5] {\ss10} ++(0,1); \draw[thick] (2,0) -- node[pos=0.5] {\ss 0} ++(1,0); 
			\draw[thick] (2,1) -- node[pos=0.5] {\ss 0} ++(0,1); \draw[thick] (2,1) -- node[pos=0.5] {\ss 0} ++(1,0); \draw[thick] (2+1,1) -- node {\ss 0} ++(-1,1); 
			\draw[thick] (3,0) -- node[pos=0.5] {\ss 10} ++(0,1); \draw[thick] (3,0) -- node[pos=0.5] {\ss 0} ++(1,0); \draw[thick] (3+1,0) -- node {\ss 1} ++(-1,1); 
		\end{tikzpicture}=\frac{\beta^2 q(1-q^2)^2(-e^{-\alpha_2})}{(Q(\beta,q)-q^2e^{-\alpha_1-\alpha_2}) (Q(\beta,q)-q^2e^{-\alpha_2})}\]
		
		\[\def\posa{0.5}\def\posb{0.5}\def\thescale{1}
		\begin{tikzpicture}[math mode,nodes={edgelabel},x={(-0.577cm,-1cm)},y={(0.577cm,-1cm)},scale=\thescale,baseline=(current  bounding  box.center)]
			\draw[thick] (1,0) -- node[pos=0.5] {\ss 1} ++(0,1); \draw[thick] (1,0) -- node[pos=0.5] {\ss 0} ++(1,0); 
			\draw[thick] (1,1) -- node[pos=0.5] {\ss 1} ++(0,1); \draw[thick] (1,1) -- node[pos=0.5] {\ss 0} ++(1,0); 
			\draw[thick] (1,2) -- node[pos=0.5] {\ss 1} ++(0,1); \draw[thick] (1,2) -- node[pos=0.5] {\ss 1} ++(1,0); \draw[thick] (1+1,2) -- node {\ss 1} ++(-1,1); 
			\draw[thick] (2,0) -- node[pos=0.5] {\ss0} ++(0,1); \draw[thick] (2,0) -- node[pos=0.5] {\ss 0} ++(1,0); 
			\draw[thick] (2,1) -- node[pos=0.5] {\ss 0} ++(0,1); \draw[thick] (2,1) -- node[pos=0.5] {\ss 0} ++(1,0); \draw[thick] (2+1,1) -- node {\ss 0} ++(-1,1); 
			\draw[thick] (3,0) -- node[pos=0.5] {\ss 0} ++(0,1); \draw[thick] (3,0) -- node[pos=0.5] {\ss 0} ++(1,0); \draw[thick] (3+1,0) -- node {\ss 0} ++(-1,1); 
		\end{tikzpicture}=1\quad 
		\def\posa{0.5}\def\posb{0.5}\def\thescale{1}
		\begin{tikzpicture}[math mode,nodes={edgelabel},x={(-0.577cm,-1cm)},y={(0.577cm,-1cm)},scale=\thescale,baseline=(current  bounding  box.center)]
			\draw[thick] (1,0) -- node[pos=0.5] {\ss 1} ++(0,1); \draw[thick] (1,0) -- node[pos=0.5] {\ss 0} ++(1,0); 
			\draw[thick] (1,1) -- node[pos=0.5] {\ss 1} ++(0,1); \draw[thick] (1,1) -- node[pos=0.5] {\ss 0} ++(1,0); 
			\draw[thick] (1,2) -- node[pos=0.5] {\ss 10} ++(0,1); \draw[thick] (1,2) -- node[pos=0.5] {\ss 1} ++(1,0); \draw[thick] (1+1,2) -- node {\ss 0} ++(-1,1); 
			\draw[thick] (2,0) -- node[pos=0.5] {\ss0} ++(0,1); \draw[thick] (2,0) -- node[pos=0.5] {\ss 0} ++(1,0); 
			\draw[thick] (2,1) -- node[pos=0.5] {\ss 0} ++(0,1); \draw[thick] (2,1) -- node[pos=0.5] {\ss 10} ++(1,0); \draw[thick] (2+1,1) -- node {\ss 1} ++(-1,1); 
			\draw[thick] (3,0) -- node[pos=0.5] {\ss 0} ++(0,1); \draw[thick] (3,0) -- node[pos=0.5] {\ss 0} ++(1,0); \draw[thick] (3+1,0) -- node {\ss 0} ++(-1,1); 
		\end{tikzpicture}=\frac{\beta q(q^2-1)}{Q(\beta,q)-q^2 e^{-\alpha_2}}\]	
		\[\def\posa{0.5}\def\posb{0.5}\def\thescale{1}
		\begin{tikzpicture}[math mode,nodes={edgelabel},x={(-0.577cm,-1cm)},y={(0.577cm,-1cm)},scale=\thescale,baseline=(current  bounding  box.center)]
			\draw[thick] (1,0) -- node[pos=0.5] {\ss 0} ++(0,1); \draw[thick] (1,0) -- node[pos=0.5] {\ss 1} ++(1,0); 
			\draw[thick] (1,1) -- node[pos=0.5] {\ss 0} ++(0,1); \draw[thick] (1,1) -- node[pos=0.5] {\ss 10} ++(1,0); 
			\draw[thick] (1,2) -- node[pos=0.5] {\ss 1} ++(0,1); \draw[thick] (1,2) -- node[pos=0.5] {\ss 10} ++(1,0); \draw[thick] (1+1,2) -- node {\ss 0} ++(-1,1); 
			\draw[thick] (2,0) -- node[pos=0.5] {\ss10} ++(0,1); \draw[thick] (2,0) -- node[pos=0.5] {\ss 0} ++(1,0); 
			\draw[thick] (2,1) -- node[pos=0.5] {\ss 0} ++(0,1); \draw[thick] (2,1) -- node[pos=0.5] {\ss 0} ++(1,0); \draw[thick] (2+1,1) -- node {\ss 0} ++(-1,1); 
			\draw[thick] (3,0) -- node[pos=0.5] {\ss 10} ++(0,1); \draw[thick] (3,0) -- node[pos=0.5] {\ss 0} ++(1,0); \draw[thick] (3+1,0) -- node {\ss 1} ++(-1,1); 
		\end{tikzpicture}= \frac{\beta q^2Q(\beta,q)(q^2-1)(1-e^{-\alpha_2})}{(Q(\beta,q)-q^2e^{-\alpha_2})(Q(\beta,q)-q^2e^{-\alpha_1-\alpha_2})}\]
	\end{egg}

	\begin{egg}\label{egg:5}
		We first compute the classes for $T^*(\mrm{Gr}(2,4))$. There are exactly three simple roots $\alpha_1,\alpha_2,\alpha_3$ in $\Sigma_\Theta^+$.  The classes are as follows (we will not expand the formulas as in the previous examples, as these formula are more complex than in previous examples):
		\[q^4\cdot S_{1100}=q^4\cdot \left[0,0,0,0,0,\tfrac{1-e^{-\alpha_2}}{Q(\beta,q)-q^2e^{-\alpha_2}}\tfrac{1-e^{-\alpha_1-\alpha_2}}{Q(\beta,q)-q^2e^{-\alpha_1-\alpha_2}}\tfrac{1-e^{-\alpha_2-\alpha_3}}{Q(\beta,q)-q^2e^{-\alpha_2-\alpha_3}}\tfrac{1-e^{-\alpha_1-\alpha_2-\alpha_3}}{Q(\beta,q)-q^2e^{-\alpha_1-\alpha_2-\alpha_3}}\right].\]
		\[q^3\cdot S_{1010}=(\tfrac{1}{q}\partial_{\alpha_2})(q^4\cdot S_{1100});\quad q^2\cdot S_{0110}=(\tfrac{1}{q}\partial_{\alpha_1})(q^3\cdot S_{1010});\quad q^2\cdot S_{1001}=(\tfrac{1}{q}\partial_{\alpha_3})(q^3\cdot S_{1010});\]
		\[q\cdot S_{0101}=(\tfrac{1}{q}\partial_{\alpha_3})(q^2\cdot S_{0110});\quad S_{0011}=(\tfrac{1}{q}\partial_{\alpha_2})(q\cdot S_{0101}).\]
		One can verify, for example, that		
		\begin{align*}
			\overline{c}&_{1001,0011}^{1010}=\def\posa{0.5}\def\posb{0.5}\def\thescale{1}
			\begin{tikzpicture}[math mode,nodes={edgelabel},x={(-0.577cm,-1cm)},y={(0.577cm,-1cm)},scale=\thescale,baseline=(current  bounding  box.center)]
				\draw[thick] (0,0) -- node[pos=0.5] {\ss 0} ++(0,1); \draw[thick] (0,0) -- node[pos=0.5] {\ss 1} ++(1,0); 
				\draw[thick] (0,1) -- node[pos=0.5] {\ss 0} ++(0,1); \draw[thick] (0,1) -- node[pos=0.5] {\ss 1} ++(1,0); 
				\draw[thick] (0,2) -- node[pos=0.5] {\ss 1} ++(0,1); \draw[thick] (0,2) -- node[pos=0.5] {\ss 10} ++(1,0); 
				\draw[thick] (0,3) -- node[pos=0.5] {\ss 1} ++(0,1); \draw[thick] (0,3) -- node[pos=0.5] {\ss 10} ++(1,0); \draw[thick] (0+1,3) -- node {\ss 0} ++(-1,1); 
				\draw[thick] (1,0) -- node[pos=0.5] {\ss 0} ++(0,1); \draw[thick] (1,0) -- node[pos=0.5] {\ss 0} ++(1,0); 
				\draw[thick] (1,1) -- node[pos=0.5] {\ss 10} ++(0,1); \draw[thick] (1,1) -- node[pos=0.5] {\ss 10} ++(1,0); 
				\draw[thick] (1,2) -- node[pos=0.5] {\ss 1} ++(0,1); \draw[thick] (1,2) -- node[pos=0.5] {\ss 1} ++(1,0); \draw[thick] (1+1,2) -- node {\ss 1} ++(-1,1); 
				\draw[thick] (2,0) -- node[pos=0.5] {\ss 1} ++(0,1); \draw[thick] (2,0) -- node[pos=0.5] {\ss 0} ++(1,0); 
				\draw[thick] (2,1) -- node[pos=0.5] {\ss 0} ++(0,1); \draw[thick] (2,1) -- node[pos=0.5] {\ss 0} ++(1,0); \draw[thick] (2+1,1) -- node {\ss 0} ++(-1,1); 
				\draw[thick] (3,0) -- node[pos=0.5] {\ss 1} ++(0,1); \draw[thick] (3,0) -- node[pos=0.5] {\ss 1} ++(1,0); \draw[thick] (3+1,0) -- node {\ss 1} ++(-1,1); 
			\end{tikzpicture}+
			\begin{tikzpicture}[math mode,nodes={edgelabel},x={(-0.577cm,-1cm)},y={(0.577cm,-1cm)},scale=\thescale,baseline=(current  bounding  box.center)]
				\draw[thick] (0,0) -- node[pos=0.5] {\ss 0} ++(0,1); \draw[thick] (0,0) -- node[pos=0.5] {\ss 1} ++(1,0); 
				\draw[thick] (0,1) -- node[pos=0.5] {\ss 0} ++(0,1); \draw[thick] (0,1) -- node[pos=0.5] {\ss 1} ++(1,0); 
				\draw[thick] (0,2) -- node[pos=0.5] {\ss 1} ++(0,1); \draw[thick] (0,2) -- node[pos=0.5] {\ss 10} ++(1,0); 
				\draw[thick] (0,3) -- node[pos=0.5] {\ss 1} ++(0,1); \draw[thick] (0,3) -- node[pos=0.5] {\ss 10} ++(1,0); \draw[thick] (0+1,3) -- node {\ss 0} ++(-1,1); 
				\draw[thick] (1,0) -- node[pos=0.5] {\ss 0} ++(0,1); \draw[thick] (1,0) -- node[pos=0.5] {\ss 0} ++(1,0); 
				\draw[thick] (1,1) -- node[pos=0.5] {\ss 10} ++(0,1); \draw[thick] (1,1) -- node[pos=0.5] {\ss 0} ++(1,0); 
				\draw[thick] (1,2) -- node[pos=0.5] {\ss 1} ++(0,1); \draw[thick] (1,2) -- node[pos=0.5] {\ss 1} ++(1,0); \draw[thick] (1+1,2) -- node {\ss 1} ++(-1,1); 
				\draw[thick] (2,0) -- node[pos=0.5] {\ss 0} ++(0,1); \draw[thick] (2,0) -- node[pos=0.5] {\ss 0} ++(1,0); 
				\draw[thick] (2,1) -- node[pos=0.5] {\ss 1} ++(0,1); \draw[thick] (2,1) -- node[pos=0.5] {\ss 10} ++(1,0); \draw[thick] (2+1,1) -- node {\ss 0} ++(-1,1); 
				\draw[thick] (3,0) -- node[pos=0.5] {\ss 1} ++(0,1); \draw[thick] (3,0) -- node[pos=0.5] {\ss 1} ++(1,0); \draw[thick] (3+1,0) -- node {\ss 1} ++(-1,1); 
			\end{tikzpicture}+
			\begin{tikzpicture}[math mode,nodes={edgelabel},x={(-0.577cm,-1cm)},y={(0.577cm,-1cm)},scale=\thescale,baseline=(current  bounding  box.center)]
				\draw[thick] (0,0) -- node[pos=0.5] {\ss 0} ++(0,1); \draw[thick] (0,0) -- node[pos=0.5] {\ss 1} ++(1,0); 
				\draw[thick] (0,1) -- node[pos=0.5] {\ss 0} ++(0,1); \draw[thick] (0,1) -- node[pos=0.5] {\ss 10} ++(1,0); 
				\draw[thick] (0,2) -- node[pos=0.5] {\ss 1} ++(0,1); \draw[thick] (0,2) -- node[pos=0.5] {\ss 10} ++(1,0); 
				\draw[thick] (0,3) -- node[pos=0.5] {\ss 1} ++(0,1); \draw[thick] (0,3) -- node[pos=0.5] {\ss 10} ++(1,0); \draw[thick] (0+1,3) -- node {\ss 0} ++(-1,1); 
				\draw[thick] (1,0) -- node[pos=0.5] {\ss 10} ++(0,1); \draw[thick] (1,0) -- node[pos=0.5] {\ss 0} ++(1,0); 
				\draw[thick] (1,1) -- node[pos=0.5] {\ss 0} ++(0,1); \draw[thick] (1,1) -- node[pos=0.5] {\ss 1} ++(1,0); 
				\draw[thick] (1,2) -- node[pos=0.5] {\ss 1} ++(0,1); \draw[thick] (1,2) -- node[pos=0.5] {\ss 1} ++(1,0); \draw[thick] (1+1,2) -- node {\ss 1} ++(-1,1); 
				\draw[thick] (2,0) -- node[pos=0.5] {\ss 1} ++(0,1); \draw[thick] (2,0) -- node[pos=0.5] {\ss 0} ++(1,0); 
				\draw[thick] (2,1) -- node[pos=0.5] {\ss 0} ++(0,1); \draw[thick] (2,1) -- node[pos=0.5] {\ss 0} ++(1,0); \draw[thick] (2+1,1) -- node {\ss 0} ++(-1,1); 
				\draw[thick] (3,0) -- node[pos=0.5] {\ss 1} ++(0,1); \draw[thick] (3,0) -- node[pos=0.5] {\ss 1} ++(1,0); \draw[thick] (3+1,0) -- node {\ss 1} ++(-1,1); 
			\end{tikzpicture}\\
			&\\
			&=\frac{q(1-e^{-\alpha_1})}{Q(\beta,q)-q^2e^{-\alpha_1}}\cdot\frac{\beta(1-q^2)}{Q(\beta,q)-q^2e^{-\alpha_1-\alpha_2}}\cdot\frac{\beta Q(\beta,q)(q^2-1)e^{-\alpha_2}}{q(Q(\beta,q)-q^2e^{-\alpha_2})}\cdot\frac{\beta q(q^2-1)}{Q(\beta,q)-q^2 e^{-\alpha_2-\alpha_3}}\\
			&\quad+\frac{\beta(1-q^2)}{Q(\beta,q)-q^2 e^{-\alpha_1}}\cdot\frac{\beta(1-q^2)e^{-\alpha_2}}{Q(\beta,q)-q^2e^{-\alpha_2}}\cdot\frac{\beta q (q^2-1)}{Q(\beta,q)-q^2e^{-\alpha_2-\alpha_3}}\\
			&\quad+\frac{q(1-e^{-\alpha_1})}{Q(\beta,q)-q^2e^{-\alpha_1}}\cdot\frac{\beta(1-q^2)e^{-\alpha_1-\alpha_2}}{Q(\beta,q)-q^2 e^{-\alpha_1-\alpha_2}}\cdot\frac{\beta q(q^2-1)}{Q(\beta,q)-q^2e^{-\alpha_1-\alpha_2-\alpha_3}}\cdot\frac{qQ(\beta,q)(1-e^{-\alpha_2-\alpha_3})}{Q(\beta,q)-q^2e^{-\alpha_2-\alpha_3}}.
		\end{align*}
	\end{egg}

	\section{Connection to the multi-parameter quantum group}\label{section:multi-parameter}
	
	In this section, we will prove that the $R$, $U$, and $D$ matrices defined in \cref{subsection:R} are intertwiners for evaluation representations of the \textit{multi-parameter quantum group} of type $\widehat{\mathfrak{a}}_2$. In \cref{subsection:multiparameter-quantum-group}, we will recall the definition of the multi-parameter quantum group of type $\widehat{\mathfrak{a}}_n$ following \cite{HP12}. In \cref{subsection:mild-deformation}, we will define certain representations of the multi-parameter quantum group of type $\widehat{\mathfrak{a}}_2$ and show that the $R$, $U$, and $D$ matrices of \cref{subsection:R} intertwine these representations.

	\subsection{The multi-parameter quantum group of type $A$}\label{subsection:multiparameter-quantum-group}
	In this subsection, we will recall the definition of the multi-parameter quantum group of affine type $A$, following \cite{HP12}. Although the multi-parameter quantum group is defined for all symmetrizable Kac-Moody Lie algebras in \cite[Def. 7]{HP12}, we will only focus on affine type $A$ in this paper. See Appendix \ref{section:quantum-group} for the presentation of the usual quantum affine algebra  $U_q(\widehat{\mathfrak{a}}_n)$ of type $\widehat{\mathfrak{a}}_n$ in terms of generators and relations. 
	
	\begin{rmk}
		The study of multi-parameter deformations of the coordinate rings of linear algebraic groups was initiated independently in \cite{AST91} and \cite{R90}. Two-parameter deformations of these coordinate rings have appeared extensively in the literature. See the introduction of \cite{PHR10} for exposition on the history of multi-parameter quantum groups.
	\end{rmk}

	Let $\widehat{\mathfrak{a}}_n$ be the affine Lie algebra of type $A$, and let $C=(C_{i,j})_{i,j=0,\dotsc,n}$ be its generalized Cartan matrix:
	\[C_{i,j}=\begin{cases}
		2,\quad &\text{ $i=j$;}\\
		-1,\quad &\text{ $i= j+1$, or $i=j-1$, or $(i,j)\in\{(0,n),(n,0)\}$;}\\
		0,\quad&\text{ otherwise.}
	\end{cases}\]
	In other words, $C$ is the following $(n+1)\times (n+1)$-matrix:
	\[
	\begin{pmatrix}
		2 & -1 & 0 & 0 & 0 & \dotsb & 0 & 0 & 0 & -1\\
		-1 & 2 & -1 & 0 & 0 & \dotsb & 0 & 0 & 0 & 0 \\
		0 & -1 & 2 & -1 & 0 & \dotsb & 0 & 0 & 0 & 0 \\
		0 & 0 & -1 & 2 & -1 & \dotsb & 0 & 0 & 0 & 0 \\
		\vdots & \vdots & \vdots &\vdots  & \vdots  & \ddots & \vdots & \vdots & \vdots & \vdots  \\
		0 & 0 & 0 & 0 & 0 & \dotsb & -1 & 2 & -1 & 0 \\
		0 & 0 & 0 & 0 & 0 & \dotsb & 0 & -1 & 2 & -1\\
		-1 & 0 & 0 & 0 & 0 & \dotsb & 0 & 0 & -1 & 2
	\end{pmatrix}
	\]
	Consider the fraction field $\mbb{Q}(q_{i,j})$ of the polynomial ring $\mbb{Q}[q_{i,j}]$, with variables $q_{i,j}$ where $i,j=0,\dotsc,n$. Assume that $q_{i,i}q_{i,j}q_{j,i}=1$, $i\neq j$, and set $\mathbf{q}:=(q_{i,j})_{i,j=0,\dotsc,n}$. Let $\mbb{K}$ be a field containing $\mbb{Q}(q_{i,j})$.
	\begin{dfn}(\cite[Def. 7]{HP12}, \cite[Def. 7]{HPR10})
		The \textbf{multi-parameter quantum group} $U_{\mathbf{q}}(\widehat{a}_n)$ is the associative unital algebra over $\mbb{K}$ generated by elements $E_i$, $F_i$, $K_{i}^{(1)}$, $(K_i^{(1)})^{-1}$, $K_i^{(2)}$, $(K_i^{(2)})^{-1}$, where $i=0,1,\dotsc,n$, subject to the relations:
		
		\
		
		\begin{center}
			\begin{varwidth}{\textwidth}
				\begin{enumerate}[itemsep=7pt]
					\item  $K_i^{(2)}(K_i^{(2)})^{-1}=(K_i^{(2)})^{-1}K_i^{(2)}=1$,\quad\quad $K_i^{(1)}(K_i^{(1)})^{-1}=(K_i^{(1)})^{-1}K_i^{(1)}=1$,
					\item $K_i^{(1)}K_j^{(2)}=K_j^{(2)}K_i^{(1)}$,\quad\quad  $K_i^{(1)}K_j^{(1)}=K_j^{(1)}K_i^{(1)}$,\quad\quad $K_i^{(2)}K_j^{(2)}=K_j^{(2)}K_i^{(2)}$,
					\item $K_i^{(1)}E_j(K_i^{(1)})^{-1}=q_{i,j}E_j$,\quad\quad $(K_i^{(2)})^{-1}E_jK_i^{(2)}=q_{j,i}^{-1}E_j$,
					\item $K_i^{(1)}F_j(K_i^{(1)})^{-1}=q_{i,j}^{-1}F_j$,\quad\quad $(K_i^{(2)})^{-1}F_jK_i^{(2)}=q_{j,i}F_j$,
					\item\label{item:5} $[E_i,F_j]=\delta_{i,j}\frac{q_{i,i}}{q_{i,i}-1} \left(K_i^{(1)}-(K_i^{(2)})^{-1}\right)$,
					\item $ E_i^2E_j-q_{i,j}(1+q_{i,i}) E_iE_jE_i+\frac{q_{i,j}}{q_{j,i}}E_jE_i^2=0$, \quad $i\neq j$, 
					\item $\frac{q_{i,j}}{q_{j,i}}F_i^2F_j- q_{i,j}(1+q_{i,i})F_iF_jF_i+ F_jF_i^2=0$,\quad \quad $i\neq j$.
				\end{enumerate}
			\end{varwidth}
		\end{center}
	\end{dfn}
	
	\begin{rmk}
		In the definition of the multi-parameter quantum group given in \cite[Def. 7]{HP12}, the authors denote their generators by $e_i,f_i,\omega_i^{\pm 1}, (\omega_i')^{\pm1}$. The generators we are using correspond to theirs as follows:
		\[E_i=e_i, \quad F_i=f_i,\quad K_{i}^{(1)}=\omega_i,\quad  (K_i^{(1)})^{-1}=\omega_i^{-1},\quad K_i^{(2)}=(\omega_i')^{-1},\quad  (K_i^{(2)})^{-1}=\omega_i'.\]
	\end{rmk}

	\begin{prop}(\cite[\S 2.3]{HP12})
		The associative algebra $U_{\mathbf{q}}(\widehat{a}_n)$ has a Hopf algebra structure with the coproduct $\Delta$, counit $\epsilon$, and antipode $S$ given by:
		\[\Delta(E_i)=K_i^{(1)}\otimes E_i+E_i\otimes 1,\quad \Delta(F_i)=1\otimes F_i+F_i\otimes (K_i^{(2)})^{-1}\]
		\[\Delta(K_i^{(1)})=K_i^{(1)}\otimes K_i^{(1)},\quad  \Delta(K_i^{(2)})=K_i^{(2)}\otimes K_i^{(2)}\]
		\[\epsilon(E_i)=0,\quad \epsilon(F_i)=0,\quad \epsilon(K_i^{(1)})=1,\quad \epsilon(K_i^{(2)})=1\]
		\[S(E_i)=-(K_i^{(1)})^{-1}E_i,\quad S(F_i)=-F_iK_i^{(2)},\quad S(K_i^{(1)})=(K_i^{(1)})^{-1},\quad S(K_i^{(2)})=(K_i^{(2)})^{-1}.\]
	\end{prop}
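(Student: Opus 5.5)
Since $U_{\mathbf{q}}(\widehat{a}_n)$ is given by generators and relations, I will exhibit $\Delta,\epsilon,S$ on the generators $E_i,F_i,(K_i^{(1)})^{\pm1},(K_i^{(2)})^{\pm1}$ and check that each respects the defining relations (1)--(7). This shows that $\Delta\colon U\to U\otimes U$ and $\epsilon\colon U\to\mbb{K}$ are algebra homomorphisms and $S\colon U\to U^{\mathrm{op}}$ is an algebra anti-homomorphism. Then I verify coassociativity, the counit axiom and the antipode axiom on generators. For the latter two this suffices because $(\mathrm{id}\otimes\epsilon)\Delta$ and $m(S\otimes\mathrm{id})\Delta$ are compatible with products once $\Delta$ is multiplicative and $S$ anti-multiplicative.

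Coassociativity is immediate on generators: $(\Delta\otimes 1)\Delta(E_i)=K_i^{(1)}\otimes K_i^{(1)}\otimes E_i+K_i^{(1)}\otimes E_i\otimes 1+E_i\otimes 1\otimes 1=(1\otimes\Delta)\Delta(E_i)$. The same holds for $F_i$, and the $K$'s are grouplike. The counit check is trivial. For the antipode, $m(S\otimes 1)\Delta(E_i)=(K_i^{(1)})^{-1}E_i-(K_i^{(1)})^{-1}E_i=0=\epsilon(E_i)$, and similarly $m(1\otimes S)\Delta(F_i)=-F_iK_i^{(2)}+F_iK_i^{(2)}=0$. The other two one-sided checks use $K_i^{(1)}S(E_i)+E_i=0$ and the analogous identity for $F_i$.

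For $\Delta$ on the relations: (1)--(2) are automatic since the $K$'s are grouplike and commute. For (3), $\Delta(K_i^{(1)})\Delta(E_j)\Delta(K_i^{(1)})^{-1}=K_i^{(1)}K_j^{(1)}(K_i^{(1)})^{-1}\otimes K_i^{(1)}E_j(K_i^{(1)})^{-1}+K_i^{(1)}E_j(K_i^{(1)})^{-1}\otimes 1=q_{i,j}\Delta(E_j)$, and the $K^{(2)}$ and $F$ versions in (4) are the same. For (5), the cross terms of $[\Delta E_i,\Delta F_j]$ are
\[K_i^{(1)}F_j\otimes E_i(K_j^{(2)})^{-1}-F_jK_i^{(1)}\otimes (K_j^{(2)})^{-1}E_i .\]
Using (3)--(4) these cancel exactly when $q_{i,j}^{-1}\cdot q_{i,j}=1$, i.e.\ the commutation factors are mutually inverse; this is where the convention $q_{j,i}$ in (3)--(4) is designed to work. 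The remaining terms give $K_i^{(1)}\otimes[E_i,F_j]+[E_i,F_j]\otimes (K_j^{(2)})^{-1}$. For $i=j$ this equals $\frac{q_{i,i}}{q_{i,i}-1}\bigl(K_i^{(1)}\otimes K_i^{(1)}-(K_i^{(2)})^{-1}\otimes(K_i^{(2)})^{-1}\bigr)$ after the middle terms $\pm K_i^{(1)}\otimes (K_i^{(2)})^{-1}$ cancel. The same kind of bookkeeping shows $S$ and $\epsilon$ respect (1)--(5).

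The main obstacle is the Serre relations (6)--(7) under $\Delta$ (and $S$). I would expand $\Delta(E_i)=a+b$ with $a=K_i^{(1)}\otimes E_i$, $b=E_i\otimes 1$, and $\Delta(E_j)=c+d$ similarly. From (3) these satisfy $q$-commutation relations: $ba=q_{i,i}^{-1}ab$, $bc=q_{j,i}^{-1}cb$-type rules, so all monomials reduce to normal-ordered words with explicit $\mathbf{q}$-coefficients. The Serre expression becomes a sum of terms of the form (words in $K$'s and $E$'s)$\otimes$(words in $E$'s). The pure terms vanish by (6) in each factor. The mixed terms vanish because their coefficients factor as a quantum-binomial identity, $1-q_{i,j}(1+q_{i,i})x+\frac{q_{i,j}}{q_{j,i}}x^2$ evaluated at the relevant commutation factor, which is zero precisely by the hypothesis $q_{i,i}q_{i,j}q_{j,i}=1$. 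I expect this coefficient check to be the delicate step. Alternatively, one can cite that $U_{\mathbf{q}}$ is a Drinfeld double of its Borel halves as in \cite{HP12}, from which the Hopf structure follows. The $F$-relation (7) is symmetric, and the computation for $S$ mirrors the one for $\Delta$.
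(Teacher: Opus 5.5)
Your proposal is correct, and it takes a different route from the paper. The paper gives no argument for this proposition; it simply cites \cite[\S 2.3]{HP12}, where the Hopf structure is part of the general multi-parameter framework for symmetrizable Kac--Moody types. Your direct check on the presentation goes through: the cross terms in $[\Delta E_i,\Delta F_j]$ cancel, the antipode identities hold on generators, and $\Delta$, $\epsilon$, $S$ respect (1)--(7).

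Your route has a concrete advantage: it proves the statement for the algebra exactly as the paper writes it. The paper imposes the cubic Serre relations and $q_{i,i}q_{i,j}q_{j,i}=1$ for every pair $i\neq j$. For $n\geq 3$ this is not the usual multi-parameter presentation, where non-adjacent nodes would carry $q_{i,j}q_{j,i}=1$ and a quadratic Serre relation. Your computation never uses the Cartan matrix, so it covers the literal statement for all $n$. The citation instead gives generality and a conceptual reason for the formulas, and it matches the paper's algebra literally in the case $n=2$, which is the only case the paper uses.

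One refinement to your Serre step. The elements $a$ and $c$ (and likewise $b$ and $d$) do not $q$-commute, so you can only normal-order across the two tensor slots.

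\begin{itemize}
\item Your polynomial $P(x)=1-q_{i,j}(1+q_{i,i})x+\frac{q_{i,j}}{q_{j,i}}x^2$ governs only the mixed terms in which both copies of $E_i$ land in the same slot. These are the $b^2c$ terms, with $x=q_{j,i}$, and the $a^2d$ terms, with $x=q_{i,j}^{-1}$. Under $q_{i,i}q_{i,j}q_{j,i}=1$ these are exactly the two roots of $P$.
\item When the two copies of $E_i$ are split (terms in $a,b,d$ or in $a,b,c$), the words $E_iE_j$ and $E_jE_i$ survive separately in one slot. Their coefficients must then vanish individually.
\item For the $a,b,d$ terms the two coefficients are $1+q_{i,i}^{-1}-q_{i,j}(1+q_{i,i})q_{i,i}^{-1}q_{i,j}^{-1}$, which is identically zero, and $-q_{i,j}^2(1+q_{i,i})+\frac{q_{i,j}}{q_{j,i}}(1+q_{i,i}^{-1})$, which is zero by the hypothesis. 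The $a,b,c$ terms behave the same way.
\end{itemize}

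So your conclusion stands. In the split cases, though, the mechanism is the identity $\binom{2}{1}_{q_{i,i}}=1+q_{i,i}$ built into the coefficient $q_{i,j}(1+q_{i,i})$, not a root of $P$.
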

	
	\begin{rmk}
		Assume that $q_{i,j}=q^{C_{i,j}}$. Then $(K_i^{(1)}-K_i^{(2)})$ is a Hopf ideal in $U_{\mathbf{q}}(\widehat{a}_n)$, and 
		\[U_{\mathbf{q}}(\widehat{a}_n)/(K_i^{(1)}-K_i^{(2)})\simeq U_q(\widehat{a}_n),\]
		where $U_q(\widehat{a}_n)$ is the usual quantum affine algebra of type $\widehat{a}_n$ (see Appendix \ref{section:quantum-group}).
	\end{rmk}

	\subsection{Representations of $U_{\mathbf{q}}(\widehat{a}_2)$}\label{subsection:mild-deformation}
	In this subsection, we will define representations of $U_{\mathbf{q}}(\widehat{a}_2)$ and show that the $R$, $U$, and $D$ matrices of \cref{subsection:R} intertwine these representations. The main theorem of this subsection is \cref{theo:R-representations}.
	
	Recall the element $Q(\beta,q)=q^2+\beta-q^2\beta\in \mbb{Q}[q,\beta]$ of Definition \ref{dfn:Q(b)}.
	
	\begin{dfn}
		Let $I$ be the ideal in $U_{\mathbf{q}}(\widehat{a}_2)$ generated by the following relations, and set $\mbb{K}=\mbb{Q}(q_{i,j},q,\beta)$:
		\begin{align*}
			&q_{0,0}-\tfrac{q^2}{Q(\beta,q)}, \quad q_{1,0}-\tfrac{1}{q},
			\quad q_{2,0}-\tfrac{Q(\beta,q)}{q},\\
			&q_{0,1}-\tfrac{Q(\beta,q)}{q},\quad q_{1,1}-\tfrac{q^2}{Q(\beta,q)},\quad q_{2,1}-\tfrac{1}{q},\\
			&q_{0,2}-\tfrac{1}{q},\quad q_{1,2}-\tfrac{Q(\beta,q)}{q},\quad q_{2,2}-\tfrac{q^2}{Q(\beta,q)}.
		\end{align*}	
		We will call the $\mbb{K}$-algebra $U_{q}(\beta,\widehat{a}_2):=U_{\mathbf{q}}(\widehat{a}_2)/I$ the \textbf{connective quantum affine algebra} of type $\widehat{a}_2$.
	\end{dfn}

	Define the following vector space:  $V^z:=(\mrm{Frac}((K_{\widehat{T}}(\mrm{pt})[\beta,\beta^{-1}])[z,z^{-1}]))^3$.
	We will now define three representations of ${U}_{q}(\beta,\widehat{a}_2)$ whose underlying vector space is $V^z$. We will denote these three representations by $\rho_g^z$, $\rho_r^z$, and $\rho_b^z$:
	\begin{align*}
		&	\rho_g^z(E_0)=\frac{q}{z}\begin{bmatrix}
			0 & 0 &0 \\
			0 & 0 & 0 \\
			-q & 0 & 0
		\end{bmatrix}\quad \rho_g^z(E_1)=q\begin{bmatrix}
			0 & 1 & 0\\
			0 & 0 & 0\\
			0 & 0 & 0
		\end{bmatrix}\quad 
		\rho_g^z(E_2)=q\begin{bmatrix}
			0 & 0 & 0\\
			0 & 0 & -\frac{1}{q}\\
			0 & 0 & 0
		\end{bmatrix}\\
		&\rho_g^z(F_0)=z\begin{bmatrix}
			0 & 0 & -\frac{1}{q} \\
			0 & 0 & 0 \\
			0 & 0 & 0
		\end{bmatrix}\quad \rho_g^z(F_1)=\begin{bmatrix}
			0 & 0 & 0\\
			\frac{1}{Q(\beta,q)} & 0 & 0\\
			0 & 0 & 0
		\end{bmatrix}\quad 
		\rho_g^z(F_2)=\begin{bmatrix}
			0 & 0 & 0\\
			0 & 0 & 0\\
			0 & -q & 0
		\end{bmatrix}\\
		&\rho_g^z(K_0^{(1)})=\begin{bmatrix}
			\frac{Q(\beta,q)}{q} & 0 & 0\\
			0 & 1 & 0\\
			0 & 0 & q
		\end{bmatrix}\quad  
		\rho_g^z(K_1^{(1)})=\begin{bmatrix}
			\frac{q}{Q(\beta,q)} & 0 & 0\\
			0 & \frac{1}{q} & 0\\
			0 & 0 & \frac{1}{Q(\beta,q)}
		\end{bmatrix} \quad
		\rho_g^z(K_2^{(1)})=\begin{bmatrix}
			1  & 0 & 0\\
			0 & q & 0\\
			0 & 0 & \frac{Q(\beta,q)}{q}
		\end{bmatrix}\\
		&\rho_g^z(K_0^{(2)})=\begin{bmatrix}
			\frac{1}{q} & 0 & 0\\
			0 & 1 & 0\\
			0 & 0 & \frac{q}{Q(\beta,q)}
		\end{bmatrix}\quad  
		\rho_g^z(K_1^{(2)})=\begin{bmatrix}
			q & 0 & 0\\
			0 & \frac{Q(\beta,q)}{q} & 0\\
			0 & 0 & Q(\beta,q)
		\end{bmatrix} \quad
		\rho_g^z(K_2^{(2)})=\begin{bmatrix}
			1  & 0 & 0\\
			0 & \frac{q}{Q(\beta,q)} & 0\\
			0 & 0 & \frac{1}{q}
		\end{bmatrix}
	\end{align*}
	
	\begin{align*}
		&\rho_r^z(E_0)=\frac{q}{z}\begin{bmatrix}
			0 & 0 &0 \\
			0 & 0 & -\frac{1}{q} \\
			0 & 0 & 0
		\end{bmatrix}\quad 
		\rho_r^z(E_1)=q\begin{bmatrix}
			0 & 0 & 0\\
			0 & 0 & 0\\
			-q & 0 & 0
		\end{bmatrix}\quad 
		\rho_r^z(E_2)=q\begin{bmatrix}
			0 &  \frac{1}{Q(\beta,q)} & 0\\
			0 & 0 & 0\\
			0 & 0 & 0
		\end{bmatrix}\\
		&\rho_r^z(F_0)=z\begin{bmatrix}
			0 & 0 &0 \\
			0 & 0 & 0 \\
			0 & -q & 0
		\end{bmatrix}\quad 
		\rho_r^z(F_1)=\begin{bmatrix}
			0 & 0 & -\frac{1}{q}\\
			0 & 0 & 0\\
			0 & 0 & 0
		\end{bmatrix}\quad 
		\rho_r^z(F_2)=\begin{bmatrix}
			0 & 0 & 0\\
			1 & 0 & 0\\
			0 & 0 & 0
		\end{bmatrix}\\
		&	\rho_r^z(K_0^{(1)})=\begin{bmatrix}
			1 & 0 & 0\\
			0 & q & 0\\
			0 & 0 & \frac{Q(\beta,q)}{q}
		\end{bmatrix}\quad  
		\rho_r^z(K_1^{(1)})=\begin{bmatrix}
			\frac{Q(\beta,q)}{q} & 0 & 0\\
			0 & 1 & 0\\
			0 & 0 & q
		\end{bmatrix} \quad
		\rho_r^z(K_2^{(1)})=\begin{bmatrix}
			\frac{q}{Q(\beta,q)}  & 0 & 0\\
			0 & \frac{1}{q} & 0\\
			0 & 0 & \frac{1}{Q(\beta,q)}
		\end{bmatrix}\\
		&\rho_r^z(K_0^{(2)})=\begin{bmatrix}
			1 & 0 & 0\\
			0 & \frac{q}{Q(\beta,q)} & 0\\
			0 & 0 & \frac{1}{q}
		\end{bmatrix}\quad  
		\rho_r^z(K_1^{(2)})=\begin{bmatrix}
			\frac{1}{q} & 0 & 0\\
			0 & 1 & 0\\
			0 & 0 & \frac{q}{Q(\beta,q)}
		\end{bmatrix} \quad
		\rho_r^z(K_2^{(2)})=\begin{bmatrix}
			q  & 0 & 0\\
			0 & \frac{Q(\beta,q)}{q} & 0\\
			0 & 0 & Q(\beta,q)
		\end{bmatrix}
	\end{align*}

	\begin{align*}
		&\rho_b^z(E_0)=\frac{q}{z}\begin{bmatrix}
			0 & 0 &0 \\
			-\frac{1}{q^2} & 0 & 0 \\
			0 & 0 & 0
		\end{bmatrix}\quad 
		\rho_b^z(E_1)=q\begin{bmatrix}
			0 & 0 & 0\\
			0 & 0 & 0\\
			0 & 1 & 0
		\end{bmatrix}\quad 
		\rho_b^z(E_2)=q\begin{bmatrix}
			0 & 0 &  \frac{1}{Q(\beta,q)}\\
			0 & 0 & 0\\
			0 & 0 & 0
		\end{bmatrix}\\
		&\rho_b^z(F_0)=z\begin{bmatrix}
			0 & -q^2 & 0 \\
			0 & 0 & 0 \\
			0 & 0 & 0
		\end{bmatrix}\quad 
		\rho_b^z(F_1)=\begin{bmatrix}
			0 & 0 & 0\\
			0 & 0 & \frac{1}{Q(\beta,q)}\\
			0 & 0 & 0
		\end{bmatrix}\quad 
		\rho_b^z(F_2)=\begin{bmatrix}
			0 & 0 & 0\\
			0 & 0 & 0\\
			1 & 0 & 0
		\end{bmatrix}\\
		&\rho_b^z(K_0^{(1)})=\begin{bmatrix}
			\frac{Q(\beta,q)}{q}& 0 & 0\\
			0 & q & 0\\
			0 & 0 & Q(\beta,q)
		\end{bmatrix}\quad  
		\rho_b^z(K_1^{(1)})=\begin{bmatrix}
			1 & 0 & 0\\
			0 & \frac{1}{q} & 0\\
			0 & 0 & \frac{q}{Q(\beta,q)}
		\end{bmatrix} \quad
		\rho_b^z(K_2^{(1)})=\begin{bmatrix}
			\frac{q}{Q(\beta,q)}  & 0 & 0\\
			0 & 1 & 0\\
			0 & 0 & \frac{1}{q}
		\end{bmatrix}\\
		&\rho_b^z(K_0^{(2)})=\begin{bmatrix}
			\frac{1}{q}& 0 & 0\\
			0 & \frac{q}{Q(\beta,q)} & 0\\
			0 & 0 & \frac{1}{Q(\beta,q)}
		\end{bmatrix}\quad  
		\rho_b^z(K_1^{(2)})=\begin{bmatrix}
			1 & 0 & 0\\
			0 & \frac{Q(\beta,q)}{q} & 0\\
			0 & 0 & q
		\end{bmatrix} \quad
		\rho_b^z(K_2^{(2)})=\begin{bmatrix}
			q  & 0 & 0\\
			0 & 1 & 0\\
			0 & 0 & \frac{Q(\beta,q)}{q}
		\end{bmatrix}
	\end{align*}

	In Theorem \ref{theo:R-representations}, we will see that the $R$ matrices defined in \cref{section:puzzle-formula} are intertwiners for the $\rho_g^z$, $\rho_r^z$, and $\rho_b^z$ representations defined in this section, up to reparametrizations which we will now define:
	\[\overline{R}_{b,r}(\beta,z):=R_{b,r}(\beta,1/z),\quad \overline{R}_{b,g}(\beta,z):=R_{b,g}(\beta,q^4/z),\quad \overline{R}_{r,g}(\beta,z):=R_{r,g}(\beta,q^4/z),\]
	\[\overline{R}_{r,b}(\beta,z):=R_{r,b}(\beta,1/z),\quad \overline{R}_{g,b}(\beta,z):=R_{g,b}(\beta,1/(q^4z)),\quad \overline{R}_{g,r}(\beta,z):=R_{g,r}(\beta,1/(q^4z)),\]
	\[\overline{R}_{g,g}(\beta,z):=R_{g,g}(\beta,1/z),\quad \overline{R}_{r,r}(\beta,z):=R_{r,r}(\beta,1/z),\quad \overline{R}_{b,b}(\beta,z):=R_{b,b}(\beta,1/z).\] 
	
	\begin{theo}\label{theo:R-representations}
		The $R$, $U$, and $D$ matrices of \cref{section:puzzle-formula} are intertwiners for the representations $\rho_g$, $\rho_r$, and $\rho_g$. Precisely, for all combinations of $(i,j)\in\{(g,r),(r,g),(b,r),(r,b),(g,b),(b,g)\}$ and $\ell\in\{ 0,1,2\}$, we have
		\[\overline{R}_{i,j}(\beta,z_2/z_1)\circ((\rho_i^{z_1}\tensor \rho_j^{z_2})(\Delta(E_\ell)))=((\rho_j^{z_2}\tensor \rho_i^{{z_1}})(\Delta(E_\ell)))\circ \overline{R}_{i,j}(\beta,z_2/z_1)\]
		\[\overline{R}_{i,j}(\beta,z_2/z_1)\circ((\rho_i^{z_1}\tensor \rho_j^{z_2})(\Delta(F_\ell)))=((\rho_j^{z_2}\tensor \rho_i^{{z_1}})(\Delta(F_\ell)))\circ \overline{R}_{i,j}(\beta,z_2/z_1)\]
		\[\overline{R}_{i,j}(\beta,z_2/z_1)\circ((\rho_i^{z_1}\tensor \rho_j^{z_2})(\Delta(K_\ell^{(1)})))=((\rho_j^{z_2}\tensor \rho_i^{{z_1}})(\Delta(K_\ell^{(1)})))\circ \overline{R}_{i,j}(\beta,z_2/z_1)\]
		\[\overline{R}_{i,j}(\beta,z_2/z_1)\circ((\rho_i^{z_1}\tensor \rho_j^{z_2})(\Delta(K_\ell^{(2)})))=((\rho_j^{z_2}\tensor \rho_i^{{z_1}})(\Delta(K_\ell^{(2)})))\circ \overline{R}_{i,j}(\beta,z_2/z_1)\]
		\[U(\beta)\circ ((\rho_g^{qz}\otimes \rho_r^{(1/q)z})(\Delta(E_\ell)))=(\rho_b^{(1/q)^2z}(E_\ell))\circ U(\beta)\]
		\[U(\beta)\circ ((\rho_g^{qz}\otimes \rho_r^{(1/q)z})(\Delta(F_\ell)))=(\rho_b^{(1/q)^2z}(F_\ell))\circ U(\beta)\]
		\[U(\beta)\circ ((\rho_g^{qz}\otimes \rho_r^{(1/q)z})(\Delta(K_\ell^{(1)})))=(\rho_b^{(1/q)^2z}(K_\ell^{(1)}))\circ U(\beta)\]
		\[U(\beta)\circ ((\rho_g^{qz}\otimes \rho_r^{(1/q)z})(\Delta(K_\ell^{(2)})))=(\rho_b^{(1/q)^2z}(K_\ell^{(2)}))\circ U(\beta)\]
		\[D(\beta)\circ(\rho_b^{(1/q)^2z}(E_\ell))=((\rho_r^{(1/q)z}\otimes \rho_g^{qz})(\Delta(E_\ell)))\circ D(\beta)\]
		\[D(\beta)\circ(\rho_b^{(1/q)^2z}(F_\ell))=((\rho_r^{(1/q)z}\otimes \rho_g^{qz})(\Delta(F_\ell)))\circ D(\beta)\]
		\[D(\beta)\circ(\rho_b^{(1/q)^2z}(K_\ell^{(1)}))=((\rho_r^{(1/q)z}\otimes \rho_g^{qz})(\Delta(K_\ell^{(1)})))\circ D(\beta)\]
		\[D(\beta)\circ(\rho_b^{(1/q)^2z}(K_\ell^{(2)}))=((\rho_r^{(1/q)z}\otimes \rho_g^{qz})(\Delta(K_\ell^{(2)})))\circ D(\beta)\]
	\end{theo}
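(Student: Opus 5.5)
The plan is a direct finite verification, organized so that most checks are cheap and only a few are substantive. Every map in the statement is an explicit matrix with entries in $\mbb{K}(z_1,z_2)$ or $\mbb{K}(z)$: the $\overline{R}_{i,j}$ are $9\times 9$, $U(\beta)$ is $3\times 9$ and $D(\beta)$ is $9\times 3$. The generators act through the explicit $3\times 3$ matrices $\rho^z_g,\rho^z_r,\rho^z_b$ and the coproduct $\Delta$. So each identity is an equality of explicit matrices of rational functions, and the proof reduces to computing both sides and comparing them entrywise, as was done for \cref{prop:ybe}.

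First I will check that $\rho^z_g,\rho^z_r,\rho^z_b$ are genuinely representations of $U_q(\beta,\widehat{a}_2)$, i.e.\ that they satisfy relations (1)--(7) with the $q_{i,j}$ specialized as in the ideal $I$. The Cartan relations (1)--(4) are immediate, since every $K$ acts diagonally and each $E_j,F_j$ is a single matrix unit. For relation (5) I will compare $[E_i,F_j]$ with $\tfrac{q_{i,i}}{q_{i,i}-1}(K^{(1)}_i-(K_i^{(2)})^{-1})$, using $q_{i,i}=q^2/Q(\beta,q)$. The Serre relations (6)--(7) hold trivially, because every cubic word vanishes in a $3$-dimensional representation in which each generator is a single matrix unit. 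Since the coproduct is an algebra map, it then suffices to check the intertwining identities on the generators $E_\ell,F_\ell,K^{(1)}_\ell,K^{(2)}_\ell$ for $\ell=0,1,2$, which is exactly what the theorem lists.

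Next I will treat the $K$-relations. Both $\Delta(K^{(a)}_\ell)$ acting on $V\otimes V$ and $K^{(a)}_\ell$ acting on the target are diagonal. So the $K$-intertwining conditions say exactly that each $\overline{R}_{i,j}$, $U(\beta)$ and $D(\beta)$ has nonzero entries only between basis vectors of equal $(K^{(1)},K^{(2)})$-weight. I will read this off from the sparsity patterns of the matrices in \cref{subsection:R}, using that $Q(\beta,q)$ and $q$ are algebraically independent.

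The $E$ and $F$ relations are then sparse linear identities. For instance, $\overline{R}\circ(K_\ell^{(1)}\otimes E_\ell+E_\ell\otimes 1)=(K_\ell^{(1)}\otimes E_\ell+E_\ell\otimes 1)\circ\overline{R}$ involves only a few weight spaces of $V\otimes V$ for each $\ell$, so it reduces to a handful of scalar identities among the fugacities $f_{i,j}^{k,\ell}$ after the reparametrizations defining $\overline{R}_{i,j}$. I will carry these out by computer algebra in SageMath \cite{G25}, as in \cref{prop:ybe}, clearing the denominators $Q(\beta,q)-q^2 z$ and checking that the resulting polynomial identities in $q,\beta,z$ hold.

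The main obstacle is the bookkeeping of spectral parameters. One must match the reparametrizations $\overline{R}_{i,j}$ (the factors $q^{\pm4}$ and the inversions $z\mapsto 1/z$) with the $z$-dependence of $\rho(E_0),\rho(F_0)$, and for $U,D$ track the shifts $qz$, $q^{-1}z$, $q^{-2}z$. The affine generator $\ell=0$ is the only one carrying $z$, so I will check the $\ell=0$ identities first and adjust the normalizations if needed. As a sanity check, I will compare with the $\beta=1$ case, where $Q=q^2$ and the statement reduces to the $U_q(\widehat{\mathfrak a}_2)$ intertwiners of \cite{KZJ21}.
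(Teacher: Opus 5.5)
Your plan is the paper's proof: the paper simply states that all of these intertwining identities are straightforward matrix computations verified in SageMath. Your extra organization is a harmless refinement of that, namely first checking that $\rho_g,\rho_r,\rho_b$ satisfy the defining relations, and reducing the $K$-identities to weight-preservation of the sparsity patterns.

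One small correction: it is not true that every cubic word vanishes. For example, $\rho_g^z(E_1)\rho_g^z(E_2)\rho_g^z(E_0)=\tfrac{q^3}{z}e_{11}\neq 0$. However, the words $E_i^2E_j$, $E_iE_jE_i$, $E_jE_i^2$ and their $F$-analogues do vanish, because each generator acts by an off-diagonal matrix unit and no two distinct generators act by transposed matrix units. That is all the Serre relations require.
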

	\begin{proof}
		These are all straightforward computations, verified using SageMath \cite{G25}.
	\end{proof}
	
	\section{Connective motivic Segre classes and interpolating between puzzle rules}\label{section:interpolating}
	
	The goal of this section is to explain the origins of the deformed motivic Segre classes of \cref{section:deformation} using the \textit{connective formal group law} and the \textit{formal affine Demazure algebra}. Using this approach, we define ``connective motivic Segre classes" of Schubert cells, we explain how to interpolate between the motivic Segre classes and the Segre-Schwartz-MacPherson classes by evaluating a parameter $\beta$ at $1$ and $0$, respectively, and we interpolate between the puzzle formulas for multiplying Segre-Schwartz-MacPherson classes and multiplying motivic Segre classes.	
	
	In \cref{subsection:forma-group-laws}, we recall the definition and properties of formal group laws and the construction of the formal group algebra from \cite{CPZ}. The formal group algebra can be thought of an an algebraic model for the so-called \textit{torus-equivariant oriented cohomology of a point}, which is discussed in \cref{section:localization-techniques}. In \cref{subsection:formal-divided-difference}, we define Demazure-Lusztig operators for arbitrary formal group laws, and we show that the operators defined for the connective formal group law satisfy the braid relations. In \cref{subsection:second-construction}, we define the ``connective motivic Segre classes" of Schubert cells using Demazure-Lusztig operators for the connective formal group law. These classes interpolate between Segre-Schwartz-MacPherson classes (when $\beta=0$) and the deformed classes of \cref{section:deformation} (when $\beta\neq 0$). We then define ``connective motivic Chern classes" of Schubert cells and show that they satisfy the so-called \textit{GKM conditions} (this implies that these classes have a geometric realization as canonical elements in a quotient of the $\widehat{T}$-equivariant algebraic cobordism ring of $T^*(G/P_\Theta)$, which we will explore in \cref{section:localization-techniques}). In  \cref{subsection:general-puzzle-rule}, we prove a general puzzle formula for the connective motivic Segre classes in the $d=1$ case, which interpolates between the puzzle formula for multiplying Segre-Schwartz-MacPherson classes of Schubert cells and the puzzle formula of \cref{thm:main} for multiplying the $\beta$-deformed motivic Segre classes.
	
	\subsection{The formal group algebra}\label{subsection:forma-group-laws}
	In this subsection, we will recall the definition and basic properties of formal group laws, and we will recall the formal group algebra of \cite{CPZ}.
	
	\begin{dfn}
		Let $S$ be a commutative ring. A \textbf{one-dimensional commutative formal group law} $(S,F)$ is a formal power series $F(x,y)=x+y+\sum_{i,j\geq 1}a_{i,j}x^iy^j\in S\llbracket x,y\rrbracket$, satisfying 		
		\begin{enumerate}
			\item $F(0,x)=x$.
			\item $F(x,y)=F(y,x)$.
			\item $F(x,F(y,z))=F(F(x,y),z)$.
		\end{enumerate}
		The \textbf{Lazard ring} $\mbb{L}$ is the quotient of the free $\mbb{Z}$-algebra with generators $a_{i,j}$, $i,j\geq 1$, modulo the relations imposed by the axioms of the formal group law. The \textbf{universal formal group law} is the pair $(\mbb{L},F_U)$, where $F_U(x,y)=x+y+\sum_{i,j\geq 1}a_{i,j}x^iy^j\in \mbb{L}\llbracket x,y\rrbracket$.
	\end{dfn}

	\begin{rmk}
		The Lazard ring $\mbb{L}$ is graded by setting $\mrm{deg}(a_{i,j})=1-i-j$. That is, $\mbb{L}$ is \textit{non-positively} graded.
	\end{rmk}
	
	\begin{rmk}\label{rmk:induced-homomorphism}
		Any ring homomorphism $f\colon \mbb{L}\to S$, where $S$ is a commutative ring, induces a formal group law $F(x,y)=x+y+\sum_{i,j\geq 1}f(a_{i,j})x^iy^j$. Conversely, any formal group law over $S$ is induced by a ring homomorphism $f\colon \mbb{L}\to S$.
	\end{rmk}
	
	\begin{dfn}
		Let $(S,F)$ be a formal group law. The \textbf{formal inverse} of an element $u\in\ S\llbracket x,y\rrbracket$ is an element $-_Fu\in S\llbracket x,y\rrbracket$ such that $F(u,-_Fu)=0$.
	\end{dfn}
	
	\begin{lem}(\cite[Ch. I, \S 3, Proposition 1]{F68})\label{lem:formal-inverse}
		Let $(S,F)$ be a formal group law. Every element $u\in S\llbracket x,y\rrbracket$ has a unique formal inverse, and the formal inverse can be expressed $-_Fu=-u-\sum_{i\geq 2} c_i u^i$, where $c_i\in S$.
	\end{lem}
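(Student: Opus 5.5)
We prove existence and uniqueness of the formal inverse by building $v=-_Fu$ one coefficient at a time. The plan is to reduce to a one-variable statement: it suffices to produce a power series $\iota(t)\in S\llbracket t\rrbracket$ with $\iota(t)=-t-\sum_{i\geq 2}c_it^i$ and $F(t,\iota(t))=0$ in $S\llbracket t\rrbracket$. Then we set $-_Fu:=\iota(u)$. This substitution is legitimate whenever $u$ has zero constant term, so that composition of power series converges in the $(x,y)$-adic topology; we will assume this, as is implicit in the statement, since formal group law arguments are always plugged into the maximal ideal. We then get $F(u,\iota(u))=0$ by substituting $t=u$.

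To construct $\iota$, we write $F(x,y)=x+y+\sum_{i,j\geq1}a_{i,j}x^iy^j$ and make the ansatz $\iota(t)=\sum_{k\geq1}b_kt^k$. Expanding $F(t,\iota(t))$, the coefficient of $t^1$ is $1+b_1$, which forces $b_1=-1$. For $k\geq 2$, the coefficient of $t^k$ has the form $b_k+P_k(b_1,\dots,b_{k-1})$, where $P_k$ is a polynomial with coefficients in $\mathbb{Z}[a_{i,j}]$. The point is that any term $a_{i,j}t^i\iota(t)^j$ with $i,j\geq 1$ contributes to $t^k$ only through the $b_m$ with $m\leq k-i<k$. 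Setting each such coefficient to zero determines $b_k=-P_k$ recursively and uniquely in $S$. Writing $c_k:=-b_k$ for $k\geq 2$ gives the stated shape $-_Fu=-u-\sum_{i\geq2}c_iu^i$.

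Uniqueness for a general $v$ (not assumed to be a series in $u$) comes from group-like cancellation. Suppose $F(u,v)=0=F(u,v')$. By commutativity and associativity of $F$,
\[v'=F(0,v')=F(F(v,u),v')=F(v,F(u,v'))=F(v,0)=v.\]
This uses $F(v,0)=v$, which follows from unity together with commutativity, and it also uses substitutivity of these identities. Substitutivity is valid provided $u,v,v'$ lie in the ideal $(x,y)$. That is automatic for $v'=\iota(u)$; for an arbitrary $v$, comparing constant terms in $F(u,v)=0$ gives $v(0)=0$.

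The main (mild) obstacle is the bookkeeping. We must justify that every substitution converges and that the recursion for $b_k$ really involves only earlier coefficients. Both reduce to the observation that all the series involved have no constant term, so each $t^k$-coefficient is a finite expression.
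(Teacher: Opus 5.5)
Your proposal is correct and follows the standard argument for this classical fact; the paper gives no proof of its own, only the citation to Fr\"ohlich. As in that standard argument, you build $\iota(t)\equiv -t \pmod{t^2}$ with $F(t,\iota(t))=0$ coefficient by coefficient, substitute $t=u$, and get uniqueness from unity, commutativity and associativity. Your restriction to $u$ in the ideal $(x,y)$ is necessary, not cosmetic: for $F_M(x,y)=x+y-\beta xy$ over $\mathbb{Z}[\beta,\beta^{-1}]$ and $u=\beta^{-1}$, one has $F_M(u,v)=\beta^{-1}$ for every $v$, so the statement as literally written fails without it.
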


	\begin{egg} Let $S$ be a commutative unital ring.
		\begin{enumerate}[leftmargin=20pt]
			\item The \textbf{additive formal group law} over $S$ is the pair ($S$, $F_A$), where $F_A(x,y)=x+y$. The formal inverse of an element $u\in S\llbracket x,y\rrbracket$ is $-u$.
			\item The \textbf{multiplicative-periodic formal group law} is the pair $(S,F_M)$, where $F_M(x,y)=x+y-\beta xy$ and $\beta\in S^\times$ is a unit. The formal inverse of an element $u\in S\llbracket x,y\rrbracket$ is the power series $\frac{u}{\beta u-1}$.
			\item The \textbf{connective formal group law} over $S$ is the pair $(S,F_C)$, where $F_C(x,y)=x+y-\beta xy$ and $\beta$ is \textit{not} invertible in $S$. The formal inverse of an element $u\in S\llbracket x,y\rrbracket$ is the power series $\frac{u}{\beta u-1}$.
		\end{enumerate}
	\end{egg}

	Let $S$ be a commutative unital ring and $(S,F)$ a formal group law over $S$. Let $M$ be a finitely generated free abelian group. Consider the formal power series ring $S\llbracket x_M\rrbracket$ over $S$ with variables indexed by elements of $M$. Given $u,v\in S\llbracket x_M\rrbracket$ and $n\in\mathbb{Z}_{\geq 0}$, we define the following notation:
	\[u+_Fv=F(u,v); \quad n\cdot_Fu:=\underbrace{u+_F+_F\dotsb+_Fu}_{\text{$n$ times}};\quad (-n)\cdot_F u=-_F(n\cdot_F u),\]
	where $-_F u$ is the formal inverse of $u$ under $(S,F)$. Observe that $S\llbracket x_M\rrbracket$ is a complete and Hausdorff topological ring with the $\mcal{I}$-adic topology, where $\mcal{I}$ is the kernel of the ring homomorphism $S\llbracket x_M\rrbracket\to S$ that fixes $S$ and sends all $x_M\mapsto 0$.
	\begin{dfn}
		Let $J_F$ be the ideal in $S\llbracket x_M\rrbracket$ generated by the elements 
		\[x_0 \quad \text{and}\quad x_{m_1+m_2}-(x_{m_1}+_Fx_{m_2}), \]
		over all $m_1,m_2\in  M$.
		Let $\mcal{J}_F$ be the topological closure of the ideal $J_F$ in the topological ring $S\llbracket x_M\rrbracket$. The quotient $S\llbracket M\rrbracket_F:=S\llbracket x_M\rrbracket/\mathcal{J}_F$ is the \textbf{formal group algebra} of \cite[Def. 2.4]{CPZ}.
	\end{dfn}
	
	\begin{lem}\label{lem:S-algebra-iso}(\cite[Theorem 2.10 and Lemma 2.11]{CPZ})
		Let $(S,F)$ be a formal group law, and let $M$ be a finitely generated free abelian group. Let $N$ be a finitely generated free abelian group isomorphic to $\mbb{Z}$, with basis $\{e\}$. There is an isomorphism of $S\llbracket M\rrbracket_F$-algebras:
		\[S\llbracket M\oplus N\rrbracket_F\xrightarrow{\simeq}  S\llbracket M\rrbracket_F\llbracket x\rrbracket,\quad x_{ne}\mapsto n\cdot_F x,\quad n\in\mbb{Z},\quad x_m\mapsto x_m,\quad m\in M.\]
	\end{lem}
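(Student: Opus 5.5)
The plan is to write down the map explicitly at the level of power series rings, check that it kills the defining ideal, and then construct an inverse. Let $\{m_1,\dots,m_r\}$ be a $\mathbb{Z}$-basis of $M$. I will first establish the case of a single generator, namely that $S\llbracket \mathbb{Z}e\rrbracket_F\simeq S\llbracket x\rrbracket$ via $x_{ne}\mapsto n\cdot_F x$. The general statement then follows by an inductive, or tensorial, version of the same argument, with $S$ replaced by the coefficient ring $S\llbracket M\rrbracket_F$.

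\textbf{Step 1 (well-definedness).} Define a continuous $S$-algebra homomorphism
\[
\phi\colon S\llbracket x_{M\oplus N}\rrbracket\to S\llbracket M\rrbracket_F\llbracket x\rrbracket
\]
by sending $x_{m+ne}$ to the image of $x_m+_F(n\cdot_F x)$. This substitution is legitimate because every target has zero constant term, by \cref{lem:formal-inverse} and $F(0,0)=0$, so the substitution is continuous for the $\mathcal{I}$-adic topology. The generators of $J_F$ map to zero:
\begin{itemize}
\item $x_0\mapsto 0$, since $0\cdot_F x=0$;
\item the relation $x_{a+b}-(x_a+_Fx_b)$ maps to zero by the associativity and commutativity of $F$, together with the identity $(n_1+n_2)\cdot_F x=n_1\cdot_Fx+_Fn_2\cdot_Fx$. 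For negative $n$, this identity uses $F(u,-_Fu)=0$ and the uniqueness in \cref{lem:formal-inverse}.
\end{itemize}
Since $\phi$ is continuous, it also kills the closure $\mathcal{J}_F$. It therefore descends to $S\llbracket M\oplus N\rrbracket_F$ and is a map of $S\llbracket M\rrbracket_F$-algebras.

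\textbf{Step 2 (inverse).} Define $\psi\colon S\llbracket M\rrbracket_F\llbracket x\rrbracket\to S\llbracket M\oplus N\rrbracket_F$ as the identity on the image of $S\llbracket M\rrbracket_F$ and by $x\mapsto x_e$. This is well-defined because the inclusion $M\hookrightarrow M\oplus N$ induces a map $S\llbracket M\rrbracket_F\to S\llbracket M\oplus N\rrbracket_F$, and $x_e$ is topologically nilpotent.

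The composite $\phi\circ\psi$ is the identity, since $x\mapsto x_e\mapsto 1\cdot_F x=x$. For $\psi\circ\phi$, it suffices to check the topological generators $x_{m+ne}$. The relations in $\mathcal{J}_F$ give $x_{m+ne}=x_m+_F x_{ne}$ and $x_{ne}=n\cdot_F x_e$. The latter holds by induction on $n\ge0$, and for $n<0$ by combining $x_0=0$ with the uniqueness of formal inverses. Hence $\psi\phi(x_{m+ne})=x_{m+ne}$.

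\textbf{Main obstacle.} The main obstacle is the topological bookkeeping. The relations hold only modulo the \emph{closure} $\mathcal{J}_F$, and I must make sure that both maps are continuous for the respective adic topologies. For this I expect to use the CPZ fact that $S\llbracket M\rrbracket_F$ is itself complete, non-canonically isomorphic to $S\llbracket x_{m_1},\dots,x_{m_r}\rrbracket$. Once continuity is established, agreement of $\psi\circ\phi$ with the identity on a dense set of generators implies agreement everywhere.
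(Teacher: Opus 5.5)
The paper gives no argument for this lemma; it quotes it from \cite[Theorem 2.10 and Lemma 2.11]{CPZ}. Your proposal is a correct, self-contained substitute for that citation, and it works by writing down explicit mutually inverse maps. You define $\phi$ on the variables. You kill the generators $x_0$ and $x_{a+b}-(x_a+_Fx_b)$ of $J_F$ using the relations already valid in $S\llbracket M\rrbracket_F$, together with the fact that $n\mapsto n\cdot_F x$ is a homomorphism into the group $(xS\llbracket x\rrbracket,+_F)$. You then invert by $x\mapsto x_e$, checking $\psi\circ\phi$ on generators via $x_{m+ne}=x_m+_F(n\cdot_F x_e)$ in $S\llbracket M\oplus N\rrbracket_F$. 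The citation buys brevity; your route buys an explicit inverse and independence from CPZ's structure theory. Steps 1 and 2 already treat $M\oplus N$ directly, so you can drop the opening reduction to rank one ``with $S$ replaced by $S\llbracket M\rrbracket_F$''. That reduction is not formal anyway: it needs an identification of $S\llbracket M\oplus N\rrbracket_F$ with a formal group algebra over $S\llbracket M\rrbracket_F$ built from power series in the $x_{ne}$ alone, which costs the same work as your direct argument.

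Two points in the topological bookkeeping need tightening.

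First, do not invoke $S\llbracket M\rrbracket_F\simeq S\llbracket x_{m_1},\dots,x_{m_r}\rrbracket$. In this paper that identification (\cref{rmk:integral-domain}) is deduced from the very lemma you are proving, so using it is circular unless you run an induction on the rank of $M$ starting from $S\llbracket 0\rrbracket_F=S$. It is also more than you need: you only use completeness. The rings $S\llbracket M\rrbracket_F$ and $S\llbracket M\oplus N\rrbracket_F$ are complete and Hausdorff for the quotient filtration, because each is a quotient of a complete Hausdorff filtered ring by the closed ideal $\mathcal{J}_F$. Let $(B_j)$ be that filtration on $B=S\llbracket M\rrbracket_F$, with $B_j=B$ for $j\le 0$. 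Then $B\llbracket x\rrbracket$ is complete and Hausdorff for $C_k=\{\sum_i b_ix^i : b_i\in B_{k-i}\}$, and this is the topology in which $\phi$ and $\phi\circ\psi$ are continuous.

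Second, when there are infinitely many variables, ``every target has zero constant term'' is not by itself what makes the substitution in Step 1 legitimate. It works because $S\llbracket x_{M\oplus N}\rrbracket$ is, as in CPZ, the completion of the polynomial ring $S[x_{M\oplus N}]$ at its augmentation ideal $I$. Then $\phi$ is the inverse limit of the induced maps $S[x_{M\oplus N}]/I^k\to B\llbracket x\rrbracket/C_k$, which exist because every variable lands in $C_1$ and $C_iC_j\subseteq C_{i+j}$. In a ring of arbitrary formal sums of monomials this fails: the element $\sum_{n\ge1}x_{ne}$ would have to be sent to $\sum_{n\ge1}n\cdot_F x$, whose coefficient of $x$ is the divergent sum $\sum_{n\ge1}n$.
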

	
	\begin{rmk}\label{rmk:integral-domain}
		Let $(S,F)$ be a formal group law, and choose a $\mbb{Z}$-basis $\{e_i\dotsc,e_n\}$ for the lattice $M$. It follows from \cref{lem:S-algebra-iso} that there is an $S$-algebra isomorphism $S\llbracket M\rrbracket_F\simeq S\llbracket x_1,\dotsc,x_n\rrbracket$, where $x_i$ is identified with the element $x_{e_i}\in S\llbracket M\rrbracket_F$. In particular, $S\llbracket M\rrbracket_F$ is an integral domain.
	\end{rmk}

	Let $(S,F)$ be a formal group law. Observe that the weight lattice $\Lambda$ of $T$ is a finitely generated free abelian group. Let $\Lambda\oplus \mbb{Z}q$ be the weight lattice of $\widehat{T}=T\times \mbb{C}^\times$. By \cref{lem:S-algebra-iso} we have $S\llbracket \Lambda\oplus \mbb{Z}q\rrbracket_F\simeq S\llbracket \Lambda\rrbracket_F\llbracket x_q\rrbracket$. 
	
	\begin{rmk}
		The Weyl group $W$ acts on $S\llbracket\Lambda\rrbracket_F$ by $w(x_\lambda):=x_{w(\lambda)}$ for all $w\in W$, $\lambda\in\Lambda$.
	\end{rmk}
	
	\begin{dfn}
		Define the rings $S_A:=\mbb{Z}$, $S_M:=\mbb{Z}[\beta,\beta^{-1}]$, and $S_C:=\mbb{Z}[\beta]$.
	\end{dfn}
	
	\begin{lem}(\cite[Example 2.18]{CPZ})\label{lem:H-iso}
		Consider the additive formal group law $F_A(x,y)=x+y$ over $S_A$. Let $\widehat{H}_T(\mrm{pt})$ be the completion of the ring $H_T(\mrm{pt})$ at the ideal generated by the elements $\lambda$ over all $\lambda\in\Lambda$. There is an $S_A$-algebra isomorphism
		\[S_A\llbracket\Lambda\rrbracket_{F_A}\llbracket x_q\rrbracket\to \widehat{H}_T(\mrm{pt})\llbracket x_{q}\rrbracket,\quad x_\lambda\mapsto \lambda, \quad \lambda\in\Lambda,\quad x_q\mapsto x_{q}.\]
	\end{lem}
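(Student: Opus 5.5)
The statement to prove is \cref{lem:H-iso}: for the additive formal group law, $S_A\llbracket\Lambda\rrbracket_{F_A}\llbracket x_q\rrbracket\simeq\widehat{H}_T(\mrm{pt})\llbracket x_q\rrbracket$ via $x_\lambda\mapsto\lambda$. The plan is to first handle the case without $x_q$, namely $S_A\llbracket\Lambda\rrbracket_{F_A}\simeq\widehat{H}_T(\mrm{pt})$. Then adjoin the extra variable by applying the functor $(-)\llbracket x_q\rrbracket$, using \cref{lem:S-algebra-iso} to identify $S_A\llbracket\Lambda\oplus\mbb{Z}q\rrbracket_{F_A}$ with $S_A\llbracket\Lambda\rrbracket_{F_A}\llbracket x_q\rrbracket$. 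An isomorphism of rings $A\to B$ extends coefficientwise to $A\llbracket x_q\rrbracket\to B\llbracket x_q\rrbracket$, so the second step is formal.

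For the main step, I would define a continuous ring map
\[\phi\colon \mbb{Z}\llbracket x_\Lambda\rrbracket\to\widehat{H}_T(\mrm{pt}),\qquad x_\lambda\mapsto\lambda,\]
where $\lambda$ is viewed as a linear form in $y_1,\dotsc,y_n$ via $\Lambda\simeq\bigoplus_i\mbb{Z}y_i$.

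\begin{itemize}
\item \emph{Well-definedness.} Each $\lambda$ lies in the augmentation ideal $\mathfrak{m}$ of $H_T(\mrm{pt})$. Since $\widehat{H}_T(\mrm{pt})$ is $\mathfrak{m}$-adically complete, substitution of topologically nilpotent elements into power series (in possibly infinitely many variables, with the $\mcal{I}$-adic topology) converges.
\item \emph{The map factors through the quotient.} $\phi$ kills $x_0$, and it kills $x_{m_1+m_2}-(x_{m_1}+x_{m_2})$ because the identification of $\Lambda$ with linear forms is additive. By continuity, $\phi$ then kills the closure $\mcal{J}_{F_A}$, so it descends to $\bar\phi\colon S_A\llbracket\Lambda\rrbracket_{F_A}\to\widehat{H}_T(\mrm{pt})$.
\end{itemize}

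To show $\bar\phi$ is bijective, choose the basis $e_i=y_i$ of $\Lambda$ and use \cref{rmk:integral-domain}. This identifies $S_A\llbracket\Lambda\rrbracket_{F_A}$ with $\mbb{Z}\llbracket x_1,\dotsc,x_n\rrbracket$, where $x_i=x_{e_i}$ and $x_\lambda\mapsto\sum_i a_i x_i$ for $\lambda=\sum_i a_i e_i$; for $F_A$ one has $n\cdot_{F_A}x=nx$. Under this identification $\bar\phi$ sends $x_i\mapsto y_i$. On the other side, $\widehat{H}_T(\mrm{pt})$ is the completion of $\mbb{Z}[y_1,\dotsc,y_n]$ at $(y_1,\dotsc,y_n)$, which is $\mbb{Z}\llbracket y_1,\dotsc,y_n\rrbracket$. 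So $\bar\phi$ is the identity under renaming, with inverse $y_i\mapsto x_{e_i}$; both maps are continuous and are inverse on topological generators. Tensoring with $\llbracket x_q\rrbracket$ gives the stated isomorphism, and it sends $x_q\mapsto x_q$.

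The only step needing care is the claim that $\mcal{J}_{F_A}$ is exactly the kernel of $\phi$, and not just contained in it. I would not prove this by hand. Instead, rely on \cref{lem:S-algebra-iso} and \cref{rmk:integral-domain} (i.e. \cite[Theorem 2.10]{CPZ}) for the presentation of $S_A\llbracket\Lambda\rrbracket_{F_A}$ as a power series ring on a basis, which reduces injectivity to comparing two power series rings in $n$ variables.
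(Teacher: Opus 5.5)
Your argument is correct. The paper gives no proof of its own and simply cites \cite[Example 2.18]{CPZ}, and your route is the standard justification of that example: define $x_\lambda\mapsto\lambda$ on $\mathbb{Z}\llbracket x_\Lambda\rrbracket$, note by continuity that it kills the closed ideal $\mathcal{J}_{F_A}$, and get bijectivity from the presentation $S_A\llbracket\Lambda\rrbracket_{F_A}\simeq\mathbb{Z}\llbracket x_1,\dotsc,x_n\rrbracket$ of \cref{lem:S-algebra-iso} and \cref{rmk:integral-domain}, where $n\cdot_{F_A}x=nx$.

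One point should be stated precisely. Your substitution step relies on $S\llbracket x_M\rrbracket$ being the $\mathcal{I}$-adic completion of the polynomial ring $S[x_M]$, as in \cite{CPZ}. It would fail for the ring of all power series in infinitely many variables, where an element such as $\sum_{k\geq 1}x_{k y_1}$ would have no image.
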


	\begin{lem}(\cite[Example 2.19]{CPZ})\label{lem:K-iso}
		Consider the multiplicative-periodic formal group law $F_M(x,y)=x+y-\beta xy$ over $S_M$. Let $\widehat{K}_T(\mrm{pt})$ be the completion of the ring $K_T(\mrm{pt})$ at the ideal generated by the elements $1-e^\lambda$ over all $\lambda\in\Lambda$. There is an $S_M$-algebra isomorphism:
		\[S_M\llbracket\Lambda\rrbracket_{F_M}\llbracket x_q\rrbracket\to \widehat{K}_T(\mrm{pt})\llbracket x_q\rrbracket[\beta,\beta^{-1}],\quad x_\lambda\mapsto \beta^{-1}(1-e^\lambda),\quad \lambda\in\Lambda,\quad x_{q}\mapsto x_q.\]
		The inverse of this map sends $e^{\lambda}\mapsto 1-\beta x_\lambda$ for all $\lambda\in\Lambda$, and $x_q\mapsto x_q$.
	\end{lem}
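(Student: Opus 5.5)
The plan is to construct the map explicitly on generators, check that it respects the defining ideal $\mathcal{J}_{F_M}$, and then produce an inverse. Since $x_q$ plays no role in the formal group relations, it is harmless: by \cref{lem:S-algebra-iso} it suffices to prove the isomorphism $S_M\llbracket\Lambda\rrbracket_{F_M}\simeq \widehat{K}_T(\mrm{pt})[\beta,\beta^{-1}]$ (completed appropriately), and then adjoin $x_q$ on both sides as a formal variable.

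First I will define $\phi\colon S_M\llbracket x_\Lambda\rrbracket\to \widehat{K}_T(\mrm{pt})[\beta,\beta^{-1}]$ by $x_\lambda\mapsto \beta^{-1}(1-e^\lambda)$. The target is complete with respect to the ideal generated by the $1-e^\lambda$. Each image lies in that ideal, so the substitution extends continuously to power series. Next I will check that $\phi$ kills the generators of $J_{F_M}$:
\begin{itemize}
\item $\phi(x_0)=\beta^{-1}(1-1)=0$;
\item for the additive relation, writing $a=1-e^{\lambda}$ and $b=1-e^{\mu}$, we have
\[\beta^{-1}a+\beta^{-1}b-\beta\cdot\beta^{-2}ab=\beta^{-1}\bigl(1-(1-a)(1-b)\bigr)=\beta^{-1}(1-e^{\lambda+\mu}).\]
\end{itemize}
By continuity $\phi$ also kills the closure $\mathcal{J}_{F_M}$, so it descends to the quotient $S_M\llbracket\Lambda\rrbracket_{F_M}$.

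For the inverse, I will send $e^{\lambda}\mapsto 1-\beta x_\lambda$. This is multiplicative in $S_M\llbracket\Lambda\rrbracket_{F_M}$: the relation $x_{\lambda+\mu}=x_\lambda+x_\mu-\beta x_\lambda x_\mu$ gives exactly $1-\beta x_{\lambda+\mu}=(1-\beta x_\lambda)(1-\beta x_\mu)$. Moreover $e^{-\lambda}\mapsto 1-\beta x_{-\lambda}$ is the inverse of the image of $e^{\lambda}$, so the map is well defined on the Laurent ring $K_T(\mrm{pt})$. It sends $1-e^\lambda$ to $\beta x_\lambda$, which lies in the augmentation ideal $\mathcal{I}$, and $S_M\llbracket\Lambda\rrbracket_{F_M}$ is $\mathcal{I}$-adically complete. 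Hence the map extends to the completion and is $\beta^{\pm1}$-linear.

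The two composites are the identity on the topological generators $x_\lambda$ and $e^{\lambda}$ respectively, and both maps are continuous, so they are mutually inverse.

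The main point needing care is the topology. I need to check that the $(1-e^\lambda)$-adic topology on $\widehat{K}_T(\mrm{pt})$ matches the $\mathcal{I}$-adic one; this holds because the two maps carry each ideal into the other. I also need to say what $\widehat{K}_T(\mrm{pt})\llbracket x_q\rrbracket[\beta,\beta^{-1}]$ means, namely the completion of $K_T(\mrm{pt})[\beta^{\pm1}]$ at the images of the $1-e^{\lambda}$. Since $\beta$ is a unit, the ideals $(\beta^{-1}(1-e^\lambda))$ and $(1-e^\lambda)$ agree, so there is no ambiguity. For the precise completeness statements I will appeal to \cite[Example 2.19]{CPZ}.
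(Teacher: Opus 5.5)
Your proposal is correct and is the standard argument behind the cited \cite[Example 2.19]{CPZ}, which is all the paper gives by way of proof. Reading the target as the completion of $K_T(\mrm{pt})[\beta,\beta^{-1}]$ at the ideal $(1-e^\lambda)_{\lambda\in\Lambda}$, then adjoining $\llbracket x_q\rrbracket$, is genuinely necessary rather than a matter of which ideal is used: $\widehat{K}_T(\mrm{pt})\llbracket x_q\rrbracket[\beta,\beta^{-1}]$ read literally (with $\beta^{\pm1}$ adjoined after completing) is too small and is not $(1-e^\lambda)$-adically complete, since for $\lambda\neq0$ the element $\sum_{k\geq0}x_\lambda^k$ maps to $\sum_{k\geq0}\beta^{-k}(1-e^\lambda)^k$, which has unbounded $\beta$-degree.
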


	\begin{lem}\label{lem:ring-isos}
		Consider the following three formal group laws:
		\begin{itemize}
			\item The connective formal group law $F_C(x,y)=x+y-\beta xy$ over $S_C$.
			\item The multiplicative-periodic formal group law $F_M(x,y)=x+y-\beta xy$ over $S_M$.
			\item The additive formal group law $F_A(x,y)=x+y$ over $S_A$.
		\end{itemize}
		Let $(S_C\llbracket\Lambda\rrbracket_{F_C}\llbracket x_q\rrbracket)_\beta $ be the localization of $S_C\llbracket\Lambda\rrbracket_{F_C}\llbracket x_q\rrbracket$ at the element $\beta$, and the let $(\beta)$ be the ideal in $S_C\llbracket\Lambda\rrbracket_{F_C}\llbracket x_q\rrbracket$ generated by $\beta$.
		There are ring isomorphisms
		\begin{equation}\label{eqn:iso1}
			(S_C\llbracket\Lambda\rrbracket_{F_C}\llbracket x_q\rrbracket)_\beta\xrightarrow{\simeq}  S_M\llbracket\Lambda\rrbracket_{F_M}\llbracket x_q\rrbracket\xrightarrow{\simeq} \widehat{K}_T(\mrm{pt})\llbracket x_q\rrbracket[\beta,\beta^{-1}],\quad x_\lambda\mapsto x_\lambda\mapsto \beta^{-1}(1-e^\lambda);
		\end{equation}
		\begin{equation}\label{eqn:iso2}
			(S_C\llbracket\Lambda\rrbracket_{F_C}\llbracket x_q\rrbracket)/(\beta)\xrightarrow{\simeq}  S_A\llbracket\Lambda\rrbracket_{F_A}\llbracket x_q\rrbracket\xrightarrow{\simeq} \widehat{H}_T(\mrm{pt})\llbracket x_q\rrbracket,\quad x_\lambda\mapsto x_\lambda\mapsto \lambda.
		\end{equation}
	\end{lem}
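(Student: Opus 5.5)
Both isomorphisms are assembled from the two previously established ones, \cref{lem:K-iso} and \cref{lem:H-iso}. The new ingredients are the first arrow in each of (\ref{eqn:iso1}) and (\ref{eqn:iso2}), which compare the connective formal group algebra with the multiplicative-periodic one after inverting $\beta$, and with the additive one after reducing modulo $\beta$.

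I would first fix a $\mbb{Z}$-basis $e_1,\dotsc,e_n$ of $\Lambda$. By \cref{rmk:integral-domain} (iterating \cref{lem:S-algebra-iso}), for any formal group law $(S,F)$ we have $S\llbracket\Lambda\rrbracket_F\llbracket x_q\rrbracket\simeq S\llbracket x_1,\dotsc,x_n,x_q\rrbracket$ with $x_i=x_{e_i}$. Under this identification every other $x_\lambda$, with $\lambda=\sum m_ie_i$, is the power series $(m_1\cdot_F x_1)+_F\dotsb+_F(m_n\cdot_F x_n)$.

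\textbf{Isomorphism (\ref{eqn:iso2}).} Reduction modulo $\beta$ sends $\mbb{Z}[\beta]\llbracket x_1,\dotsc,x_n,x_q\rrbracket$ onto $\mbb{Z}\llbracket x_1,\dotsc,x_n,x_q\rrbracket$, with kernel exactly the ideal $(\beta)$. This holds because a power series whose coefficients all vanish at $\beta=0$ is $\beta$ times a power series, coefficientwise. The reduction of $F_C$ modulo $\beta$ is $F_A$, so the reduction of the series representing $x_\lambda$ is the corresponding $F_A$-series. Hence $x_\lambda\mapsto x_\lambda$ and the map is well defined and compatible with the generators. Composing with \cref{lem:H-iso} gives $x_\lambda\mapsto\lambda$.

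\textbf{Isomorphism (\ref{eqn:iso1}), and the main obstacle.} The natural map $\mbb{Z}[\beta]\llbracket x_1,\dotsc,x_n,x_q\rrbracket\to\mbb{Z}[\beta^{\pm1}]\llbracket x_1,\dotsc,x_n,x_q\rrbracket$ sends $x_\lambda\mapsto x_\lambda$, because $F_C$ and $F_M$ are given by the same formula. This map is injective and extends to the localization at $\beta$. The main obstacle is that it is \emph{not} obviously surjective onto the power series ring: a series such as $\sum_k\beta^{-k}x_1^k$ has unbounded negative powers of $\beta$, so it does not come from a single fraction $f/\beta^m$.

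I would handle this by rescaling, since the ring isomorphism need not be the identity on the $x$-variables. The substitution $x_\lambda\mapsto\beta^{-1}x'_\lambda$ converts $x+y-\beta xy$ into $\beta^{-1}(x'+y'-x'y')$, which suggests matching the target variables with $\beta^{-1}(1-e^\lambda)$ exactly as in \cref{lem:K-iso}. Concretely, I would argue that both sides become $\widehat{K}_T(\mrm{pt})\llbracket x_q\rrbracket[\beta,\beta^{-1}]$, where the completion is taken with respect to the $(1-e^\lambda)$-adic topology, which is independent of $\beta$. On the connective side, one inverts $\beta$ in $\mbb{Z}[\beta]\llbracket 1-e^{\lambda}\rrbracket$ via $x_\lambda=\beta^{-1}(1-e^\lambda)$ and appeals to \cref{lem:K-iso}, whose inverse $e^\lambda\mapsto1-\beta x_\lambda$ makes sense already over $S_C$.

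If full surjectivity onto the power series ring fails, I would interpret the localization as the completed localization, or restrict to the subring generated by the $x_\lambda$ and the $e^{\lambda}$. This is the only point that needs genuine care. The remaining step is to check that the composite sends $x_\lambda\mapsto\beta^{-1}(1-e^\lambda)$, which is immediate from \cref{lem:K-iso}.
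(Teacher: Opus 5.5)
Your argument for (\ref{eqn:iso2}) is correct and is the paper's own argument: base change along $S_C/(\beta)\simeq S_A$, then \cref{lem:H-iso}. You also make explicit a detail the paper leaves implicit, namely that divisibility by $\beta$ in $\mbb{Z}[\beta]\llbracket x_1,\dotsc,x_n,x_q\rrbracket$ is checked coefficientwise and that $F_C\equiv F_A$ modulo $\beta$. For (\ref{eqn:iso1}) the paper argues just as formally, from $(S_C)_\beta\simeq S_M$ and \cref{lem:K-iso}. The obstacle you found is exactly what that argument overlooks: localization at $\beta$ does not commute with forming power series. The statement prescribes the first arrow by $x_\lambda\mapsto x_\lambda$, so it is the natural map $(\mbb{Z}[\beta]\llbracket x\rrbracket)_\beta\to\mbb{Z}[\beta^{\pm1}]\llbracket x\rrbracket$. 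This map is injective but misses $\sum_k\beta^{-k}x_{e_1}^k$, so it is not an isomorphism.

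The gap in your proposal is the repair. Rescaling is not available, because the statement fixes the map. The claim that both sides become $\widehat{K}_T(\mrm{pt})\llbracket x_q\rrbracket[\beta,\beta^{-1}]$ is also false. Set $t_i=1-e^{e_i}$ and suppress $x_q$. Then the $(1-e^\lambda)$-adic completion of $K_T(\mrm{pt})[\beta^{\pm1}]$ is $\mbb{Z}[\beta^{\pm1}]\llbracket t\rrbracket$, not $\mbb{Z}\llbracket t\rrbracket[\beta^{\pm1}]$, so the completion does depend on whether $\beta$ is inverted first. Under $x_{e_i}=\beta^{-1}t_i$, the localization is exactly the set of series whose $t^a$-coefficient has lowest $\beta$-power at least $-m-|a|$, for some fixed $m$. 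This ring lies strictly between the two candidates:
\begin{itemize}
\item $\sum_k x_{e_1}^k=\sum_k\beta^{-k}t_1^k$ belongs to it but not to $\mbb{Z}\llbracket t\rrbracket[\beta^{\pm1}]$;
\item $\sum_k\beta^{-k^2}t_1^k$ belongs to $\mbb{Z}[\beta^{\pm1}]\llbracket t\rrbracket$ but not to it.
\end{itemize}
Hence no argument can prove (\ref{eqn:iso1}) as stated. Your fallback of replacing the localization by its $x$-adic completion does give an isomorphism onto $S_M\llbracket\Lambda\rrbracket_{F_M}\llbracket x_q\rrbracket$, but that is a different statement. What is true for the localization itself is that the composite is an injective ring homomorphism with $x_\lambda\mapsto x_\lambda\mapsto\beta^{-1}(1-e^\lambda)$, reading the target of \cref{lem:K-iso} as $\mbb{Z}[\beta^{\pm1}]\llbracket t,x_q\rrbracket$. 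This injectivity is what the later arguments (\cref{lem:injects-into-localization}, \cref{thm:main2}) actually use. You should state that conclusion explicitly rather than end on a choice of reinterpretation.
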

	\begin{proof}
		Observe the ring isomorphisms $(S_C)_\beta\simeq S_M$ and $S_C/(\beta)\simeq S_A$. This result follows from \cref{lem:H-iso} and \cref{lem:K-iso}.
	\end{proof}

	\begin{lem}\label{lem:injects-into-localization}
		The ring $S_C\llbracket\Lambda\rrbracket_{F_C}\llbracket x_q\rrbracket$ injects into its localization $(S_C\llbracket\Lambda\rrbracket_{F_C}\llbracket x_q\rrbracket)_\beta\simeq \widehat{K}_T(\mrm{pt})\llbracket x_q\rrbracket[\beta,\beta^{-1}]$ at $\beta$.
	\end{lem}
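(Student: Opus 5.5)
The plan is to reduce the statement to the fact that $\beta$ is a nonzerodivisor in $A:=S_C\llbracket\Lambda\rrbracket_{F_C}\llbracket x_q\rrbracket$. The localization map $A\to A_\beta$ has kernel consisting of the elements annihilated by some power of $\beta$. So it suffices to show that $\beta\cdot a=0$ forces $a=0$.

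First, by \cref{rmk:integral-domain}, choose a $\mbb{Z}$-basis $e_1,\dotsc,e_n$ of $\Lambda$. This gives an $S_C$-algebra isomorphism $S_C\llbracket\Lambda\rrbracket_{F_C}\simeq S_C\llbracket x_1,\dotsc,x_n\rrbracket$. Combined with the isomorphism $S\llbracket \Lambda\oplus \mbb{Z}q\rrbracket_F\simeq S\llbracket \Lambda\rrbracket_F\llbracket x_q\rrbracket$ recorded before the definition of $S_A,S_M,S_C$ (an instance of \cref{lem:S-algebra-iso}), we get
\[A\simeq \mbb{Z}[\beta]\llbracket x_1,\dotsc,x_n,x_q\rrbracket .\]
Since $\mbb{Z}[\beta]$ is an integral domain, so is a formal power series ring over it in finitely many variables. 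This follows by the standard lowest-degree-term argument, applied one variable at a time. In particular $\beta\neq 0$ is a nonzerodivisor in $A$, so $A\to A_\beta$ is injective. Alternatively, one can argue directly: $\beta\cdot\sum c_I x^I=\sum(\beta c_I)x^I$, and $\beta c_I=0$ in $\mbb{Z}[\beta]$ forces $c_I=0$.

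Second, the identification $A_\beta\simeq \widehat{K}_T(\mrm{pt})\llbracket x_q\rrbracket[\beta,\beta^{-1}]$ is exactly (\ref{eqn:iso1}) of \cref{lem:ring-isos}, so nothing further is needed there.

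The only point requiring care is that the isomorphism with a power series ring holds over $S_C=\mbb{Z}[\beta]$ itself and not merely after localizing. But \cref{lem:S-algebra-iso} and \cref{rmk:integral-domain} are stated for an arbitrary formal group law $(S,F)$, so they apply directly to $(S_C,F_C)$. I expect no genuine obstacle.
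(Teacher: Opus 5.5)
Your argument is correct and is the paper's: injectivity holds because $\beta$ is a nonzero element of the integral domain $S_C\llbracket\Lambda\rrbracket_{F_C}\llbracket x_q\rrbracket\simeq\mbb{Z}[\beta]\llbracket x_1,\dotsc,x_n,x_q\rrbracket$, and you are slightly more careful than the paper, which cites \cref{rmk:integral-domain} only for $S_C\llbracket\Lambda\rrbracket_{F_C}$ and leaves the extra variable $x_q$ implicit. Like the paper, you take the identification with $\widehat{K}_T(\mrm{pt})\llbracket x_q\rrbracket[\beta,\beta^{-1}]$ from \cref{lem:ring-isos}; strictly speaking this is only an embedding into power series with coefficients in $\mbb{Z}[\beta,\beta^{-1}]$ (e.g.\ $\sum_k x_\lambda^k$ would have to map to $\sum_k\beta^{-k}(1-e^\lambda)^k$, which has unboundedly negative powers of $\beta$), but this does not affect the injectivity, which is all that is used later.
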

	\begin{proof}
		This follows since $\beta$ is a nonzero element in the integral domain $S_C\llbracket\Lambda\rrbracket_{F_C}$ (see Remark \ref{rmk:integral-domain}).
	\end{proof}

	The diagram below illustrates the main takeaway from \cref{lem:ring-isos}:
	
	\begin{equation}\label{eqn:non-eq-ck}\begin{tikzcd}
			& x_\lambda \in S_C\llbracket\Lambda\rrbracket_{F_C}\llbracket x_q\rrbracket  \arrow [dl, swap] \arrow[dr ] &\\
			\lambda\in\widehat{H}_{T}(\mrm{pt})\llbracket x_q\rrbracket	
			& & \beta^{-1}(1-e^{\lambda}) \in \widehat{K}_{T}(\mrm{pt})\llbracket x_q\rrbracket[\beta,\beta^{-1}]
		\end{tikzcd}
	\end{equation}

	The following technical lemmas will be used in the proof of \cref{prop:GKM}.
	\begin{lem}\label{lem:technical}
		For all $\lambda\in\Lambda$, the following relations hold in $S_C\llbracket \Lambda\rrbracket_{F_C}\llbracket x_q\rrbracket$:
		\begin{enumerate}
			\item $x_\lambda=x_{-\lambda}(\beta x_{\lambda}-1)$.
			\item $(\beta x_{\lambda}-1)(\beta x_{-\lambda}-1)=1$.
		\end{enumerate}
	\end{lem}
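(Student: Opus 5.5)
The plan is to compute directly with the defining relations of the formal group algebra $S_C\llbracket\Lambda\rrbracket_{F_C}\llbracket x_q\rrbracket$. Its closed ideal $\mathcal{J}_{F_C}$ contains $x_0$ and $x_{m_1+m_2}-(x_{m_1}+_{F_C}x_{m_2})$ for all lattice elements $m_1,m_2$. Applying the second relation with $m_1=\lambda$ and $m_2=-\lambda$, and using $x_0=0$, gives
\[
0=x_0=x_\lambda+x_{-\lambda}-\beta x_\lambda x_{-\lambda}.
\]
This single identity is the source of both claims. It holds in $S_C\llbracket\Lambda\rrbracket_{F_C}$ and hence in $S_C\llbracket\Lambda\rrbracket_{F_C}\llbracket x_q\rrbracket$, via the isomorphism of \cref{lem:S-algebra-iso}, which is the identity on $x_m$ for $m\in\Lambda$.

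For (1), rearrange the identity as $x_\lambda=\beta x_\lambda x_{-\lambda}-x_{-\lambda}=x_{-\lambda}(\beta x_\lambda-1)$. This is exactly the claim, and it uses no inverses, so it is valid over $S_C=\mathbb{Z}[\beta]$ even though $\beta$ is not a unit.

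For (2), expand the left side:
\[
(\beta x_\lambda-1)(\beta x_{-\lambda}-1)=\beta^2x_\lambda x_{-\lambda}-\beta x_\lambda-\beta x_{-\lambda}+1=1-\beta\bigl(x_\lambda+x_{-\lambda}-\beta x_\lambda x_{-\lambda}\bigr).
\]
By the displayed identity, the bracket vanishes, so the product equals $1$.

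I expect no real obstacle. The only point needing care is that the relation coming from $-\lambda$ genuinely lies in the ideal, rather than only in a completion in which $x_{-\lambda}$ is identified with the formal inverse $-_{F_C}x_\lambda=\tfrac{x_\lambda}{\beta x_\lambda-1}$. I avoid that issue by using only the defining generators of $J_{F_C}$, so that the formal inverse formula of \cref{lem:formal-inverse} is not needed. The formal inverse formula is still consistent with (1), since $\beta x_\lambda-1$ is a unit in the power series ring, having constant term $-1$. As a further check, one can transport (2) to $K$-theory along \eqref{eqn:iso1}: there $\beta x_\lambda-1\mapsto -e^{\lambda}$, and the product becomes $e^{\lambda}e^{-\lambda}=1$. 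This is consistent, but it is not needed for the proof.
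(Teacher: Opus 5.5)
Your proposal is correct and follows the paper's own argument: derive $0=x_0=x_\lambda+x_{-\lambda}-\beta x_\lambda x_{-\lambda}$ from the defining relations, rearrange it to get (1), and expand the product in (2) to $1-\beta\bigl(x_\lambda+x_{-\lambda}-\beta x_\lambda x_{-\lambda}\bigr)=1$. Your remarks about the formal inverse and the $K$-theory check are consistent with this but not needed.
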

	\begin{proof}
		(1) The relations defining the ring $S_C\llbracket \Lambda\rrbracket_{F_C}$ imply that $0=x_0=x_{-\lambda+\lambda}=x_{-\lambda}+x_\lambda-\beta x_\lambda x_{-\lambda}$. 
		
		(2) We compute $(\beta x_{\lambda}-1)(\beta x_{-\lambda}-1)=1-\beta(x_{-\lambda}+x_\lambda-\beta x_\lambda x_{-\lambda})=1$.
	\end{proof}
	
	\begin{lem}\label{lem:does-not-divide}
		Let $(S,F)$ be a formal group law, and choose $i_1,i_2,j_1,j_2=1,\dotsc,n$ with $i_1\neq i_2$ and $j_1\neq j_2$. Then $x_{y_{j_1}-y_{j_2}}$ divides $x_{y_{i_1}-y_{i_2}}$ in $S\llbracket \Lambda\rrbracket_F\llbracket x_q\rrbracket$ if and only if $\{j_1,j_2\}=\{i_1,i_2\}$.
	\end{lem}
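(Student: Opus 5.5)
The plan is to reduce everything to a statement about ordinary power series in the variables $x_{y_1},\dotsc,x_{y_n}$. By \cref{rmk:integral-domain} (via \cref{lem:S-algebra-iso}), choosing the basis $y_1,\dotsc,y_n$ of $\Lambda$ gives an $S$-algebra isomorphism $S\llbracket\Lambda\rrbracket_F\llbracket x_q\rrbracket\simeq S\llbracket x_1,\dotsc,x_n,x_q\rrbracket$ with $x_i=x_{y_i}$. Under it, $x_{y_a-y_b}=F(x_a,-_Fx_b)$. I will first show that this element equals $(x_a-x_b)\cdot u_{a,b}$ with $u_{a,b}$ a unit.

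For the unit claim, set $G(u,v):=F(u,-_Fv)\in S\llbracket u,v\rrbracket$. By \cref{lem:formal-inverse}, $-_Fv=-v+O(v^2)$, so $G$ has linear part $u-v$. Moreover $G(v,v)=F(v,-_Fv)=0$. Substituting $u=v+t$, the series $G(v+t,v)$ vanishes at $t=0$, so it is divisible by $t=u-v$. The quotient has constant term $1$, because the linear part of $G$ is $u-v$. Hence the quotient is a unit in $S\llbracket u,v\rrbracket$, and after substituting $u=x_a$, $v=x_b$ we get $x_{y_a-y_b}=(x_a-x_b)u_{a,b}$ with $u_{a,b}$ a unit. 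Divisibility of $x_{y_{j_1}-y_{j_2}}$ into $x_{y_{i_1}-y_{i_2}}$ is therefore equivalent to divisibility of $x_{j_1}-x_{j_2}$ into $x_{i_1}-x_{i_2}$ in $S\llbracket x_1,\dotsc,x_n,x_q\rrbracket$.

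The "if" direction is then immediate. If $\{j_1,j_2\}=\{i_1,i_2\}$, the two differences agree up to the sign $\pm1$, which is a unit.

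For "only if", suppose $x_{i_1}-x_{i_2}=(x_{j_1}-x_{j_2})h$ and assume $\{j_1,j_2\}\neq\{i_1,i_2\}$. Since both are $2$-element sets, some index, say $j_1$ after relabeling, lies outside $\{i_1,i_2\}$. Apply the continuous $S$-algebra endomorphism of $S\llbracket x_1,\dotsc,x_n,x_q\rrbracket$ that sends $x_{j_1}\mapsto x_{j_2}$ and fixes all other variables. This map kills the right-hand side. It leaves $x_{i_1}-x_{i_2}$ unchanged, because $j_1\notin\{i_1,i_2\}$ and $i_1\neq i_2$. Since $x_{i_1}-x_{i_2}$ is nonzero (assuming $S\neq 0$), this is a contradiction.

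The only mildly delicate step is the unit factorization of $F(u,-_Fv)$; the rest is formal. Note that the argument never needs $S$ to be a domain.
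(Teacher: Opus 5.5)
Your proof is correct, but it takes a different route from the paper's. The paper never factors out a unit. It writes $h\cdot F(x_{j_1},-_Fx_{j_2})=F(x_{i_1},-_Fx_{i_2})$ and picks an index of $\{i_1,i_2\}$ \emph{not} in $\{j_1,j_2\}$, say $i_1$. The monomial $x_{i_1}$ occurs with coefficient $1$ on the right, coming from the linear part $x_{i_1}-x_{i_2}$. It cannot occur on the left, because every monomial of $F(x_{j_1},-_Fx_{j_2})$, and hence of any multiple of it, is divisible by $x_{j_1}$ or $x_{j_2}$. The case of $-x_{i_2}$ is symmetric.

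You instead pick an index of $\{j_1,j_2\}$ not in $\{i_1,i_2\}$ and apply the specialization $x_{j_1}\mapsto x_{j_2}$, which kills the divisor while fixing the dividend. This turns the paper's somewhat informal ``one of the terms must equal $x_{i_1}$'' into a clean ring-homomorphism argument. Along the way you also prove the stronger structural fact that $x_{y_a-y_b}$ is an associate of $x_a-x_b$, which reduces the whole question to the additive formal group law. The paper's route is shorter.

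Your unit factorization is not actually needed, though. For ``only if'', the same specialization sends $F(x_{j_1},-_Fx_{j_2})$ directly to $F(x_{j_2},-_Fx_{j_2})=0$. It fixes $F(x_{i_1},-_Fx_{i_2})$, which is nonzero (for $S\neq 0$) because its linear part is $x_{i_1}-x_{i_2}$. For ``if'', it suffices that $x_{-\lambda}=-_Fx_\lambda$ is $x_\lambda$ times a unit, by \cref{lem:formal-inverse}.
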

	\begin{proof}
		If $\{j_{1},j_{j_2}\}=\{i_1,i_2\}$, then $x_{y_{j_1}-y_{j_2}}$ divides $x_{y_{i_1}-y_{i_2}}$ by \cref{lem:formal-inverse}. Now assume that $\{j_{1},j_2\}\neq \{i_1,i_2\}$.
		The isomorphism $S\llbracket \Lambda\rrbracket_F\llbracket x_q\rrbracket\simeq S\llbracket x_1,\dotsc,x_n\rrbracket\llbracket x_q\rrbracket$ of \cref{rmk:integral-domain} identifies $x_{y_{r}-y_{s}}$ with the element
		\[x_r+_F(-_F x_s)=x_r+(-_F x_s)+\sum_{k_1,k_2\geq 1} a_{k_1,k_2} x_r^{k_1}(-_F x_s)^{k_2}.\]
		Thus, the elements $x_{y_{j_1}-y_{j_2}}$ and $x_{y_{i_1}-y_{i_2}}$ are identified with 
		\[x_{j_1}+(-_F x_{j_2})+\sum_{k_1,k_2\geq 1} a_{k_1,k_2} x_{j_1}^{k_1}(-_F x_{j_2})^{k_2}\quad \text{and}\quad x_{i_1}+(-_F x_{i_2})+\sum_{k_1,k_2\geq 1} a_{k_1,k_2} x_{i_1}^{k_1}(-_F x_{i_2})^{k_2},\]
		respectively in $S\llbracket x_1,\dotsc,x_n\rrbracket\llbracket x_q\rrbracket$. Assume towards a contradiction that $h\in S\llbracket x_1,\dotsc,x_n\rrbracket\llbracket x_q\rrbracket$ satisfies 
		\[h\cdot\left(x_{j_1}+(-_F x_{j_2})+\sum_{k_1,k_2\geq 1} a_{k_1,k_2} x_{j_1}^{k_1}(-_F x_{j_2})^{k_2}\right)=x_{i_1}+(-_F x_{i_2})+\sum_{k_1,k_2\geq 1} a_{k_1,k_2} x_{i_1}^{k_1}(-_F x_{i_2})^{k_2}.\]
		If $i_1\neq j_1$ and $i_1\neq j_2$, then one of the terms on the left hand side must equal $x_{i_1}$, which is impossible. Similarly, if $i_2\neq j_1$ and $i_2\neq j_2$, then \cref{lem:formal-inverse} implies that one of the terms on the left hand side must equal $-x_{i_2}$. This is impossible as well.
	\end{proof}

	\subsection{Formal divided difference operators}\label{subsection:formal-divided-difference}
	
	In this subsection, we will define Demazure-Lusztig operators for every formal group law $(S,F)$. We show that our Demazure-Lusztig operators satisfy the braid relations if and only if $F(x,y)=x+y-\beta xy$ for some $\beta\in S$.
	
	\begin{dfn}(\cite[Def. 3.11]{CPZ})
		Define the following $S$-linear operators on the fraction field of $S\llbracket\Lambda\rrbracket_F$:
		\[\delta_i:=\frac{1}{x_{-\alpha_i}}+\frac{1}{x_{\alpha_i}}r_i=(1+r_i)\tfrac{1}{x_{-\alpha_i}},\quad \alpha_i\in\Delta.\]
	\end{dfn}
	\begin{lem}(\cite[Def. 3.11]{CPZ})\label{lem:restricts}
		The operator $\delta_i$ restricts to an $S$-linear operator on $S\llbracket\Lambda\rrbracket_F$.
	\end{lem}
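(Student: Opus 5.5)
We need to show that for every $f\in S\llbracket\Lambda\rrbracket_F$ the element $\delta_i(f)=\frac{f}{x_{-\alpha_i}}+\frac{r_i(f)}{x_{\alpha_i}}$, a priori in the fraction field, actually lies in $S\llbracket\Lambda\rrbracket_F$. By \cref{lem:formal-inverse}, $x_{-\alpha_i}=-_F x_{\alpha_i}=x_{\alpha_i}\cdot u$ with $u=-1-\sum_{k\ge 2}c_kx_{\alpha_i}^{k-1}$, which is a unit in the power series ring. So it suffices to show that $u^{-1}f+r_i(f)$ is divisible by $x_{\alpha_i}$. Since $u^{-1}\equiv -1 \pmod{x_{\alpha_i}}$, this reduces to showing that $f-r_i(f)$ is divisible by $x_{\alpha_i}$; the remaining term $(u^{-1}+1)f$ is already a multiple of $x_{\alpha_i}$.

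\textbf{Choice of coordinates.} Using \cref{lem:S-algebra-iso} and \cref{rmk:integral-domain}, I will choose a $\mbb{Z}$-basis of $\Lambda$ adapted to $\alpha_i$. Concretely, take $\alpha_i=y_i-y_{i+1}$, $y_{i+1}$, and the remaining $y_j$. Then $S\llbracket\Lambda\rrbracket_F\simeq S\llbracket x_{\alpha_i},x_{y_{i+1}},\dots\rrbracket$.

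\textbf{Ideal argument.} Let $\mathcal{I}$ be the closed ideal generated by $x_{\alpha_i}$. The key claim is that $r_i$ acts as the identity modulo $\mathcal{I}$. It is enough to check this on the topological generators $x_\lambda$, because $r_i$ is a continuous ring automorphism and $\mathcal{I}$ is closed. For any $\lambda$ we have $r_i(\lambda)=\lambda-\langle\lambda,\alpha_i^\vee\rangle\alpha_i$. Then
\[x_{r_i\lambda}=x_\lambda+_F(m\cdot_F x_{-\alpha_i}) \quad\text{with } m=\langle\lambda,\alpha_i^\vee\rangle .\]
Since $m\cdot_F x_{\pm\alpha_i}$ is a power series in $x_{\alpha_i}$ with no constant term, and $F(a,b)-a\in (b)$, we get $x_{r_i\lambda}\equiv x_\lambda \pmod{\mathcal{I}}$. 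Multiplicativity and continuity then give $r_i(f)\equiv f\pmod{\mathcal I}$ for all $f$.

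\textbf{Conclusion and expected obstacle.} The main point needing care is passing from "congruent mod $\mathcal I$" to genuine divisibility in a complete ring with closure of ideals. In the adapted coordinates this is clean: $\mathcal I$ is exactly the set of power series with $x_{\alpha_i}$ as a factor, because $x_{\alpha_i}$ is a coordinate variable. The required quotient therefore exists and is unique, since the ring is an integral domain. Finally, $S$-linearity of $\delta_i$ is clear, because $r_i$ fixes $S$.
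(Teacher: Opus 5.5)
Your proposal is correct and is essentially the argument behind the result the paper cites from \cite{CPZ} (the paper gives no proof of its own). Since $(u^{-1}+1)/x_{\alpha_i}=\frac{1}{x_{\alpha_i}}+\frac{1}{x_{-\alpha_i}}=\kappa_i$, you are writing $\delta_i=\kappa_i-\Delta_i$ with $\Delta_i(f)=\frac{f-r_i(f)}{x_{\alpha_i}}$. Your adapted-basis proof that $x_{\alpha_i}\mid f-r_i(f)$ is the divisibility statement \cite[Corollary 3.4]{CPZ}, which the paper itself invokes later in the proof of \cref{prop:GKM}.

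Two small refinements. First, the adapted basis exists because $\alpha_i=y_i-y_{i+1}$ is primitive in $\Lambda=\mathbb{Z}^n$. Second, uniqueness of the quotient follows from $x_{\alpha_i}$ being a coordinate, hence a non-zero-divisor. It does not need the integral-domain property, which only holds when $S$ itself is a domain.
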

	Set $\kappa_i:=\frac{1}{x_{\alpha_i}}+\frac{1}{x_{-\alpha_i}}$ and $\kappa_{i,j}:=\frac{1}{x_{-\alpha_i-\alpha_{j}}}\left(\frac{1}{x_{\alpha_i}}-\frac{1}{x_{-\alpha_{j}}}\right)-\frac{1}{x_{-\alpha_i}x_{-\alpha_{j}}}$.
	
	\begin{lem}(\cite[Example 6.6 and Remark 6.10]{HMSZ}, \cite[p. 809]{BE90})\label{lem:equals0}
		Let $(S,F)$ be a formal group law.
		Both $\kappa_i$ and $\kappa_{i,j}$ lie in $S\llbracket\Lambda\rrbracket_F$. The element $\kappa_{i,j}$ equals $0$ if and only if $F(x,y)=x+y-\beta xy$ for some $\beta \in S$. If $F(x,y)=x+y-\beta xy$ for some $\beta \in S$, then $\kappa_i=\beta$.
	\end{lem}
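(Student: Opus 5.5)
The plan has three parts: show $\kappa_i\in S\llbracket\Lambda\rrbracket_F$ in general, show that $\kappa_{i,j}$ vanishes exactly when $F$ has the form $x+y-\beta xy$, and compute $\kappa_i=\beta$ in that case.

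\emph{Membership of $\kappa_i$.} Write $x:=x_{\alpha_i}$, and use \cref{lem:formal-inverse} to write $x_{-\alpha_i}=-_F x=-x\,u(x)$ with $u(x)=1+\sum_{k\ge1}c_{k+1}x^k$ a unit. Then
\[
\kappa_i=\frac{1}{x}-\frac{1}{x\,u(x)}=\frac{u(x)-1}{x\,u(x)}.
\]
Since $u(x)-1$ is divisible by $x$, this is a power series in $x$. Hence $\kappa_i\in S\llbracket\Lambda\rrbracket_F$.

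\emph{Membership of $\kappa_{i,j}$.} Set $a=x_{\alpha_i}$, $b=x_{\alpha_j}$, and $c=x_{-\alpha_i-\alpha_j}=-_F(a+_Fb)$. Putting everything over the common denominator $a\,x_{-\alpha_j}\,c$, the numerator is
\[
x_{-\alpha_j}-a-c .
\]
This numerator vanishes modulo $a$: setting $a=0$ gives $c=-_Fb=x_{-\alpha_j}$. It also vanishes modulo $x_{-\alpha_j}$, i.e.\ modulo $b$, since setting $b=0$ gives $c=-_F a$, so the numerator becomes $-a-(-_Fa)$, which is divisible by $a$ though not zero. This needs a little care. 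The cleaner route is to treat $a$ and $b$ as independent variables via \cref{rmk:integral-domain} (for $i\neq j$ the two roots are part of a basis of the root lattice) and check divisibility by each of the three coprime primes $a$, $b$, $a+_Fb$ separately. Membership is the \cite{HMSZ} statement and may simply be cited.

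\emph{Vanishing of $\kappa_{i,j}$.} For $F=x+y-\beta xy$, formal inverses are $-_Fu=u/(\beta u-1)$, so $1/x_{-\lambda}=\beta-1/x_\lambda$. Substituting into $\kappa_i$ gives $\kappa_i=\beta$ directly. For $\kappa_{i,j}$, replace every reciprocal $1/x_{-\mu}$ by $\beta-1/x_\mu$ and reduce. This reduces the vanishing to the identity
\[
\frac{1}{a+_Fb}=\frac{1}{a}+\frac{1}{b}-\beta\cdot\frac{1}{ab}\cdot(\cdots),
\]
which one checks by writing out $a+_Fb=a+b-\beta ab$. I expect this to be a routine rational-function verification.

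\emph{Converse.} Assume $\kappa_{i,j}=0$. Rearranging gives the functional equation
\[
\frac{1}{-_F(a+_Fb)}=\frac{\text{expression in }a,b}{\cdots},
\]
that is, $F(a,b)$ is a rational function of $a$, $b$, $-_Fa$ and $-_Fb$ of a specific shape. Substituting $g(t)=1/(-_Ft)+1/t$, which is a power series by the first step, this identity becomes
\[
g(a+_Fb)=g(a)\ \text{(or a constant)} .
\]
Setting $b=0$ and comparing shows that $g$ is constant, say $g\equiv\beta$. Hence $-_Ft=t/(\beta t-1)$, and solving the relation for $F$ yields $F=x+y-\beta xy$.

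The main obstacle is this converse: extracting the functional equation cleanly and showing that it forces the constant-$g$ form. As a fallback, I would cite \cite[Example 6.6, Remark 6.10]{HMSZ} and \cite{BE90}, where this is established.
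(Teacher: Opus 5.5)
Your argument for $\kappa_i$ is fine, and $g(t)=\frac1t+\frac{1}{-_Ft}$ (so that $\kappa_i=g(x_{\alpha_i})$) is the right tool. The $\kappa_{i,j}$ part, however, is never carried out, and the step you call routine actually fails for the definition as printed.

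Your numerator $x_{-\alpha_j}-a-c$ silently replaces the last term $\frac{1}{x_{-\alpha_i}x_{-\alpha_j}}$ by $\frac{1}{x_{\alpha_i}x_{-\alpha_j}}$. Your displayed identity for $\frac{1}{a+_Fb}$ already fails at $\beta=0$. Now do the computation honestly for $F=x+y-\beta xy$. Put $A=1/x_{\alpha_i}$, $B=1/x_{\alpha_j}$, $C=1/x_{\alpha_i+\alpha_j}$, and use $1/x_{-\lambda}=\beta-1/x_\lambda$ together with $C(A+B-\beta)=AB$ (this is $x_{\alpha_i+\alpha_j}=x_{\alpha_i}+x_{\alpha_j}-\beta x_{\alpha_i}x_{\alpha_j}$ divided by $x_{\alpha_i}x_{\alpha_j}x_{\alpha_i+\alpha_j}$). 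You get
\[
\kappa_{i,j}=(\beta-C)(A+B-\beta)-(\beta-A)(\beta-B)=-2(A-\beta)(B-\beta)=-\frac{2}{x_{-\alpha_i}x_{-\alpha_j}},
\]
e.g.\ $-2/(\alpha_i\alpha_j)$ for the additive law. So whenever $2\neq 0$ in $S$, the $\kappa_{i,j}$ as written is neither $0$ nor an element of $S\llbracket\Lambda\rrbracket_F$. No verification can establish the lemma as stated. The parenthesis must be reversed to $\frac{1}{x_{-\alpha_j}}-\frac{1}{x_{\alpha_i}}$, and that is the version for which the claims hold. The paper itself only cites the literature here, so your fallback coincides with it. But the self-contained argument you offer then has to be right, and actually doing the check would have caught this sign.

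Your converse sketch has a similar problem: no functional equation is ever derived. Also, from $g(a+_Fb)=g(a)$ the substitution $b=0$ gives a tautology; you would need $a=0$. With the corrected sign, your $g$ finishes everything in a few lines. Since $-\alpha_i,-\alpha_j$ extend to a basis of $\Lambda$, $u=x_{-\alpha_i}$ and $v=x_{-\alpha_j}$ are independent variables, with $x_{-\alpha_i-\alpha_j}=F(u,v)$ and $1/x_{\alpha_i}=g(u)-1/u$. Hence
\[
\kappa_{i,j}=\frac{u+v-g(u)\,uv-F(u,v)}{uv\,F(u,v)}.
\]

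\emph{Membership.} The numerator is divisible by $uv$, because $u+v-F(u,v)$ is. It also vanishes at $v=-_Fu$, because $g(u)\,u\,(-_Fu)=u+(-_Fu)$. Since $(u,F(u,v))$ is also a coordinate system and $u\,(-_Fu)$ is a non-zero-divisor, these divisibilities combine to give $\kappa_{i,j}\in S\llbracket\Lambda\rrbracket_F$.

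\emph{Vanishing criterion.} We have $\kappa_{i,j}=0$ if and only if $F(u,v)=u+v-g(u)\,uv$. Symmetry of $F$ then forces $g(u)=g(v)$, so $g$ is the constant $\beta:=g(0)$ and $F=x+y-\beta xy$. Conversely, for this $F$ one has $-_Ft=t/(\beta t-1)$, hence $g\equiv\beta$. This gives both $\kappa_{i,j}=0$ and $\kappa_i=\beta$.
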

	
	\begin{prop}\label{prop:braid-relations}
		The operator $\delta_i$ satisfies the following relations:
		\begin{enumerate}
			\item $\delta_i\delta_j=\delta_j\delta_i$, for $|i-j|\geq 2$.
			\item $\delta_i\delta_{i+1}\delta_{i}-\delta_{i+1}\delta_i\delta_{i+1}=\kappa_{i+1,i}\delta_i-\kappa_{i,i+1}\delta_{i+1}$,\quad $i=1,\dotsc,n-2$.
			\item $\delta_i^2=\kappa_i\delta_i$.
		\end{enumerate} 
		Moreover, the element $\kappa_{i,j}$ is zero for all $i,j=1,\dotsc,n-1$ if and only if $(S,F)$ is the additive, multiplicative-periodic, or connective formal group law over $S$.
	\end{prop}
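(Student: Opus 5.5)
The plan is to work throughout in the fraction field of $S\llbracket\Lambda\rrbracket_F$, viewing operators as elements of the twisted group algebra $Q_W=\mathrm{Frac}(S\llbracket\Lambda\rrbracket_F)\rtimes W$. There we write $\delta_i=\tfrac{1}{x_{-\alpha_i}}+\tfrac{1}{x_{\alpha_i}}r_i$ and use the commutation rule $r_i\,f=r_i(f)\,r_i$. Every identity then reduces to comparing the coefficients of the group elements $w\in W$.

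For (3), expand
\[\delta_i^2=\Bigl(\tfrac{1}{x_{-\alpha_i}}+\tfrac{1}{x_{\alpha_i}}r_i\Bigr)^2=\tfrac{1}{x_{-\alpha_i}^2}+\tfrac{1}{x_{\alpha_i}x_{-\alpha_i}}+\Bigl(\tfrac{1}{x_{-\alpha_i}x_{\alpha_i}}+\tfrac{1}{x_{\alpha_i}^2}\Bigr)r_i,\]
using $r_i(x_{\alpha_i})=x_{-\alpha_i}$. Both coefficients factor as $\kappa_i\cdot\tfrac{1}{x_{-\alpha_i}}$ and $\kappa_i\cdot\tfrac{1}{x_{\alpha_i}}$, which gives $\delta_i^2=\kappa_i\delta_i$.

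For (1), when $|i-j|\ge 2$ the reflection $r_i$ fixes $\pm\alpha_j$ and $r_j$ fixes $\pm\alpha_i$. The four terms of $\delta_i\delta_j$ and of $\delta_j\delta_i$ therefore match term by term, since $r_ir_j=r_jr_i$.

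For (2), expand both triple products into sums over the six elements of $\langle r_i,r_{i+1}\rangle\cong S_3$. The coefficients of $r_ir_{i+1}r_i=r_{i+1}r_ir_{i+1}$ agree on both sides, because each is a single product $\tfrac{1}{x_{\alpha_i}x_{\alpha_i+\alpha_{i+1}}x_{\alpha_{i+1}}}$. The coefficients of $r_ir_{i+1}$ and $r_{i+1}r_i$ appear only on one side each, but they also match after using $r_i(\alpha_{i+1})=\alpha_i+\alpha_{i+1}$. What remains lies in the span of $1,r_i,r_{i+1}$, and one checks that it equals $\kappa_{i+1,i}\delta_i-\kappa_{i,i+1}\delta_{i+1}$ by matching the coefficients of $r_i$ and $r_{i+1}$. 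This fixes the two $\kappa$'s as the stated expressions, and the coefficient of $1$ is then a consistency check. This is the step I expect to be the main obstacle, since it is a sizable bookkeeping computation with formal inverses. I would organize it as in \cite{HMSZ}, writing everything in terms of $\tfrac{1}{x_{\pm\alpha}}$ for $\alpha\in\{\alpha_i,\alpha_{i+1},\alpha_i+\alpha_{i+1}\}$, and verify the identity of the $1$-coefficient directly.

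For the final claim, by \cref{lem:equals0} we have $\kappa_{i,j}=0$ exactly when $F(x,y)=x+y-\beta xy$ for some $\beta\in S$. The additive, multiplicative-periodic and connective laws are precisely the cases $\beta=0$, $\beta\in S^\times$ and $\beta$ a non-unit, respectively. This gives both directions.
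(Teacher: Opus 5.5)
Your route is the paper's route. Its proof of this proposition is only ``a direct computation following \cite[Prop.~6.8]{HMSZ}'' together with \cref{lem:equals0}, and your treatment of (1) and (3) is correct.

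The gap is in (2), at exactly the step you flag and then skip. Matching the coefficients of $r_i$ and $r_{i+1}$ does \emph{not} reproduce the stated expressions. Put $a=\alpha_i$ and $b=\alpha_{i+1}$. The coefficient of $r_i$ is $\tfrac{1}{x_a^2x_{-a-b}}+\tfrac{1}{x_{-a}x_{-b}x_a}$ in $\delta_i\delta_{i+1}\delta_i$, and $\tfrac{1}{x_{-b}x_ax_{-a-b}}$ in $\delta_{i+1}\delta_i\delta_{i+1}$. So, with $\kappa$ multiplying on the left as in your bookkeeping, the coefficient of $\delta_i$ is forced to be
\[\frac{1}{x_{-a-b}}\Bigl(\frac{1}{x_a}-\frac{1}{x_{-b}}\Bigr)+\frac{1}{x_{-a}x_{-b}},\]
and the coefficient of $-\delta_{i+1}$ is the same expression with $a$ and $b$ interchanged.

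These are the printed $\kappa_{i,i+1}$ and $\kappa_{i+1,i}$ with the sign of the last term reversed. So the displayed definition of $\kappa_{i,j}$ has a sign error, and the subscripts in (2) are swapped. As printed, the statement is false:
\begin{itemize}
\item For $F_A$, the printed $\kappa_{i,j}$ equals $-\tfrac{2}{\alpha_i\alpha_j}\neq 0$. But $\delta_i=-\tfrac{1}{\alpha_i}(1-r_i)$ is minus the BGG operator, so the left side of (2) is $0$ while the right side is $-\tfrac{2}{\alpha_i\alpha_{i+1}}(\delta_i-\delta_{i+1})\neq 0$.
\item For $F_C$, the printed $\kappa_{i,j}$ equals $-\tfrac{2}{x_{-\alpha_i}x_{-\alpha_j}}\neq 0$.
\end{itemize}
Consequently \cref{lem:equals0}, which you (like the paper) invoke for the last claim, cannot hold for $\kappa_{i,j}$ as defined here either.

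The fix is to redefine $\kappa_{i,j}$ with $+\tfrac{1}{x_{-\alpha_i}x_{-\alpha_j}}$ and read (2) as $\kappa_{i,i+1}\delta_i-\kappa_{i+1,i}\delta_{i+1}$. Then your outline goes through:
\begin{itemize}
\item The coefficient of $1$ matches identically, with no formal group law relation needed.
\item For $F_C$, the identity $\tfrac{1}{x_{-\lambda}}=\beta-\tfrac{1}{x_\lambda}$ from \cref{lem:technical}, together with $x_{a+b}=x_a+x_b-\beta x_ax_b$, shows directly that the corrected $\kappa_{i,j}$ vanish. This is the direction used later to make $\partial_w$ well defined.
\item The converse direction still has to be imported from \cite{HMSZ}, \cite{BE90}.
\end{itemize}

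A smaller slip: the coefficients of $r_ir_{i+1}$ and $r_{i+1}r_i$ do appear on both sides, each from a single term; for instance, both sides give $\tfrac{1}{x_ax_{a+b}x_{-b}}$ for $r_ir_{i+1}$. If they appeared on one side only, they would have to vanish.
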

	\begin{proof}
		This is a direct computation, following the technique in the proof of \cite[Prop. 6.8]{HMSZ}. The latter statement follows from \cref{lem:equals0}.
	\end{proof}
	
	\begin{rmk}\label{rmk:braid-relations}
		Proposition \ref{prop:braid-relations} says that the operators $\delta_i$ satisfy the braid relations if and only if $(S,F)$ is the additive, multiplicative-periodic, or connective formal group law over $S$.
	\end{rmk}
	
	Extend $\delta_i$ to an $S\llbracket x_q\rrbracket$-linear operator on $S\llbracket\Lambda\rrbracket_F\llbracket x_q\rrbracket$.
	\begin{dfn}
		Define the following $S$-linear operators on $S\llbracket\Lambda\rrbracket_F\llbracket x_q\rrbracket$:
		\[\partial_i:=(1-x_q^2)\delta_{i}+x_q^2r_{i},\quad \alpha_i\in\Delta.\]
	\end{dfn}

	\begin{prop}\label{prop:independent}
		The operator $\partial_i$ satisfies the following relations: 
		\begin{enumerate}
			\item $\partial_i\partial_j=\partial_j\partial_i$, for $|i-j|\geq 2$.
			\item $\partial_i\partial_{i+1}\partial_{i}-\partial_{i+1}\partial_i\partial_{i+1}=(1-x_q^2)(\kappa_{i,i+1}\partial_i-\kappa_{i+1,i}\partial_{i+1}+x_q^2\left(\kappa_{i+1}-\kappa_{i}\right))$,\quad $i=1,\dotsc,n-2$.
			\item $(\partial_i+x_q^2)\circ(\partial_i+(x_q^2\kappa_i-x_q^2-\kappa_i))=0$.
		\end{enumerate} 
		Moreover, the operators $\partial_i$ satisfy the braid relations if and only if $(S,F)$ is the additive, multiplicative-periodic, or connective formal group law over $S$.
	\end{prop}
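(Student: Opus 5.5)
The plan is to reduce everything to \cref{prop:braid-relations}, using that $\partial_i=(1-x_q^2)\delta_i+x_q^2r_i$ is an affine combination of $\delta_i$ and the reflection $r_i$, where $r_i$ acts $S\llbracket x_q\rrbracket$-linearly and fixes $x_q$. First I would record the basic commutation identities between $r_i$ and $\delta_j$. Since $\delta_i=(1+r_i)\tfrac{1}{x_{-\alpha_i}}$, we have $r_i\delta_i=\delta_i$. Also $\delta_i r_i=\tfrac{1}{x_{-\alpha_i}}r_i+\tfrac{1}{x_{\alpha_i}}$, so $\delta_ir_i=\kappa_i-\delta_i$ after using $r_i(x_{\alpha_i})=x_{-\alpha_i}$. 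For $|i-j|\geq 2$, $r_i$ and $\delta_j$ commute, because $r_i$ fixes $\alpha_j$. For adjacent indices, $r_i\delta_{i+1}r_i$ is the operator $(1+r_{s})\tfrac{1}{x_{-\gamma}}$ with $\gamma=\alpha_i+\alpha_{i+1}$ and $s=r_ir_{i+1}r_i$.

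Relation (1) is then immediate: all four products of $\{\delta_i,r_i\}$ with $\{\delta_j,r_j\}$ commute, and $\kappa_i$ is a scalar by \cref{lem:equals0} only in the connective case. In general, though, $\delta_i$ commutes with $r_j$ for far-apart indices regardless.

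For relation (3), expand $\partial_i^2$ using $\delta_i^2=\kappa_i\delta_i$, $r_i^2=1$, $r_i\delta_i=\delta_i$ and $\delta_ir_i=\kappa_i-\delta_i$. Note that $\kappa_i\in S\llbracket\Lambda\rrbracket_F$ is $r_i$-invariant, which is needed when $\kappa_i$ is moved past $r_i$. This expresses $\partial_i^2$ as a combination of $\partial_i$ and $1$. Concretely, $\partial_i^2=(1-x_q^2)^2\kappa_i\delta_i+x_q^2(1-x_q^2)(\delta_i+\kappa_i-\delta_i)+x_q^4$, and after rewriting $\delta_i=(\partial_i-x_q^2r_i)/(1-x_q^2)$ one compares with the claimed quadratic. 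The term in $r_i$ must cancel. It does, since the $r_i$-coefficient is then $-(1-x_q^2)\kappa_ix_q^2$, which matches the $r_i$-part of $(\kappa_i-x_q^2\kappa_i)\partial_i$. The result is the quadratic relation (3) after collecting terms.

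Relation (2) is the main obstacle. I would expand both triple products into the eight words in $\delta$'s and $r$'s, weighted by powers of $(1-x_q^2)$ and $x_q^2$. The words with only $r$'s cancel by the braid relation in $W$. The words with only $\delta$'s contribute $(1-x_q^2)^3(\kappa_{i+1,i}\delta_i-\kappa_{i,i+1}\delta_{i+1})$ by \cref{prop:braid-relations}. The mixed words are handled by pushing reflections to the right using the twisted commutation $r_i\delta_{i+1}=(r_i\delta_{i+1}r_i)r_i$ and the identities above. I would then regroup the result in terms of $\partial_i$, $\partial_{i+1}$ and a scalar. This bookkeeping, including tracking $\kappa$'s that are not Weyl-invariant, is the delicate step. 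I would check it first against the connective case, where all $\kappa_{i,j}=0$ and $\kappa_i=\beta$, so that the right side vanishes.

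The final equivalence follows from the formula in (2). If $F$ is one of the three listed laws, then all $\kappa_{i,j}=0$ and $\kappa_i=\kappa_{i+1}=\beta$ by \cref{lem:equals0}, so the right side vanishes. Conversely, suppose the braid relation holds. Then $\kappa_{i,i+1}\partial_i-\kappa_{i+1,i}\partial_{i+1}+x_q^2(\kappa_{i+1}-\kappa_i)=0$, since $1-x_q^2$ is a nonzerodivisor. Applying this operator to $1$ and to suitable elements, and comparing the coefficients of $r_i$ and $r_{i+1}$ (which are independent from the multiplication operators over the fraction field), forces $\kappa_{i,i+1}=\kappa_{i+1,i}=0$. \cref{prop:braid-relations} then finishes the proof.
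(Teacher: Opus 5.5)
Your first commutation identity is wrong, and it breaks your derivation of (3). From $\delta_ir_i=\tfrac{1}{x_{\alpha_i}}+\tfrac{1}{x_{-\alpha_i}}r_i$ you cannot conclude $\delta_ir_i=\kappa_i-\delta_i$. The latter equals $\tfrac{1}{x_{\alpha_i}}-\tfrac{1}{x_{\alpha_i}}r_i$, so the two agree only if $x_{-\alpha_i}=-x_{\alpha_i}$, i.e.\ for the additive law. For example, in $K$-theory with $x_\lambda=1-e^{\lambda}$ and $n=2$, one has $\delta_1r_1(e^{y_1})=0$ but $(\kappa_1-\delta_1)(e^{y_1})=-e^{y_2}$.

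The correct identity is $\delta_ir_i=(1+r_i)\tfrac{1}{x_{\alpha_i}}=\kappa_i(1+r_i)-\delta_i$. It follows from $\tfrac{1}{x_{\alpha_i}}=\kappa_i-\tfrac{1}{x_{-\alpha_i}}$ and $r_i(\kappa_i)=\kappa_i$. Your claimed cancellation of the $r_i$-term is therefore not real. After substituting $\delta_i=(\partial_i-x_q^2r_i)/(1-x_q^2)$, your expression still contains $-x_q^2(1-x_q^2)\kappa_ir_i$, which is exactly the term your identity dropped. With the correct identity you get $\partial_i^2=(1-x_q^2)^2\kappa_i\delta_i+x_q^2(1-x_q^2)\kappa_i(1+r_i)+x_q^4=(1-x_q^2)\kappa_i\partial_i+x_q^2(1-x_q^2)\kappa_i+x_q^4$. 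Since $\kappa_i$ commutes with $\partial_i$, this is (3). Part (1) is fine as you argue it.

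For (2) you give no argument, and checking the connective case proves nothing, since both sides vanish there. Worse, the calculation does not produce the displayed formula. Write $\partial_i=A_i+B_ir_i$ with $A_i=\tfrac{1-x_q^2}{x_{-\alpha_i}}$ and $B_i=\tfrac{1-x_q^2}{x_{\alpha_i}}+x_q^2$. Compare coefficients of the six elements of $\langle r_i,r_{i+1}\rangle$ in the twisted group algebra. The coefficients of $r_ir_{i+1}$, $r_{i+1}r_i$ and $r_ir_{i+1}r_i$ cancel, and one finds
\[
\partial_i\partial_{i+1}\partial_i-\partial_{i+1}\partial_i\partial_{i+1}=(1-x_q^2)^2\bigl(c_i(\partial_i+x_q^2)+c_{i+1}(\partial_{i+1}+x_q^2)\bigr),
\]
where $c_i\delta_i+c_{i+1}\delta_{i+1}$ is the right-hand side of the relation for the $\delta$'s (the case $x_q=0$).

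An independent check of the order of vanishing: at $x_q^2=1$ the $\partial_i$ become $r_i$. The part of the difference linear in $1-x_q^2$ is $\epsilon_ir_{i+1}r_i+r_i\epsilon_{i+1}r_i+r_ir_{i+1}\epsilon_i$ minus the same expression with $i\leftrightarrow i+1$, where $\epsilon_i=\delta_i-r_i$, and this is identically zero. The displayed right-hand side of (2), by contrast, is $(1-x_q^2)$ times an operator whose $r_i$-coefficient at $x_q^2=1$ is $\kappa_{i,i+1}$. So the displayed identity can hold only when $F=x+y-\beta xy$, where both sides are $0$.

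The final equivalence survives. Your linear-independence argument for $1,r_i,r_{i+1}$ applies verbatim to the corrected formula: the braid relation holds iff $c_i=c_{i+1}=0$, and \cref{lem:equals0} then finishes. The paper itself only says ``straightforward calculation'', so the comparison here is with what that calculation actually yields.
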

	
	\begin{proof}
		This is a straightforward calculation. The latter statement follows from \cref{lem:equals0}.
	\end{proof}
	
	\begin{rmk}
		Operators similar to the $\partial_i$ are defined in \cite{ZZ17}. Their operators satisfy similar relations to ours (see \cite[Theorem 2.13]{ZZ17}).
	\end{rmk}
	
	Write $w\in W$ as a reduced product of simple transpositions $w=r_{i_1}\cdots r_{i_k}$. When $(S,F)=(S_C,F_C)$ is the connective formal group law over $S_C$, we will use the notation $\partial_w:=\partial_{{i_1}}\circ\dotsb\circ \partial_{{i_k}}$; it follows from Proposition \ref{prop:independent} that $\partial_w$ is independent of the choice of reduced word for $w$.

	\subsection{Connective motivic Chern classes and the GKM conditions}\label{subsection:second-construction}
	
	In this subsection, we will define connective motivic Segre classes of Schubert cells using the same method that we used to define the deformed motivic Segre classes in \cref{subsection:motive-Segre}. In \cref{egg:H}, we will explain how to recover the Segre-Schwartz-MacPherson classes from the connective motivic Segre classes by setting $\beta=0$. We will use the connective motivic Segre classes to define connective motivic Chern classes, and we will show that the connective motivic Chern classes satisfy the so-called \textit{GKM conditions} (see \cref{prop:GKM}). The fact that the connective motivic Chern classes satisfy the GKM conditions will imply that the connective motivic Chern classes can be viewed as canonical elements in a quotient of the $\widehat{T}$-equivariant algebraic cobordism ring of $T^*(G/P_\Theta)$, as we will show in \cref{theo:geometric-connective-stable}.

	We will assume in this subsection that $(S,F)=(S_C,F_C)$ is the connective formal group law over $S_C$. Let $(S_C\llbracket\Lambda\rrbracket_{F_C}\llbracket x_q\rrbracket)^{\mrm{loc}}$ be the localization of $S_C\llbracket\Lambda\rrbracket_{F_C}\llbracket x_q\rrbracket$ at the elements $\{1-x_q^2(1-x_\lambda)\}_{\lambda\in \Sigma}$. As $S_C\llbracket\Lambda\rrbracket_{F_C}\llbracket x_q\rrbracket$ is an integral domain, so is $(S_C\llbracket\Lambda\rrbracket_{F_C}\llbracket x_q\rrbracket)^{\mrm{loc}}$. Consider the rings
	\[\widetilde{\mcal{R}}_\Lambda:=\bigoplus_{\lambda\in W^\Theta}S_C\llbracket\Lambda\rrbracket_{F_C}\llbracket x_q\rrbracket\quad\text{and}\quad \widetilde{\mcal{R}}_\Lambda^{\text{loc}}:=\bigoplus_{\lambda\in W^\Theta}(S_C\llbracket\Lambda\rrbracket_{F_C}\llbracket x_q\rrbracket)^{\mrm{loc}},\]
	each with direct sum addition and multiplication. There is a natural action of $W^\Theta$ on $\widetilde{\mcal{R}}_\Lambda$ given by $w\cdot(f_\lambda)_\lambda:=(w(f_\lambda))_{w(\lambda)}$ for all $w\in W^\Theta$. Moreover, the action of $W^\Theta$ on $S_C\llbracket\Lambda\rrbracket_{F_C}$ preserves the set of elements $\{1-x_q^2(1-x_\lambda)\}_{\lambda\in \Sigma}$. Therefore, the action of $W^\Theta$ on $\widetilde{\mcal{R}}_\Lambda^\mrm{loc}$ induces an action on $\widetilde{\mcal{R}}_\Lambda^\mrm{loc}$. In turn, we get an action of $\partial_w$ on $\widetilde{\mcal{R}}_\Lambda^\mrm{loc}$ for all $w\in W^\Theta$. Consider the element $S_{w_0}\in\widetilde{\mcal{R}}_\Lambda^\mrm{loc}$ defined by 
	\[S_{w_0}|_\lambda=\begin{cases}
		\prod_{\alpha\in\Sigma_\Theta^+} \frac{x_{-\alpha}}{1-x_q^2(1-x_{-\alpha})},& \text{if $\lambda=w_0$};\\
		0,&\text{otherwise.}
	\end{cases}\] 
	Define $S_{w\cdot w_0}:=\partial_w(S_{w_0})$. The set $\{S_\lambda\}_{\lambda\in W^\Theta}$ forms a basis for $\widetilde{\mcal{R}}_\Lambda^\mrm{loc}$ over $(S_C\llbracket\Lambda\rrbracket_{F_C}\llbracket x_q\rrbracket)^{\text{loc}}$. 
	\begin{dfn}
		We define $S_w$ to be the \textbf{connective motivic Segre class} of the Schubert cell $X_w^\circ$.
	\end{dfn}
	\begin{rmk}
		Observe that the classes $S_w$ are well-defined because, by \cref{prop:independent}, the $\partial_w$ satisfy the braid relations for the formal group law $(S_C,F_C)$. 
	\end{rmk}
	We will now discuss two interesting specializations of the parameter $\beta$.
	\begin{egg}\label{egg:connective:beta}
		When we apply the isomorphism (\ref{eqn:iso1}) and set $x_q$ to $q$, the operator $\partial_i$ and class $S_{w_0}$ are the following:
		\[\partial_i=\frac{\beta(1-q^2)}{1-e^{-\alpha_i}}+\frac{Q(\beta,q)-q^2e^{\alpha_i}}{1-e^{\alpha_i}}r_{i}
		\quad\text{and}\quad S_{w_0}|_\lambda=\begin{cases}
			\prod_{\alpha\in\Sigma_\Theta^+} \frac{1-e^{-
					\alpha}}{Q(\beta,q)-q^2e^{-\alpha}},& \text{if $\lambda=w_0$};\\
			0,&\text{otherwise.}
		\end{cases}
		\]
		Thus, we can view the images of the classes $\{S_\lambda\}_{\lambda\in W^\Theta}$ under the isomorphism (\ref{eqn:iso1}) as the set of deformed motivic Segre classes from \cref{section:deformation}.
	\end{egg}
	
	\begin{egg}\label{egg:H}
		When we apply the isomorphism (\ref{eqn:iso2}) and set $x_q$ to $\hbar$, the operator $\partial_i$ and class $S_{w_0}$ are the following:
		\[\partial_i=\frac{\hbar^2-1}{\alpha_i}+\frac{1-\hbar^2+\hbar^2\alpha_i}{\alpha_i}r_i
		\quad\text{and}\quad S_{w_0}|_\lambda=\begin{cases}
			\prod_{\alpha\in\Sigma_\Theta^+} \frac{\alpha}{\hbar^2+\hbar^2\alpha-1},& \text{if $\lambda=w_0$};\\
			0,&\text{otherwise.}
		\end{cases}
		\]
		Replace the operator $\partial_i$ with $\partial_i^H:=\frac{\partial_i}{\hbar^2}$, replace $S_{w_0}$ with $S_{w_0}^H:=(\hbar^2)^{\ell(w_0)}S_{w_0}$, and set $\overline{\hbar}:=\tfrac{\hbar^2-1}{\hbar^2}$. Then 
		\[\partial_i^H=\frac{\overline{\hbar}}{\alpha_i}+\frac{\alpha_i-\overline{\hbar}}{\alpha_i}r_i\quad\text{and}\quad S_{w_0}^H|_\lambda=\begin{cases}
			\prod_{\alpha\in\Sigma_\Theta^+}\frac{\alpha}{\alpha+\overline{\hbar}},& \text{if $\lambda=w_0$};\\
			0,&\text{otherwise.}
		\end{cases}\]
		Thus, we can view the set of homogenizations $\{(\hbar^2)^{\ell(\lambda)}S_\lambda\}_{\lambda\in W^\Theta}$ as the set of images of the Segre-Schwartz-MacPherson classes of Schubert cells in $H_{\widehat{T}}^{\text{loc}}(T^*(G/P_\Theta))$ under the localization map $\iota$.
	\end{egg}

	Consider the element $\mrm{St}_{w_0}\in\widetilde{\mcal{R}}_\Lambda$ defined by $\mrm{St}_{w_0}:=\kappa S_{w_0}$,
	where $\kappa:=\prod_{\alpha\in\Sigma^+_\Theta}(1-x_q^2(1-x_{-\alpha}))$. Now recursively define $\mrm{St}_{w\cdot w_0}:=\partial_w(\mrm{St}_{w_0})$. 
	
	\begin{dfn}
		We will call $\mrm{St}_w$ the \textbf{connective motivic Chern class} of the Schubert cell $X_w^\circ$.
	\end{dfn}
	
	\begin{rmk}\label{rmk:connective-stable-class}
		When $\beta=1$, the class $\mrm{St}_w$ is the motivic Chern class of the Schubert cell $X_w^\circ$ (this follows from \cref{egg:connective:beta} and \cite[\S 2.5]{KZJ21}), and when $\beta=0$, the class $\mrm{St}_w$ is the \textit{homogenization of} the Chern-Schwartz-MacPherson class of the Schubert cell $X_w^\circ$ by the factor $(\hbar^2)^{\ell(w)}$ (this follows from \cref{egg:H} and \cite[\S 5]{KZJ21}). In light of \cref{theo:CSM-classes} and \cref{theo:motivic-Chern-classes}, it is also reasonable to call the classes $\{\mrm{St_w}\}_{w\in W^\Theta}$ the \textbf{connective stable classes} for $T^*(G/P_\Theta)$.
	\end{rmk}
	Below we prove the \textit{GKM conditions} for the connective motivic Chern classes. We will see in \cref{section:localization-techniques} that \cref{prop:GKM} implies the connective motivic Chern classes arise from geometric classes in a \textit{quotient of} the $\widehat{T}$-equivariant algebraic cobordism ring of $T^*(G/P_\Theta)$.
	
	\begin{prop}\label{prop:GKM}
		Assume $j,k\in\{1,\dotsc,n\}$ satisfy $j<k$, and suppose $y_j-y_k=\alpha_j+\dotsb+\alpha_{k-1}$ lies in $\Sigma_\Theta$.
		Then $\mrm{St}_\lambda$ lies in $\widetilde{\mcal{R}}_\Lambda$, and the difference $\mrm{St}_\lambda|_{\mu}-\mrm{St}_\lambda|_{(j,k)\cdot \mu}$ is divisible by $x_{y_j-y_k}$ in $S_C\llbracket \Lambda\rrbracket_{F_C}\llbracket x_q\rrbracket$, where $(j,k)$ is the transposition in $S_n$ that swaps $j$ and $k$ and fixes all other $i$.
	\end{prop}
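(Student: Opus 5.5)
We argue by induction on $\ell(w)$, where $\lambda=w\cdot w_0$ and $\mrm{St}_{w\cdot w_0}=\partial_w(\mrm{St}_{w_0})$. We prove integrality and the GKM divisibility simultaneously, so it is convenient to introduce the subring
\[\mcal{G}:=\Bigl\{(f_\mu)_\mu\in\widetilde{\mcal{R}}_\Lambda \;:\; x_{y_j-y_k}\mid f_\mu-f_{(j,k)\mu}\ \text{for all } y_j-y_k\in\Sigma_\Theta\Bigr\}.\]
The claim is that every $\mrm{St}_\lambda$ lies in $\mcal{G}$.

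For the base case, $\mrm{St}_{w_0}=\kappa S_{w_0}$ is supported only at $w_0$. There its value is $\prod_{\alpha\in\Sigma_\Theta^+}x_{-\alpha}$, because the factors $1-x_q^2(1-x_{-\alpha})$ in $\kappa$ cancel the denominators of $S_{w_0}$. This value lies in $S_C\llbracket\Lambda\rrbracket_{F_C}\llbracket x_q\rrbracket$. For the divisibility, take $\mu=w_0$ and $(j,k)\mu\neq w_0$. The difference is then $\prod_\alpha x_{-\alpha}$ or its negative. If $(j,k)$ changes the coset of $w_0$, the root $y_j-y_k$ is (up to sign) one of the $\alpha\in\Sigma_\Theta^+$, so the product contains $x_{\pm(y_j-y_k)}$. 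By \cref{lem:technical}(1), $x_{-\alpha}$ and $x_\alpha$ differ by the unit $\beta x_\alpha-1$, hence $x_{y_j-y_k}$ divides the difference. If both $\mu$ and $(j,k)\mu$ differ from $w_0$, the difference is $0$. This matches the usual GKM description of the class of a fixed point.

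For the inductive step, it suffices to show that each operator $\partial_i=(1-x_q^2)\delta_i+x_q^2r_i$ preserves $\mcal{G}$. The term $r_i$ preserves $\mcal{G}$: it permutes the coordinates by $\mu\mapsto r_i\mu$, it acts on the coefficient ring by $x_\gamma\mapsto x_{r_i\gamma}$, and it conjugates the transposition $(j,k)$ to the transposition attached to the root $r_i(y_j-y_k)$. It therefore sends the GKM conditions to themselves.

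The main obstacle is $\delta_i$, because it has denominators. We have $(\delta_i f)_\mu=\frac{f_\mu}{x_{-\alpha_i}}+\frac{r_i(f_{r_i\mu})}{x_{\alpha_i}}$. Using \cref{lem:technical}(1) to write $x_{\alpha_i}=x_{-\alpha_i}u$ with the unit $u=\beta x_{\alpha_i}-1$, this becomes
\[(\delta_i f)_\mu=\frac{u f_\mu+ r_i(f_{r_i\mu})}{x_{\alpha_i}}.\]
We then proceed in two steps.

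\begin{enumerate}
\item \textbf{Integrality.} We need $x_{\alpha_i}\mid uf_\mu+r_i(f_{r_i\mu})$. Modulo $x_{\alpha_i}$ we have $u\equiv -1$, and $r_i$ acts trivially on $S\llbracket\Lambda\rrbracket_F/(x_{\alpha_i})$, since $x_{r_i\gamma}-x_\gamma$ is divisible by $x_{\alpha_i}$ (a standard CPZ fact, used in \cref{lem:restricts}). So the numerator is congruent to $f_{r_i\mu}-f_\mu$, which is divisible by $x_{\alpha_i}$ by the GKM condition for the root $\alpha_i$. When $r_i\mu=\mu$ in $W^\Theta$, this is exactly the statement of \cref{lem:restricts}.
\item \textbf{Divisibility for $\delta_i f$.} For a root $\gamma=y_j-y_k\neq\pm\alpha_i$, the pair $\mu,(j,k)\mu$ is sent by $r_i$ to a pair differing by the reflection in $r_i\gamma$. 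Combining the GKM conditions for $f$ at $\gamma$ and at $r_i\gamma$ (transported by $r_i$), the numerator difference becomes divisible by $x_\gamma$. Since $x_\gamma$ and $x_{\alpha_i}$ are coprime in the UFD $S_C\llbracket x_1,\dots,x_n\rrbracket\llbracket x_q\rrbracket$ by \cref{lem:does-not-divide}, dividing by $x_{\alpha_i}$ preserves this divisibility. For $\gamma=\pm\alpha_i$, one checks directly that $(\delta_if)_\mu-(\delta_if)_{r_i\mu}$ is a multiple of $x_{\alpha_i}$, again using $r_i\equiv\mathrm{id}$ modulo $x_{\alpha_i}$ together with the fact that $\kappa_i=\beta$ is a constant (\cref{lem:equals0}).
\end{enumerate}

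This shows that $\delta_i$, and hence $\partial_i$, preserves $\mcal{G}$, which completes the induction and proves the proposition.
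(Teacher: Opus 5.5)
Your proposal is correct and is essentially the paper's proof. It uses the same induction from $\mathrm{St}_{w_0}$ and gets integrality of $\delta_i(\mathrm{St}_\lambda)$ from the GKM condition at $\alpha_i$ together with $r_i\equiv\mathrm{id}$ modulo $x_{\alpha_i}$. It makes the same split into $\gamma\neq\pm\alpha_i$ (the paper's Case~I, where one cancels $x_{\alpha_i}$) and $\gamma=\pm\alpha_i$ (Case~II, where the difference reduces to $D-r_i(D)$). The only cosmetic difference is that you check directly that $r_i$ preserves the GKM conditions, whereas the paper rewrites $r_i=x_{\alpha_i}\delta_i-(\beta x_{\alpha_i}-1)$.

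One point to tighten, which the paper also glosses over: \cref{lem:does-not-divide} only gives non-divisibility, not coprimality. To cancel $x_{\alpha_i}$ you should note that $x_{y_j-y_k}=(x_j-x_k)/(1-\beta x_k)$ is a unit times the prime $x_j-x_k$ of $\mathbb{Z}[\beta]\llbracket x_1,\dots,x_n,x_q\rrbracket$.
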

	\begin{proof}
		We will proceed by induction, decreasing the length of $\lambda$. For the base case, will consider $\mrm{St}_{w_0}$. By construction, $\mrm{St}_{w_0}$ lies in $\widetilde{\mcal{R}}_\Lambda$. Additionally, as $\mrm{St}_{w_0}|_{\mu}$ is nonzero if and only if $\mu=w_0$, and as $\mrm{St}_{w_0}|_{\mu}$ is divisible by all $x_{-(y_j-y_k)}$ whenever $y_j-y_k\in \Sigma_\Theta$, it follows that  $\mrm{St}_{w_0}|_{\mu}-\mrm{St}_{w_0}|_{(j,k)\cdot \mu}$
		is divisible by $x_{y_j-y_k}=(j,k)\cdot x_{-(y_j-y_k)}$ in $S_C\llbracket \Lambda\rrbracket_{F_C}\llbracket x_q\rrbracket$. 
		
		For our induction hypothesis, we assume that $\mrm{St}_\lambda$ lies in $\widetilde{\mcal{R}}_\Lambda$, and that, for all $j<k$, the element $\mrm{St}_{\lambda}|_{\mu}-\mrm{St}_{\lambda}|_{(j,k)\cdot \mu}$
		is divisible by $x_{y_j-y_k}$ in $S_C\llbracket \Lambda\rrbracket_{F_C}\llbracket x_q\rrbracket$. For $i$ such that $r_i(\lambda)<\lambda$, we will show that $\mrm{St}_{r_i(\lambda)}=\partial_i(\mrm{St}_\lambda)$ lies in $\widetilde{\mcal{R}}_\Lambda$, and that $\partial_i(\mrm{St}_\lambda)|_\mu-\partial_i(\mrm{St}_\lambda)|_{(j,k)\cdot \mu}$ is divisible by $x_{y_j-y_k}$ in $S_C\llbracket \Lambda\rrbracket_{F_C}\llbracket x_q\rrbracket$. Recall that $\partial_i=(1-x_q^2)\delta_i+x_q^2 r_i$ and that $\delta_i=\frac{1}{x_{-\alpha_i}}+\frac{1}{x_{\alpha_i}}r_i$. It follows from \cref{lem:technical} and the expression for $\delta_i$ that $r_i=x_{\alpha_i}\delta_i-\tfrac{x_{\alpha_i}}{x_{-\alpha_i}}=x_{\alpha_i}\delta_i-(\beta x_{\alpha_i}-1)$. By our induction hypothesis, $(\beta x_{\alpha_i}-1)\mrm{St}_\lambda$ lies in $\widetilde{\mcal{R}}_\lambda$ and $(\beta x_{\alpha_i}-1)(\mrm{St}_{\lambda}|_{\mu}-\mrm{St}_{\lambda}|_{(j,k)\cdot \mu})$ is divisible by $x_{y_j-y_k}$ in $S_C\llbracket \Lambda\rrbracket_{F_C}\llbracket x_q\rrbracket$.
		Therefore, we only need to show that $\delta_i(\mrm{St}_\lambda)$ lies in $\widetilde{\mcal{R}}_\Lambda$, and that $\delta_i(\mrm{St}_\lambda)|_\mu-\delta_i(\mrm{St}_\lambda)|_{(j,k)\cdot \mu}$
		is divisible by $x_{y_j-y_k}$ in $S_C\llbracket \Lambda\rrbracket_{F_C}\llbracket x_q\rrbracket$.
		
		To see that $\delta_i(\mrm{St}_\lambda)$ lies in $\widetilde{\mcal{R}}_\Lambda$, we first note that, by \cref{lem:technical}, we have $\tfrac{1}{x_{\alpha_i}}=\tfrac{\beta x_{-\alpha_i}-1}{x_{-\alpha_i}}$.
		Therefore,
		\begin{align*}
			(\delta_i(\mrm{St}_\lambda))|_{\mu}&=\frac{\mrm{St}_\lambda|_{\mu}+(\beta x_{-\alpha_i}-1)(r_i(\mrm{St}_\lambda|_{r_i (\mu)}))}{x_{-\alpha_i}}
			\\ & = \frac{\mrm{St}_\lambda|_{\mu}+(\beta x_{-\alpha_i}-1)(x_{\alpha_i}\delta_i-(\beta x_{\alpha_i}-1))(\mrm{St}_\lambda|_{r_i (\mu)})}{x_{-\alpha_i}}\\
			&= \frac{\mrm{St}_\lambda|_\mu-(\beta x_{-\alpha_i}-1)(\beta x_{\alpha_i}-1)\mrm{St}_\lambda|_{r_i( \mu)}}{x_{-\alpha_i}}+\frac{(\beta x_{-\alpha_i}-1)x_{\alpha_i}\delta_i(\mrm{St}_\lambda|_{r_i(\mu)})}{x_{-\alpha_i}}.
		\end{align*}
		Again by \cref{lem:technical}, we have $(\beta x_{-\alpha_i}-1)(\beta x_{\alpha_i}-1)=1$ and $\frac{(\beta x_{-\alpha_i}-1)x_{\alpha_i}}{x_{-\alpha_i}}=1$. 
		By the induction hypothesis, $\mrm{St}_\lambda|_\mu-\mrm{St}_\lambda|_{(i,i+1)\cdot \mu}$ is divisible by $x_{-\alpha_i}$ in $S_C\llbracket \Lambda\rrbracket_{F_C}\llbracket x_q\rrbracket$, and by \cref{lem:restricts}, the element $\delta_i(\mrm{St}_\lambda|_{r_i(\mu)})$ lies in $S_C\llbracket \Lambda\rrbracket_{F_C}\llbracket x_q\rrbracket$. We conclude that $\delta_i(\mrm{St}_\lambda)$ lies in $\widetilde{\mcal{R}}_\lambda$.
		
		To prove that $\delta_i(\mrm{St}_\lambda)|_\mu-\delta_i(\mrm{St}_\lambda)|_{(j,k)\cdot \mu}$
		is divisible by $x_{y_j-y_k}$ in $S_C\llbracket \Lambda\rrbracket_{F_C}\llbracket x_q\rrbracket$, there are two cases to consider. 
		
		\underline{Case I}: Assume $(j,k)\neq r_i$. There exist $j',k'$ such that $r_i\cdot (j,k)=(j',k')\cdot r_i$. We compute
		\begin{align*}
			&\delta_i(\mrm{St}_\lambda)|_\mu-\delta_i(\mrm{St}_\lambda)|_{(j,k)\cdot \mu}\\
			&=\frac{\mrm{St}_\lambda|_{\mu}+(\beta x_{-\alpha_i}-1)r_i(\mrm{St}_\lambda|_{r_i\cdot \mu})-\mrm{St}_\lambda|_{(j,k)\cdot \mu}-(\beta x_{-\alpha_i}-1)r_i(\mrm{St}_\lambda|_{r_i\cdot (j,k)\cdot \mu})}{x_{-\alpha_i}}\\
			&=\frac{\mrm{St}_\lambda|_{\mu}-\mrm{St}_\lambda|_{(j,k)\cdot \mu}+(\beta x_{-\alpha_i}-1)r_i(S_\lambda|_{r_i\cdot \mu}-\mrm{St}_\lambda|_{r_i\cdot (j,k)\cdot \mu})}{x_{-\alpha_i}}\\
			&=\frac{\mrm{St}_\lambda|_{\mu}-\mrm{St}_\lambda|_{(j,k)\cdot \mu}+(\beta x_{-\alpha_i}-1)r_i(\mrm{St}_\lambda|_{r_i\cdot \mu}-\mrm{St}_\lambda|_{(j',k')\cdot r_i\cdot \mu})}{x_{-\alpha_i}}
		\end{align*}
		By the induction hypothesis, the element $x_{y_{j'}-y_{k'}}$ divides $\mrm{St}_\lambda|_{r_i\cdot \mu}-\mrm{St}_\lambda|_{(j',k')\cdot r_i\cdot \mu}$ in $S_C\llbracket \Lambda\rrbracket_{F_C}\llbracket x_q\rrbracket$. Hence, $x_{y_j-y_k}=r_i(x_{y_{j'}-y_{k'}})$ divides $r_i(\mrm{St}_\lambda|_{r_i\cdot \mu}-\mrm{St}_\lambda|_{(j',k')\cdot r_i\cdot \mu})$ in $S_C\llbracket \Lambda\rrbracket_{F_C}\llbracket x_q\rrbracket$. We also have by the induction hypothesis that $x_{y_j-y_k}$ divides $\mrm{St}_\lambda|_{\mu}-\mrm{St}_\lambda|_{(j,k)\cdot \mu}$ in $S_C\llbracket \Lambda\rrbracket_{F_C}\llbracket x_q\rrbracket$.
		Thus, we conclude that
		$x_{y_j-y_k}$ divides $	x_{-\alpha_i}(\delta_i(\mrm{St}_\lambda)|_\mu-\delta_i(\mrm{St}_\lambda)|_{(i,j)\cdot \mu})$ in $S_C\llbracket \Lambda\rrbracket_{F_C}\llbracket x_q\rrbracket$. Since $(j,k)\neq r_i$, it follows from 
		\cref{lem:does-not-divide} that $x_{y_j-y_k}$ does \textit{not} divide $x_{-\alpha_i}$. Therefore, $x_{y_j-y_k}$ must divide $\delta_i(\mrm{St}_\lambda)|_\mu-\delta_i(\mrm{St}_\lambda)|_{(i,j)\cdot \mu}$, which is what we wanted.
		
		\underline{Case II}: Assume that $(j,k)=r_i$. By the induction hypothesis, we can assume $\mrm{St}_{\lambda}|_{\mu}-\mrm{St}_{\lambda}|_{r_i\cdot \mu}=x_{-\alpha_i}D$ for some $D\in S_C\llbracket \Lambda\rrbracket_{F_C}\llbracket x_q\rrbracket$. We compute
		\begin{align*}
			&\delta_i(\mrm{St}_\lambda)|_\mu-\delta_i(\mrm{St}_\lambda)|_{r_i\cdot \mu}\\
			&=\frac{\mrm{St}_\lambda|_{\mu}+(\beta x_{-\alpha_i}-1)r_i(\mrm{St}_\lambda|_{r_i\cdot \mu})-\mrm{St}_\lambda|_{r_i\cdot \mu}-(\beta x_{-\alpha_i}-1)r_i(\mrm{St}_\lambda|_{ \mu})}{x_{-\alpha_i}}\\
			&=\frac{\mrm{St}_\lambda|_{\mu}-\mrm{St}_\lambda|_{r_i\cdot \mu}}{x_{-\alpha_i}}-\frac{(\beta x_{-\alpha_i}-1)r_i(\mrm{St}_\lambda|_{\mu}-\mrm{St}_\lambda|_{r_i\cdot \mu})}{x_{-\alpha_i}}\\
			&=\frac{x_{-\alpha_i}D}{x_{-\alpha_i}}-\frac{(\beta x_{-\alpha_i}-1) x_{\alpha_i}r_i(D)}{x_{-\alpha_i}}\\
			&=D-r_i(D).
		\end{align*}
		In the last equality, we used \cref{lem:technical} to deduce that $\frac{(\beta x_{-\alpha_i}-1) x_{\alpha_i}}{x_{-\alpha_i}}=1$. It follows from \cite[Corollary 3.4]{CPZ} that $D-r_i(D)$ is divisible by $x_{\alpha_i}$ in $S_C\llbracket \Lambda\rrbracket_{F_C}\llbracket x_q\rrbracket$, which completes the proof.
	\end{proof}
	
	The following remark will be used in \cref{subsection:geometric-stabclasses}.

	\begin{rmk}\label{rmk:general-motivic-Chern-classes}
		One can define classes analogous to the $\mrm{St}_\lambda$ for \textit{any} formal group law $(S,F)$. Indeed, fix a formal group law $(S,F)$, and fix a reduced expression $w=s_{i_1}\dotsc s_{i_k}$ for every $w\in W^\Theta$. Set $I_w:=(s_{i_1},\dotsc,s_{i_k})$. Consider the ring $\widetilde{\mcal{R}}_\Lambda^{(S,F)}:=\oplus_{w\in W^\Theta} S\llbracket\Lambda\rrbracket_F\llbracket x_q\rrbracket$, which has a natural action of $W^\Theta$ given by $w\cdot(f_\lambda)_{\lambda}:=(w(f_\lambda))_{w(\lambda)}$ for all $w\in W^\Theta$. Define the element $\mrm{St}_{I_{w_0}}^{(S,F)}\in \widetilde{\mcal{R}}_\Lambda^F$ by
		\[\mrm{St}_{I_{w_0}}^{(S,F)}|_\lambda=\begin{cases}
			\prod_{\alpha\in\Sigma_\Theta^+} x_{-\alpha},& \text{if $\lambda=w_0$};\\
			0,&\text{otherwise.}
		\end{cases}\]
		Set $\mrm{St}_{I_{w\cdot w_0}}^{(S,F)}:=\partial_{I_w}(\mrm{St}_{I_{w_0}}^{(S,F)})$, where the notation $\partial_{I_w}$ means $\partial_{I_w}:=\partial_{i_1}\circ\dotsb\circ \partial_{i_k}$ when $I_w=(s_{i_1},\dotsc,s_{i_k})$. We saw in \cref{prop:independent} that the $\partial_{I_w}$ are independent of the choices of reduced words $I_w$ if and only if $(S,F)$ is the additive, multiplicative-periodic, or connective formal group law over $S$. Thus, the $\mrm{St}_{I_w}^{(S,F)}$ are independent of the choices of reduced words $I_w$ if and only if $(S,F)$ is the additive, multiplicative-periodic, or connective formal group law over $S$. When $(S,F)=(S_C,F_C)$ (resp. $(S,F)=(S_M,F_M)$; resp. $(S,F)=(S_A,F_A)$), the class $\mrm{St}_{I_{w}}^{(S_C,F_C)}$ is the connective motivic Chern class (resp. motivic Chern class; resp. Chern-Schwartz-MacPherson class) of the Schubert cell $X_w^\circ$.
		
		The analogue of \cref{prop:GKM} holds for the classes $\mrm{St}_{I_{w}}^{(S,F)}$. To be precise, assume $j,k\in\{1,\dotsc,n\}$ satisfy $j<k$, and suppose $y_j-y_k=\alpha_j+\dotsb+\alpha_k$ lies in $\Sigma_\Theta$. Then $\mrm{St}_{I_\lambda}^{(S,F)}$ lies in $\widetilde{\mcal{R}}_\Lambda^{(S,F)}$, and the difference $\mrm{St}_{I_\lambda}^{(S,F)}|_{\mu}-\mrm{St}^{(S,F)}_{I_\lambda}|_{(j,k)\cdot \mu}$ is divisible by $x_{y_j-y_k}$ in $S\llbracket \Lambda\rrbracket_{F}\llbracket x_q\rrbracket$, where $(j,k)$ is the transposition in $S_n$ that swaps $j$ and $k$ and fixes all other $i$. The proof of this statement is identical to the proof of \cref{prop:GKM}, noticing that $\tfrac{x_{-\alpha}}{x_{\alpha}}=g(\alpha)$ is a unit in $S\llbracket \Lambda\rrbracket_{F}\llbracket x_q\rrbracket$, as $g(\alpha)$ has constant term $-1$ by \cref{lem:formal-inverse}. A similar statement is proven in \cite[Theorem 10.7]{CZZ2}.
	\end{rmk}
	
	\subsection{The general puzzle rule}\label{subsection:general-puzzle-rule}
	In this subsection, we will prove a combinatorial formula in terms of Knutson-Tao puzzles for the structure constants in the basis of connective motivic Segre classes when $d=1$. This result essentially follows from \cref{thm:main} and the injectivity result \cref{lem:injects-into-localization}.
	
	We will make the following assumption for the rest of this section.
	
	\begin{assumption}
		For the rest of this section, we will assume that $d=1$.
	\end{assumption}

	\begin{dfn}\label{dfn:homogenization2}
		The \textbf{homogenization} of $S_\lambda$ by a factor $x_q$ is $x_q^{\ell(\lambda)}S_\lambda$, where $\ell(\lambda)$ is the length of $\lambda\in W^\Theta$.
	\end{dfn}

	Analogously to \cref{section:deformation}, we would like to compute the structure constants in the $S_\lambda$ basis. Below we list the fugacities of the rhombi and triangle tiles. Let $\lambda\in\Lambda$, and let $Q(\beta,x_q)=\beta+x_q^2-x_q^2\beta$.
	
	\[
	\begin{gathered}
		\rh0000=
		\rh1111=
		\rh0101=
		\rh1{10}1{10}=
		\rh{10}0{10}0=1
	\end{gathered}\quad\quad 
	\begin{gathered}
		\rh{10}{10}{10}{10}=Q(\beta,x_q)
	\end{gathered}
	\]
	\[
	\begin{gathered}
		\rh1{10}00=
		\rh{10}011=\frac{1-x_q^2}{1-x_q^2(1-x_\lambda)}
	\end{gathered}\quad\quad 
	\begin{gathered}
		\rh1010=\frac{x_qx_\lambda}{1-x_q^2(1-x_\lambda)}
	\end{gathered}
	\]
	\[
	\begin{gathered}
		\rh11{10}0=
		\rh001{10}=\frac{(1-x_q^2)(1-\beta x_{\lambda})}{1-x_q^2(1-x_\lambda)}
	\end{gathered}\quad\quad 
	\begin{gathered}
		\rh{10}{10}01=\frac{x_q(x_q^2-1)}{1-x_q^2(1-x_\lambda)}
	\end{gathered}	
	\]
	\[\begin{gathered}
		\rh0{10}0{10}=
		\rh{10}1{10}1=\frac{x_qQ(\beta,x_q)x_\lambda}{1-x_q^2(1-x_\lambda)}
	\end{gathered}\quad\quad
	\begin{gathered}
		\rh01{10}{10}=\frac{Q(\beta,x_q)(x_q^2-1)(1-\beta x_\lambda)}{x_q(1-x_q^2)(1-x_\lambda))}
	\end{gathered}
	\]
	\[
	\uptri000=
	\uptri111=
	\uptri01{10}=
	\uptri{10}01=
	\uptri1{10}0=1 \quad\quad
	\uptri{10}{10}{10}=\frac{-Q(\beta,x_q)}{x_q}
	\]
	
	\[
	\downtri000=
	\downtri111=
	\downtri01{10}=
	\downtri{10}01=
	\downtri1{10}0=1\quad \quad
	\downtri{10}{10}{10}= - x_q
	\]
	
	\begin{theo}\label{thm:main2}
		The product of two classes $S_\lambda$ and $S_\mu$ in $\widetilde{\mcal{R}}_\Lambda^{\mrm{loc}}$ is given by the ``puzzle'' formula
		\begin{equation}\label{eq:main2}
			(x_q^{\ell(\lambda)}S_\lambda)(x_q^{\ell(\mu)} S_\mu)=
			\sum_\nu \tikz[scale=1.8,baseline=0.5cm]{\uptri{\lambda}{\nu}{\mu}}
			\ (x_q^{\ell(\nu)}S_\nu)  
		\end{equation}
	\end{theo}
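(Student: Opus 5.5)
The plan is to deduce \cref{thm:main2} from \cref{thm:main} by the injectivity of \cref{lem:injects-into-localization}, as the introduction to this subsection suggests. Both sides of (\ref{eq:main2}) live in $\widetilde{\mcal{R}}_\Lambda^{\mrm{loc}}$, a direct sum of copies of the integral domain $(S_C\llbracket\Lambda\rrbracket_{F_C}\llbracket x_q\rrbracket)^{\mrm{loc}}$. I would first check that this localized ring still injects into its further localization at $\beta$. Since $\beta\neq 0$ in an integral domain, this follows exactly as in \cref{lem:injects-into-localization}. It therefore suffices to prove the identity after inverting $\beta$, i.e.\ after applying the isomorphism (\ref{eqn:iso1}), which sends $x_\lambda\mapsto\beta^{-1}(1-e^\lambda)$. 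We may then treat $x_q$ formally as $q$, since $\partial_i$, $S_{w_0}$ and the fugacities are all rational in $x_q$ and the identity is one of rational functions.

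The second step is to identify the images. By \cref{egg:connective:beta}, under (\ref{eqn:iso1}) with $x_q\mapsto q$, the classes $S_\lambda$ of this subsection go to the deformed classes of \cref{subsection:motive-Segre}. Hence $x_q^{\ell(\lambda)}S_\lambda$ goes to $q^{\ell(\lambda)}S_\lambda$.

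Next I would check, tile by tile, that the fugacities listed above become those of \cref{subsection:fugacities} after the rescaling $z=e^{-\alpha}$. This substitution gives $1-\beta x_\lambda\mapsto e^{\lambda}$ and $1-x_q^2(1-x_\lambda)\mapsto \beta^{-1}(\beta-q^2\beta+q^2 e^{\lambda})$, so the denominators become $\beta^{-1}(Q(\beta,q)-q^2 z)$ up to the sign convention $\lambda=-\alpha$ for the root attached to the rhombus position. For example, $\frac{1-x_q^2}{1-x_q^2(1-x_\lambda)}\mapsto\frac{\beta(1-q^2)}{Q-q^2z}$, and $\frac{x_q x_\lambda}{\cdots}\mapsto \frac{q(1-z)}{Q-q^2z}$. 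The triangle fugacities match verbatim.

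The main obstacle is this bookkeeping. I expect the rhombus $\rh01{10}{10}$, whose displayed formula appears garbled, to need the correct reading, namely the one whose image is $\frac{\beta Q(q^2-1)z}{q(Q-q^2z)}$. I would also have to match the choice of $\lambda$ per rhombus, the root $y_i-y_j$ versus $y_j-y_i$, against the convention $z=e^{y_j-y_i}$. If a tile fails to match, I would recompute it as the preimage of the \cref{subsection:fugacities} fugacity under (\ref{eqn:iso1}).

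Once the fugacities match, the puzzle sum maps to the puzzle sum of \cref{thm:main}. Then (\ref{eq:main}) gives the identity in the $\beta$-localization, and injectivity returns (\ref{eq:main2}). Along the way I would note that the structure constants are a priori in the localized ring, so no integrality issue arises.
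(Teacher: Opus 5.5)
Your proposal is correct and follows the paper's proof. You push (\ref{eq:main2}) through the $\beta$-localization isomorphism (\ref{eqn:iso1}), identify the images of the classes and fugacities with those of \cref{section:puzzle-formula}, invoke \cref{thm:main}, and pull back by the injectivity of \cref{lem:injects-into-localization}; your extra care with the localized ring, the substitution $x_q\mapsto q$, and the misprinted rhombus fugacity fills in details the paper glosses over. One small slip: $1-x_q^2(1-x_\lambda)$ maps to $\beta^{-1}(Q(\beta,q)-q^2e^{\lambda})$, not $\beta^{-1}(\beta-q^2\beta+q^2e^{\lambda})$, though the denominator you go on to use is the correct one.
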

	\begin{proof}
		Observe that the images of the puzzle piece fugacities and classes $S_\lambda$ under the localization at $\beta$ map $S_C\llbracket\Lambda\rrbracket_{F_C}\llbracket x_q\rrbracket\to \widehat{K}_{\widehat{T}}(\mrm{pt})[\beta,\beta^{-1}]$ are precisely the puzzle fugacties and deformed motivic Segre classes from \cref{section:puzzle-formula}. (Note that ${K}_{{T}}(\mrm{pt})[\beta,\beta^{-1}]$ injects into the completion $\widehat{K}_{{T}}(\mrm{pt})[\beta,\beta^{-1}]$; see \cref{lem:inclusion of rings} and \cref{rmk:inclusion of rings0}). In particular, the image of (\ref{eq:main2}) under the localization at $\beta$ map is precisely (\ref{eq:main}). As (\ref{eq:main}) holds and the localization at $\beta$ map is injective (\cref{lem:injects-into-localization}), it immediately follows that (\ref{eq:main2}) holds as well. 
	\end{proof}
	
	\begin{rmk}
		It follows from \cref{egg:connective:beta} that the puzzle rule (\ref{eq:main2}) specializes to the puzzle rule in \cref{thm:main} after applying the map (\ref{eqn:iso1}), and it follows from \cref{egg:H} that the puzzle rule (\ref{eq:main2}) specializes to a puzzle rule for multiplying Segre-Schwartz-MacPherson classes of Schubert cells after setting $\beta=0$.
	\end{rmk}
	
	\begin{rmk}\label{rmk:positive2}
		Consider the submonoid $M$ of $S_C\llbracket\Lambda\rrbracket_{F_C}\llbracket x_q\rrbracket$ under addition, defined as the set of sums of products of the factors
		\[-x_q^{\pm1}\qquad Q(\beta,x_q)\qquad  1-\beta x_{-\alpha}\qquad \tfrac{1-x_q^2}{1-x_q^2(1-x_{-\alpha})}\qquad -\tfrac{x_{-\alpha}}{1-x_q^2(1-x_{-\alpha})}\]
		over all $\alpha\in\Sigma_\Theta^+$. Then $M$ is a positivity monoid in $(S_C\llbracket\Lambda\rrbracket_{F_C}\llbracket x_q\rrbracket)^{\text{loc}}$. To see this, first note that the images of these factors under the localization at $\beta$ map $S_C\llbracket\Lambda\rrbracket_{F_C}\llbracket x_q\rrbracket\to \widehat{K}_{{T}}(\mrm{pt})\llbracket x_q\rrbracket[\beta,\beta^{-1}]$ are precisely the factors from Remark \ref{rmk:positive}. (Note that ${K}_{{T}}(\mrm{pt})[\beta,\beta^{-1}]$ injects into the completion $\widehat{K}_{{T}}(\mrm{pt})[\beta,\beta^{-1}]$; see \cref{lem:inclusion of rings} and \cref{rmk:inclusion of rings0}). As the localization at $\beta$ map is injective (\cref{lem:injects-into-localization}), and as the images of the factors above form a positivity monoid in ${K}_{\widehat{T}}^{\text{loc}}(\mrm{pt})[\beta,\beta^{-1}]$, it follows that $M$ is a positivity monoid as well. 
	\end{rmk}

	\section{Equivariant algebraic cobordism and localization techniques}\label{section:localization-techniques}
	
	In this section, we prove that the connective motivic Chern classes of Schubert cells defined in \cref{subsection:second-construction} can be interpreted as elements in a quotient of the equivariant algebraic cobordism ring of $T^*(G/P_\Theta)$. In \cref{subsection:algebraic-cobordism}, we give an overview of algebraic cobordism. In \cref{subsection:equivariant-algebraic-cobordism}, we give an overview of \textit{equivariant} algebraic cobordism, with an emphasis on torus-equivariant algebraic cobordism. In \cref{subsection:geometric-stabclasses}, we show that the connective motivic Chern classes defined in \cref{subsection:second-construction} come from canonical elements in a quotient of the $\widehat{T}$-equivariant algebraic cobordism ring of $T^*(G/P_\Theta)$. 
	
	\subsection{Algebraic cobordism}\label{subsection:algebraic-cobordism}
	In this subsection, we give an overview of algebraic cobordism, following the exposition in \cite{LM07}.

	Let $\textbf{Sch}_{\mbb{C}}$ be the category consisting of schemes that are separated and of finite type over $\mbb{C}$, and let $\textbf{Sm}_{\mbb{C}}$ be the full subcategory of $\textbf{Sch}_{\mbb{C}}$ consisting of schemes that are smooth and quasi-projective over $\mbb{C}$. Let $\textbf{Ring}^*$ be the category of graded, commutative, unital rings.
	In \cite{LM07}, the authors defined the notion of an \textit{oriented cohomology theory}, which is a contravariant functor $\mrm{h}^*\colon \textbf{Sm}_{\mbb{C}}\to \textbf{Ring}^*$ satisfying several cohomological axioms, such as localization, homotopy invariance, existence of push-forwards along projective morphisms (see \cite[Def. 1.1.2]{LM07}). A \textit{morphism} of oriented cohomology theories is a natural transformation of functors that commutes with the push-forwards. 
	
	Given an oriented cohomology theory $\mrm{h}^*$ and vector bundle $E\to X$ in $\textbf{Sm}_{\mbb{C}}$, there are unique elements $c_{i}(E)\in \mrm{h}^i(X)$, where $i=0,1,\dotsc,n$, characterized by the axioms in \cite[pp. 3]{LM07}, which include the ``Whitney sum formula," and the requirement that the $c_i$ commute with pullbacks. Every oriented cohomology theory $\mrm{h}^*$ is equipped with a formal group law $F$ over $S=\mrm{h}^*(\mrm{pt})$, where $\mrm{pt}=\mrm{Spec}(\mbb{C})$ is the point. This formal group law is characterized by the following formula, called the \textit{Quillen formula}: given line bundles $\mcal{L}_1\to X$, $\mcal{L}_2\to X$ in $\textbf{Sm}_{\mbb{C}}$, we have $c_1(\mcal{L}_1\otimes\mcal{L}_2)=F(c_1(\mcal{L}_1),c_1(\mcal{L}_2))$. 
	
	\begin{egg}
		The functor $\mrm{CH}^*\colon \textbf{Sm}_{\mbb{C}}\to \textbf{Ring}^*$ that sends a smooth scheme $X$ to its Chow ring $\mrm{CH}^*(X)$ of algebraic cycles modulo rational equivalence is an oriented cohomology theory. The formal group law associated to $\mrm{CH}^*$ is the additive formal group law $F_A(x,y)=x+y$ over $\mrm{CH}^*(\mrm{pt})\simeq \mbb{Z}$.
	\end{egg}
	
	\begin{egg}
		Consider the $K^0$ functor\footnote{Although a common notation for this functor is $K_0$, we follow the notation $K^0$ that is used in \cite{LM07}.} that sends a smooth variety $X$ to the Grothendieck ring $K^0(X)$ of vector bundles on $X$. Form the graded ring $K^*(X):=K^0(X)\otimes_{\mbb{Z}}\mbb{Z}[\beta,\beta^{-1}]$, where $\mrm{deg}(\beta)=-1$. Then the functor $K^*\colon \textbf{Sm}_{\mbb{C}}\to \textbf{Ring}^*$ that sends a smooth scheme $X$ to $K^*(X)$ is an oriented cohomology theory. The formal group law associated to $K^*$ is the multiplicative-periodic formal group law $F_M(x,y)=x+y-\beta xy$ over $\mrm{K}^*(\mrm{pt})\simeq \mbb{Z}[\beta,\beta^{-1}]$.
	\end{egg}
	
	For a finite-dimensional complex manifold $X$, one can define the \textit{complex cobordism ring} $MU^*(X)$ of $X$; see \cite[\S 1]{Q71}, \cite[\S 3]{BE92}, and \cite[\S 1]{T97} for exposition. In \cite{Q69}, Quillen showed that complex cobordism is the ``universal" complex-oriented cohomology theory, and the axioms given in \cite{LM07} to define an (algebraic) oriented cohomology theory were inspired by the axioms that Quillen gave in \cite{Q69} to define a ``complex-oriented cohomology theory." Complex cobordism also defines an (algebraic) oriented cohomology theory: 
	
	\begin{egg}
		Given $X\in\textbf{Sm}_{\mbb{C}}$, let $X(\mbb{C})$ be the set of complex points of $X$. Then $X(\mbb{C})$ is a complex manifold. The functor $MU^*$ that sends $X\in\textbf{Sm}_{\mbb{C}}$ to the complex cobordism ring $MU^*(X(\mbb{C}))$ is an oriented cohomology theory. The formal group law associated with $MU^*$ is the universal formal group law $F_U$ over the Lazard ring $MU^*(\mrm{pt})\simeq \mbb{L}^*$.
	\end{egg}
	
	In analogy to Quillen's result that complex cobordism is the universal complex-oriented cohomology theory, Levine--Morel proved in \cite{LM07} that there is a ``universal" oriented cohomology theory, which they called \textit{algebraic cobordism} $\Omega^*$:
	\begin{theo}(\cite[Theorem 1.2.6]{LM07})
		There exists an oriented cohomology theory $\Omega^*$, called \textbf{algebraic cobordism}, satisfying the following universal property:
		for any oriented cohomology theory $\mrm{h}^*$, there is a unique morphism of oriented cohomology theories $\Omega^*\to \mrm{h}^*$.
		The algebraic cobordism ring of a point $\Omega^*(\mrm{pt})$ is isomorphic to the Lazard ring $\mbb{L}$, and the formal group law associated to $\Omega^*$ is the universal formal group law $F_U$ over $\mbb{L}$.
	\end{theo}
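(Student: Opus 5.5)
This statement is the Levine--Morel existence and universality theorem, quoted from \cite[Theorem 1.2.6]{LM07}. In the paper it serves as background and will simply be cited. If I were to reconstruct the proof, I would follow the Levine--Morel construction, which proceeds in three steps.

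\textbf{Step 1: build $\Omega_*$.} For $X\in\textbf{Sch}_{\mbb{C}}$, first form the free abelian group on cobordism cycles $[f\colon Y\to X, L_1,\dotsc,L_r]$. Here $Y$ is smooth and irreducible, $f$ is projective, and the $L_i$ are line bundles on $Y$. Impose the dimension axiom, which kills cycles with too many line bundles relative to $\dim Y$, and the section axiom, which identifies $c_1(L)$ with the pushforward from the zero locus of a transverse section. Then tensor with $\mbb{L}$ and impose the formal group law relation $\tilde c_1(L\otimes M)=F_U(\tilde c_1(L),\tilde c_1(M))$. Pushforwards and external products are defined at the level of cycles. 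On $\textbf{Sm}_{\mbb{C}}$, setting $\Omega^*(X):=\Omega_{\dim X-*}(X)$ gives the cohomological theory.

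\textbf{Step 2: verify the axioms of \cite[Def. 1.1.2]{LM07}.} The key axioms are pullbacks along lci morphisms, the projective bundle formula, homotopy invariance and the localization sequence. Pullbacks along smooth morphisms are immediate. General pullbacks go through deformation to the normal cone and a moving lemma. Together these yield a contravariant functor with projective pushforwards. The main obstacle sits here: making the pullback well defined requires resolution of singularities and weak factorization, which is why characteristic $0$ over $\mbb{C}$ is needed.

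\textbf{Step 3: universality and the coefficient ring.} Given an oriented cohomology theory $\mrm{h}^*$ with formal group law $F_{\mrm h}$ over $\mrm{h}^*(\mrm{pt})$, I would define the map by
\[
[f\colon Y\to X,L_1,\dotsc,L_r]\mapsto f_*\bigl(c_1^{\mrm h}(L_1)\cdots c_1^{\mrm h}(L_r)\bigr),
\]
with $\mbb{L}$ acting through the classifying map $\mbb{L}\to \mrm{h}^*(\mrm{pt})$ of $F_{\mrm h}$ (\cref{rmk:induced-homomorphism}). The map respects all the relations, because those relations hold in any oriented theory. It is unique, because the classes $f_*(1)$ together with first Chern classes generate $\Omega^*$.

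For $\Omega^*(\mrm{pt})\simeq\mbb{L}$, the natural map $\mbb{L}\to\Omega^*(\mrm{pt})$ is surjective by the generalized degree formula, which is proved via the localization sequence and induction on dimension. Injectivity follows by composing with the map to $MU^*(\mrm{pt})\simeq\mbb{L}$ using Quillen's theorem. Finally, the formal group law of $\Omega^*$ is $F_U$ by construction, since the relation imposed in Step 1 is precisely $F_U$.
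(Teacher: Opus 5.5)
Your proposal matches the paper, which gives no proof of this statement and simply quotes Levine--Morel. Your outline is a fair summary of their argument:
\begin{itemize}
\item cobordism cycles taken modulo the dimension, section and formal group law relations;
\item the oriented-theory axioms established in characteristic zero;
\item universality via $[f\colon Y\to X,L_1,\dotsc,L_r]\mapsto f_*(c_1(L_1)\cdots c_1(L_r))$;
\item injectivity of $\mathbb{L}\to\Omega^*(\mathrm{pt})$ through the realization to $MU^{2*}$ and Quillen's theorem.
\end{itemize}
As a sketch, though, it passes over two genuinely hard points. First, the universal map must be natural for pullbacks, not merely compatible with pushforwards and the defining relations. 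Second, surjectivity of $\mathbb{L}\to\Omega^*(\mathrm{pt})$ cannot come from localization and induction on dimension alone. The generalized degree formula for $X=\mathrm{pt}$ is exactly that surjectivity statement, so the base case of such an induction needs a separate argument.
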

	
	\begin{rmk}
		The functors $\Omega^*$ and $MU^*$ are \textit{not} isomorphic, yet they have the same formal group law. This says that an oriented cohomology theory is not uniquely identified by its formal group law.
	\end{rmk}
	
	The algebraic cobordism ring $\Omega^*(X)$ of a smooth variety $X$ is a ring that is generated by projective morphisms $Y\to X$ from smooth, irreducible, and quasi-projective varieties, modulo certain relations. The group $\Omega^k(X)$ is generated by classes $[f\colon Y\to X]$, such that $k=\mrm{dim}(Y)-\mrm{dim}(X)$. The pullback $p^*\colon \Omega^*(\mrm{pt})\to\Omega^*(X)$ along the structure map $p\colon X\to\mrm{pt}$ gives $\Omega^*(X)$ the structure of a graded $\mbb{L}^*$-algebra. A good exposition on the Levine-Morel (\cite{LM07}) construction of the algebraic cobordism ring of an smooth $\mbb{C}$-scheme can be found in \cite[\S 2.1]{K12}. A more geometric construction of algebraic cobordism in terms of so-called ``double-point degenerations" was developed by Levine-Pandharipande \cite{LP09}, and a good exposition on this construction can be found in \cite[\S 2.2]{K12}.

	\begin{dfn}(\cite[page 39]{LM07})
		A \textbf{free oriented cohomology theory} is an oriented cohomology theory of the form $\mrm{h}^*(-):=\Omega^*(-)\otimes_{\mbb{L}}S$, where $S$ is a commutative unital ring. The formal group law associated to $\mrm{h}^*(-)$ is the formal group law induced by the ring homomorphism $\mbb{L}\to S$ (see \cref{rmk:induced-homomorphism}.)
	\end{dfn}
	
	We will see in \cref{theo:CH-iso} and \cref{theo:K-iso} that both $\mrm{CH}^*$ and $K^*$ are free oriented cohomology theories.
	
	\begin{theo}(\cite[Theorem 1.2.19]{LM07})\label{theo:CH-iso}
		The universal morphism $\Omega^*\to \mrm{CH}^*$ induces an isomorphism of oriented cohomology theories \[\Omega^*(-)\otimes_{\mbb{L}}\mbb{Z}\xrightarrow{\simeq}\mrm{CH}^*(-),\] where the morphism $\mbb{L}\to\mbb{Z}$ is induced by the additive formal group law $F_A(x,y)=x+y$ over $\mbb{Z}$ (see \cref{rmk:induced-homomorphism}.)
	\end{theo}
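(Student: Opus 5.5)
The approach is to build the comparison map from the universal property, prove surjectivity by resolution of singularities, and prove injectivity by writing down an explicit inverse $\mrm{CH}^*(X)\to \Omega^*(X)\otimes_{\mbb{L}}\mbb{Z}$. Since $\mrm{CH}^*$ is an oriented cohomology theory with the additive formal group law, the unique morphism $\Omega^*\to\mrm{CH}^*$ sends $\mbb{L}=\Omega^*(\mrm{pt})\to \mrm{CH}^*(\mrm{pt})=\mbb{Z}$ via the classifying map of $F_A$ (\cref{rmk:induced-homomorphism}). Hence it factors through a natural transformation
\[\vartheta\colon \Omega^*(-)\otimes_{\mbb{L}}\mbb{Z}\to \mrm{CH}^*(-),\]
which is automatically compatible with pullbacks, projective pushforwards and Chern classes. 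It is easiest to work throughout with the Borel--Moore version $\Omega_*$ on $\textbf{Sch}_{\mbb{C}}$ and use $\Omega^{d-*}=\Omega_*$ for smooth equidimensional $X$ of dimension $d$. This gives access to the localization exact sequence $\Omega_*(Z)\to\Omega_*(X)\to\Omega_*(X\setminus Z)\to 0$ for closed $Z\subset X$.

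\textbf{Surjectivity.} For an integral closed subvariety $Z\subset X$, choose a resolution of singularities $\widetilde Z\to Z$, which exists in characteristic $0$. This gives a projective morphism $\widetilde Z\to X$ from a smooth variety, and its class has image $[Z]$ under $\vartheta$, because proper pushforward in $\mrm{CH}$ of the fundamental class of a birational map is the fundamental class of the image. Since such $[Z]$ generate $\mrm{CH}^*(X)$, $\vartheta$ is surjective.

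\textbf{Injectivity.} I would define $\psi([Z]):=[\widetilde Z\to X]\otimes 1$ and show that it is well defined and inverse to $\vartheta$. This requires two things.

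\begin{enumerate}
\item $\psi$ does not depend on the chosen resolution. The tool is the \emph{generalized degree formula} of Levine--Morel. For a projective $f\colon Y\to X$ with $Y$ smooth and image $Z$, it says that $[Y\to X]-\deg(Y/Z)\,[\widetilde Z\to X]$ lies in $\sum_i \mbb{L}^{<0}\cdot[\widetilde{Z_i}\to X]$ for proper closed subvarieties $Z_i\subsetneq Z$. Tensoring with $\mbb{Z}$ kills $\mbb{L}^{<0}$. It follows that every generator $[Y\to X]\otimes1$ equals $\deg\cdot[\widetilde Z\to X]\otimes 1$ (zero if $\dim Z<\dim Y$), and also that the class is independent of the resolution.
\item $\psi$ respects rational equivalence. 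Given $W\subset X\times\mbb{P}^1$ dominating $\mbb{P}^1$, resolve to $\widetilde W\to \mbb{P}^1$ so that the fibers over $0$ and $\infty$ are strict normal crossing divisors. The relation $c_1(\mcal{O}(1))$ pulled back from $0$ and from $\infty$ gives equal classes in $\Omega_*$. By the formal group law formula, the class of an SNC divisor $\sum n_iD_i$ is $F$-combinations of pushforwards from the strata, and modulo $\mbb{L}^{<0}$ it reduces to $\sum n_i[D_i]$. Combined with step 1, this shows that $\psi(\mathrm{div} f)=0$.
\end{enumerate}

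Granting these, $\vartheta\circ\psi=\mathrm{id}$ by construction. The formula in step 1 shows that $\psi$ hits all generators of $\Omega^*(X)\otimes_{\mbb{L}}\mbb{Z}$, so $\psi\circ\vartheta=\mathrm{id}$.

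\textbf{Main obstacle.} The hard part is the generalized degree formula. I would prove it by Noetherian induction on $\dim Z$ using the localization sequence. The key input is that $\Omega_*(\operatorname{Spec} k(Z))$ is $\mbb{L}$ in degree shift $\dim Z$, via $\Omega_*(\mrm{pt})\cong\mbb{L}$ and a limit argument over open subsets of $Z$. Then the class of $Y$ restricted to the generic point of $Z$ is $\deg\cdot 1$ plus elements of $\mbb{L}^{<0}$, and the error is supported on a proper closed subset, to which induction applies.
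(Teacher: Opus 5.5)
Your outline is correct and is essentially the Levine--Morel proof that the paper cites without giving an argument of its own: surjectivity comes from resolution of singularities, and the inverse $[Z]\mapsto[\widetilde Z\to X]\otimes 1$ is made well defined by the generalized degree formula; rational equivalence is then handled by resolving a family over $\mathbb{P}^1$ so that the fibres over $0$ and $\infty$ are SNC, and their classes collapse to $\sum n_i[D_i]$ once the formal group law is additive. The one input to state more carefully is $\Omega_*(\operatorname{Spec} k(Z))\cong\mathbb{L}$: the limit argument only identifies $\varinjlim_U\Omega_{*+\dim Z}(U)$ with the cobordism of the point over the field $k(Z)$, so what you actually need is Levine--Morel's theorem $\mathbb{L}\cong\Omega^*(K)$ for an arbitrary field $K$ of characteristic zero (applied to $K=k(Z)$), not just the case $K=\mathbb{C}$.
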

	
	\begin{theo}(\cite[Theorem 1.2.18]{LM07})\label{theo:K-iso}
		The universal morphism $\Omega^*\to K^*$ induces an isomorphism of oriented cohomology theories \[\Omega^*(-)\otimes_{\mbb{L}}\mbb{Z}[\beta,\beta^{-1}]\xrightarrow{\simeq}K^*(-),\] where the morphism $\mbb{L}\to\mbb{Z}[\beta,\beta^{-1}]$ is induced by the multiplicative-periodic formal group law $F_M(x,y)=x+y-\beta xy$ over $\mbb{Z}[\beta,\beta^{-1}]$ (see \cref{rmk:induced-homomorphism}.)
	\end{theo}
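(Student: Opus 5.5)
This is Levine--Morel's theorem (\cite[Theorem 1.2.18]{LM07}), so within the paper I would cite it rather than reprove it. If I did write out an argument, I would follow their route, which comes from the universal property of $\Omega^*$.

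\textbf{Step 1: construct the map.} First, $K^*$ is an oriented cohomology theory with formal group law $F_M$ over $\mbb{Z}[\beta,\beta^{-1}]$. For this I would use the normalized first Chern class $c_1(\mcal{L})=\beta^{-1}(1-[\mcal{L}^\vee])$. By the universal property of $\Omega^*$ there is a unique morphism $\Omega^*\to K^*$. On $\Omega^*(\mrm{pt})=\mbb{L}$ this morphism is the classifying map of $F_M$, as in \cref{rmk:induced-homomorphism}. Hence it factors through a natural transformation
\[\vartheta\colon\Omega^*(-)\otimes_{\mbb{L}}\mbb{Z}[\beta,\beta^{-1}]\to K^*(-),\]
and $\vartheta$ is compatible with pullbacks, pushforwards and Chern classes.

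\textbf{Step 2: universality of the source.} Next I would show that $\Omega^*\otimes_{\mbb{L}}\mbb{Z}[\beta,\beta^{-1}]$ is the universal oriented theory whose formal group law is multiplicative-periodic. This uses Levine--Morel's description of $\Omega^*$ by generators $[Y\to X]$ and relations. Tensoring that presentation along $\mbb{L}\to\mbb{Z}[\beta,\beta^{-1}]$ gives a presentation of the free theory. Any theory with formal group law $F_M$ then receives a unique morphism from it.

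\textbf{Step 3: universality of $K^*$, and conclusion.} Then I would prove that $K^*$ has the same universal property. Following Levine--Morel, I would use a Riemann--Roch/Todd-class twisting argument together with Quillen's description of $K^0$ of smooth varieties. The target statement is that $K^0$ is generated by classes $f_*1=[f_*\mcal{O}_Y]$ for projective $f\colon Y\to X$ with $Y$ smooth. The relations among these classes should be exactly the formal-group relations, which are checked on projective bundles and on blow-ups. Since both theories satisfy the same universal property, $\vartheta$ is an isomorphism.

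\textbf{Main obstacle.} The hardest step is surjectivity and injectivity of $\vartheta$ on an arbitrary smooth $X$, not just on $\mrm{pt}$. For surjectivity I would start from resolution of singularities, which says coherent sheaves are generated by pushforwards of structure sheaves of smooth projective varieties. For injectivity I would use the localization sequences on both sides and induct on dimension via dévissage, reducing to the already known isomorphism over a point.
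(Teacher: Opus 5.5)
Citing \cite[Theorem 1.2.18]{LM07} is exactly what the paper does. The statement is quoted from Levine--Morel with no argument, so your first sentence is all that is needed here.

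The sketch you add, however, would not stand on its own as a proof. In Step~3 you invoke Riemann--Roch but never say which morphism $K^*\to A^*$ it is applied to. The ``target statement'' that the relations among the classes $f_*1$ in $K^0$ are exactly the formal-group relations is just a restatement of the injectivity of $\vartheta$. Checking those relations on projective bundles and blow-ups only re-proves that $\vartheta$ exists.

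Your fallback for injectivity fails as well. The localization sequence $\Omega_*(Z)\to\Omega_*(X)\to\Omega_*(U)\to 0$ is only right exact. Nothing on the cobordism side plays the role of $G_1(U)$ in $G_1(U)\to G_0(Z)\to G_0(X)\to G_0(U)\to 0$. So the diagram chase with noetherian induction gives surjectivity of $\vartheta$, but not injectivity. Two smaller points:
\begin{itemize}
\item The induction leaves $\mathbf{Sm}_{\mbb{C}}$ and bottoms out at function fields, not at a point.
\item Up to a power of $\beta$, the class $f_*1$ is $\sum_i(-1)^i[R^if_*\mcal{O}_Y]$, not $[f_*\mcal{O}_Y]$.
\end{itemize}

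What is missing is an explicit inverse. For smooth $X$, let $\psi\colon K^*(X)\to\Omega^*(X)\otimes_{\mbb{L}}\mbb{Z}[\beta,\beta^{-1}]$ be the $\mbb{Z}[\beta,\beta^{-1}]$-linear map with $\psi([E])=\sum_i(1-\beta x_i)^{-1}$. Here the $x_i$ are the Chern roots of $E$; the sum is a polynomial in the Chern classes, which are nilpotent. This map has the following properties:
\begin{itemize}
\item On line bundles it is $[L]\mapsto 1-\beta c_1(L^\vee)$.
\item It is additive by the Whitney formula and commutes with pull-backs.
\item It is multiplicative precisely because $1-\beta F_M(x,y)=(1-\beta x)(1-\beta y)$.
\end{itemize}
Since $\psi(c_1^K(L))=c_1(L)$, the Todd class is trivial, so Panin's Riemann--Roch theorem for oriented theories shows that $\psi$ also commutes with projective push-forwards.

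It then follows that $\psi\circ\vartheta=\mrm{id}$, by the uniqueness in the universal property of $\Omega^*$ together with $\mbb{Z}[\beta,\beta^{-1}]$-linearity. Also $\vartheta\circ\psi=\mrm{id}$, because it fixes every $[L]$, hence every $[E]$ by the splitting principle. No localization or d\'evissage is needed.
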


	Given a ring homomorphism $\mbb{L}\to S$, one could ask whether $\mrm{h}^*(-):=\Omega^*(-)\otimes_{\mbb{L}}S$ is an oriented cohomology theory. When $\mbb{L}\to \mbb{Z}[\beta]$ is induced by the connective formal group law $F(x,y)=x+y-\beta xy$ over $S=\mbb{Z}[\beta]$, the functor $\Omega^*(-)\otimes_{\mbb{L}}\mbb{Z}[\beta]$ does indeed define an oriented cohomology theory:
	
	\begin{dfn}(\cite{C08}, \cite{DL14})\label{dfn:CK-iso}
		The oriented cohomology theory $CK^*(-):=\Omega^*(-)\otimes_{\mbb{L}}\mbb{Z}[\beta]$ is called \textbf{algebraic connective $K$-theory}.
	\end{dfn}
	
	The oriented cohomology theory $CK^*(-)$ has a geometric description due to Cai \cite{C08} and Dai-Levine \cite{DL14}. In the papers \cite{C08}, \cite{DL14}, the authors defined the algebraic connective $K$-ring $CK^*(X)$ of a smooth variety $X$ in terms of coherent sheaves on $X$, showed that $CK^*$ defines an oriented cohomology theory, and proved that $CK^*(-)$ is isomorphic to $\Omega^*(-)\otimes_{\mbb{L}}\mbb{Z}[\beta]$. The geometric description of $CK^*(X)$ is not relevant for the current work, so we will refer the reader to the original sources (\cite{C08}, \cite{DL14}) for exposition on the geometric description of $CK^*(X)$ for a smooth variety $X$.
	
	\begin{lem}\label{lem:maps-to-both}
		Let $X\in\textbf{Sm}_{\mbb{C}}$. There are ring homomorphisms $\mrm{CK}^*(X)\to \mrm{CH}^*(X)$ and $\mrm{CK}^*(X)\to K^*(X)$.
	\end{lem}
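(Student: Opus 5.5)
The plan is to use the universal property of algebraic cobordism together with the free-theory descriptions of the three theories involved. By \cref{dfn:CK-iso}, $CK^*(X)=\Omega^*(X)\otimes_{\mbb{L}}\mbb{Z}[\beta]$. By \cref{theo:CH-iso}, $\mrm{CH}^*(X)\simeq\Omega^*(X)\otimes_{\mbb{L}}\mbb{Z}$, where $\mbb{L}\to\mbb{Z}$ classifies $F_A$. By \cref{theo:K-iso}, $K^*(X)\simeq\Omega^*(X)\otimes_{\mbb{L}}\mbb{Z}[\beta,\beta^{-1}]$, where $\mbb{L}\to\mbb{Z}[\beta,\beta^{-1}]$ classifies $F_M$. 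Both desired maps should therefore come from ring homomorphisms out of $\mbb{Z}[\beta]$ that are compatible with the classifying maps from $\mbb{L}$, by base change.

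First, let $\phi_C\colon\mbb{L}\to\mbb{Z}[\beta]$ classify $F_C(x,y)=x+y-\beta xy$, as in \cref{rmk:induced-homomorphism}. Consider the evaluation $\mrm{ev}_0\colon\mbb{Z}[\beta]\to\mbb{Z}$ with $\beta\mapsto 0$, and the inclusion $\iota\colon\mbb{Z}[\beta]\hookrightarrow\mbb{Z}[\beta,\beta^{-1}]$.

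Second, check that the triangles over $\mbb{L}$ commute. Applying $\mrm{ev}_0$ to the coefficients of $F_C$ gives $x+y=F_A$. Applying $\iota$ gives $x+y-\beta xy=F_M$. By the uniqueness in \cref{rmk:induced-homomorphism}, this means $\mrm{ev}_0\circ\phi_C=\phi_A$ and $\iota\circ\phi_C=\phi_M$. These are exactly the compatibilities needed to view $\mbb{Z}$ and $\mbb{Z}[\beta,\beta^{-1}]$ as $\mbb{L}$-algebras through $\mbb{Z}[\beta]$.

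Third, tensor over $\mbb{L}$. The maps
\[\mrm{id}\otimes\mrm{ev}_0\colon\Omega^*(X)\otimes_{\mbb{L}}\mbb{Z}[\beta]\to\Omega^*(X)\otimes_{\mbb{L}}\mbb{Z}\]
and
\[\mrm{id}\otimes\iota\colon\Omega^*(X)\otimes_{\mbb{L}}\mbb{Z}[\beta]\to\Omega^*(X)\otimes_{\mbb{L}}\mbb{Z}[\beta,\beta^{-1}]\]
are well defined. They are ring homomorphisms, because the tensor product of commutative $\mbb{L}$-algebras is functorial in each factor. Composing with the isomorphisms of \cref{theo:CH-iso} and \cref{theo:K-iso} gives $CK^*(X)\to\mrm{CH}^*(X)$ and $CK^*(X)\to K^*(X)$.

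Alternatively, the universal property \cite[Theorem 1.2.6]{LM07}, stating that a free theory is universal among theories with a given formal group law, would give morphisms of oriented cohomology theories directly. The base-change argument above suffices for ring homomorphisms.

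There is one point to handle with care, and I expect it to be the only step needing attention. The map $\mrm{ev}_0$ does not preserve the grading, since $\deg\beta=-1$. The map to $\mrm{CH}^*$ is nonetheless a ring homomorphism: it is just the quotient by $\beta$, which sends each graded piece $CK^k(X)$ to $\mrm{CH}^k(X)$ and kills $\beta$-multiples. The map to $K^*$ is graded, since $\iota$ preserves degrees.
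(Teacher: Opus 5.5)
Your proposal is correct and matches the paper's argument: you base-change $CK^*(X)=\Omega^*(X)\otimes_{\mathbb{L}}\mathbb{Z}[\beta]$ along $\mathbb{Z}[\beta]\to\mathbb{Z}[\beta]/(\beta)\simeq\mathbb{Z}$ and along $\mathbb{Z}[\beta]\to\mathbb{Z}[\beta,\beta^{-1}]$, then identify the targets via \cref{theo:CH-iso} and \cref{theo:K-iso}; your check that these maps are compatible with the classifying maps out of $\mathbb{L}$ makes explicit a step the paper leaves implicit. One small correction to your last paragraph: $\beta\mapsto 0$ does preserve the grading, since it is the quotient by the homogeneous ideal $(\beta)$ (your own description $CK^k(X)\to\mathrm{CH}^k(X)$ says exactly this), and it is evaluation at $\beta=1$ that would fail to be graded.
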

	\begin{proof}	
		Recall the ring homomorphisms $\mbb{Z}[\beta]\to (\mbb{Z}[\beta])_{\beta}=\mbb{Z}[\beta,\beta^{-1}]$ and $\mbb{Z}[\beta]\to \mbb{Z}[\beta]/(\beta)\to \mbb{Z}$. This result follows from \cref{theo:CH-iso}, \cref{theo:K-iso}, and the realization of $\mrm{CK}^*(X)$ as $\Omega^*(X)\otimes_{\mbb{L}^*}\mbb{Z}[\beta]$.
	\end{proof}
	
	\begin{rmk}\label{rmk:Hudson}
		Some aspects of Schubert calculus in algebraic connective $K$-theory have been explored in \cite{H14}, \cite{HM18}, \cite{HIMN17}, \cite{HIMN20}. An important application to combinatorics was provided by Hudson in \cite{H14}, where he showed that a family of polynomials defined by Fomin and Kirillov \cite{FK} can be interpreted geometrically as classes in the algebraic connective $K$-ring of the flag variety. As explained in \cite[\S 3.1]{H14}, and using the conventions of \cite{H14}, the Fomin-Kirillov polynomials interpolate between ``double Schubert polynomials" (at $\beta=0$) and ``double Grothendieck polynomials" (at $\beta=-1$).
	\end{rmk}
	
	\subsection{Equivariant algebraic cobordism}\label{subsection:equivariant-algebraic-cobordism}
	
	In this subsection, we will recall equivariant algebraic cobordism and various technical results surrounding this theory. Equivariant algebraic cobordism was constructed independently by Krishna \cite{K12} and Malag\'{o}n-L\'{o}pez--Heller \cite{HM13}. The two constructions are equivalent, as explained in \cite[Remark 14]{HM13}. We will recall the construction of equivariant algebraic cobordism due to Krishna \cite{K12}, closely following the exposition in \cite[pp. 393--394]{KK13}. This construction is based on the definition of the algebraic cobordism ring of a classifying spaces due to Deshpande \cite{D09}, and uses ideas from work of Totaro \cite{T9797} and Edidin-Graham \cite{EG98}. Many of the results in this section regarding torus actions come from ideas of Brion, where he proved analogous results for the torus-equivariant Chow ring \cite{B97} (for example, \cref{theo:cobordism-GKM}).

	Let $G$ be a connected linear algebraic group over $\mbb{C}$. Let $\mathbf{Sch}_{\mbb{C}}^{G}$ be the category whose objects are separated and finite-type schemes over $\mbb{C}$ with a linear $G$-action, and whose morphisms are $G$-equivariant maps of schemes. Let $\textbf{Sm}_{\mbb{C}}^G$ be the full subcategory of $\mathbf{Sch}_{\mbb{C}}^{G}$ whose objects are smooth and quasi-projective schemes.

	\begin{dfn}
		Given an integer $j\geq 0$, let $V$ be a finite-dimensional $G$-representation, and let $U\subseteq V$ be an open subset such that the codimension of the complement $U\setminus V$ in $V$ is at least $j$. The pair $(V,U)$ is called a \textbf{good pair} corresponding to $j$ if $G$ acts freely on $U$ and the quotient $U/G$ is a quasi-projective scheme. 
	\end{dfn}
	
	\begin{lem}(\cite[Lemma 9]{EG98})
		Let $j\geq 0$ be an integer. Then a good pair $(V,U)$ corresponding to $j$ exists.
	\end{lem}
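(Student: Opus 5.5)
The idea is to build a good pair directly from an embedding of $G$ into a general linear group, following Totaro and Edidin--Graham.

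First, choose a faithful finite-dimensional representation $W$ of $G$, so that $G\hookrightarrow \mathrm{GL}(W)=\mathrm{GL}_m$. This exists because $G$ is a linear algebraic group. Then take, for $N\geq m$, the representation
\[V:=\mathrm{Hom}(\mbb{C}^N,W)\cong W^{\oplus N},\]
with $G$ acting through its action on $W$. Let $U\subseteq V$ be the open subset of surjective linear maps. On $U$ the group $\mathrm{GL}_m$ acts freely, since a surjection $\phi$ fixed by $g$ forces $g=\mathrm{id}$ on $\mathrm{im}\,\phi=W$. Hence the closed subgroup $G$ also acts freely on $U$.

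Next, bound the codimension. The complement $V\setminus U$ is the determinantal locus of $m\times N$ matrices of rank $\le m-1$. Its codimension is $N-m+1$. Taking $N\geq m+j-1$ makes this codimension at least $j$.

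It remains to check that $U/G$ is a quasi-projective scheme, and this is the main step. The quotient $U/\mathrm{GL}_m$ is the Grassmannian of $(N-m)$-dimensional subspaces of $\mbb{C}^N$ (the kernels), and $U\to U/\mathrm{GL}_m$ is a principal $\mathrm{GL}_m$-bundle. I would then form $U/G$ as the associated bundle $U\times^{\mathrm{GL}_m}(\mathrm{GL}_m/G)$ over this Grassmannian. Since $\mathrm{GL}_m/G$ is quasi-projective (Chevalley) and the bundle is Zariski locally trivial, this gives a scheme.

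Quasi-projectivity is the delicate point, since descent of a quasi-projective fibre need not stay quasi-projective. To handle it, I would use Chevalley's theorem to choose a $\mathrm{GL}_m$-representation $E$ and a line $L\subseteq E$ whose stabilizer is $G$. This embeds $\mathrm{GL}_m/G$ equivariantly as a locally closed orbit in $\mathbb{P}(E)$. Then $U/G$ becomes a locally closed subscheme of the projective bundle $U\times^{\mathrm{GL}_m}\mathbb{P}(E)$ over the Grassmannian, which is projective. Therefore $U/G$ is quasi-projective.

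Finally, since $G$ is connected, no finiteness issues arise, and $(V,U)$ is a good pair corresponding to $j$.
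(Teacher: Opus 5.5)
Your proposal is correct and is essentially the standard Edidin--Graham argument that the paper cites without reproducing. That argument embeds $G$ in $\mathrm{GL}_m$, takes the full-rank $m\times N$ matrices fibred over the Grassmannian (whose complement has codimension $N-m+1$), realizes $U/G$ as the associated $\mathrm{GL}_m/G$-bundle, and obtains quasi-projectivity from a $\mathrm{GL}_m$-linearized ample line bundle on $\mathrm{GL}_m/G$, which your Chevalley embedding into $\mathbb{P}(E)$ supplies concretely. Two small remarks: you leave implicit that $U\to U\times^{\mathrm{GL}_m}(\mathrm{GL}_m/G)$ is a principal $G$-bundle (locally it is $\mathrm{id}\times(\mathrm{GL}_m\to\mathrm{GL}_m/G)$), and your final appeal to the connectedness of $G$ is not actually used anywhere.
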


	\begin{lem}(\cite[Theorem 6.1]{K12})\label{lem:sequence-of-good-pairs}
		There exists a sequence of good pairs $\{(V_j,U_j)\}_{j\geq 0}$ for $G$, such that 
		\begin{enumerate}
			\item $V_{j+1}=V_j\oplus W_j$ as representations for $G$ with $\mrm{dim}(W_j)>0$, and 
			\item $U_j\oplus W_j\subseteq U_{j+1}$ as $G$-invariant open subsets.
		\end{enumerate}
	\end{lem}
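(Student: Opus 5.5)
We need to prove the existence of a sequence of good pairs $\{(V_j,U_j)\}_{j\geq 0}$ for $G$ with $V_{j+1}=V_j\oplus W_j$, $\dim W_j>0$, and $U_j\oplus W_j\subseteq U_{j+1}$.

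The plan is to build the sequence by induction on $j$, using the existence of a single good pair (\cite[Lemma 9]{EG98}) and taking direct sums. First fix a faithful finite-dimensional representation $G\hookrightarrow \mathrm{GL}(E)$. By the standard construction of Edidin–Graham and Totaro, we may take $V=\mathrm{Hom}(\mathbb{C}^N,E)$ with $N\geq \dim E$, and $U\subseteq V$ the open set of surjective (full-rank) maps. Then $\mathrm{GL}(E)$, and hence $G$, acts freely on $U$. The quotient $U/G$ is quasi-projective: $U/\mathrm{GL}(E)$ is a Grassmannian, and $U/G\to U/\mathrm{GL}(E)$ is a fibration with fibre $\mathrm{GL}(E)/G$, which is quasi-projective by Chevalley. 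The complement $V\setminus U$ is the determinantal locus, of codimension $N-\dim E+1$, which grows without bound as $N\to\infty$.

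Next we set $V_j:=\mathrm{Hom}(\mathbb{C}^{N_j},E)$ with $N_j=\dim E+j$, so that $V_{j+1}=V_j\oplus W_j$ with $W_j=\mathrm{Hom}(\mathbb{C},E)\simeq E$; in particular $\dim W_j=\dim E>0$. We let $U_j$ be the locus of surjective maps. A map $(\phi,w)\in U_j\oplus W_j$ has $\phi$ already surjective, so the extended map $\mathbb{C}^{N_j+1}\to E$ is surjective as well. Hence $U_j\oplus W_j\subseteq U_{j+1}$, and this inclusion is $G$-invariant and open. The codimension of $V_j\setminus U_j$ is $j+1\geq j$, so $(V_j,U_j)$ is a good pair corresponding to $j$.

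The only nontrivial point is quasi-projectivity of $U_j/G$ for a general connected linear algebraic group $G$, rather than for $\mathrm{GL}$. I would handle it exactly as in \cite[Lemma 9]{EG98}. The quotient $U_j\to U_j/\mathrm{GL}(E)$ is a principal $\mathrm{GL}(E)$-bundle over a Grassmannian, which is locally trivial in the Zariski topology. Therefore $U_j/G=U_j\times^{\mathrm{GL}(E)}(\mathrm{GL}(E)/G)$ is a Zariski-locally trivial bundle with quasi-projective fibre over a projective base, and it carries a relatively ample line bundle (via a $\mathrm{GL}(E)$-linearized projective embedding of $\mathrm{GL}(E)/G$). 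Thus it is quasi-projective. This is the step I expect to require the most care, though it is standard.
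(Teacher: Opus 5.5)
Your proposal is correct and is essentially the standard construction behind the cited result (Krishna, following Totaro and Edidin--Graham, Lemma 9). The construction embeds $G\subseteq \mathrm{GL}(E)$, takes the full-rank loci $U_j\subseteq \mathrm{Hom}(\mathbb{C}^{\dim E+j},E)$, and gets quasi-projectivity of $U_j/G$ from the associated $\mathrm{GL}(E)/G$-bundle over a Grassmannian via a linearized embedding; the paper itself only cites this, and the one thing to add is that you should choose $\dim E\geq 1$ (which matters only for trivial $G$) so that $\dim W_j>0$.
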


	\begin{dfn}(\cite[Theorem 6.1]{K12})\label{dfn:equivariant-algebraic-cobordism}
		Let $X\in\textbf{Sm}_{\mbb{C}}^G$, and suppose $\{(V_j,U_j)\}_{j\geq 0}$ is a sequence of good pairs for $G$ satisfying (1) and (2) of \cref{lem:sequence-of-good-pairs}. Define the $i$-th $G$-equivariant algebraic cobordism group of $X$ as the inverse limit of groups
		\[\Omega_G^i(X):=\varprojlim \Omega^i\left(X\times^G U_j\right),\]
		where $X\times^G U_j:=(X\times U_j)/G$, and $G$ acts diagonally on $X\times U_j$.
		The \textbf{$G$-equivariant algebraic cobordism ring} of $X$ is $\Omega_G(X):=\bigoplus_{i\in\mbb{Z} }\Omega_G^i(X)$. Given a vector bundle $E\to X$ in $\textbf{Sm}_{\mbb{C}}^G$, there are unique elements $c_i^G(E)\in \Omega_G^i(X)$ called \textbf{$G$-equivariant Chern classes}, characterized by the axioms in \cite[Lemma 25]{HM13}, which include the ``Whitney sum formula," and the requirement that the $c_i$ commute with pullbacks.
	\end{dfn}
	
	\begin{rmk}
		In \cite{K12}, Krishna \textit{defined} the group $\Omega_G^i(X)$ using the so-called ``coniveau filtration," and then proved that this definition agrees with the definition given in \cref{dfn:equivariant-algebraic-cobordism}. The definition given in \cref{dfn:equivariant-algebraic-cobordism} is useful for performing computations.
	\end{rmk}

	Following the conventions in \cite[\S 7.3]{K12},
	for a finite-dimensional complex manifold $X$ with the action of a complex Lie group $H$, the \textbf{equivariant complex cobordism ring} of $X$ is defined as
	\[MU^*_H(X):=MU^*(X\times^H EH),\]
	where $EH$ is any contractible space on which $H$ acts freely. 
	\begin{egg}(\cite[Proposition 7.4]{K12})\label{egg:eq-MU-iso}
		Let $X\in\textbf{Sm}_{\mbb{C}}^{G}$, and let $G(\mbb{C})$ be the complex Lie group associated to the algebraic group $G$. Denote by  $X(\mbb{C})$ the complex manifold consisting of the complex points of $X$. If the $G(\mbb{C})$-equivariant cohomology ring $H_{G(\mbb{C})}^*(X(\mbb{C}))$ of $X(\mbb{C})$ is torsion-free, then there is a natural homomorphism of graded rings
		$\Omega_G^*(X)\to MU_{G(\mbb{C})}^{2*}(X(\mbb{C}))$.
	\end{egg}
	
	Now let us assume that $G=A:=(\mbb{C}^\times)^n$ is an algebraic torus, with weight lattice $\Lambda^A:=\mrm{Hom}(A,\mbb{C}^\times)$. Let $\mcal{L}_\lambda$ be the $A$-equivariant line bundle on $\mrm{pt}=\mrm{Spec}(\mbb{C})$ corresponding to the one-dimensional representation of $T$ with weight $\lambda\in\Lambda^A$. The $A$-equivariant algebraic cobordism ring of a point $\Omega_A^*(\mrm{pt})$ is generated by the first Chern classes of the equivariant line bundles $c_1^A(\mcal{L}_\lambda)$. The ring $\Omega_A^*(\mrm{pt})$ has an algebraic description in terms of the formal affine Demazure algebra:
	
	\begin{theo}\label{theo-algebraic-structure-of-cobordism-of-point}(\cite[Theorem 3.3]{CZZ3})
		There is an $\mbb{L}$-algebra isomorphism
		\[\mbb{L}\llbracket\Lambda^A\rrbracket_{F_U}\xrightarrow{\simeq} \Omega_{A}(\mrm{pt}),\quad x_\lambda\mapsto c_1^A(\mcal{L}_\lambda). \]
	\end{theo}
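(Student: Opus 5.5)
The statement is \cite[Theorem 3.3]{CZZ3}: an $\mathbb{L}$-algebra isomorphism $\mathbb{L}\llbracket\Lambda^A\rrbracket_{F_U}\to\Omega_A(\mathrm{pt})$ sending $x_\lambda\mapsto c_1^A(\mathcal{L}_\lambda)$. My plan is to reduce to the non-equivariant cobordism of products of projective spaces, using the Borel-type construction of \cref{dfn:equivariant-algebraic-cobordism}. Choose a $\mathbb{Z}$-basis $\lambda_1,\dots,\lambda_n$ of $\Lambda^A$ dual to the coordinate characters of $A=(\mathbb{C}^\times)^n$. For good pairs take $V_j=(\mathbb{C}^{j})^{\oplus n}$, with the $i$-th factor of $A$ scaling the $i$-th copy, and $U_j=(\mathbb{C}^j\setminus 0)^n$. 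These satisfy the hypotheses of \cref{lem:sequence-of-good-pairs}, and $U_j/A=(\mathbb{P}^{j-1})^n$. Hence
\[\Omega_A(\mathrm{pt})=\varprojlim_j \Omega^*\big((\mathbb{P}^{j-1})^n\big).\]

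First, I will define the map. The equivariant line bundle $\mathcal{L}_{\lambda_i}$ descends on $U_j/A$ to $\mathcal{O}(\pm1)$ pulled back from the $i$-th factor. Levine--Morel's projective bundle formula then gives
\[\Omega^*((\mathbb{P}^{j-1})^n)\cong \mathbb{L}[t_1,\dots,t_n]/(t_1^j,\dots,t_n^j),\]
where $t_i=c_1(\mathcal{L}_{\lambda_i})$. These isomorphisms are compatible with restriction along $U_j\times W_j\subseteq U_{j+1}$, so in the graded inverse limit we get $\Omega_A(\mathrm{pt})\cong \mathbb{L}\llbracket t_1,\dots,t_n\rrbracket$ (taken degreewise, or completed). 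On the formal side, \cref{rmk:integral-domain} (via \cref{lem:S-algebra-iso}) gives $\mathbb{L}\llbracket\Lambda^A\rrbracket_{F_U}\cong\mathbb{L}\llbracket x_{\lambda_1},\dots,x_{\lambda_n}\rrbracket$. So the assignment $x_{\lambda_i}\mapsto t_i$ is an isomorphism of power series rings.

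Second, I will check that this isomorphism sends $x_\lambda$ to $c_1^A(\mathcal{L}_\lambda)$ for every $\lambda$, not just the basis elements. Write $\lambda=\sum m_i\lambda_i$. In $\mathbb{L}\llbracket\Lambda^A\rrbracket_{F_U}$ we have $x_\lambda=\sum_{F_U}m_i\cdot_{F_U}x_{\lambda_i}$. On the geometric side $\mathcal{L}_\lambda=\bigotimes\mathcal{L}_{\lambda_i}^{\otimes m_i}$, and the Quillen formula $c_1(\mathcal{L}\otimes\mathcal{M})=F_U(c_1\mathcal{L},c_1\mathcal{M})$ holds at each finite level. Together with the formal inverse for duals (\cref{lem:formal-inverse}), this gives the same expression for $c_1^A(\mathcal{L}_\lambda)$ in the limit. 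The map is $\mathbb{L}$-linear because the structure map $\mathrm{pt}\to U_j/A$ is compatible with the $\mathbb{L}$-algebra structures at every level.

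I expect the main obstacle to be the passage to the limit. One needs that the inverse limit of the truncated polynomial rings is genuinely the completed power series ring in the topology used to define $\mathcal{J}_F$. One also needs that $\Omega_A$ is independent of the chosen good pairs, which Krishna proves. I would handle the first point degree by degree. Each $\Omega^i$ is a finitely generated $\mathbb{L}$-module at each level, and the transition maps are surjective, so there is no $\varprojlim^1$ issue. Under the grading conventions, the degree-$i$ part of the limit is then exactly the degree-$i$ homogeneous power series.
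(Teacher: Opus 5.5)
The paper gives no argument here beyond citing \cite[Theorem 3.3]{CZZ3}. Your Borel-construction computation is the standard one behind that result, and its geometric steps are fine: the explicit good pairs, the iterated projective bundle formula, and the Quillen formula to pass from basis characters to all $\lambda$. The gap is exactly at the point you flag as the main obstacle, and your proposed resolution does not work.

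The paper defines $\Omega_A(\mathrm{pt})=\bigoplus_{i\in\mathbb{Z}}\Omega_A^i(\mathrm{pt})$ with $\Omega_A^i=\varprojlim_j\Omega^i$. Under this definition, your degree-by-degree computation correctly shows that $\Omega_A^i(\mathrm{pt})$ is the group of homogeneous degree-$i$ power series in $t_1,\dots,t_n$. But $\mathbb{L}^{-k}\neq 0$ for every $k\geq 0$, so the direct sum of these pieces is the graded power series ring $\mathbb{L}\llbracket t_1,\dots,t_n\rrbracket_{\mathrm{gr}}$. This is a proper subring of $\mathbb{L}\llbracket t_1,\dots,t_n\rrbracket\cong\mathbb{L}\llbracket\Lambda^A\rrbracket_{F_U}$; for example, $\sum_{k\geq 0}t_1^k$ has infinitely many nonzero homogeneous components.

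In fact, no ring map $\mathbb{L}\llbracket\Lambda^A\rrbracket_{F_U}\to\bigoplus_i\Omega_A^i(\mathrm{pt})$ with $x_\lambda\mapsto c_1^A(\mathcal{L}_\lambda)$ can exist. For $\lambda\neq 0$, $1+x_\lambda$ is a unit in the source. Its would-be image $1+c_1^A(\mathcal{L}_\lambda)$ has nonzero components in degrees $0$ and $1$. So it is not a unit in the $\mathbb{Z}$-graded domain $\mathbb{L}\llbracket t_1,\dots,t_n\rrbracket_{\mathrm{gr}}$, whose units are homogeneous. Your phrase ``taken degreewise, or completed'' therefore conflates two genuinely different rings, and the degreewise argument lands on the wrong one.

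To close the gap you must fix a convention and argue accordingly. There are two options.

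\emph{Ungraded limit.} Take the limit in your first display, $\varprojlim_j\Omega^*((\mathbb{P}^{j-1})^n)$, in the category of ungraded rings. You must say explicitly that this differs from the degreewise definition used in the paper. Then $\varprojlim_j\mathbb{L}[t_1,\dots,t_n]/(t_1^j,\dots,t_n^j)$ is literally the full power series ring, no degreewise analysis is needed, and your identification with $\mathbb{L}\llbracket\Lambda^A\rrbracket_{F_U}$ via $x_{\lambda_i}\mapsto t_i$ is correct.

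\emph{Graded definition.} Keep the paper's graded definition and prove the correct graded statement. The same assignment identifies $\Omega_A(\mathrm{pt})$ with the subring of $\mathbb{L}\llbracket\Lambda^A\rrbracket_{F_U}$ consisting of finite sums of homogeneous elements, where $\deg x_\lambda=1$ and $\deg a_{ij}=1-i-j$. Then $\mathbb{L}\llbracket\Lambda^A\rrbracket_{F_U}$ is the completion of $\Omega_A(\mathrm{pt})$ at its augmentation ideal, not $\Omega_A(\mathrm{pt})$ itself.

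Two minor points: at each finite level $\Omega^i$ is a finitely generated abelian group, not an $\mathbb{L}$-module, and $\varprojlim^1$ plays no role in computing the limit itself.
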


	\begin{dfn}(\cite[\S 4.2]{K122})
		Let $X\in\textbf{Sm}_{\mbb{C}}^{A}$. Then $X$ is called \textbf{$A$-equivariantly filtrable} if the fixed point locus $X^A$ for the action of $A$ on $X$ is smooth and projective, there is a numbering $X^A=\sqcup_{m=0}^nZ_m$ of the connected components of the fixed point locus, a filtration on $X$
		\[\emptyset =X_{n+1}\subsetneq X_{n}\subsetneq X_{n-1}\subsetneq \dotsb\subsetneq X_1\subsetneq X_0=X\] 
		by $T$-invariant and closed subvarieties $X_i$ of $X$, and maps $\phi_m\colon (X_m\setminus X_{m-1})\to Z_m$ for $0\leq m\leq n$ which are all $A$-equivariant vector bundles.
	\end{dfn}
	
	\begin{theo}(\cite{B73})\label{theo:BB-decomp}
		Suppose $X$ is a smooth projective algebraic variety over $\mbb{C}$ with an algebraic action of $A$. Then $X$ is $A$-equivariantly filtrable.
	\end{theo}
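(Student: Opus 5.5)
The plan is to prove the Białynicki-Birula statement by reducing to a generic one-parameter subgroup and then running the classical Białynicki-Birula decomposition. First I would pick a cocharacter $\gamma\colon\mbb{C}^\times\to A$ that is generic, i.e.\ $X^{\gamma(\mbb{C}^\times)}=X^A$. Such a $\gamma$ exists because only finitely many characters of $A$ occur as weights on the tangent spaces at fixed points (and on an $A$-stable projective embedding), so it suffices to avoid finitely many hyperplanes in $\mrm{Hom}(\mbb{C}^\times,A)\otimes\mbb{Q}$. Smoothness of $X^A$ is standard: it is the fixed locus of a reductive (diagonalizable) group acting on a smooth variety, and each component is closed in $X$, hence projective. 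Write $X^A=\sqcup_m Z_m$ for the components.

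Next, for each component set $X_m^+:=\{x\in X:\lim_{t\to0}\gamma(t)x\in Z_m\}$. Since $X$ is projective these limits exist, so $X=\sqcup_m X_m^+$. By Białynicki-Birula's theorem, each $X_m^+$ is a locally closed smooth subvariety, and the limit map $\phi_m\colon X_m^+\to Z_m$ is a $\mbb{C}^\times$-equivariant affine-space fibration, locally trivial, modelled on the positive-weight part $T_zX^{+}$ of the tangent space. Because $A$ is commutative, $A$ commutes with $\gamma$, so $X_m^+$ is $A$-stable and $\phi_m$ is $A$-equivariant.

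The step I expect to be the main obstacle is upgrading the affine-space fibration to an honest $A$-equivariant vector bundle. I would first linearize equivariantly: embed $X$ $A$-equivariantly in some $\mbb{P}(V)$ using Sumihiro's theorem, which gives $A$-stable affine open neighbourhoods of fixed points. On these neighbourhoods, the positive-weight coordinates identify $X_m^+$ $A$-equivariantly with the normal-type bundle $N_m^+\subset TX|_{Z_m}$. An alternative is to use that a $\mbb{C}^\times$-equivariant fibration with only positive fibre weights carries a canonical linear structure, via the grading of functions on the fibre. If the weights are not all distinct, this linear structure is not automatic, and I would then fall back on Białynicki-Birula's original argument, which produces the vector bundle structure directly.

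Finally, I would order the components so that closures behave well. Choose an $A$-linearized ample bundle and use the moment-type weight $\langle\gamma,\cdot\rangle$ of the fibre over $Z_m$; equivalently, use the Białynicki-Birula filtrability theorem, which says that $\overline{X_m^+}\subseteq\bigcup_{m'\ge m}X_{m'}^+$ after a suitable numbering. Setting $X_i:=\bigcup_{m\ge i}X_m^+$ gives closed $A$-invariant subvarieties with $X_0=X$ and $X_{n+1}=\emptyset$. The successive differences are then the $X_m^+$, with the vector bundle maps $\phi_m$, which is exactly the definition of $A$-equivariantly filtrable.
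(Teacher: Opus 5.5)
Your outline (generic cocharacter $\gamma$ with $X^{\gamma(\mathbb{C}^\times)}=X^A$, the plus-decomposition with $A$-equivariant limit maps $\phi_m$, and the ordering of the cells via an $A$-linearized ample bundle) is exactly what the paper's bare citation of \cite{B73} supplies, and those steps are fine. The genuine gap is the step you flag yourself: none of your three fixes makes $\phi_m\colon X_m^+\to Z_m$ an $A$-equivariant vector bundle.

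What Białynicki-Birula proves is weaker. $\phi_m$ is Zariski-locally trivial with fibre an affine space and a linear action on the fibres. The transition maps, however, are equivariant \emph{polynomial} automorphisms of the fibre, not linear ones.
\begin{itemize}
\item Your Sumihiro-chart argument only reproduces these local trivializations, so it says nothing about gluing.
\item Białynicki-Birula's own argument does not produce a vector bundle.
\item The ``canonical linear structure from the grading'' exists only when no fibre weight is a sum of two or more fibre weights, for example when all weights are equal. So your caveat is backwards: distinct weights cause the problem, not repeated ones.
\end{itemize}
For fibre weights $1,2$, the map $(x,y)\mapsto(x,y+fx^2)$ is an allowed transition function. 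Write $N_m^+=N_1\oplus N_2$ for the weight decomposition. The degree-$2$ part of $\phi_{m*}\mathcal{O}$ is an extension of $N_2^\vee$ by $\mathrm{Sym}^2N_1^\vee$, and the cell is an equivariant vector bundle only if this extension splits. Its class lies in $H^1(Z_m,N_2\otimes\mathrm{Sym}^2N_1^\vee)$, which need not vanish.

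This does occur. Over an elliptic curve $Z$, glue the charts $U_i\times\mathbb{A}^2$ by $(x,y)\mapsto(x,y+f_{ij}x^2)$ with $[f_{ij}]\neq0$ in $H^1(Z,\mathcal{O})$. Let $\mathbb{G}_m$ act by $t\cdot(x,y)=(tx,t^2y)$. Compactify fibrewise to the associated $\mathbb{P}(1,2,1)$-bundle and blow up its singular section. The result is a smooth projective $\mathbb{G}_m$-variety. Its open Białynicki-Birula cell for $\gamma=\mathrm{id}$ (automatically generic) is this affine bundle over the fixed component $Z$, and it is not an equivariant vector bundle.

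So the claim ``every cell of a generic $\gamma$ is an equivariant vector bundle'' is false in general. As written, your argument proves filtrability only with the $\phi_m$ equivariant affine-space bundles, which is what \cite{B73} actually gives and what homotopy-invariance arguments need. It gives the vector-bundle version only in special cases. The main one is when all fixed points are isolated: then $X_m^+$ is affine with a positively graded smooth coordinate ring, hence $A$-equivariantly isomorphic to $T_zX^+$. The paper's citation glosses over the same point.
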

	
	\begin{theo}(\cite[Theorem 3.7]{KK13})\label{theo:alg-to-complex-iso}
		Suppose that $X\in\textbf{Sm}_{\mbb{C}}^{A}$, and assume $X$ is $A$-equivariantly filtrable. Then the graded ring homomorphism $\Omega_A^*(X)\to MU_{A(\mbb{C})}^{2*}(X(\mbb{C}))$ of \cref{egg:eq-MU-iso} is an isomorphism.
	\end{theo}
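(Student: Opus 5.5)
The plan is to argue by induction along the filtration $\emptyset=X_{n+1}\subsetneq X_n\subsetneq\dotsb\subsetneq X_0=X$, comparing the algebraic and topological sides stratum by stratum. Both theories will be computed on the finite approximations $X\times^A U_j$ of \cref{dfn:equivariant-algebraic-cobordism}. I would first prove the comparison on each $X\times^A U_j$, or at least the appropriate surjectivity and injectivity there, and then pass to the inverse limit. The passage to the limit uses that the relevant systems are Mittag-Leffler (the transition maps are surjective by homotopy invariance of $U_j\oplus W_j\subseteq U_{j+1}$). Since the map of \cref{egg:eq-MU-iso} is natural with respect to pullbacks, projective pushforwards and Chern classes, every square below commutes.

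\emph{Step 1: the strata.} Set $Y_m:=X_m\setminus X_{m+1}$, an $A$-equivariant vector bundle over the fixed component $Z_m$ via $\phi_m$. Homotopy invariance on both sides gives $\Omega_A^*(Y_m)\cong\Omega_A^*(Z_m)$ and $MU_A^{2*}(Y_m)\cong MU_A^{2*}(Z_m)$, compatibly with the comparison map. Since $A$ acts trivially on $Z_m$, we have $Z_m\times^A U_j=Z_m\times U_j/A$. Using \cref{theo-algebraic-structure-of-cobordism-of-point} and the projective bundle formula on the approximations, this gives
\[
\Omega_A^*(Z_m)\cong\Omega^*(Z_m)\widehat{\otimes}_{\mathbb L}\mathbb L\llbracket\Lambda^A\rrbracket_{F_U},
\]
and similarly $MU^{2*}_A(Z_m)\cong MU^{2*}(Z_m)\widehat\otimes MU^{2*}(BA)$. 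So the stratum case reduces to the non-equivariant comparison $\Omega^*(Z_m)\to MU^{2*}(Z_m(\mathbb C))$ for the smooth projective fixed components.

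\emph{Step 2: gluing.} For the closed embedding $X_{m+1}\hookrightarrow X_m$ with open complement $Y_m$, both theories have localization sequences
\[
\Omega_A^*(X_{m+1})\to\Omega_A^*(X_m)\to\Omega_A^*(Y_m)\to0 .
\]
(For $X_{m+1}$ singular one uses the Borel--Moore/homological version, or works with the smooth pieces via the Thom isomorphism.) I would show these become short exact and split. Using the bundle structure, the classes $\phi_m^*(-)\cdot[\overline{Y_m}]$ give a lift of $\Omega_A^*(Y_m)$. Injectivity on the left follows by restricting to fixed points: the equivariant Euler class of the normal bundle of $Z_m$ has no zero weights, so it is a non-zero-divisor in $\Omega^*(Z_m)\llbracket\Lambda^A\rrbracket_{F_U}$, by the integral-domain argument of \cref{rmk:integral-domain}. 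The same argument works topologically. The five lemma then propagates the isomorphism from $X_n=Y_n$ up to $X_0=X$.

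\emph{Main obstacle.} I expect the hard step to be the non-equivariant input in Step 1, namely $\Omega^*(Z_m)\cong MU^{2*}(Z_m(\mathbb C))$. This is false for arbitrary smooth projective $Z_m$. My first attempt is to note that in the applications in this paper ($G/P_\Theta$, its cotangent bundle, and products with tori) the fixed components are points, where the statement is the Levine--Morel identification $\Omega^*(\mathrm{pt})\cong\mathbb L\cong MU^{2*}(\mathrm{pt})$. In general I would try to verify the hypotheses in \cite{KK13} that make the fixed components cellular, by iterating \cref{theo:BB-decomp} for a generic one-parameter subgroup acting on $Z_m$ with isolated fixed points, so that the same induction applies to $Z_m$. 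A secondary technical point is checking exactness of the inverse limits, which I would handle with the Mittag-Leffler condition noted above.
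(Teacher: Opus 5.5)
\textbf{There is a genuine gap, and it is the one you flag in Step 1. It cannot be closed, because the statement as written (with only ``$A$-equivariantly filtrable'') is false.} The paper gives no argument of its own here, only the citation.

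\emph{Counterexample.} Take $X=C$, a smooth projective curve of genus $g\geq 1$ with trivial $A$-action. Then $X^A=C$ is smooth and projective. The filtration $\emptyset=X_1\subsetneq X_0=C$ with $\phi_0=\mathrm{id}$ (a rank-$0$ bundle) shows that $C$ is filtrable. Since $H^*_A(C)$ is torsion-free, the map of \cref{egg:eq-MU-iso} exists.
\begin{itemize}
\item For distinct points $p,q$, the class $[p]-[q]\in\Omega^1(C)$ is nonzero, because its image in $\mathrm{CH}^1(C)=\mathrm{Pic}(C)$ is nonzero.
\item It stays nonzero in $\Omega^1_A(C)$. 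For trivial action, $\Omega^*(C)\to\Omega^*_A(C)$ is split by restriction along the fibre inclusion $C\to C\times^A U_j$.
\item Its image in $MU^2_A(C)$ is zero. It is pulled back from $MU^2(C)\cong H^2(C;\mathbb{Z})\cong\mathbb{Z}$, which is detected by degree.
\end{itemize}
So the comparison map is not injective.

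\emph{Your repair does not work.} $A$ acts trivially on every $Z_m$, so no one-parameter subgroup of $A$ produces isolated fixed points there. Moreover, $Z_m$ need not carry any torus action at all: a curve of genus $\geq 1$ has no nontrivial $\mathbb{G}_m$-action. The same defect also appears topologically: $MU^{\mathrm{odd}}(Z_m)$ can be nonzero, so the topological localization sequences need not be short exact.

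\emph{What is true.} The result holds under an extra hypothesis, for instance $X^A$ finite, so that every stratum $Y_m$ is an $A$-representation. More generally it suffices that each $Z_m$ satisfies $\Omega^*(Z_m)\cong MU^{2*}(Z_m(\mathbb{C}))$ and $MU^{\mathrm{odd}}(Z_m(\mathbb{C}))=0$. This is also all that \cref{lem:cotangent-iso} needs, since the $\widehat{T}$-fixed points of $T^*(G/P_\Theta)$ are isolated.

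\emph{How Step 2 should go in that setting.} Your induction is the right shape, but Step 2 should be reorganized.
\begin{itemize}
\item The input that does the work is odd-degree vanishing of (Borel--Moore, equivariant) $MU$ on the cells. By induction this makes the topological sequence $0\to MU(X_{m+1})\to MU(X_m)\to MU(Y_m)\to 0$ short exact.
\item Combine it with the right-exact algebraic localization sequence. A diagram chase shows that the comparison map is an isomorphism on $X_m$ once it is one on $X_{m+1}$ and on $Y_m$. Injectivity of the algebraic $i_*$ then follows a posteriori.
\item So you need neither the Euler-class argument nor the splitting $\phi_m^*(-)\cdot[\overline{Y_m}]$. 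The splitting is not even defined as written, since $\phi_m$ lives only on $Y_m$.
\item The Euler-class argument is also misapplied. \cref{rmk:integral-domain} gives a domain only when $Z_m$ is a point; for positive-dimensional $Z_m$ the ring $\Omega^*(Z_m)\llbracket\Lambda^A\rrbracket_{F_U}$ contains nilpotents.
\end{itemize}
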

	
	Let $X\in\textbf{Sm}_{\mbb{C}}^{A}$, and assume that $X$ has finitely many $A$-fixed points $F$. Given a fixed point $f\in F$, let $\iota_f\colon f\to X$ be the inclusion of $f$ into $X$. The pullback along $\iota$ is denoted $\iota_f^*\colon \Omega_A^*(X)\to \Omega_A^*(f)\simeq \mbb{L}\llbracket\Lambda^A\rrbracket_{F_U}$. 
	
	\begin{theo}\label{theo:cobordism-GKM}(\cite[Theorem 7.8]{K122})
		Let $X$ be a smooth $A$-equivariantly filtrable variety, and assume that $X$ has finitely many $A$-fixed points $F$ and finitely many $A$-invariant curves. Then the localization map 
		\[\iota^*:=\oplus_{f\in F} \iota_f^*\colon \Omega_A^*(X)\to \bigoplus_{f\in F}\Omega_A^*(\mrm{pt})\] is injective, and its image is the set of tuples $(g_f)_{f\in F}$ satisfying the conditions that $c_1^A(\mcal{L}_\lambda) | (g_f-g_{f'})$, whenever the fixed points $f$ and $f'$ lie in the same $A$-invariant irreducible curve and $\lambda$ is the weight of $A$ acting on this curve. 
	\end{theo}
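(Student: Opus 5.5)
The plan is to follow Brion's argument for the torus-equivariant Chow ring, transported to algebraic cobordism through the complex-cobordism comparison. The statement has two parts, injectivity of $\iota^*$ and the description of its image by divisibility conditions along $A$-invariant curves, and both are reduced to a filtration argument plus a rank-one computation.

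The first step is injectivity, by induction along the equivariant filtration $\emptyset=X_{n+1}\subsetneq\dotsb\subsetneq X_0=X$. Each stratum $X_m\setminus X_{m+1}$ is an $A$-equivariant vector bundle over a fixed point $f_m$. Homotopy invariance then gives
\[
\Omega_A^*(X_m\setminus X_{m+1})\simeq \Omega_A^*(\mrm{pt})\simeq \mbb{L}\llbracket\Lambda^A\rrbracket_{F_U},
\]
using \cref{theo-algebraic-structure-of-cobordism-of-point}. The localization sequence
\[
\Omega_A^*(X_{m+1})\to\Omega_A^*(X_m)\to\Omega_A^*(X_m\setminus X_{m+1})\to0
\]
should split into short exact sequences. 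The key input for this is that the Euler class of the normal bundle at $f_m$, namely the product of the $x_\lambda$ over the tangent weights $\lambda$, is a nonzerodivisor in the integral domain $\mbb{L}\llbracket\Lambda^A\rrbracket_{F_U}$ (\cref{rmk:integral-domain}). This makes $\Omega_A^*(X)$ free over $\Omega_A^*(\mrm{pt})$, with a basis of pushforward classes, and triangular with respect to restriction to fixed points. Injectivity of $\iota^*$ follows after inverting these Euler classes. Where limits over good pairs make the inductive exactness delicate, I would pass instead through the isomorphism $\Omega_A^*(X)\simeq MU_{A(\mbb{C})}^{2*}(X(\mbb{C}))$ of \cref{theo:alg-to-complex-iso}, where the needed exactness is topological.

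The second step shows that the image satisfies the GKM conditions. Let $C$ be an $A$-invariant irreducible curve joining $f$ and $f'$ with weight $\lambda$; then $C\simeq\mbb{P}^1$. Restriction factors through $\Omega_A^*(\mbb{P}^1)$, so it suffices to treat $\mbb{P}^1$ with the weights $0$ and $\lambda$. A direct projective-bundle computation shows that the image of $\Omega_A^*(\mbb{P}^1)$ is $\{(g,g'): x_\lambda\mid g-g'\}$. This uses that $x_{-\lambda}$ is a unit times $x_\lambda$, by \cref{lem:formal-inverse}.

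The third step is surjectivity onto the GKM ring, which I expect to be the main obstacle. For this I would reduce to codimension-one subtori, following Brion and Chang--Skjelbred. For each $A'\subset A$ of codimension one, the fixed locus $X^{A'}$ is a union of points and $A$-invariant curves. One then shows that
\[
\mrm{Im}(\iota^*)=\bigcap_{A'}\mrm{Im}\bigl(\Omega_A^*(X^{A'})\to\Omega_A^*(X^A)\bigr).
\]
This requires a localization theorem saying that $\Omega_A^*(X)\to\Omega_A^*(X^{A'})$ becomes an isomorphism after inverting the $x_\mu$ for weights $\mu$ nontrivial on $A'$. It also requires a freeness and reflexivity argument over $\mbb{L}\llbracket\Lambda^A\rrbracket_{F_U}$, which is a regular ring of power-series type, to pass from this localization statement to the intersection over height-one primes. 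The subtle point is that a height-one prime containing $x_\lambda$ must be identified with the prime attached to the subtorus $\ker\lambda$; \cref{lem:does-not-divide} supplies the needed non-divisibility between distinct root elements. The rank-one computation from the second step then yields exactly the divisibility conditions.
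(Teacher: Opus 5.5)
The paper gives no proof here; it quotes Krishna. Krishna's argument follows Brion's strategy, and your three steps (filtration, the rank-one computation on $\mathbb{P}^1$, the Chang--Skjelbred reduction) are that strategy. There is, however, a genuine gap in the step you flag as the main obstacle.

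Let $M=\Omega^*_A(X)$ and let $\mathfrak p$ be a height-one prime of $S=\mathbb{L}\llbracket\Lambda^A\rrbracket_{F_U}$. To identify $M_{\mathfrak p}$ with a localization of $\Omega^*_A(X^H)$, for a group $H$ whose fixed locus is points and invariant curves, you need the following: the characters $\mu$ with $x_\mu\in\mathfrak p$ form a subgroup of rank at most one. Equivalently, $x_\mu$ and $x_\nu$ never lie in a common height-one prime when $\mu,\nu$ are linearly independent. \cref{lem:does-not-divide} does not supply this, for two reasons. It concerns only the type~A root elements $x_{y_i-y_j}$, whereas the curve weights of a general $X$ are arbitrary characters. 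And non-divisibility of one element by another is strictly weaker than having no common prime factor. Your claim that a height-one prime containing $x_\lambda$ is the prime attached to $\ker\lambda$ is also false for non-primitive $\lambda=k\lambda_0$. In that case $x_\lambda=[k]_F(x_{\lambda_0})=x_{\lambda_0}\,g(x_{\lambda_0})$ with $g(0)=k$. The height-one primes containing $g(x_{\lambda_0})$ are not $(x_{\lambda_0})$, and at such primes the relevant group is a disconnected subgroup of the form $\ker(d\lambda_0)$, not a subtorus.

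This is not a technicality. Every ingredient you invoke holds verbatim for the additive law over $\mathbb{Z}$: the domain property, freeness, the $\mathbb{P}^1$ computation, \cref{lem:does-not-divide}, the localization theorem, and regularity of $\mathbb{Z}\llbracket u,v\rrbracket$. Yet the conclusion fails there. Let $A=(\mathbb{C}^\times)^2$ act on $\mathbb{P}^2$ with characters $0,2e_1,2e_2$, and put $u=x_{e_1}$, $v=x_{e_2}$. There are three fixed points and three invariant lines, with weights $2e_1$, $2e_2$, $2e_2-2e_1$. The tuple $(0,0,2v(v-u))$ satisfies all the divisibility conditions. But a class vanishing at the first two fixed points is a multiple of the class of the third point, whose restriction there is $\pm 4v(v-u)$. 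The offending height-one prime is $(2)$, which contains all three curve elements. So your argument must use a property specific to $F_U$ to prove the rank-at-most-one statement for every height-one prime; for instance, $[p]_{F_U}(x)\not\equiv 0 \bmod p$. It must also treat the torsion primes through disconnected subgroups.

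Two smaller points. First, $S$ is not Noetherian, let alone regular, because $\mathbb{L}$ is a polynomial ring in infinitely many variables. The reflexivity step should instead use that $S$ is a Krull domain. Second, in your first step the sequence $\Omega_A^*(X_{m+1})\to\Omega_A^*(X_m)\to\cdots$ involves possibly singular closed pieces. There only Borel--Moore cobordism makes sense, and the Euler-class argument does not apply as stated. That argument belongs to the Gysin sequence of the smooth stratum $X_m\setminus X_{m+1}$ inside the smooth open set $X\setminus X_{m+1}$.
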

	
	\begin{rmk}\label{rmk:GKM-explanation}
		The conditions characterizing the image of $\iota^*$ are called the \textit{GKM conditions}, after Goresky--Kottwitz--MacPherson \cite{GKM98}. See also \cref{theo:GKM-for-H} and \cref{theo:GKM-for-K} below.
	\end{rmk}
	
	We will now compare \cref{theo:cobordism-GKM} to its topological counterparts in $A$-equivariant cohomology and $A$-equivariant $K$-theory. See also \cref{rmk:GMK-cobordism}. 
	
	\begin{theo}\label{theo:GKM-for-H}(\cite{GKM98}, cf. \cite[\S 2.2]{KT03})
		Let $X$ be a smooth projective variety with an algebraic action of $A$, and assume that $X$ has finitely many $A$-fixed points $F$. Then the localization map 
		\[\iota_{H_A}^*:=\oplus_{f\in F} (\iota_f^*)_{H_A}\colon H_A^*(X)\to \bigoplus_{f\in F}H_A^*(\mrm{pt})\]
		is injective, and its image consists of the set of tuples $(g_f)_{f\in F}$ such that, whenever $f$ and $f'$ are contained in an $A$-invariant irreducible curve in $X$ with weight $\lambda$, the difference $g_f-g_{f'}$ is divisible by $\lambda$ in the ring $H_A(\mrm{pt})$.
	\end{theo}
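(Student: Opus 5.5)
The plan is to follow the standard Goresky--Kottwitz--MacPherson argument. It has three stages: equivariant formality, the localization theorem, and the Chang--Skjelbred reduction to codimension-one subtori.

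\textbf{Step 1: equivariant formality.} By \cref{theo:BB-decomp}, $X$ is $A$-equivariantly filtrable. The fixed points are isolated, so each stratum $X_m\setminus X_{m-1}$ is an $A$-equivariant affine space over a fixed point. The long exact sequences of the pairs $(X_{m-1},X_m)$, together with Thom isomorphisms, then show that $H_A^*(X)$ is a free $H_A^*(\mrm{pt})$-module of rank $|F|$, with odd cohomology vanishing. Equivalently, the Serre spectral sequence of $X\times^A EA\to BA$ degenerates.

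\textbf{Step 2: injectivity and necessity.} The Borel--Atiyah--Bott localization theorem says that $\iota_{H_A}^*$ becomes an isomorphism after inverting all nonzero characters. Since $H_A^*(X)$ is free, hence torsion-free, this gives injectivity.

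Necessity of the divisibility conditions comes from restricting to an invariant curve $C\cong\mbb{P}^1$ containing $f,f'$ with weight $\lambda$. A direct computation of $H_A^*(\mbb{P}^1)$ via its two-cell decomposition (or the Mayer--Vietoris sequence) shows that its image in $H_A(\mrm{pt})^2$ is exactly $\{(a,b): \lambda\mid a-b\}$. Pulling back along $C\hookrightarrow X$ then gives $\lambda\mid g_f-g_{f'}$.

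\textbf{Step 3: sufficiency.} I would use the Chang--Skjelbred lemma, which for an equivariantly formal $X$ gives
\[
\mrm{Im}(\iota_{H_A}^*)=\bigcap_{A'\subset A\ \text{codim }1}\mrm{Im}\bigl(H_A^*(X^{A'})\to H_A^*(X^{A})\bigr).
\]
The proof is commutative algebra on the free module $H_A^*(X)$. Localize at each height-one prime $(\lambda)$ of the polynomial ring $H_A(\mrm{pt})$. By localization, only the fixed locus of the subtorus $A'=\ker\lambda$ contributes there. A free module is the intersection of its localizations at height-one primes, which gives the displayed equality.

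It remains to identify each term of the intersection. Each connected component $Z$ of $X^{A'}$ is smooth and projective, and $A/A'\cong\mbb{C}^\times$ acts on it with isolated fixed points $Z\cap F$. If $Z$ is a point, the condition is empty. If $Z$ is a curve, it is a $\mbb{P}^1$ of weight $\pm\lambda$, and Step 2 gives exactly the stated divisibility.

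\textbf{Main obstacle.} The hard case is $\dim Z\ge 2$, which happens precisely when there are infinitely many invariant curves of weight $\lambda$. The statement as written does not assume finitely many invariant curves. Since $A'$ acts trivially on $Z$, we have $H_A^*(Z)\cong H^*_{A/A'}(Z)\otimes H_{A'}^*(\mrm{pt})$, noncanonically after splitting the torus. So I need the image of $H_{\mbb{C}^\times}^*(Z)$ in $\bigoplus_{Z\cap F}H_{\mbb{C}^\times}^*(\mrm{pt})$, localized at $\lambda$, to consist exactly of tuples whose pairwise differences along invariant curves in $Z$ are divisible by $\lambda$.

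My first attempt is the following. The invariant curves of $Z$ connect all of its fixed points: use the Bia\l ynicki-Birula cells of the $\mbb{C}^\times$-action and closures of one-dimensional orbits. Next, every class in $H_{\mbb{C}^\times}^*(Z)_{(\lambda)}$ is congruent modulo $\lambda$ to a constant on $Z\cap F$, because $H^*(Z)$ is generated in positive degree by classes whose restrictions vanish mod $\lambda$. Conversely, one then tries to lift tuples that are constant mod $\lambda$, degree by degree, using freeness from Step 1 applied to $Z$.

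If this dimension count proves delicate, I would fall back on adding the hypothesis that $X$ has finitely many invariant curves, as in \cite{GKM98}. That hypothesis forces $\dim Z\le 1$ and finishes the proof immediately.
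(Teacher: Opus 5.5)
There is a genuine gap, and it is exactly at the point you flagged. The case $\dim Z\ge 2$ cannot be closed by your ``first attempt,'' because in that case the statement is false. The statement omits the hypothesis that $X$ has finitely many $A$-invariant curves. That hypothesis is assumed in \cite{GKM98} and kept in the paper's own cobordism analogue \cref{theo:cobordism-GKM}, and it cannot be dropped.

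Here is a counterexample. Take $X=\mathbb{P}^2$ with $A=\mathbb{C}^\times$ acting by $t\cdot[x_0:x_1:x_2]=[x_0:tx_1:t^2x_2]$, and write $H_A(\mathrm{pt})=\mathbb{Z}[u]$. The fixed points are the coordinate points $p_0,p_1,p_2$. The invariant curves are of two kinds:
\begin{itemize}
\item the three coordinate lines, with weight $u$ joining $p_0,p_1$, weight $u$ joining $p_1,p_2$, and weight $2u$ joining $p_0,p_2$;
\item the conics $x_0x_2=cx_1^2$ with $c\neq 0$, each joining $p_0$ to $p_2$ with weight $u$.
\end{itemize}
The tuple $(0,0,2u)$ satisfies every divisibility condition in the statement. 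However, $H^2_A(\mathbb{P}^2)$ is spanned by $u\cdot 1$ and $c_1^A(\mathcal{O}(-1))$, whose restrictions are $(u,u,u)$ and $(0,u,2u)$. So $(0,0,2u)$ is not in the image, even rationally.

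In this example $A'$ is trivial and $Z=X$. With rational coefficients the image is exactly the set of tuples with $u\mid g_{p_1}-g_{p_0}$ and $u^2\mid g_{p_2}-2g_{p_1}+g_{p_0}$. No invariant curve sees this second-order condition. Your proposed converse is to lift every tuple that is constant mod $\lambda$ on $Z\cap F$. It already fails in degree $2$: every tuple of linear forms is constant mod $u$, yet the image in degree $2$ has rank $2<3$.

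So your fallback is not optional; the statement needs the finiteness hypothesis. That hypothesis does hold for $G/P_\Theta$ with the maximal torus, which is the case the paper actually needs.

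Once the hypothesis is added, your Steps 1--3 are the standard Goresky--Kottwitz--MacPherson / Chang--Skjelbred argument. The paper gives no proof beyond the citation to \cite{GKM98}, so this is the intended route.

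One further caveat concerns Step 3. It implicitly works over a field of characteristic zero, as \cite{GKM98} does. The paper takes $H_A(\mathrm{pt})$ to be a polynomial ring over $\mathbb{Z}$. Over $\mathbb{Z}[y_1,\dots,y_n]$ there are also height-one primes $(p)$ with $p$ a rational prime. Your localization argument says nothing at these primes, because characters divisible by $p$ are not inverted there. So as written you prove the rational statement. An integral version needs extra input, for instance a direct argument for Grassmannians as in \cite{KT03}.
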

	
	\begin{theo}\label{theo:GKM-for-K}(\cite[Corollary A.5]{R03})
		Let $X$ be a smooth projective variety with an algebraic action of $A$, and assume that $X$ has finitely many $A$-fixed points $F$. Then the localization map 
		\[\iota_{K_A}^*:=\oplus_{f\in F} (\iota_f^*)_{K_A}\colon K_A(X)\to \bigoplus_{f\in F}K_A(\mrm{pt})\]
		is injective, and its image consists of the set of tuples $(g_f)_{f\in F}$ such that, whenever $f$ and $f'$ are contained in an $A$-invariant irreducible curve in $X$ with weight $\lambda$, the difference $g_f-g_{f'}$ is divisible by $1-e^{-\lambda}$ in the ring $K_A(\mrm{pt})$.
	\end{theo}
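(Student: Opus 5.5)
The statement to prove is \cref{theo:GKM-for-K}. I would derive it from the cobordism GKM result, \cref{theo:cobordism-GKM}, by base change along the multiplicative formal group law, followed by the completion comparison of \cref{lem:K-iso}. A direct route via equivariant $K$-theory localization is also available, and I would fall back on it where the transfer is delicate.

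\textbf{Step 1 (hypotheses).} By \cref{theo:BB-decomp}, $X$ is $A$-equivariantly filtrable. Since $X$ is projective with finitely many fixed points, the Białynicki-Birula cells are affine spaces. The hypothesis that there are finitely many $A$-invariant curves is standard in the GKM setting; if needed I would impose it, or note that the conditions only involve curves joining pairs of fixed points.

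\textbf{Step 2 (injectivity).} For filtrable $X$ the localization sequences split. Hence $K_A(X)$ is a free $K_A(\mathrm{pt})$-module with rank equal to $|F|$. The restriction map becomes an isomorphism after inverting the elements $1-e^{\lambda}$, by the localization theorem (Thomason concentration). Since $K_A(X)$ is torsion-free, $\iota^*_{K_A}$ is injective.

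\textbf{Step 3 (image lies in the GKM set).} Let $C\cong\mathbb{P}^1$ be an invariant curve with fixed points $f,f'$ and weight $\lambda$. Restricting to $K_A(C)$ and using the known presentation $K_A(\mathbb{P}^1)\cong\{(a,b): (1-e^{-\lambda})\mid a-b\}$ gives the divisibility. This presentation is checked directly on the generators $1$ and $\mathcal O(-1)$.

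\textbf{Step 4 (surjectivity onto the GKM set).} This is the main obstacle. I would argue by induction along the filtration, using the classes $[\mathcal O_{\overline{X_m}}]$ of closures of cells, or rather a basis upper-triangular with respect to the attracting order. Each such class restricts to zero at fixed points not in its closure, and to $\prod(1-e^{-\mu})$ over the normal weights at its own fixed point. Take a GKM tuple $g$ vanishing at all points above $f$ in the order. The difference conditions along the curves through $f$ in the negative directions then force $g_f$ to be divisible by each $1-e^{-\mu}$. Since distinct weights give pairwise coprime $1-e^{-\mu}$ in the UFD $K_A(\mathrm{pt})=\mathbb{Z}[\Lambda]$ (assuming the weights are pairwise non-proportional, which is the GKM condition), $g_f$ is divisible by their product. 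One then subtracts a multiple of the basis class and continues the induction.

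\textbf{Proportional weights.} The coprimality can fail when two weights are proportional; for instance $1-e^{-2\lambda}$ and $1-e^{-\lambda}$ share a factor. In that case I would instead cite \cite[Corollary A.5]{R03} directly, or transfer from \cref{theo:cobordism-GKM} via $\Omega_A(X)\otimes_{\mathbb L}\mathbb{Z}[\beta^{\pm1}]$ and \cref{lem:K-iso}. That transfer uses faithful flatness of completion to descend divisibility from $\widehat K_A(\mathrm{pt})$ back to $K_A(\mathrm{pt})$.
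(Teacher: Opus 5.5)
The paper gives no argument of its own for this statement; it simply cites \cite[Corollary A.5]{R03}. Your Steps 1--4 take a genuinely different, self-contained route. You get freeness from the Bia\l{}ynicki-Birula filtration, injectivity from torsion-freeness plus concentration, and the easy containment from $K_A(\mathbb{P}^1)$. Surjectivity comes from subtracting flow-up classes $[\mathcal{O}_{\overline{X_m}}]$, whose restriction at their own fixed point is the product of the normal factors $1-e^{-\mu}$. Once finitely many $A$-invariant curves is assumed, this argument is correct. It gives more than the citation, namely an explicit triangular basis and freeness. The citation, by contrast, covers more general GKM spaces.

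The genuine problem is how you handle that hypothesis. It is not optional, because the statement as printed is false without it. Take $X=\mathbb{P}^2$ with $t\cdot[x:y:z]=[x:ty:t^2z]$ and $K_A(\mathrm{pt})=\mathbb{Z}[u^{\pm1}]$.
\begin{itemize}
\item The fixed points are $p_0,p_1,p_2$. The invariant curves are the three coordinate lines, of weights $1,1,2$, and the conics $y^2=cxz$ through $p_0,p_2$, of weight $1$.
\item The tuple $(0,0,1-u^2)$ satisfies every stated divisibility.
\item However, $K_A(\mathbb{P}^2)$ is free on $1,L,L^2$ with $L=\mathcal{O}(-1)$ and $L|_{p_i}=u^i$. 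Hence any class vanishing at $p_0,p_1$ restricts at $p_2$ to a multiple of $(1-u)(1-u^2)$, so the tuple is not in the image.
\end{itemize}
Your alternative, ``note that the conditions only involve curves joining pairs of fixed points,'' does not repair this. Every invariant curve, these conics included, joins two fixed points.

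Conversely, once finitely many invariant curves is imposed, proportional or repeated weights at a fixed point $f$ cannot occur. Otherwise the component through $f$ of $X^{(\ker\mu)^\circ}$ would be a smooth projective variety of dimension at least $2$. It would carry a $\mathbb{C}^\times$-action with isolated fixed points, and hence would contain infinitely many orbit closures. This has two consequences for your steps. In Step 4, coprimality is automatic, since distinct directions give non-associate cyclotomic factors in $\mathbb{Z}[\Lambda]$. In Step 3, the same argument shows each invariant curve is smooth, which justifies $C\cong\mathbb{P}^1$.

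Your ``Proportional weights'' paragraph should therefore be deleted, and its fallbacks would fail anyway. First, both \cite[Corollary A.5]{R03} and \cref{theo:cobordism-GKM} assume finitely many invariant curves, so neither can be invoked in that case. Second, the descent from the completion is wrong: $\mathbb{Z}[\Lambda]\to\widehat{K}_A(\mathrm{pt})$ is flat but not faithfully flat, and divisibility does not descend. For instance, $\Phi_6(e^{-\mu})\equiv 1$ modulo the augmentation ideal, so it becomes a unit after completion. Then $(1-e^{-3\mu})(1+e^{-\mu})=(1-e^{-6\mu})/\Phi_6(e^{-\mu})$ is divisible by $1-e^{-6\mu}$ in $\widehat{K}_A(\mathrm{pt})$ but not in $K_A(\mathrm{pt})$. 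So already for $\mathbb{P}^1$ with weight $6\mu$, the completed GKM set meets $\bigoplus K_A(\mathrm{pt})$ in more than the image. Third, base change from $\mathbb{L}$ only gives containment of images, as in \cref{lem:surjective-restriction}. Commit to the direct route, with the finiteness-of-curves hypothesis stated explicitly.
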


	\begin{rmk}\label{rmk:GMK-cobordism}
		The theorem analogous to \cref{theo:GKM-for-H} and \cref{theo:GKM-for-K} for $A$-equivariant complex cobordism can be found in \cite{HHH05}. 
	\end{rmk}

	\subsection{Connective motivic Chern classes revisited}\label{subsection:geometric-stabclasses}
	
	In this subsection, we will show that the connective motivic Chern classes of Schubert cells can be viewed as canonical elements in a quotient of the $\widehat{T}$-equivariant algebraic cobordism ring of $T^*(G/P_\Theta)$ (see \cref{theo:geometric-connective-stable}). 
	
	We note that if one would like to work 
	with $\widehat{T}(\mbb{C})$-equivariant complex cobordism rather than $\widehat{T}$-equivariant algebraic cobordism, this can be done without any trouble:
	\begin{lem}\label{lem:cotangent-iso}
		The ring homomorphism $\Omega_{\widehat{T}}^*(T^*(G/P_\Theta))\to MU_{\widehat{T}(\mbb{C})}^*((T^*(G/P_\Theta))(\mbb{C}))$ of \cref{egg:eq-MU-iso} is an isomorphism.
	\end{lem}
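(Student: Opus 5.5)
We reduce to \cref{theo:alg-to-complex-iso}, which says that the map $\Omega_A^*(X)\to MU_{A(\mbb{C})}^{2*}(X(\mbb{C}))$ of \cref{egg:eq-MU-iso} is an isomorphism whenever $X$ is smooth, quasi-projective and $A$-equivariantly filtrable. The plan has three steps: check that the map exists, verify the filtrability hypothesis for $X=T^*(G/P_\Theta)$ and $A=\widehat{T}$, and then invoke the theorem.

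First, the homomorphism of \cref{egg:eq-MU-iso} requires $H^*_{\widehat{T}(\mbb{C})}(T^*(G/P_\Theta)(\mbb{C}))$ to be torsion-free. The zero section is a $\widehat{T}$-equivariant homotopy equivalence, so this ring is isomorphic to $H^*_{\widehat{T}}(G/P_\Theta)$. The Bruhat decomposition (\ref{eqn:Bruhat-decomposition}) gives $G/P_\Theta$ a $\widehat{T}$-stable affine paving. Hence $G/P_\Theta$ is equivariantly formal, and $H^*_{\widehat{T}}(G/P_\Theta)$ is a free module over the polynomial ring $H^*_{\widehat{T}}(\mrm{pt})$. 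In particular it is torsion-free. Since $T^*(G/P_\Theta)$ is a vector bundle over a projective variety, it is also smooth and quasi-projective.

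Second, the main step is filtrability. \cref{theo:BB-decomp} only covers projective varieties, so it cannot be applied to $T^*(G/P_\Theta)$ directly. Instead we build the filtration by hand.
\begin{itemize}
\item The fixed locus $(T^*(G/P_\Theta))^{\widehat{T}}$ is the finite set of points $wP_\Theta$ in the zero section, for $w\in W^\Theta$. To see this, note that the fibre over each such point is $T^*_{wP_\Theta}(G/P_\Theta)$. The $\mbb{C}^\times$-factor acts on it by dilation, so the only fixed vector is $0$. The fixed locus is therefore smooth and projective.
\item Order $W^\Theta$ by a linear extension of the Bruhat order. Take
\[X_m := \pi^{-1}\Big(\bigsqcup_{\ell\geq m} X^\circ_{w_\ell}\Big),\]
where $\pi$ is the bundle projection. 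These are closed and $\widehat{T}$-invariant, because the union of cells $X^\circ_w$ with $w$ at or above a given element is closed.
\item The successive differences are $\pi^{-1}(X^\circ_w)$. Each of these is the restriction of the cotangent bundle to the affine cell $X^\circ_w\cong \mbb{A}^{k}$.
\item A $\widehat{T}$-equivariant vector bundle over an affine space with a linear torus action, contracting to the fixed point $wP_\Theta$, is equivariantly trivial. This is the equivariant Quillen--Suslin theorem (Masuda--Moser-Jauslin--Petrie) for tori. Thus $\pi^{-1}(X^\circ_w)$ is isomorphic to $X^\circ_w\times T^*_{wP_\Theta}$.
\item This product is a $\widehat{T}$-equivariant vector bundle over the point $Z_w=\{wP_\Theta\}$, which supplies the required map $\phi_m$.
\end{itemize}

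I expect this second step to be the main obstacle. The difficulty is justifying that each difference $X_m\setminus X_{m+1}$ is an equivariant vector bundle over its fixed point, and this is where the equivariant triviality input is needed. An alternative is to apply the Białynicki-Birula decomposition to a generic one-parameter subgroup of $\widehat{T}$ that contracts $T^*(G/P_\Theta)$ onto its fixed points. For example, one can take a dominant cocharacter combined with a large power of the dilation; the limits then exist because the dilation weight dominates. I would use whichever of the two arguments is cleaner to present.

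Third, with both hypotheses in place, \cref{theo:alg-to-complex-iso} applies with $A=\widehat{T}$ and $X=T^*(G/P_\Theta)$. This gives that $\Omega_{\widehat{T}}^*(T^*(G/P_\Theta))\to MU_{\widehat{T}(\mbb{C})}^{2*}(T^*(G/P_\Theta)(\mbb{C}))$ is an isomorphism. Equivariant complex cobordism of this space is concentrated in even degrees, because the cell structure above consists of even-dimensional complex cells. So this degree-doubled isomorphism is the claimed isomorphism onto $MU^*_{\widehat{T}(\mbb{C})}$.
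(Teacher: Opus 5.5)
Your proposal is correct and follows the same route as the paper. The paper simply asserts that $T^*(G/P_\Theta)$ is smooth and $\widehat{T}$-equivariantly filtrable and then invokes \cref{theo:alg-to-complex-iso}; your explicit filtration $X_m=\pi^{-1}(\bigsqcup_{\ell\ge m}X^\circ_{w_\ell})$ with the equivariant trivialization of $T^*(G/P_\Theta)|_{X^\circ_w}$, the torsion-freeness check, and the remark reconciling $MU^{2*}$ with $MU^*$ supply details the paper leaves implicit (in particular, \cref{theo:BB-decomp} alone does not cover the non-projective $T^*(G/P_\Theta)$).
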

	\begin{proof}
		The cotangent bundle $T^*(G/P_\Theta)$ is smooth and $\widehat{T}$-equivariantly filtrable. Now \cref{theo:alg-to-complex-iso} says that the natural ring homomorphism $\Omega_{\widehat{T}}^*(T^*(G/P_\Theta))\to MU_{\widehat{T}(\mbb{C})}^*(T^*(G/P_\Theta)(\mbb{C}))$ is an isomorphism.
	\end{proof}
	
	Consider the localization map $\iota^*\colon \Omega_{\widehat{T}}^*(T^*(G/P_\Theta))\to \bigoplus_{w\in W^\Theta}\Omega_{\widehat{T}}^*(\mrm{pt})$, where we have identified $W^\Theta$ with the set of $\widehat{T}$-fixed points of $T^*(G/P_\Theta)$. By \cref{theo-algebraic-structure-of-cobordism-of-point}, we identify $\Omega_{\widehat{T}}^*(\mrm{pt})$ with $\mbb{L}\llbracket \Lambda\oplus \mbb{Z}q\rrbracket_{F_U}\simeq \mbb{L}\llbracket \Lambda\rrbracket_{F_U}\llbracket x_q\rrbracket$. 
	
	\begin{theo}\label{theo:localization}
		The localization map below is injective: 
		\[\iota^* \colon \Omega_{\widehat{T}}(T^*(G/P_\Theta))\hookrightarrow \bigoplus_{w\in W^\Theta}\Omega_{\widehat{T}}(\mrm{pt})\simeq \bigoplus_{w\in W^\Theta}\mbb{L}\llbracket \Lambda\rrbracket_{F_U}\llbracket x_q\rrbracket.\]
		The image of $\iota^*$ is the set of sequences $(f_w)_{w\in W^\Theta}$ such that $x_{\alpha} | (f_w-f_{s_\alpha w})$ over all roots $\alpha\in\Sigma_\Theta$.
	\end{theo}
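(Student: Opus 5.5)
The plan is to deduce \cref{theo:localization} directly from \cref{theo:cobordism-GKM}, applied with $A=\widehat{T}$ and $X=T^*(G/P_\Theta)$. This requires checking the hypotheses of that theorem and then translating its GKM description into the notation of the statement.

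\textbf{Smoothness and filtrability.} The cotangent bundle $T^*(G/P_\Theta)$ is smooth and quasi-projective, since it is a vector bundle over a smooth projective variety. For filtrability, I would not use \cref{theo:BB-decomp} directly, because $T^*(G/P_\Theta)$ is not projective. Instead, I would build the filtration from the base. By \cref{theo:BB-decomp}, $G/P_\Theta$ is $\widehat{T}$-equivariantly filtrable, with fixed points $W^\Theta$ and Białynicki-Birula cells that are equivariant affine bundles (vector bundles) over the fixed points. Pulling these cells back along the $\widehat{T}$-equivariant projection $\pi\colon T^*(G/P_\Theta)\to G/P_\Theta$ gives a filtration of $T^*(G/P_\Theta)$ by closed $\widehat{T}$-stable subvarieties $\pi^{-1}(X_m)$. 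Each stratum is a vector bundle over a cell, hence an equivariant vector bundle over the corresponding fixed point. The fixed locus is $W^\Theta$, which is finite, smooth and projective: the extra $\mathbb{C}^\times$ scales the fibres with nonzero weight, so the only fixed points lie on the zero section. This step is already used implicitly in the proof of \cref{lem:cotangent-iso}, and I would cite it there.

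\textbf{Finiteness of invariant curves (main obstacle).} This is the step I expect to be hardest, because $T^*(G/P_\Theta)$ is not proper and the fibre directions could a priori carry families of invariant curves. I would argue through tangent weights at a fixed point $w$. The tangent space there is $T_w(G/P_\Theta)\oplus T_w^*(G/P_\Theta)$, with weights $\{w\alpha\}$ on the first summand and $\{-w\alpha+\hbar\}$ (in the $q$-direction) on the second, where $\alpha$ ranges over the roots not in $W_\Theta$. These weights are pairwise non-proportional, because the $x_q$-component distinguishes the two summands and roots of type $A$ are pairwise non-proportional up to sign. A $\widehat{T}$-invariant irreducible curve through $w$ is then the closure of a one-dimensional orbit tangent to a single weight line, so there are finitely many through each fixed point. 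The curves in the fibre directions are lines $\overline{\widehat{T}\cdot v}$, which are not compact and contain only one fixed point, so they impose no GKM relation. The compact invariant curves joining two fixed points lie in the zero section: an invariant curve through two fixed points has both endpoints on the zero section, and by fibre weights it cannot leave it. These are exactly the $T$-curves in $G/P_\Theta$ joining $w$ and $s_\alpha w$, with weight $\alpha$, one for each $\alpha\in\Sigma_\Theta$.

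\textbf{Translation.} Applying \cref{theo:cobordism-GKM} gives injectivity of $\iota^*$ and describes its image as the set of tuples $(g_w)_{w\in W^\Theta}$ with $c_1^{\widehat{T}}(\mathcal{L}_\alpha)\mid g_w-g_{s_\alpha w}$. Under \cref{theo-algebraic-structure-of-cobordism-of-point} combined with \cref{lem:S-algebra-iso}, which identifies $\Omega_{\widehat{T}}(\mathrm{pt})\simeq\mathbb{L}\llbracket\Lambda\rrbracket_{F_U}\llbracket x_q\rrbracket$, the class $c_1^{\widehat{T}}(\mathcal{L}_\alpha)$ goes to $x_\alpha$. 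The divisibility conditions are insensitive to the sign convention for the weight, because $x_{-\alpha}$ and $x_\alpha$ differ by a unit by \cref{lem:formal-inverse}. This yields exactly the condition $x_\alpha\mid(f_w-f_{s_\alpha w})$ for all $\alpha\in\Sigma_\Theta$, as claimed.
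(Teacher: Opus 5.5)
Your proposal is correct and follows the same route as the paper. The paper's proof simply asserts that $T^*(G/P_\Theta)$ is $\widehat{T}$-equivariantly filtrable with finitely many fixed points and invariant curves, notes that the invariant $\mathbb{P}^1$ joining $w$ and $s_\alpha w$ has weight $\alpha$, and then invokes \cref{theo:cobordism-GKM}. Your extra arguments supply details the paper leaves implicit: pulling back the cells of \cref{theo:BB-decomp} along $\pi$, and using the fibre-scaling weight to show that any invariant curve leaving the zero section is a line in a single fibre $T^*_w$ containing only one fixed point. The one step you state a little quickly is that a vector bundle over a Schubert cell is equivariantly a representation; this holds because the cell is a linear subspace of a $T$-stable affine chart on which $T^*(G/P_\Theta)$ is equivariantly trivial.
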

	\begin{proof}
		Note that $T^*(G/P_\Theta)$ is $\widehat{T}$-equivariantly filtrable, and the action of $\widehat{T}$ on $T^*(G/P_\Theta)$ has finitely many fixed points and finitely many invariant curves. The weight of the $\widehat{T}$-invariant $\mbb{P}^1$ that connects two fixed points $f_w$ and $f_{s_\alpha w}$ is precisely the root $\lambda=\alpha$. This result now follows from \cref{theo:cobordism-GKM}.
	\end{proof}

	Let $(S,F)$ be a formal group law.
	
	\begin{dfn}\label{dfn:GKM-conditions}
		Let $\widehat{\mcal{F}}(S,F)$ be the subring of $\oplus_{\lambda\in\Lambda} S\llbracket\Lambda\rrbracket_F\llbracket x_q\rrbracket$ consisting of sequences $(f_w)_{w\in W^\Theta}$ that satisfy the \textbf{GKM conditions}, i.e., such that $x_\alpha|(f_w-f_{s_\alpha w})$ over all roots $\alpha\in\Sigma_\Theta$.
	\end{dfn}
	
	\begin{rmk}
		It follows from \cref{theo:localization} that
		the ring $\widehat{\mcal{F}}(\mbb{L},F_U)$ is isomorphic to $\Omega_{\widehat{T}}(T^*(G/P_\Theta))$.
	\end{rmk}
	
	The formal group law $(S,F)$ induces a ring homomorphism $\mbb{L}\to S$ (see \cref{rmk:induced-homomorphism}), which, as explained in \cite[\S 2.5]{CPZ} induces a ring homomorphism $\overline{h}_{(S,F)}\colon \mbb{L}\llbracket \Lambda\rrbracket_{F_U}\llbracket x_q\rrbracket\to S\llbracket \Lambda \rrbracket_{F}\llbracket x_q\rrbracket$ that sends $x_\lambda\mapsto x_\lambda$ for all $\lambda\in\Lambda$ and sends $x_q\mapsto x_q$. If the map $\mbb{L}\to S$ is \textit{surjective}, then $\overline{h}_{(S,F)}$ is surjective as well. We define the following ring homomorphism:
	\[h_{(S,F)}:=\oplus_{w\in W^\Theta}\overline{h}_{(S,F)}\colon \bigoplus_{w\in W^\Theta}\Omega_{\widehat{T}}^*(\mrm{pt})\to \bigoplus_{w\in W^\Theta}S\llbracket \Lambda\rrbracket_{F}\llbracket x_q\rrbracket;\]
	
	\begin{lem}\label{lem:surjective-restriction}
		The ring homomorphism $h_{(S,F)}$ restricts to the ring homomorphism 
		\[h_{(S,F)}|_{\widehat{\mcal{F}}(\mbb{L},F_U)}\colon \Omega_{\widehat{T}}(T^*(G/P_\Theta))=\widehat{\mcal{F}}(\mbb{L},F_U)\to \widehat{\mcal{F}}(S,F).\]
	\end{lem}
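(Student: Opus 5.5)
The plan is to check directly that $h_{(S,F)}$ maps tuples satisfying the GKM conditions for $(\mbb{L},F_U)$ to tuples satisfying the GKM conditions for $(S,F)$. The homomorphism $\overline{h}_{(S,F)}$ is induced by the classifying map $\mbb{L}\to S$, sends $x_\lambda\mapsto x_\lambda$ and $x_q\mapsto x_q$, and is continuous for the adic topologies. So $h_{(S,F)}$ is a ring homomorphism on $\bigoplus_{w\in W^\Theta}\mbb{L}\llbracket\Lambda\rrbracket_{F_U}\llbracket x_q\rrbracket$. The claim therefore reduces to showing that $h_{(S,F)}$ sends the subring $\widehat{\mcal{F}}(\mbb{L},F_U)$ into $\widehat{\mcal{F}}(S,F)$. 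The identification $\Omega_{\widehat{T}}(T^*(G/P_\Theta))=\widehat{\mcal{F}}(\mbb{L},F_U)$ comes from \cref{theo:localization}, as in the remark following \cref{dfn:GKM-conditions}.

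Let $(f_w)_{w\in W^\Theta}\in\widehat{\mcal{F}}(\mbb{L},F_U)$ and fix a root $\alpha\in\Sigma_\Theta$. The GKM condition provides $g\in\mbb{L}\llbracket\Lambda\rrbracket_{F_U}\llbracket x_q\rrbracket$ with
\[
f_w-f_{s_\alpha w}=x_\alpha\cdot g .
\]
Applying the ring homomorphism $\overline{h}_{(S,F)}$ gives
\[
\overline{h}_{(S,F)}(f_w)-\overline{h}_{(S,F)}(f_{s_\alpha w})=\overline{h}_{(S,F)}(x_\alpha)\cdot\overline{h}_{(S,F)}(g)=x_\alpha\cdot\overline{h}_{(S,F)}(g),
\]
because $\overline{h}_{(S,F)}(x_\alpha)=x_\alpha$. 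Hence $x_\alpha$ divides the difference of the images in $S\llbracket\Lambda\rrbracket_F\llbracket x_q\rrbracket$, so the image tuple lies in $\widehat{\mcal{F}}(S,F)$. The restricted map is then a ring homomorphism, since it is the restriction of one to subrings.

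The only point requiring care is that $\overline{h}_{(S,F)}$ is well defined, i.e.\ that it carries the closed ideal $\mcal{J}_{F_U}$ into $\mcal{J}_F$. The generator $x_{m_1+m_2}-(x_{m_1}+_{F_U}x_{m_2})$ maps to $x_{m_1+m_2}-(x_{m_1}+_F x_{m_2})$, since the coefficients $a_{i,j}$ map to those of $F$, and $x_0\mapsto x_0$. Continuity then handles the topological closure. This is exactly what \cite[\S 2.5]{CPZ} provides, and the paper already cites it, so no genuine obstacle remains.
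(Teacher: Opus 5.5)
Your proposal is correct and is essentially the paper's own proof: apply the ring homomorphism $\overline{h}_{(S,F)}$ to the relation $f_w-f_{s_\alpha w}=x_\alpha g$ and use $\overline{h}_{(S,F)}(x_\alpha)=x_\alpha$ to get the GKM divisibility in $S\llbracket\Lambda\rrbracket_F\llbracket x_q\rrbracket$. Your additional remark on why $\overline{h}_{(S,F)}$ is well defined is the point the paper simply cites to \cite[\S 2.5]{CPZ}.
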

	\begin{proof}
		Consider a sequence $(f_w)_{w\in W^\Theta}$ in $\widehat{\mcal{F}}(\mbb{L},F_U)$. By definition, for each $w\in W^\Theta$, there is $g_{w,s_\alpha w}\in\Omega_{\widehat{T}}^*(\mrm{pt})$ such that $f_w-f_{s_\alpha w}=x_\alpha g_{w,s_\alpha w}$. Thus, the equation $\overline{h}_{(S,F)}(f_w)-\overline{h}_{(S,F)}(f_{s_\alpha w})=x_{\alpha} \overline{h}_{(S,F)}(g_{w,s_\alpha w})$ holds in $S\llbracket\Lambda\rrbracket_F\llbracket x_q\rrbracket$. As a consequence, the image $h_{(S,F)}((f_w)_{w\in W^\Theta})=(\overline{h}_{(S,F)}(f_w))_{w\in W^\Theta}$ lies in $\widehat{\mcal{F}}(S,F)$.
	\end{proof}

	Recall the connective motivic Chern classes $\mrm{St}_w$ defined in \cref{subsection:stabclass}. In \cref{prop:GKM}, we showed that the elements $\mrm{St}_w$ satisfy the GKM conditions, so $\mrm{St}_w\in\widehat{\mcal{F}}_{(\mbb{Z}[\beta],F_C)}$. We showed in \cref{rmk:general-motivic-Chern-classes} that, once we fix a reduced word $I_w$ for each $w\in W^\Theta$, there are classes $\mrm{St}_{I_w}^{(\mbb{L},F_U)}$ that lie in $\widehat{\mcal{F}}(\mbb{L},F_U)=\Omega_{\widehat{T}}(T^*(G/P_\Theta))$. By construction of the classes $\mrm{St}_{I_w}^{(\mbb{L},F_U)}$, we have $h_{(\mbb{Z}[\beta],F_C)}|_{\widehat{\mcal{F}}(\mbb{L},F_U)}(\mrm{St}_{I_w}^{(\mbb{L},F_U)})=\mrm{St}_w$. In particular, the connective motivic Chern classes $\mrm{St}_w$ lie in the image of $h_{(\mbb{Z}[\beta],F_C)}|_{\widehat{\mcal{F}}(\mbb{L},F_U)}$. Let $I_{(\mbb{Z}[\beta],F_C)}$ be the kernel of the ring homomorphism $h_{(\mbb{Z}[\beta],F_C)}|_{\widehat{\mcal{F}}(\mbb{L},F_U)}$. Thus, the ring $\Omega_{\widehat{T}}(T^*(G/P_\Theta))/I_{(\mbb{Z}[\beta],F_C)}$ is isomorphic to the image of $h_{(\mbb{Z}[\beta],F_C)}|_{\widehat{\mcal{F}}(\mbb{L},F_U)}$, so we obtain the following:
	\begin{theo}\label{theo:geometric-connective-stable}
		The connective motivic Chern classes $\mrm{St}_w$ can be viewed as canonical elements in the ring
		\[\Omega_{\widehat{T}}(T^*(G/P_\Theta))/I_{(\mbb{Z}[\beta],F_C)}.\]
	\end{theo}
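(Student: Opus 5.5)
The plan is to show that each $\mrm{St}_w$ lies in the image of the restricted homomorphism $h_{(\mbb{Z}[\beta],F_C)}|_{\widehat{\mcal{F}}(\mbb{L},F_U)}$ of \cref{lem:surjective-restriction}. Once that is known, the first isomorphism theorem identifies the image with $\Omega_{\widehat{T}}(T^*(G/P_\Theta))/I_{(\mbb{Z}[\beta],F_C)}$, and the preimages give the desired elements. The key steps, in order, are as follows.

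First, fix a reduced word $I_w$ for every $w\in W^\Theta$. Form the universal classes $\mrm{St}^{(\mbb{L},F_U)}_{I_w}$ of \cref{rmk:general-motivic-Chern-classes}, applying the universal operators $\partial_{I_w}$ to the element supported at $w_0$ with entry $\prod_{\alpha\in\Sigma_\Theta^+}x_{-\alpha}$. By the universal analogue of \cref{prop:GKM} stated in that remark, these classes lie in $\bigoplus_{w}\mbb{L}\llbracket\Lambda\rrbracket_{F_U}\llbracket x_q\rrbracket$ and satisfy the GKM conditions. Hence, by \cref{theo:localization}, they lie in $\widehat{\mcal{F}}(\mbb{L},F_U)\cong\Omega_{\widehat{T}}(T^*(G/P_\Theta))$.

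Second, check compatibility with the map $\overline{h}_{(\mbb{Z}[\beta],F_C)}$. This map sends $x_\lambda\mapsto x_\lambda$ and $x_q\mapsto x_q$, and it commutes with the $W$-action. It also intertwines $\delta_i$, hence $\partial_i=(1-x_q^2)\delta_i+x_q^2r_i$, with the corresponding connective operators. Two points need care here.
\begin{itemize}
\item $\delta_i$ preserves the power-series ring by \cref{lem:restricts}, so the division by $x_{-\alpha_i}$ is compatible with the ring map. One could verify this by writing $\delta_i(f)$ as the quotient $(f-r_i f)/x_{-\alpha_i}$ plus a power-series correction, in the style of CPZ.
\item The starting element must match. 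The universal base class carries no $\kappa$-factor, while the connective $\mrm{St}_{w_0}=\kappa S_{w_0}$ has entry $\prod x_{-\alpha}$ at $w_0$. So the images agree at the base.
\end{itemize}
It then follows that $h(\mrm{St}^{(\mbb{L},F_U)}_{I_w})=\mrm{St}_{I_w}^{(\mbb{Z}[\beta],F_C)}$. By \cref{prop:independent}, this equals $\mrm{St}_w$, independent of the choice of $I_w$.

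Third, conclude. The class $\mrm{St}_w$ lies in the image of $h|_{\widehat{\mcal{F}}(\mbb{L},F_U)}$, and that image is isomorphic to the quotient by the kernel $I_{(\mbb{Z}[\beta],F_C)}$. The image of $\mrm{St}^{(\mbb{L},F_U)}_{I_w}$ in the quotient is canonical: any two lifts differ by an element of the kernel, so the choice of reduced words, which affects the universal lift, becomes invisible modulo $I$.

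I expect the main obstacle to be the second step, namely that $\overline{h}$ truly intertwines the division-type operators $\delta_i$. That map is only a map of power-series rings, not of their fraction fields. I would handle this by the explicit power-series formula for $\delta_i$, whose coefficients are polynomial in the $a_{ij}$ and are mapped termwise.
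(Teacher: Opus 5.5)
Your proposal is correct and follows the same route as the paper. You lift each $\mrm{St}_w$ to the universal class $\mrm{St}^{(\mbb{L},F_U)}_{I_w}$ of \cref{rmk:general-motivic-Chern-classes}, which lies in $\widehat{\mcal{F}}(\mbb{L},F_U)\cong\Omega_{\widehat{T}}(T^*(G/P_\Theta))$ by the universal GKM statement and \cref{theo:localization}; you then push it forward along $h_{(\mbb{Z}[\beta],F_C)}$ and identify the image with the quotient by the kernel $I_{(\mbb{Z}[\beta],F_C)}$. Your extra checks supply details the paper compresses into ``by construction'': that $\overline{h}_{(\mbb{Z}[\beta],F_C)}$ intertwines $\delta_i$ (hence $\partial_i$), that both base classes have entry $\prod_{\alpha\in\Sigma_\Theta^+}x_{-\alpha}$ at $w_0$, and that the lift's dependence on the reduced words disappears modulo the kernel.
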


	The following technical lemma is used in \cref{rmk:inclusion of rings0}.
	
	\begin{lem}\label{lem:inclusion of rings}
		The following are true:
		\begin{enumerate}
			\item The ring $\mrm{CH}_{T}(\mrm{pt})\simeq \mbb{Z}[y_1,\dotsc,y_n]$ injects into the completion $\widehat{\mrm{CH}}_T(\mrm{pt})=\mbb{Z}\llbracket y_1,\dotsc,y_n\rrbracket$ at the ideal $I=(y_1,\dotsc,y_n)$.
			\item The ring $K_{T}(\mrm{pt})$ injects into the completion $\widehat{K}_{T}(\mrm{pt})$ at the ideal $I=(e^\lambda-1)$ over all $\lambda\in\Lambda$.
		\end{enumerate}
	\end{lem}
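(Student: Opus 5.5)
The plan is to show directly that the natural map from a ring to its adic completion is injective, using the Krull intersection theorem or an explicit embedding into a power series ring.

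For (1), the completion of $\mbb{Z}[y_1,\dotsc,y_n]$ at $I=(y_1,\dotsc,y_n)$ is identified with $\mbb{Z}\llbracket y_1,\dotsc,y_n\rrbracket$. The completion map sends a polynomial to itself viewed as a power series. This map is visibly injective, since a polynomial that is zero as a power series has all coefficients zero. Equivalently, $\bigcap_k I^k=0$ because every element of $I^k$ has no monomials of degree $<k$. No real work is needed here.

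For (2), write $K_T(\mrm{pt})\simeq \mbb{Z}[e^{\pm y_1},\dotsc,e^{\pm y_n}]$. The ideal $I$ generated by all $e^\lambda-1$ equals $(e^{y_1}-1,\dotsc,e^{y_n}-1)$. This follows because $e^{\lambda+\mu}-1=(e^\lambda-1)(e^\mu-1)+(e^\lambda-1)+(e^\mu-1)$ and $e^{-\lambda}-1=-e^{-\lambda}(e^\lambda-1)$. The kernel of $K_T(\mrm{pt})\to\widehat{K}_T(\mrm{pt})$ is $\bigcap_k I^k$.

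The cleanest route is to substitute $e^{y_i}=1+u_i$. This gives a ring map $\mbb{Z}[e^{\pm y_1},\dotsc,e^{\pm y_n}]\to \mbb{Z}\llbracket u_1,\dotsc,u_n\rrbracket$, sending $e^{-y_i}\mapsto \sum_{m\ge0}(-u_i)^m$. The map is legitimate because $1+u_i$ is a unit in the power series ring. It carries $I^k$ into $(u_1,\dotsc,u_n)^k$, so it factors through $\widehat{K}_T(\mrm{pt})$. This is the same substitution as in \cref{lem:K-iso} with $\beta=1$.

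It therefore suffices to show that this substitution is injective. Take a Laurent polynomial $f$ and multiply by a monomial $e^{N(y_1+\dotsb+y_n)}$, which is a unit on both sides. This reduces to the case where $f$ is an honest polynomial in the $e^{y_i}$. Its image is then the polynomial $f(1+u_1,\dotsc,1+u_n)$. Since $\mbb{Z}[e^{y_i}]\to\mbb{Z}[u_i]$, $e^{y_i}\mapsto 1+u_i$, is a ring isomorphism, this image vanishes only if $f=0$.

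Hence the composite $K_T(\mrm{pt})\to\widehat{K}_T(\mrm{pt})\to\mbb{Z}\llbracket u\rrbracket$ is injective, and so the first map is injective.

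An alternative for (2) is the Krull intersection theorem. It applies because $K_T(\mrm{pt})$ is a Noetherian domain and $I$ is a proper ideal, the kernel of the augmentation $e^{\lambda}\mapsto1$. Then $\bigcap_k I^k=0$. I would cite this as a backup.

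The only mildly delicate point is the reduction from Laurent polynomials to polynomials via a unit monomial. I expect no step to be a genuine obstacle.
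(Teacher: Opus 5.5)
Your proof is correct, but your main argument for (2) takes a different route from the paper's; for (1) you agree (both treat it as immediate).

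\textbf{The paper's route.} For (2) the paper observes that the kernel of $K_T(\mathrm{pt})\to\widehat{K}_T(\mathrm{pt})$ is $\bigcap_m I^m$ and applies Krull's intersection theorem. That is exactly your backup argument. You state its hypotheses more carefully than the paper does: the paper mentions only that $K_T(\mathrm{pt})$ is a domain, leaving implicit that it is Noetherian and that $I$ is proper (it is the augmentation ideal).

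\textbf{Your route.} Your primary argument embeds everything explicitly. The substitution $e^{y_i}\mapsto 1+u_i$ sends $I$ into $(u_1,\dots,u_n)$, so it factors through the completion. After clearing denominators by a unit monomial, its injectivity reduces to the fact that $e^{y_i}\mapsto 1+u_i$ is an isomorphism of polynomial rings.

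\textbf{Comparison.} The Krull argument is a one-liner and works for any Noetherian domain and proper ideal, but it says nothing about what the completion looks like. Your argument is elementary and avoids Krull entirely. It also exhibits the composite $K_T(\mathrm{pt})\hookrightarrow\mathbb{Z}\llbracket u_1,\dots,u_n\rrbracket$ concretely. This is the form in which the paper later uses the completion, via \cref{lem:K-iso} at $\beta=1$ (your $u_i$ equals $-x_{y_i}$ there), when it transfers \cref{thm:main} to \cref{thm:main2}. It also makes (2) parallel to (1), where injectivity is likewise read off inside an explicit power series ring.
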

	\begin{proof}
		(1). This is straightforward. (2). The kernel of the natural map $\pi\colon K_{T}(\mrm{pt})\to\widehat{K}_{T}(\mrm{pt})$ is precisely the intersection $\mrm{ker}(\pi)=\cap_{m=1}^\infty I^m$. As the ring $K_{T}(\mrm{pt})$ is a domain, it follows from Krull's intersection theorem (see, e.g., \cite[Corollary 5.4]{E95}) that the intersection $\cap_{m=1}^\infty I^m=(0)$.
	\end{proof}

	\begin{rmk}\label{rmk:inclusion of rings0}
		Recall the additive formal group law $(\mbb{Z},F_A)$ and the multiplicative-periodic formal group law $(\mbb{Z},F_M)$. By definition, the rings $\widehat{\mcal{F}}(\mbb{Z},F_A)$ and  $\widehat{\mcal{F}}(\mbb{Z},F_M)$ are the subrings of $\bigoplus_{w\in W^\Theta}\mbb{Z}\llbracket \Lambda\rrbracket_{F_A}\llbracket x_q\rrbracket$ and $\bigoplus_{w\in W^\Theta}\mbb{Z}\llbracket \Lambda\rrbracket_{F_M}\llbracket x_q\rrbracket$, respectively, consisting of sequence $(f_w)_{w\in W^\Theta}$ such that $x_\alpha$ divides $(f_w-f_{s_\alpha w})$ over all roots $\alpha\in\Sigma$.
		\cref{lem:inclusion of rings} explains that $H_T(\mrm{pt})$ injects into $\mbb{Z}\llbracket \Lambda\rrbracket_{F_A}$, and  that $K_T(\mrm{pt})$ injects into $\mbb{Z}\llbracket \Lambda\rrbracket_{F_M}$. Thus, it follows from \cref{theo:GKM-for-H} that there is an injection $H_{\widehat{T}}(\mrm{pt})=H_{T}(\mrm{pt})[\hbar]\hookrightarrow \widehat{\mcal{F}}(\mbb{Z},F_A)[\tfrac{1}{x_q}]$, where $\hbar\mapsto x_q$, and it follows from \cref{theo:GKM-for-K} that there is an injection $K_T(\mrm{pt})[q^{\pm}]\hookrightarrow \widehat{\mcal{F}}(\mbb{Z},F_M)[\tfrac{1}{x_q}]$, where $q\mapsto x_q$. As explained in \cref{rmk:connective-stable-class}, the ring homomorphisms 
		\[\widehat{\mcal{F}}(\mbb{Z}[\beta],F_C)\to \widehat{\mcal{F}}(\mbb{Z},F_A)[\tfrac{1}{x_q}]\quad\text{ and}\quad \widehat{\mcal{F}}(\mbb{Z}[\beta],F_C)\to \widehat{\mcal{F}}(\mbb{Z},F_M)[\tfrac{1}{x_q}]\] send $\overline{\mrm{St}}_w$ to the corresponding Chern-Schwartz-MacPherson class in $H_{\widehat{T}}(T^*(G/P_\Theta))$ (homogenized by a factor $(\hbar^2)^{\ell(w)}$) and to the corresponding motivic Chern class in $K_{\widehat{T}}(T^*(G/P_\Theta))$, respectively.
	\end{rmk}

	\begin{rmk}\label{rmk:Karpenko}
		The definition of \textit{equivariant algebraic connective $K$-theory} $CK_{(-)}(-)$ is given in \cite{KM22}. Based on \cref{lem:maps-to-both}, \cref{rmk:Hudson}, and \cref{rmk:inclusion of rings0}, we expect that $\mrm{CK}_{\widehat{T}}(T^*(G/P_\Theta))$ can be identified via the localization map with the $\mrm{CK}_{\widehat{T}}(\mrm{pt})$-subalgebra of $\widehat{\mcal{F}}_{(\mbb{Z}[\beta],F_C)}(T^*(G/P_{\Theta}))[\tfrac{1}{x_q}]$ generated by the elements ${\mrm{St}}_w$. Furthermore, we expect that there is an equivariant algebraic connective $K$-theory analogue of the motivic Chern natural transformation that associates to a Schubert cell $X_w^\circ$ in $G/P_\Theta$ an element ${\mrm{St}}_w^{CK_{\widehat{T}}}$ in the $\widehat{T}$-equivariant algebraic connective $K$-ring $CK_{\widehat{T}}(T^*(G/P_\Theta))$ which agrees with the element ${\mrm{St}}_w$ in $\widehat{\mcal{F}}_{(\mbb{Z}[\beta],F_C)}(T^*(G/P_{\Theta}))[\tfrac{1}{x_q}]$ under the localization map. Proving the statements described in this remark is work in progress of the author of this paper and Anubhav Nanavaty.
	\end{rmk}

	\section{Future work}\label{section:future-work}
	
	In this section, we will describe several directions one could take this work.

	\subsection{Future work} In \cite{KZJ17}, \cite{KZJ21}, Knutson and Zinn-Justin prove a puzzle formula for the structure constants in the basis of motivic Segre classes in $\widehat{K}_{\widehat{T}}(T^*(G/P_\Theta))$, where $G/P_\Theta$ is a $d$-step flag variety for $d=1,2,3,4$. The puzzle formula when $d=2$ (resp. $d=3$, resp. $d=4$) is related to the representation theory of $U_q(\mathfrak{d}_4)$ (resp. $U_q(\mathfrak{e}_6)$, resp. $U_q(\mathfrak{e}_8)$). This motivates the following problem:
	\begin{problem}
		Give a puzzle formula for the structure constants in the basis of connective motivic Segre classes for $d=2,3,4$, using the representation theory of the multi-parameter quantum groups of types $\mathfrak{d}_4$, $\mathfrak{e}_6$, and $\mathfrak{e}_8$, respectively.
	\end{problem}
	
	In \cite{KZJ23}, Knutson and Zinn-Justin prove a puzzle formula for the structure constants arising in the product of two motivic Segre classes $\mrm{St}_u$ and $\mrm{St}_v$, where $u$ and $v$ are permutations in $S_n$ that have ``separated descents" or ``almost-separated descents." This motivates the following problem:
	\begin{problem}
		Give a puzzle formula for the structure constants that arise in the product of two connective motivic Segre classes $\mrm{St}_u$ and $\mrm{St}_v$, where $u$ and $v$ are permutations in $S_n$ that have ``separated descents" or ``almost-separated descents."
	\end{problem}
	
	In \cite{N01} (see also \cite[Theorem 7.3]{KZJ21}), Nakajima defined an action of the quantum affine algebra $U_q(\widehat{a}_2)$ on the $\widehat{T}$-equivariant $K$-ring of the Nakajima quiver variety $\mcal{M}:=\sqcup_{k=0}^{2}T^*(\mrm{Gr}(k,2))$. Observe
	\[K_{\widehat{T}}(\mcal{M})\simeq \bigoplus_{k=0}^2 K_{\widehat{T}}(T^*(\mrm{Gr}(k,2))).\]
	Therefore, the action of $U_q(\widehat{a}_2)$ on $K_{\widehat{T}}(\mcal{M})$ is completely determined by the actions of generators $E_i,F_i,K_i$ of $U_q(\widehat{a}_2)$ (see Appendix \ref{section:quantum-group}) on each $K_{\widehat{T}}(T^*(\mrm{Gr}(k,n)))$ component. The action of the $E_i,F_i,K_i$ generators is completely determined by their effects on the basis of motivic Chern classes of the Schubert cells in $K_{\widehat{T}}(T^*(\mrm{Gr}(k,n)))$. This leads to a natural question:
	
	\begin{problem}
		Is there an action of the multi-parameter quantum group $U_{\textbf{q}}(\widehat{a}_2)$ on the ring $K_{\widehat{T}}(\mcal{M})\otimes_{\mbb{Z}}\mbb{Z}[\beta,\beta^{-1}]$ involving a parameter $\beta$ that specializes to the action of $U_q(\widehat{a}_2)$ on $K_{\widehat{T}}(\mcal{M})$ when $\beta=1$.
	\end{problem}

	We saw in \cref{theo:CSM-classes} and \cref{theo:motivic-Chern-classes} that the Chern-Schwartz-MacPherson classes and motivic Chern classes of Schubert cells are closely related to cohomological and $K$-theoretic stable envelopes for $T^*(G/P_\Theta)$. The stable classes defined by Maulik and Okounkov \cite{MO19}, \cite{O17} exist for any ``symplectic resolution" and are characterized by three axioms: \textit{degree}, \textit{support}, and \textit{normalization}. Note that the construction of a motivic Chern transformation for equivariant algebraic connective $K$-theory is work in progress of the author of this paper; see \cref{rmk:Karpenko} for details. We pose the following:
	\begin{problem}
		Can the methods used in this paper by working in the GKM ring of \cref{dfn:GKM-conditions} be extended to define and study ``connective" motivic Chern classes or ``connective" stable classes for other families of symplectic resolutions?
	\end{problem}

	Schubert calculus in the $\widehat{T}$-equivariant elliptic cohomology of $T^*(G/P_\Theta)$ has been developed in \cite{RW20}, \cite{KRW20}, \cite{LZZ23}, \cite{ZZ23}, \cite{LXZ25}. In \cite[Appendix A]{LXZ25}, the authors explain how to recover motivic Chern classes of Schubert cells as limits of the so-called ``elliptic stable classes." This leads to a natural question:
	
	\begin{problem}
		Is there a relationship between the elliptic stable classes and the connective motivic Chern classes?
	\end{problem}

	\appendix

	\section{The Yang-Baxter equations}\label{section:YBE}
	
	\subsection{The equations}\label{subsection:the-equations}
	The following equations were verified in the proof of \cref{prop:ybe} using SageMath \cite{G25}:
	
	\begin{itemize}
		\item \textit{Yang-Baxter equations:}
		\begin{align*}
			(R_{r,g}(\beta,q^2z_3/z_2)\otimes \mrm{id}_{3})&\circ ( \mrm{id}_3\otimes R_{r,g}(\beta,q^2z_3/z_1))\circ 
			( R_{r,r}(\beta,z_2/z_1)\otimes  \mrm{id}_3)\\&=(\mrm{id}_3\otimes R_{r,r}(\beta,z_2/z_1))\circ
			(R_{r,g}(\beta,q^2z_3/z_1)\otimes \mrm{id}_3)\circ
			( \mrm{id}_3\otimes R_{r,g}(\beta,q^2z_3/z_2)).
		\end{align*}	
		\begin{align*}
			(R_{g,r}(\beta,z_3/(q^2z_2))\otimes \mrm{id}_{3})&\circ ( \mrm{id}_3\otimes R_{r,r}(\beta,z_3/z_1))\circ 
			( R_{r,g}(\beta,q^2z_2/z_1)\otimes  \mrm{id}_3)\\&=(\mrm{id}_3\otimes R_{r,g}(\beta,q^2z_2/z_1))\circ
			(R_{r,r}(\beta,z_3/z_1)\otimes \mrm{id}_3)\circ
			( \mrm{id}_3\otimes R_{g,r}(\beta,z_3/(q^2z_2)).
		\end{align*}				
		\begin{align*}
			(R_{r,r}(\beta,z_3/z_2)\otimes \mrm{id}_{3})&\circ ( \mrm{id}_3\otimes R_{g,r}(\beta,z_3/(q^2z_1))\circ 
			( R_{g,r}(\beta,z_2/(q^2z_1))\otimes  \mrm{id}_3)\\&=(\mrm{id}_3\otimes (R_{g,r}(\beta,z_2/(q^2z_1)))\circ
			(R_{g,r}(\beta,z_3/(q^2z_1))\otimes \mrm{id}_3)\circ
			( \mrm{id}_3\otimes R_{r,r}(\beta,z_3/z_2)).
		\end{align*}		
		\begin{align*}
			(R_{g,r}(\beta,z_3/(q^2z_2))\otimes \mrm{id}_{3})&\circ ( \mrm{id}_3\otimes R_{g,r}(\beta,z_3/(q^2z_1)))\circ 
			( R_{g,g}(\beta,z_2/z_1)\otimes  \mrm{id}_3)\\&=(\mrm{id}_3\otimes 	(R_{g,g}(\beta,z_2/z_1))\circ
			(R_{g,r}(\beta,z_3/(q^2z_1)))\otimes \mrm{id}_3)\circ
			( \mrm{id}_3\otimes R_{g,r}(\beta,z_3/(q^2z_2))).
		\end{align*}		
		\begin{align*}
			(R_{r,g}(\beta,q^2z_3/z_2)\otimes \mrm{id}_{3})&\circ ( \mrm{id}_3\otimes R_{g,g}(\beta,z_3/z_1))\circ 
			(R_{g,r}(\beta,z_2/(q^2z_1))\otimes  \mrm{id}_3)\\&=(\mrm{id}_3\otimes 	R_{g,r}(\beta,z_2/(q^2z_1)))\circ
			(R_{g,g}(\beta,z_3/z_1)\otimes \mrm{id}_3)\circ
			( \mrm{id}_3\otimes R_{r,g}(\beta,q^2z_3/z_2)).
		\end{align*}	
		\begin{align*}
			(R_{g,g}(\beta,z_3/z_2)\otimes \mrm{id}_{3})&\circ ( \mrm{id}_3\otimes R_{r,g}(\beta,q^2z_3/z_1))\circ 
			(R_{r,g}(\beta,q^2z_2/z_1)\otimes  \mrm{id}_3)\\&=(\mrm{id}_3\otimes 	R_{r,g}(\beta,q^2z_2/z_1))\circ
			(R_{r,g}(\beta,q^2z_3/z_1)\otimes \mrm{id}_3)\circ
			( \mrm{id}_3\otimes R_{g,g}(\beta,z_3/z_2)).
		\end{align*}

		\begin{align*}
			(R_{b,r}(\beta,z_3/(qz_2))\otimes \mrm{id}_{3})&\circ ( \mrm{id}_3\otimes R_{b,r}(\beta,z_3/(qz_1)))\circ 
			(R_{b,b}(\beta,z_2/z_1)\otimes  \mrm{id}_3)\\&=(\mrm{id}_3\otimes R_{b,b}(\beta,z_2/z_1))\circ
			(R_{b,r}(\beta,z_3/(qz_1)\otimes \mrm{id}_3)\circ
			( \mrm{id}_3\otimes R_{b,r}(\beta,z_3/(qz_2))).
		\end{align*}
		\begin{align*}
			(R_{r,b}(\beta,qz_3/z_2))\otimes \mrm{id}_{3})&\circ ( \mrm{id}_3\otimes R_{b,b}(\beta,z_3/z_1))\circ 
			(R_{b,r}(\beta,z_2/(qz_1))\otimes  \mrm{id}_3)\\&=(\mrm{id}_3\otimes R_{b,r}(\beta,z_2/(qz_1)))\circ
			(R_{b,b}(\beta,z_3/z_1)\otimes \mrm{id}_3)\circ
			( \mrm{id}_3\otimes R_{r,b}(\beta,qz_3/z_2)).
		\end{align*}				
		\begin{align*}
			(R_{b,b}(\beta,z_3/z_2))\otimes \mrm{id}_{3})&\circ ( \mrm{id}_3\otimes R_{r,b}(\beta,qz_3/z_1))\circ 
			(R_{r,b}(\beta,qz_2/z_1)\otimes  \mrm{id}_3)\\&=(\mrm{id}_3\otimes R_{r,b}(\beta,qz_2/z_1))\circ
			(R_{r,b}(\beta,qz_3/z_1)\otimes \mrm{id}_3)\circ
			( \mrm{id}_3\otimes R_{b,b}(\beta,z_3/z_2)).
		\end{align*}		
		\begin{align*}
			(R_{r,b}(\beta,qz_3/z_2))\otimes \mrm{id}_{3})&\circ ( \mrm{id}_3\otimes R_{r,b}(\beta,qz_3/z_1))\circ 
			(R_{r,r}(\beta,z_2/z_1)\otimes  \mrm{id}_3)\\&=(\mrm{id}_3\otimes R_{r,r}(\beta,z_2/z_1))\circ
			(R_{r,b}(\beta,qz_3/z_1)\otimes \mrm{id}_3)\circ
			( \mrm{id}_3\otimes R_{r,b}(\beta,qz_3/z_2)).
		\end{align*}		
		\begin{align*}
			(R_{b,r}(\beta,z_3/(qz_2))\otimes \mrm{id}_{3})&\circ ( \mrm{id}_3\otimes R_{r,r}(\beta,z_3/z_1))\circ 
			(R_{r,b}(\beta,qz_2/z_1)\otimes  \mrm{id}_3)\\&=(\mrm{id}_3\otimes 	(R_{r,b}(\beta,qz_2/z_1))\circ
			(R_{r,r}(\beta,z_3/z_1)\otimes \mrm{id}_3)\circ
			( \mrm{id}_3\otimes R_{b,r}(\beta,z_3/(qz_2))).
		\end{align*}
		\begin{align*}
			(R_{r,r}(\beta,z_3/z_2)\otimes \mrm{id}_{3})&\circ ( \mrm{id}_3\otimes R_{b,r}(\beta,z_3/(qz_1)))\circ 
			(R_{b,r}(\beta,z_2/(qz_1))\otimes  \mrm{id}_3)\\&=(\mrm{id}_3\otimes R_{b,r}(\beta,z_2/(qz_1)))\circ
			(R_{b,r}(\beta,z_3/(qz_1))\otimes \mrm{id}_3)\circ
			( \mrm{id}_3\otimes R_{r,r}(\beta,z_3/z_2)).
		\end{align*}
		
		\begin{align*}
			(R_{b,b}(\beta,z_3/z_2)\otimes \mrm{id}_{3})&\circ ( \mrm{id}_3\otimes R_{g,b}(\beta,z_3/(qz_1)))\circ 
			(R_{g,b}(\beta,z_2/(qz_1))\otimes  \mrm{id}_3)\\&=(\mrm{id}_3\otimes R_{g,b}(\beta,z_2/(qz_1))\circ
			(R_{g,b}(\beta,z_3/(qz_1)))\otimes \mrm{id}_3)\circ
			( \mrm{id}_3\otimes R_{b,b}(\beta,z_3/z_2)).
		\end{align*}
		\begin{align*}
			(R_{g,b}(\beta,z_3/(qz_2))\otimes \mrm{id}_{3})&\circ ( \mrm{id}_3\otimes R_{b,b}(\beta,z_3/z_1))\circ 
			(R_{b,g}(\beta,qz_2/z_1)\otimes  \mrm{id}_3)\\&=(\mrm{id}_3\otimes R_{b,g}(\beta,qz_2/z_1))\circ
			(R_{b,b}(\beta,z_3/z_1)))\otimes \mrm{id}_3)\circ
			( \mrm{id}_3\otimes R_{g,b}(\beta,z_3/(qz_2))).
		\end{align*}
		\begin{align*}
			(R_{b,g}(\beta,qz_3/z_2)\otimes \mrm{id}_{3})&\circ ( \mrm{id}_3\otimes R_{b,g}(\beta,qz_3/z_1))\circ 
			(R_{b,b}(\beta,z_2/z_1)\otimes  \mrm{id}_3)\\&=(\mrm{id}_3\otimes R_{b,b}(\beta,z_2/z_1))\circ
			((R_{b,g}(\beta,qz_3/z_1))\otimes \mrm{id}_3)\circ
			( \mrm{id}_3\otimes R_{b,g}(\beta,qz_3/z_2)).
		\end{align*}		
		\begin{align*}
			(R_{g,b}(\beta,z_3/(qz_2))\otimes \mrm{id}_{3})&\circ ( \mrm{id}_3\otimes R_{g,b}(\beta,z_3/(qz_1)))\circ 
			(R_{g,g}(\beta,z_2/z_1)\otimes  \mrm{id}_3)\\&=(\mrm{id}_3\otimes R_{g,g}(\beta,z_2/z_1))\circ
			(R_{g,b}(\beta,z_3/(qz_1))\otimes \mrm{id}_3)\circ
			( \mrm{id}_3\otimes R_{g,b}(\beta,z_3/(qz_2))).
		\end{align*}		
		\begin{align*}
			(R_{b,g}(\beta,qz_3/z_2)\otimes \mrm{id}_{3})&\circ ( \mrm{id}_3\otimes R_{g,g}(\beta,z_3/z_1))\circ 
			(R_{g,b}(\beta,z_2/(qz_1))\otimes  \mrm{id}_3)\\&=(\mrm{id}_3\otimes R_{g,b}(\beta,z_2/(qz_1)))\circ
			(R_{g,g}(\beta,z_3/z_1)\otimes \mrm{id}_3)\circ
			( \mrm{id}_3\otimes R_{b,g}(\beta,qz_3/z_2)).
		\end{align*}		
		\begin{align*}
			(R_{g,g}(\beta,z_3/z_2)\otimes \mrm{id}_{3})&\circ ( \mrm{id}_3\otimes R_{b,g}(\beta,qz_3/z_1))\circ 
			(R_{b,g}(\beta,qz_2/z_1)\otimes  \mrm{id}_3)\\&=(\mrm{id}_3\otimes R_{b,g}(\beta,qz_2/z_1))\circ
			(R_{b,g}(\beta,qz_3/z_1)\otimes \mrm{id}_3)\circ
			( \mrm{id}_3\otimes R_{g,g}(\beta,z_3/z_2)).
		\end{align*}
		
		\begin{align*}
			(R_{g,b}(\beta,z_3/(qz_2))\otimes \mrm{id}_{3})&\circ ( \mrm{id}_3\otimes R_{r,b}(\beta,qz_3/z_1))\circ 
			(R_{r,g}(\beta,q^2z_2/z_1)\otimes  \mrm{id}_3)\\&=(\mrm{id}_3\otimes R_{r,g}(\beta,q^2z_2/z_1))\circ
			(R_{r,b}(\beta,qz_3/z_1)\otimes \mrm{id}_3)\circ
			( \mrm{id}_3\otimes R_{g,b}(\beta,z_3/(qz_2))).
		\end{align*}
		\begin{align*}
			(R_{b,g}(\beta,qz_3/z_2)\otimes \mrm{id}_{3})&\circ ( \mrm{id}_3\otimes (R_{r,g}(\beta,q^2z_3/z_1))\circ 
			(R_{r,b}(\beta,qz_2/z_1)\otimes  \mrm{id}_3)\\&=(\mrm{id}_3\otimes R_{r,b}(\beta,qz_2/z_1))\circ
			(R_{r,g}(\beta,q^2z_3/z_1)\otimes \mrm{id}_3)\circ
			( \mrm{id}_3\otimes R_{b,g}(\beta,qz_3/z_2)).
		\end{align*}		
		\begin{align*}
			(R_{r,g}(\beta,q^2z_3/z_2)\otimes \mrm{id}_{3})&\circ ( \mrm{id}_3\otimes (R_{b,g}(\beta,qz_3/z_1))\circ 
			(R_{b,r}(\beta,z_2/(qz_1))\otimes  \mrm{id}_3)\\&=(\mrm{id}_3\otimes R_{b,r}(\beta,z_2/(qz_1))\circ
			(R_{b,g}(\beta,qz_3/z_1)\otimes \mrm{id}_3)\circ
			( \mrm{id}_3\otimes R_{r,g}(\beta,q^2z_3/z_2)).
		\end{align*}		
		\begin{align*}
			(R_{g,r}(\beta,z_3/(q^2z_2))\otimes \mrm{id}_{3})&\circ ( \mrm{id}_3\otimes (R_{b,r}(\beta,z_3/(qz_1)))\circ 
			(R_{b,g}(\beta,qz_2/z_1)\otimes  \mrm{id}_3)\\&=(\mrm{id}_3\otimes R_{b,g}(\beta,qz_2/z_1))\circ
			(R_{b,r}(\beta,z_3/(qz_1))\otimes \mrm{id}_3)\circ
			( \mrm{id}_3\otimes R_{g,r}(\beta,z_3/(q^2z_2))).
		\end{align*}		
		\begin{align*}
			(R_{r,b}(\beta,qz_3/z_2)\otimes \mrm{id}_{3})&\circ ( \mrm{id}_3\otimes (R_{g,b}(\beta,z_3/(qz_1)))\circ 
			(R_{g,r}(\beta,z_2/(q^2z_1))\otimes  \mrm{id}_3)\\&=(\mrm{id}_3\otimes R_{g,r}(\beta,z_2/(q^2z_1)))\circ
			(R_{g,b}(\beta,z_3/(qz_1)))\otimes \mrm{id}_3)\circ
			( \mrm{id}_3\otimes R_{r,b}(\beta,qz_3/z_2)).
		\end{align*}		
		\begin{align*}
			(R_{b,r}(\beta,z_3/(qz_2))\otimes \mrm{id}_{3})&\circ ( \mrm{id}_3\otimes (R_{g,r}(\beta,z_3/(q^2z_1)))\circ 
			(R_{g,b}(\beta,z_2/(qz_1))\otimes  \mrm{id}_3)\\&=(\mrm{id}_3\otimes R_{g,b}(\beta,z_2/(qz_1)))\circ
			(R_{g,r}(\beta,z_3/(q^2z_1)))\otimes \mrm{id}_3)\circ
			( \mrm{id}_3\otimes R_{b,r}(\beta,z_3/(qz_2))).
		\end{align*}

		\item \textit{Bootstrap equations:}
		\begin{align*}
			R_{b,r}(\beta,z_2/(qz_1))\circ (U(\beta)\otimes \mrm{id}_3)=(\mrm{id}_3\tensor U(\beta))\circ (R_{g,r}(\beta,z_2/(q^2z_1)\otimes \mrm{id}_3)\circ (\mrm{id}_3\otimes R_{r,r}(\beta,z_2/z_1)).
		\end{align*}
		\begin{align*}
			R_{b,g}(\beta,qz_2/z_1)\circ (U(\beta)\otimes \mrm{id}_3)=(\mrm{id}_3\tensor U(\beta))\circ (R_{g,g}(\beta,z_2/z_1)\otimes \mrm{id}_3)\circ (\mrm{id}_3\otimes R_{r,g}(\beta,q^2z_2/z_1)).
		\end{align*}
		\begin{align*}
			R_{b,b}(\beta,z_2/z_1)\circ (U(\beta)\otimes \mrm{id}_3)=(\mrm{id}_3\tensor U(\beta))\circ (R_{g,b}(\beta,z_2/(qz_1)\otimes \mrm{id}_3)\circ (\mrm{id}_3\otimes R_{r,b}(\beta,qz_2/z_1)).
		\end{align*}
		
		\begin{align*}
			R_{r,b}(\beta,qz_2/z_1)\circ (\mrm{id}_3\otimes U(\beta))=	(U(\beta)\tensor \mrm{id}_3)\circ (\mrm{id}_3\otimes R_{r,r}(\beta,z_2/z_1)\circ ( R_{r,g}(\beta,q^2z_2/z_1)\otimes \mrm{id}_3).
		\end{align*}
		\begin{align*}
			R_{g,b}(\beta,z_2/(qz_1))\circ (\mrm{id}_3\otimes U(\beta))=(U(\beta)\tensor \mrm{id}_3)\circ (\mrm{id}_3\otimes R_{g,r}(\beta,z_2/(q^2z_1))\circ ( R_{g,g}(\beta,z_2/z_1)\otimes \mrm{id}_3).
		\end{align*}
		\begin{align*}
			R_{b,b}(\beta,z_2/z_1)\circ (\mrm{id}_3\otimes U(\beta))=(U(\beta)\tensor \mrm{id}_3)\circ (\mrm{id}_3\otimes R_{b,r}(\beta,z_2/(qz_1))\circ ( R_{b,g}(\beta,qz_2/z_1)\otimes \mrm{id}_3).
		\end{align*}
		
		\begin{align*}
			(\mrm{id}_3\otimes D(\beta))\circ R_{b,r}(\beta,z_2/(qz_1))= (R_{r,r}(\beta,z_2/z_1)\otimes \mrm{id}_3)\circ ( \mrm{id}_3\otimes R_{g,r}(\beta,z_2/(q^2z_1)))\circ (D(\beta)\tensor \mrm{id}_3).
		\end{align*}
		\begin{align*}
			(\mrm{id}_3\otimes D(\beta))\circ R_{b,g}(\beta,qz_2/z_1)=(R_{r,g}(\beta,q^2z_2/z_1)\otimes \mrm{id}_3)\circ ( \mrm{id}_3\otimes R_{g,g}(\beta,z_2/z_1))\circ (D(\beta)\tensor \mrm{id}_3).
		\end{align*}
		\begin{align*}
			(\mrm{id}_3\otimes D(\beta))\circ R_{b,b}(\beta,z_2/z_1)=(R_{r,b}(\beta,qz_2/z_1)\otimes \mrm{id}_3)\circ ( \mrm{id}_3\otimes R_{g,b}(\beta,z_2/(qz_1)))\circ (D(\beta)\tensor \mrm{id}_3).
		\end{align*}
		
		\begin{align*}
			(D(\beta)\otimes \mrm{id}_3)\circ R_{r,b}(\beta,qz_2/z_1)=( \mrm{id}_3\otimes R_{r,g}(\beta,q^2z_2/z_1))\circ ( R_{r,r}(\beta,z_2/z_1)\otimes \mrm{id}_3)\circ ( \mrm{id}_3\otimes D(\beta)).
		\end{align*}
		\begin{align*}
			(D(\beta)\otimes \mrm{id}_3)\circ R_{g,b}(\beta,z_2/(qz_1))=
			( \mrm{id}_3\otimes R_{g,g}(\beta,z_2/z_1))\circ ( R_{g,r}(\beta,z_2/(q^2z_1))\otimes \mrm{id}_3)\circ ( \mrm{id}_3\otimes D(\beta)).
		\end{align*}
		\begin{align*}
			(D(\beta)\otimes \mrm{id}_3)\circ R_{b,b}(\beta,z_2/z_1)=( \mrm{id}_3\otimes R_{b,g}(\beta,qz_2/z_1))\circ ( R_{b,r}(\beta,z_2/(qz_1))\otimes \mrm{id}_3)\circ ( \mrm{id}_3\otimes D(\beta)).
		\end{align*}
		
		\item \textit{Unitarity equations:}
		
		\begin{align*}
			R_{r,r}(\beta,z_2/z_1)\circ R_{r,r}(\beta,z_1/z_2)=\mrm{id}_3\otimes \mrm{id}_3.
		\end{align*}
		\begin{align*}
			R_{g,g}(\beta,z_2/z_1)\circ R_{g,g}(\beta,z_1/z_2)=\mrm{id}_3\otimes \mrm{id}_3.
		\end{align*}
		\begin{align*}
			R_{b,b}(\beta,z_2/z_1)\circ R_{b,b}(\beta,z_1/z_2)=\mrm{id}_3\otimes \mrm{id}_3.
		\end{align*}
		\begin{align*}
			R_{r,g}(\beta,q^2z_2/z_1)\circ R_{g,r}(\beta,z_1/(q^2z_2))=\mrm{id}_3\otimes \mrm{id}_3.
		\end{align*}
		\begin{align*}
			R_{b,g}(\beta,qz_2/z_1)\circ R_{g,b}(\beta,z_1/(qz_2))=\mrm{id}_3\otimes \mrm{id}_3.
		\end{align*}
		\begin{align*}
			R_{r,b}(\beta,qz_2/z_1)\circ R_{b,r}(\beta,z_1/(qz_2))=\mrm{id}_3\otimes \mrm{id}_3.
		\end{align*}
		\item \textit{Value at equal spectral parameters:}
		\begin{align*}
			R_{g,g}(\beta,1)=\mrm{id}_3\otimes \mrm{id}_3.
		\end{align*}
		\begin{align*}
			R_{r,r}(\beta,1)=\mrm{id}_3\otimes \mrm{id}_3.
		\end{align*}
		\begin{align*}
			R_{b,b}(\beta,1)=\mrm{id}_3\otimes \mrm{id}_3.
		\end{align*}
	\end{itemize}
	
	\section{The quantum group ${U}_q(\widehat{a}_n)$}\label{section:quantum-group}
	We will recall the presentation for ${U}_q(\widehat{\mathfrak{a}}_n)$ given in \cite[Appendix A.1]{KZJ21}
	
	\subsection{Generators and relations} 
	let $C=(C_{i,j})_{i,j=0,\dotsc,n}$ be the generalized Cartan matrix for the affine Lie algebra of type $\widehat{\mathfrak{a}}_n$:
	\[C_{i,j}=\begin{cases}
		2,\quad &\text{ $i=j$;}\\
		-1,\quad &\text{ $i= j+1$, or $i=j-1$, or $(i,j)\in\{(0,n),(n,0)\}$;}\\
		0,\quad&\text{ otherwise.}
	\end{cases}\]
	The quantum affine algebra ${U}_q(\widehat{\mathfrak{a}}_n)$ is the $\mbb{C}(q)$-algebra with generators $E_i, F_i,K_i$, where $i=0,1,\dotsc,n$, subject only to the following relations: 
	\[K_iK_j=K_jK_i,\]
	\[K_iE_jK_i^{-1}=C_{i,j}E_j,\quad K_iF_jK_i^{-1}=C_{i,j}^{-1}F_j,\quad [E_i,F_j]=\delta_{i,j}\frac{K_i-K_i^{-1}}{q-q^{-1}},\]
	\[E_i^2E_j-(q+q^{-1})E_iE_jE_i+E_jE_i^2=0,\quad i\neq j,\]
	\[F_i^2F_j-(q+q^{-1})F_iF_jF_i+F_jF_i^2=0,\quad i\neq j.\]
	The algebra ${U}_q(\widehat{\mathfrak{a}}_n)$ has the structure of a Hopf algebra, and the coproduct $\Delta$, counit $\epsilon$, and antipode $S$ of this Hopf algebra are given by the formulas below:
	\[\Delta(E_i)=K_i\otimes E_i+E_i\otimes 1,\quad \Delta(F_i)=1\otimes F_i+F_i\otimes K_i^{-1},\quad \Delta(K_i)=K_i\otimes K_i,\]
	\[\epsilon(E_i)=0,\quad \epsilon(F_i)=0,\quad \epsilon(K_i)=1,\]
	\[S(E_i)=-K_i^{-1}E_i,\quad S(F_i)=-F_iK_i,\quad S(K_i)=K_i^{-1}.\]

	
	\bibliographystyle{alphaurl}
	\bibliography{biblist}

\end{document}